\definecolor{codegreen}{rgb}{0,0.6,0}
\definecolor{codegray}{rgb}{0.5,0.5,0.5}
\definecolor{codepurple}{rgb}{0.58,0,0.82}
\definecolor{backcolour}{rgb}{0.95,0.95,0.92}
\lstdefinestyle{mystyle}{
    commentstyle=\color{codegreen},
    keywordstyle=\color{magenta},
    numberstyle=\tiny\color{codegray},
    stringstyle=\color{codepurple},
    basicstyle=\ttfamily\footnotesize,
    breakatwhitespace=false,         
    breaklines=true,                 
    captionpos=b,                    
    keepspaces=true,                 
    numbers=left,                    
    numbersep=5pt,                  
    showspaces=false,                
    showstringspaces=false,
    showtabs=false,                  
    tabsize=2,
    frame=single, 
    rulecolor=\color{black} 
}
\pgfplotsset{compat=1.18}
\newtheorem{theorem}{Theorem}[section]
\newtheorem{prop}[theorem]{Proposition}
\newtheorem{conjecture}[theorem]{Conjecture}
\newtheorem{corollary}[theorem]{Corollary}
\newtheorem{lemma}[theorem]{Lemma}
\theoremstyle{definition}
\newtheorem{defn}[theorem]{Definition}
\theoremstyle{remark}
\newtheorem{remark}[theorem]{Remark}
\newtheorem{example}[theorem]{Example}
\newtheorem{question}[theorem]{Question}
\DeclareMathOperator{\coker}{coker}
\DeclareMathOperator{\gr}{gr}
\DeclareMathOperator{\ad}{ad}
\newcommand\SU{\mathrm{SU}}
\numberwithin{equation}{section}
\DeclareMathOperator{\nll}{null}
\DeclareMathOperator{\rank}{rank}
\DeclareMathOperator{\im}{im}
\title{Framed instanton homology and Fr{\o}yshov's invariant}
\author{Sudipta Ghosh}
\address{University of Notre Dame\\ USA}
\email{sghosh7@nd.edu}
\author{Mike Miller Eismeier}
\address{University of Vermont\\ USA}
\email{Mike.Miller-Eismeier@uvm.edu}
\begin{document}
\maketitle

\begin{abstract}
We determine the framed instanton homology with coefficients in $\mathbb F = \mathbb Z/2$ for Dehn surgeries on a knot in the $3$-sphere. The dimension of these groups is seen to have a close relationship with a homology cobordism invariant due to Fr{\o}yshov.

As an application, we show that $r$-surgery on a non-trivial knot cannot be nondegenerate $\SU(2)$-abelian for any $|r| \le 4\lceil g(K)/2\rceil$, which is $2g(K)$ for $g$ even and $2g(K) + 2$ for $g$ odd.
\end{abstract}

\section{Introduction}
The framed instanton homology, introduced by Kronheimer and Mrowka in \cite{KMframed}, associates to any closed, oriented $3$-manifold $Y$ a $\mathbb Z/2$-graded abelian group $I^\#(Y)$, defined in terms of Floer's instanton homology on the $3$-manifold $Y \# T^3$. In simple cases, $I^\#(Y)$ is the target of a spectral sequence beginning with the homology of the framed representation variety $\text{Hom}(\pi_1 Y, \SU(2))$, and so can be used to study representations of the fundamental group.

Many different-looking variations of Floer homology for $3$-manifolds have been introduced, notably Heegaard Floer homology \cite{OSdisks}, embedded contact homology \cite{hutchings-index}, and monopole Floer homology \cite{KM-book}. The three named Floer homologies have very different definitions, yet have been shown to be isomorphic \cite{CGHhf=ech1, klt1}. The relationship these theories share with Floer's instanton homology remains unknown. Kronheimer and Mrowka proposed the following:

\begin{conjecture}\cite{km-excision}*{Conjecture~7.24}\label{Conj: KM}
    When taken with complex coefficients, there exists an isomorphism $I^\#(Y;\mathbb C) \cong \widehat{HF}(Y;\mathbb C)$. 
\end{conjecture}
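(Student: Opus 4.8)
The plan is to prove Conjecture~\ref{Conj: KM} by exhibiting a natural transformation between the two invariants and then propagating the isomorphism along surgery presentations of arbitrary $3$-manifolds. Both $I^\#(-;\mathbb C)$ and $\widehat{HF}(-;\mathbb C)$ are functorial for cobordisms, both satisfy a connected-sum K\"unneth formula, both assign $\mathbb C$ to $S^3$, and both fit into the surgery exact triangle relating $Y$, $Y_0$ and $Y_1$; moreover, when $b_1(Y)=0$, each has Euler characteristic $\pm|H_1(Y;\mathbb Z)|$, so the Euler characteristics already agree and it is only the ``extra'' rank that is at issue. Suppose one had a natural map $\Phi_Y\colon I^\#(Y;\mathbb C)\to \widehat{HF}(Y;\mathbb C)$ commuting with the $2$-handle cobordism maps and the K\"unneth isomorphisms and equal to the identity of $\mathbb C$ for $Y=S^3$. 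Writing an arbitrary closed oriented $Y$ as integral surgery on a framed link $L\subset S^3$ and inducting on the number and complexity of its components, one applies the exact triangle and the five lemma at each stage to conclude that $\Phi_Y$ is an isomorphism, the base of the induction being connected sums of $S^3$ and $S^1\times S^2$, handled by the K\"unneth formula and the known values of both invariants there.

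The decisive obstacle is the construction of $\Phi$. There is at present no bridge between Floer's gauge-theoretic instanton homology and the symplectic or combinatorial Heegaard Floer package, so even producing a single natural map is the heart of the matter. The natural route runs through monopole Floer homology: since $\widehat{HF}(Y;\mathbb C)\cong\widehat{HM}(Y;\mathbb C)$ is known, it would suffice to compare $I^\#$ with $\widehat{HM}$, either via an Atiyah--Floer-type statement realising $I^\#(Y)$ as a Lagrangian Floer homology amenable to such a comparison, or via a common deformation of the Yang--Mills and Seiberg--Witten equations at the level of cobordism maps. Failing a direct map, one might attempt a rigidity theorem instead: show that any functor on the $2$-handle cobordism category satisfying a short list of axioms --- grading conventions, the connected-sum formula, compatibility with the exact triangles, and normalisations on $S^3$, on $S^1\times S^2$ and on lens spaces --- must coincide with $\widehat{HF}(-;\mathbb C)$, and then verify that $I^\#$ satisfies those axioms. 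The well-known weak point of this variant is that the exact triangle alone does not pin down a Floer theory, so one would need substantially more relations among the cobordism maps to force uniqueness.

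Even granting $\Phi$, the triangle induction is not automatic, because the connecting homomorphisms are only determined up to the ambiguities inherent in the mapping cone; the five-lemma step therefore needs the triangles packaged into honest surgery formulas on both sides --- the Manolescu--Ozsv\'ath link-surgery mapping cone for $\widehat{HF}$, and a presently undeveloped instanton analogue. The determination of $I^\#$ for all Dehn surgeries on a knot in $S^3$ carried out in this paper is precisely the first instance of such a formula; upgrading that computation from $\mathbb Z/2$ to $\mathbb C$ and matching it term-by-term against the Ozsv\'ath--Szab\'o mapping cone for $\widehat{HF}(S^3_{p/q}(K))$ would settle the conjecture for all surgeries on knots, after which an instanton link-surgery formula would bootstrap it to arbitrary $Y$. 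I expect the construction of the natural transformation --- equivalently, an identification of $I^\#$ with a theory manifestly comparable to $\widehat{HM}$ --- to be the real bottleneck; this is why the conjecture remains open, and why the present paper instead computes the $\mathbb Z/2$ invariant directly and extracts its consequences for $\SU(2)$ representations rather than attempting the comparison.
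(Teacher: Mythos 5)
This statement is a conjecture, due to Kronheimer and Mrowka, and the paper does not prove it --- indeed the surrounding text emphasises that it remains open and that the present paper works over $\mathbb Z/2$ precisely because the conjecture fails there. You have correctly recognised this: what you have written is not a proof and does not claim to be one, but rather an honest account of what a proof would require and why no one has produced it. Your identification of the two bottlenecks --- the absence of any natural transformation bridging the instanton and Heegaard/monopole packages, and the inadequacy of the bare exact triangle for a five-lemma induction without a mapping-cone surgery formula on the instanton side --- is accurate and matches the state of the art. The only caution I would add is that the ``rigidity'' variant you sketch (axiomatising $\widehat{HF}$ and verifying the axioms for $I^\#$) is even weaker than you suggest: no such axiomatisation is known even conjecturally, and the exact triangle plus K\"unneth plus normalisations is far from enough to pin down the theory, as you yourself note. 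Since there is no proof in the paper to compare against, the correct assessment is simply that you have not proven the statement, you know you have not proven it, and your explanation of why is sound.
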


This conjecture has been verified for a large class of 3-manifolds, such as the boundaries of almost-rational plumbing \cite{BSLY-plumbing}, Seifert fibered rational homology spheres \cite{LFapplications}, integral surgeries of L-space knots \cite{LCS-lspace}, and non-zero surgeries of alternating knots of bridge index at most 3 \cite{GL-alternating}. 

Partly towards understanding Conjecture \ref{Conj: KM}, in \cite{bs-concordance} Baldwin and Sivek compute both the framed instanton homology and Heegaard Floer homology of Dehn surgeries on a large class of knots $K \subset S^3$. Their computations stem from the following general result.

\begin{theorem}[\cite{bs-concordance}*{Theorem~1.1}]\label{thm:rational-dimformula}
Let $K \subset S^3$ be a knot. There exist integers $r_0(K), \nu^\#(K)$ so that, if $p/q$ is written in least terms with $q \ge 0$, then \[\dim I^\#(S^3_{p/q}(K);\mathbb C) = \begin{cases} qr_0(K) + |p - q\nu^\#(K)| & \nu^\#(K) \ne 0 \text{ or } p/q \ne 0 \\ r_0(K) \text{ or } r_0(K) + 2 & p/q = \nu^\#(K) = 0. \end{cases}\] 
\end{theorem}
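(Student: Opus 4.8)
The plan is to determine $\dim I^\#(S^3_{p/q}(K))$ from three inputs. The first is the surgery exact triangle in framed instanton homology: to any three Dehn‑surgery slopes on $K$ that pairwise have geometric intersection number $1$ it assigns a cyclic three‑term exact sequence relating the corresponding groups $I^\#(S^3_\gamma(K))$. The second is the Euler characteristic identity $|\chi(I^\#(Y))| = |H_1(Y;\mathbb Z)|$ when $b_1(Y)=0$, which yields both $\dim I^\#(S^3_{p/q}(K)) \equiv p \pmod 2$ and $\dim I^\#(S^3_{p/q}(K)) \ge |p|$. The third is a ``large surgery'' computation of $\dim I^\#(S^3_n(K))$ for $|n|$ large, for instance through the instanton knot homology of the complement, which I will use to control the behavior near the crossover value. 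In Farey coordinates a triad of slopes is $\tfrac ab,\ \tfrac cd,\ \tfrac{a+c}{b+d}$ with $|ad-bc|=1$, and I will repeatedly invoke the elementary fact that no integer lies strictly between two Farey neighbors.

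\emph{Step 1 (integer surgeries).} First I would apply the triangle to the triad $\{S^3 = S^3_\infty,\ S^3_{n-1}(K),\ S^3_n(K)\}$. Since $\dim I^\#(S^3)=1$, the two maps of the exact sequence adjacent to $I^\#(S^3)$ have ranks summing to $1$, so one has rank $1$ and the other rank $0$; hence $\dim I^\#(S^3_n(K)) - \dim I^\#(S^3_{n-1}(K)) = \pm 1$ for every $n$. Consequently $n \mapsto \dim I^\#(S^3_n(K)) - n$ is non‑increasing and $n \mapsto \dim I^\#(S^3_n(K)) + n$ is non‑decreasing, and since $\dim I^\#(S^3_n(K)) \ge |n|$ these two sequences are bounded below, hence have finite limits $a_+$ and $a_-$ (each $\ge 0$, and even by the parity constraint). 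Set $r_0(K) := \tfrac12(a_+ + a_-)$ and $\nu^\#(K) := \tfrac12(a_- - a_+)$. Monotonicity then gives the lower bound $\dim I^\#(S^3_n(K)) \ge \max\{\,n + a_+,\ -n + a_-\,\} = r_0(K) + |n - \nu^\#(K)|$ for \emph{all} $n$. The reverse inequality amounts to the ``no oscillation'' statement that $\dim I^\#(S^3_n(K)) - \dim I^\#(S^3_{n-1}(K))$ equals $-1$ for $n \le \nu^\#(K)$ and $+1$ for $n > \nu^\#(K)$; I would extract this from the large‑surgery computation combined with the lower bound just established. For $n = 0$ the same triad only pins $\dim I^\#(S^3_0(K))$ down to within $\pm 2$, which is the origin of the stated ambiguity when $\nu^\#(K) = 0$.

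\emph{Step 2 (rational surgeries).} Next I would induct on $q$; the base cases $q = 0$ (where $S^3_{p/q}(K) = S^3$ and the formula reads $1 = 1$) and $q = 1$ (Step~1) are settled. For $q \ge 2$, write $p/q$ as the mediant of its two Farey parents $a/b$ and $c/d$, which have denominators $< q$. The exact triangle for this triad gives $\dim I^\#(S^3_{p/q}(K)) \le \dim I^\#(S^3_{a/b}(K)) + \dim I^\#(S^3_{c/d}(K))$, which by the inductive hypothesis equals $q\,r_0(K) + \big(|a - b\nu^\#(K)| + |c - d\nu^\#(K)|\big)$. Because no integer lies strictly between $a/b$ and $c/d$, the integer $\nu^\#(K)$ avoids the open interval $(a/b,\,c/d)$, so $a - b\nu^\#(K)$ and $c - d\nu^\#(K)$ have the same sign (or one vanishes); hence $|a - b\nu^\#(K)| + |c - d\nu^\#(K)| = |p - q\nu^\#(K)|$, and the upper bound is exactly $q\,r_0(K) + |p - q\nu^\#(K)|$. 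For the matching lower bound one must show that the connecting homomorphism of this exact sequence --- the $2$‑handle cobordism map not involving $S^3_{p/q}(K)$ --- has rank $0$, so that $\dim I^\#(S^3_{p/q}(K)) = \dim I^\#(S^3_{a/b}(K)) + \dim I^\#(S^3_{c/d}(K)) = q\,r_0(K) + |p - q\nu^\#(K)|$, which closes the induction.

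The main obstacle is common to both steps, and is the only point where the geometry genuinely enters: the exact triangle supplies inequalities, and upgrading them to equalities --- ``no oscillation'' for integer surgeries, and vanishing of the connecting map for rational surgeries --- requires an independent guarantee that the relevant cobordism maps carry no rank, i.e.\ that no unexpected cancellation occurs. I expect this to be supplied by the large‑surgery / sutured‑instanton picture of the knot complement, which stands in for the mapping‑cone formula that instanton theory lacks and provides lower bounds for $\dim I^\#(S^3_n(K))$ beyond what the Euler characteristic sees; granting it, the rational case is then formal. One last technical point to verify is that, when $\nu^\#(K) = 0$, the possible extra $2$ in $\dim I^\#(S^3_0(K))$ does not propagate through the $q \ge 2$ induction --- notably through the slopes $\pm 1/q$, whose Farey parents include $0$ --- so that the ambiguity remains confined to the single slope $p/q = 0$.
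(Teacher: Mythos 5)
Your outline correctly reproduces the formal skeleton: the Floer triangle forces adjacent integer surgeries to differ by $\pm 1$, the Euler‑characteristic bound $\dim I^\#(S^3_n(K);\mathbb C)\ge |n|$ forces the monotone sequences $\dim \mp n$ to stabilize, and the Farey‑mediant induction propagates the answer to rational slopes provided the relevant exact triangles degenerate. However, the two places where you explicitly punt are precisely where the content lies, and your stated plan for filling them does not close the gap. For the ``no oscillation'' claim on integer slopes, knowing $\dim I^\#(S^3_n(K);\mathbb C)$ for $|n|$ large and having the lower bound $r_0 + |n - \nu^\#|$ does \emph{not} preclude extra bumps at intermediate $n$: one still needs an independent mechanism forcing each $2$‑handle map $I^\#(S^3)\to I^\#(S^3_n(K))$ to be nonzero on one side of the crossover and zero on the other. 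In Baldwin--Sivek this comes from the $\mu$‑operator eigenvalue decomposition over $\mathbb C$ together with the adjunction inequality (neither of which a ``large surgery / sutured'' black box by itself supplies), and in the present paper's $\mathbb F$‑analogue it comes from the monotonicity of Fr\o yshov's $q_3$ under definite cobordisms — some such monotone homology‑cobordism invariant is the actual engine, and your write‑up has none.

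For the rational step your proposed lower bound — show the connecting homomorphism of the Floer triangle has rank $0$ — is logically equivalent to the conclusion but is not a route to it: there is no direct way to see that rank vanishing without already knowing the dimension. The proof here (and in Section~\ref{sec:rat} of this paper) instead runs a \emph{second} exact triangle, the distance‑$2$ triangle of Theorem \ref{thm:distance2}, against the same Farey data to obtain the complementary inequality $\dim I^\#(S^3_{p/q}(K)) \ge 2\dim I^\#(S^3_{a/b}(K)) - \dim I^\#(S^3_{e/f}(K))$, after which equality in the Floer triangle falls out for free. You should also be more careful than a parenthetical remark about the $0$‑slope: the Farey parents of $\pm 1/q$ include $0/1$, where the dimension depends on the bundle, and the induction only closes because the Floer triangle at that stage naturally produces the \emph{nontrivial} bundle on the $0$‑surgery (and hence the smaller dimension). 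As written, your proposal is an accurate table of contents for the argument rather than a proof; the two deferred steps are the theorem.
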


Focusing only on integer surgeries, this theorem asserts that the graph of the function $\dim I^\#(S^3_n(K);\mathbb C)$ is `$V$-shaped' except possibly when $\nu^\#(K) = 0$, when it is either `$V$-shaped' or `$W$-shaped'. When $\nu^\#(K) = 0$, the dimension of $I^\#(S^3_0(K);\mathbb C)$ is not known to be determined by $r_0(K)$ alone. 

More generally, one may define the framed instanton homology $I^\#(Y,w)$ for any pair of a closed oriented $3$-manifold $Y$ and an oriented $1$-manifold $w \subset Y$;  the resulting homology groups depend up to isomorphism only on the homology class $[w]_2 \in H_1(Y;\mathbb Z/2)$. Baldwin and Sivek show that $\dim I^\#(S^3_{p/q}(K),w;\mathbb C)$ is independent of the choice of $w$ except in the case $\nu^\#(K) = 0$.

While $\widehat{HF}(Y)$ is typically torsion-free and thus the dimension of $\widehat{HF}(Y;k)$ is independent of the field $k$, it seems to be typical that $I^\#(Y)$ contains $2$-torsion. This is already the case for the Poincar\'e sphere \cite{bhat}*{Theorem~1.5}, and the main theorem of \cite{LiYe1} shows that $I^\#(S^3_r(K);\mathbb Z)$ contains $2$-torsion for $r \in \{1, 1/2, 1/4\}$ for any nontrivial knot $K$. Thus, Conjecture \ref{Conj: KM} fails over $\mathbb Z$ and $\mathbb F = \mathbb Z/2$.

Indeed, the behavior of $I^\#$ is significantly different when taken with $\mathbb F$ coefficients. The eigenvalue decomposition of \cite{bs-lspace} is not available in characteristic $2$, there is no known analogue of the adjunction inequality, and the landscape is less clear. 

Nevertheless, Li and Ye prove that the graph of dimensions of the homology groups, $\dim I^\#(S^3_n(K); \mathbb{F})$ is always $V$-shaped, $W$-shaped, or a `generalized $W$-shape' \cite{LiYe1}*{Proposition~1.11}. Our first main result is an analogue of Theorem \ref{thm:rational-dimformula}, with a crucial distinction: we show that the graph is always `$W$-shaped'.

\begin{theorem}\label{thm:surgerydim}
Let $K \subset S^3$ be a knot. There exist integers $M(K)$ and $r_2(K)$, both divisible by $4$ and with $r_2(K) \ge |M(K)|$, with the following significance: for any rational $p/q$ in least terms with $q \ge 0$, and any $1$-cycle $w$, we have \begin{equation}\label{eqn:dim-formula}\dim I^\#(S^3_{p/q}(K), w; \mathbb F) = \begin{cases}  r_2(K) + 2 & p/q = M(K) \text{ and } [w]_2 = 0 \\ 
qr_2(K) + |p - qM(K)| & \text{otherwise.} \end{cases}\end{equation}
\end{theorem}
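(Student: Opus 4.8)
The plan is to reduce the computation of $\dim I^\#(S^3_{p/q}(K),w;\mathbb F)$ for all slopes to that of the integer surgeries using the surgery exact triangle, to read off the invariants $M(K)$ and $r_2(K)$ from the large-surgery asymptotics, and then to use the relationship between framed instanton homology over $\mathbb F$ and Fr\o yshov's invariant---the new ingredient of this paper---to pin down the bounded central range of slopes, where the characteristic-$2$ phenomena obstruct the more formal arguments available over $\mathbb C$. Concretely: by Li--Ye \cite{LiYe1}*{Proposition~1.11} the function $n\mapsto\dim I^\#(S^3_n(K);\mathbb F)$ agrees outside a finite set with one of the form $C+|n-D|$, and I would \emph{define} $r_2(K):=C$, $M(K):=D$. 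Running the surgery exact triangle $\cdots\to I^\#(S^3)\to I^\#(S^3_n(K))\to I^\#(S^3_{n+1}(K))\to\cdots$ and its rational analogues for Farey-neighbour slopes, with $\dim I^\#(S^3)=1$, propagates a candidate closed formula from integer slopes to all rational slopes by the continued-fraction induction of \cite{bs-concordance}; the universal-coefficient inequality $\dim I^\#(Y;\mathbb F)\ge\dim I^\#(Y;\mathbb C)$ and Theorem~\ref{thm:rational-dimformula} then supply the matching lower bounds, once one checks that the two piecewise-linear graphs have the same asymptotic slopes $\pm1$ (so $r_2(K)\ge r_0(K)$ and $|M(K)-\nu^\#(K)|\le r_2(K)-r_0(K)$). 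The divisibility of $M(K)$ and $r_2(K)$ by $4$, and the bound $r_2(K)\ge|M(K)|$, should come from combining the mod-$4$ (resp.\ mod-$8$) graded structure of instanton homology and the Euler-characteristic count for $\chi(I^\#(S^3_n(K),w))$ with the non-negativity of $\dim I^\#(S^3_0(K),w;\mathbb F)$. This reduces the theorem to the assertion that the graph of $n\mapsto\dim I^\#(S^3_n(K),w;\mathbb F)$ is not merely one of Li--Ye's ``generalized $W$-shapes'' but a genuine $W$: it equals $r_2(K)+|n-M(K)|$ except for a single central peak of height exactly $2$, located at $n=M(K)$ and present precisely when $[w]_2=0$. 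Since $M(K)$ is an integer this also accounts for the absence of an exceptional case when $q\ge 2$, once one checks---as part of the same continued-fraction bookkeeping---that the central bump does not propagate to neighbouring rational surgeries.

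This last reduction is the crux, and it is where Fr\o yshov's invariant enters. The exact triangle only controls the \emph{consecutive differences} $\bigl|\dim I^\#(S^3_n(K))-\dim I^\#(S^3_{n+1}(K))\bigr|=1$, so on its own it can neither exclude a long oscillation at the bottom of the graph nor rule out a central bump of height $4$ or more. Instead I would show that the deviation of the graph from a plain $V$---equivalently, the ``extra'' $2$-torsion carried by these surgeries---is controlled by Fr\o yshov's homology-cobordism invariant $h$ of the relevant $3$-manifolds, so that the rigidity of $h$ under the surgery cobordisms both localizes the bump to a single slope and fixes its height at exactly $2$; the graded considerations above then force that slope to be divisible by $4$, hence to equal $M(K)$. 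The $w$-dependence is handled in parallel: for every slope other than the exceptional one, $\dim I^\#(S^3_{p/q}(K),w;\mathbb F)$ is independent of $[w]_2\in H_1$, which for $[w]_2\ne0$ follows (when $|H_1|$ is even) by combining the Euler-characteristic count with the exact-triangle rigidity, just as in \cite{bs-concordance}, but with the Fr\o yshov input substituting for the eigenvalue decomposition of \cite{bs-lspace}, which has no analogue over $\mathbb F$.

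I expect essentially all of the difficulty to be concentrated in this final step: translating Li--Ye's ``generalized $W$-shape'' into a statement about $h$, and then exploiting the functoriality and homology-cobordism invariance of $h$ to force the central bump to be unique and of height exactly $2$. By contrast, the Farey recursion, the universal-coefficient and Euler-characteristic bounds, and the divisibility claims should be rigid but essentially routine bookkeeping on top of Theorem~\ref{thm:rational-dimformula}.
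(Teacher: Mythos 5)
Your high-level outline correctly identifies the strategic skeleton---refine Li--Ye's ``generalized $W$-shape'' to a genuine $W$ using a Fr\o yshov-type invariant, then propagate to rational slopes by continued-fraction induction---but there are several genuine gaps, two of which are fatal as written.

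First, the invariant you name is the wrong one. You propose controlling the central bump by ``Fr\o yshov's homology-cobordism invariant $h$,'' but the classical $h$ is defined over $\mathbb Q$ (or $\mathbb Z$) coefficients and is insensitive to the $2$-torsion that creates the bump in the first place: every manifold whose $I^\#$ over $\mathbb F$ differs from $I^\#$ over $\mathbb C$ differs by $2$-torsion, which $h$ cannot detect. The invariant the paper actually uses is $q_3$, Fr\o yshov's mod-$2$ invariant, extended to $\mathbb F$-homology spheres via the equivariant complex $\widetilde C(Y;\mathbb F)$. More importantly, the mechanism by which $q_3$ controls the shape of the graph is not just monotonicity under negative-definite cobordisms: the paper's key technical input is a filtered decomposition $\widetilde I(Y,w;\mathbb F)\cong V\oplus \gr Z\oplus \gr V$ under which a $2$-handle cobordism map becomes upper triangular with the middle diagonal block equal to $\gr\epsilon_0(W)$, and $\gr\epsilon_0(W)=1$ if and only if $q_3(Y')=q_3(Y)-b^+(W)$ (Proposition~\ref{prop:q3-eps}). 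One of the two crucial cases (Lemma~\ref{lemma:formula2}) involves a cobordism with $b^+(W)=1$, where the needed map on $\widehat I_{\chi_2}$ is only available via Theorem~\ref{thm:suspension-maps} from \cite{DME2}. None of this machinery is sketched or replaced in your proposal, and there is no ``routine'' substitute for it.

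Second, your proposed lower bound for rational slopes is too weak. You suggest pairing the Floer-triangle upper bound with the universal-coefficient inequality $\dim I^\#(Y;\mathbb F)\ge \dim I^\#(Y;\mathbb C)$ and Theorem~\ref{thm:rational-dimformula}. But this lower bound equals $qr_0(K)+|p-q\nu^\#(K)|$, and since in general $r_2(K)>r_0(K)$ and $M(K)\ne\nu^\#(K)$, one has a strict inequality $qr_0+|p-q\nu^\#|<qr_2+|p-qM|$ for essentially all slopes. The sandwich does not close. The paper instead obtains the sharp lower bound from the distance-$2$ surgery exact triangle of \cite{CDX} (Theorem~\ref{thm:distance2}), which gives inequalities of the form $\dim V_2\ge 2\dim V_3-\dim V_1$; this is an entirely different input and not a cosmetic variant of the UCT estimate.

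There are smaller issues as well. Taking Li--Ye's Proposition~1.11 as the \emph{definition} of $r_2, M$ is not quite well posed, since a generalized $W$-shape can have a flat bottom and does not a priori single out one slope; the paper derives the $W$-shape from scratch and thereby gets a canonical $M(K)$ characterized by $q_3$ (Theorem~\ref{thm:q3-surgery}). The $w$-dependence at the exceptional slope requires the explicit structure of $\mathfrak Z(Y,w)$ in the equivariant complex, not just an Euler-characteristic count. And the remaining ambiguity $\zeta(K)\in\{0,1\}$ at $M(K)=0$ is closed by an additivity argument (Lemma~\ref{lemma:M-add}) together with the trefoil computation, which your proposal does not touch. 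So while the global shape of your plan matches the paper, the two load-bearing ingredients---$q_3$ together with the filtered-complex/$\gr\epsilon_0$ machinery, and the CDX triangle as lower bound---are either misidentified or absent, and the argument does not go through without them.
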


With the exception of slope $p/q = M(K)$, this formula has also been established by Li and Ye by different techniques \cite{LiYe3}.

As a consequence of \eqref{eqn:dim-formula}, for every knot $K$ in the $3$-sphere there is exactly one slope for which $\dim I^\#(S^3_r(K);\mathbb F) \ne \dim I^\#(S^3_r(K),w;\mathbb F)$ depends on the choice of $w$, in which case the former group has larger dimension. This can be taken as a definition of $M(K)$, and we can define $r_2(K) = \dim I^\#(S^3_{M(K)}, w; \mathbb F)$ for $[w]_2 \ne 0$. 

Baldwin and Sivek give a relationship between $\nu^\#(K)$ and Fr{\o}yshov's $h$-invariant $h(S^3_1(K))$ \cite{bs-concordance2}*{Proposition~9.2}. Fr{\o}yshov has also introduced integer homology cobordism invariants $q_2(Y), q_3(Y) \in \mathbb Z$ \cite{froy-mod2}. The invariant $q_3(Y)$ extends to an integer-valued $\mathbb F$-homology cobordism invariant of $\mathbb F$-homology spheres, and the behavior of $q_3(Y)$ under Dehn surgery characterizes the invariant $M(K)$.

\begin{theorem}\label{thm:q3-surgery}
Fr\o yshov's invariant $q_3$ satisfies, for all $r \in \mathbb Q$ with odd numerator, \[q_3(S^3_r(K)) = \begin{cases} 1 & M(K) < r < 0 \\ -1 & 0 < r < M(K) \\ 0 & \text{otherwise.} \end{cases}\] 
\end{theorem}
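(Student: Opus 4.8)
The plan is to combine the dimension formula of Theorem~\ref{thm:surgerydim} with the structural features of $q_3$ coming from Fr\o yshov's construction. I will use that $q_3$ is an $\mathbb F$-homology cobordism invariant of $\mathbb F$-homology spheres with $q_3(S^3)=0$ and $q_3(-Y)=-q_3(Y)$; that it obeys Fr\o yshov's monotonicity inequality under negative-definite cobordisms with no $2$-torsion in their homology; that it vanishes for the lens spaces that are $\mathbb F$-homology spheres; and, crucially, that it is extracted from $I^\#(Y;\mathbb F)$ together with the extra algebraic structure that enters the definition of $q_3$, in a manner compatible with the cobordism maps of the framed-instanton surgery exact triangle. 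Since $S^3_r(K)$ is an $\mathbb F$-homology sphere exactly when $r$ has odd numerator, $q_3(S^3_r(K))$ is defined precisely for the slopes appearing in the statement.

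First I would organize the surgeries by the surgery exact triangle: for each Farey triple $\{p/q,\ p'/q',\ (p+p')/(q+q')\}$ there is an exact triangle relating $I^\#(\cdot;\mathbb F)$ of the three corresponding surgeries on $K$, and in particular, taking one slope to be $\infty$, the triangle on $I^\#(S^3)$, $I^\#(S^3_n(K))$ and $I^\#(S^3_{n+1}(K))$. Feeding the dimension formula \eqref{eqn:dim-formula} into these triangles pins down the rank of every connecting map. Away from the slope $M(K)$ the triangles are rank-minimal, so the distinguished generator of $I^\#(S^3)$ either injects into the neighbouring surgery or maps to zero there; the two regimes are separated exactly by the corner of the piecewise-linear function $p/q\mapsto|p-qM(K)|$ and by the slope $0$, where $S^3_0(K)$ is not an $\mathbb F$-homology sphere. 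The extra $2$-dimensional summand of $I^\#(S^3_{M(K)}(K),w;\mathbb F)$ for $[w]_2=0$ is precisely the image of this distinguished generator.

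Next I would propagate the distinguished generator of $I^\#(S^3)$ through the chain of triangles and use the compatibility of $q_3$ with the cobordism maps to conclude that $q_3(S^3_r(K))$ is locally constant on the odd-numerator slopes lying outside the closed interval between $0$ and $M(K)$, where it agrees with $q_3$ of a large lens-space surgery and hence equals $0$; and that it is locally constant on the odd-numerator slopes strictly between $0$ and $M(K)$, where the extra summand forces it to be nonzero, with absolute value $1$ because that summand is $2$-dimensional and contributes a single unit. The sign is pinned down by the direction of Fr\o yshov's definite-cobordism inequality along the cobordisms realizing the surgery exact triangle, and transported to the remaining cases via $-S^3_r(K)\cong S^3_{-r}(\overline K)$, $M(\overline K)=-M(K)$ (which follows from \eqref{eqn:dim-formula} under mirroring), and $q_3(-Y)=-q_3(Y)$; this yields $q_3=-1$ on $(0,M(K))$ when $M(K)>0$ and $q_3=1$ on $(M(K),0)$ when $M(K)<0$, which is the assertion.

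The hard part will be the claim underlying the middle two steps: that $q_3$ changes by exactly $\pm1$, and not by $0$ or more, as the slope crosses each of the two corner values $0$ and $M(K)$, and that it is genuinely locally constant in between. Proving this needs a careful analysis of Fr\o yshov's definition of $q_3$ at the (co)chain level and of how the degree-shifting cobordism maps of the surgery exact triangle interact with the operators entering that definition --- in effect, a surgery-exact-triangle refinement of $q_3$. The weaker estimate $|q_3(S^3_r(K))|\le1$ on the interval is comparatively cheap, following from monotonicity together with large-surgery vanishing; extracting the exact value, and controlling the behaviour at the endpoint slope $0$ (where the interrupting manifold $S^3_0(K)$ has $b_1=1$), are where the real difficulty lies.
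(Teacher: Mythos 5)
Your high-level plan correctly identifies the key ingredients (the dimension formula, the surgery exact triangles, Fr{\o}yshov monotonicity, duality, and the fact that $q_3$ is a refinement of $I^\#$ compatible with cobordism maps), and the outer logic---propagating the generator of $I^\#(S^3)$ through the Floer triangles, pinning down the sign via $-S^3_r(K)\cong S^3_{-r}(\overline K)$ and $q_3(-Y)=-q_3(Y)$---matches the paper. But you have explicitly deferred the actual proof at precisely the point where the paper does real work, by writing that ``the real difficulty lies'' in showing $q_3$ jumps by exactly $\pm1$ at the corners and is locally constant elsewhere, and that this ``needs a careful analysis of Fr{\o}yshov's definition of $q_3$ at the (co)chain level.'' That analysis is the content of Section 3.1 of the paper: the three-step filtration on $\widetilde I(Y,w;\mathbb F)$ coming from the equivariant theory, the resulting upper-triangular matrix form of cobordism maps (equation \eqref{eqn:triangular-matrix-form}), and Proposition \ref{prop:q3-eps}, which translates ``$\gr\epsilon_0(W,c)=1$'' into ``$q_3$ drops by exactly $b^+(W)$.'' The crucial point---which your proposal does not supply---is how to extract the $\gr\epsilon_0$-component from the knowledge that the whole map $I^\#(W;\mathbb F)$ is (non)zero; the paper does this by comparing the dimensions of the $V$-pieces on either side so that the corner entries of the triangular matrix are forced to be isomorphisms (Lemmas \ref{lemma:formula1}--\ref{lemma:formula2} and Proposition \ref{prop:even-behavior}). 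Without this mechanism, knowing the rank of the connecting map does not by itself control $\gr\epsilon_0$, which is what actually feeds into $q_3$.

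A second gap is the behavior as the slope approaches $M(K)$ from either side. Monotonicity constrains $q_3$ on $(0,M(K)-1)$ and on $(M(K)+1,\infty)$ once the integer values are known, but for slopes $M(K)\pm 1/n$ the paper needs a separate inductive argument (the unnumbered lemma at the end of Section 4.3) built on the distance-$2$ exact triangle, Lemma \ref{lemma:nplus1-cob}, and the same triangular-matrix machinery, to show $\gr\epsilon_0=1$ along the chain of $2$-handle cobordisms $S^3_{M+1/(n+1)}(K)\to S^3_{M+1/n}(K)$. Your proposal does not address this, and the phrase ``locally constant on the odd-numerator slopes strictly between $0$ and $M(K)$'' is not something monotonicity alone can deliver there. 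Two smaller inaccuracies: the extra $2$-dimensional summand of $I^\#(S^3_{M(K)}(K);\mathbb F)$ is the $\gr Z$ piece of the filtration, not the image of the $I^\#(S^3)$ generator (that image is at most $1$-dimensional); and a large surgery $S^3_m(K)$ is not a lens space in general, so ``agrees with $q_3$ of a large lens-space surgery'' should be replaced by the paper's argument that $q_3$ must eventually become $0$ because otherwise $\dim I^\#(S^3_m(K);\mathbb F)$ would decrease below $|m|$.
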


As a consequence, like $\nu^\#(K)$, the integer $M(K)$ is a concordance invariant. Unlike $\nu^\#(K)$, it is even and additive under connected sum.

\begin{theorem}\label{thm:M-additive}
$M(K)$ is a concordance invariant and defines a surjective homomorphism $M: \mathcal C \to 4\mathbb Z$. 
\end{theorem}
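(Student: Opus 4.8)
The plan is to derive all three assertions from Theorems~\ref{thm:surgerydim} and~\ref{thm:q3-surgery}, together with two standard properties of Fr\o yshov's invariant: that $q_3$ is an $\mathbb F$--homology cobordism invariant of $\mathbb F$--homology spheres, and that $q_3(-Y)=-q_3(Y)$. I would begin with concordance invariance, i.e.\ the ``well--defined on $\mathcal C$'' part. By Theorem~\ref{thm:q3-surgery}, the integer $M(K)$ is determined by the function $r\mapsto q_3(S^3_r(K))$ on the rationals $r$ with odd numerator: this function is identically zero exactly when $M(K)=0$, and otherwise its support is precisely the set of such $r$ strictly between $0$ and $M(K)$, which is nonempty because $M(K)$ is even. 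Now if $K_0$ and $K_1$ are concordant, surgering the concordance annulus inside $S^3\times[0,1]$ produces, for each slope $p/q$, a cobordism from $S^3_{p/q}(K_0)$ to $S^3_{p/q}(K_1)$ which a Mayer--Vietoris computation shows to be a $\mathbb Z$--homology product; when $p$ is odd this is an $\mathbb F$--homology cobordism of $\mathbb F$--homology spheres, so $q_3$ agrees on the two ends and hence $M(K_0)=M(K_1)$. The same dictionary applied through $S^3_{-r}(\overline K)=-S^3_r(K)$ and $q_3(-Y)=-q_3(Y)$ gives the mirror formula $M(\overline K)=-M(K)$; and since $S^3_r(U)$ is a lens space, hence $\SU(2)$--abelian with $q_3=0$, we get $M(U)=0$. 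So $M$ descends to a well--defined map $\mathcal C\to 4\mathbb Z$ killing the identity.

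Next, additivity. Using the mirror formula it suffices to prove the inequality $M(K_1\# K_2)\le M(K_1)+M(K_2)$ for all $K_1,K_2$, since applying it to the mirrors $\overline K_1,\overline K_2$ yields the reverse inequality and the two together give $M(K_1\# K_2)=M(K_1)+M(K_2)$ (the cases with some $M(K_i)=0$ are handled identically). Translating through Theorem~\ref{thm:q3-surgery}, this subadditivity amounts to showing $q_3\bigl(S^3_r(K_1\# K_2)\bigr)=0$ for every $r$ with odd numerator lying outside the interval between $0$ and $M(K_1)+M(K_2)$. I would attack this with a cobordism: the split link $K_1\sqcup K_2\subset S^3$ is converted to the knot $K_1\# K_2\subset S^3$ by a single band move, which is a genus--zero surface cobordism in $S^3\times[0,1]$; surgering along this surface, with bottom slopes $s_1,s_2$ chosen so that $s_1$, $s_2$ and $r$ all have odd numerator, so that $r$ is related to $s_1,s_2$ as dictated by the band framing, and so that by Theorem~\ref{thm:q3-surgery} each $q_3(S^3_{s_i}(K_i))$ is forced to vanish, yields a cobordism relating $S^3_{s_1}(K_1)\# S^3_{s_2}(K_2)$ to $S^3_r(K_1\# K_2)$; one then invokes Fr\o yshov's monotonicity of $q_3$ under this cobordism. (Alternatively, one can hope to read the additivity of $M$ off the surgery formula underlying Theorem~\ref{thm:surgerydim} together with the Künneth isomorphism $KHI(K_1\# K_2)\cong KHI(K_1)\otimes KHI(K_2)$ for instanton knot homology.) The main obstacle is exactly here: in the cobordism approach one must arrange that the surgered--band cobordism has the definiteness and homological properties making the Fr\o yshov inequality effective, and one must understand $q_3$ of the connected sum $S^3_{s_1}(K_1)\# S^3_{s_2}(K_2)$ --- in particular establish that $q_3$ of a connected sum vanishes whenever both summands have vanishing $q_3$. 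This connected--sum behaviour of $q_3$ does not follow formally from homology cobordism invariance, and controlling it is the technical heart of the argument.

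Finally, surjectivity. Once $M$ is a homomorphism its image is a subgroup of $4\mathbb Z$, so it suffices to exhibit a knot with $M=\pm4$, and the right--handed trefoil does this. Since $S^3_1(T_{2,3})=\Sigma(2,3,5)$ has $q_3\ne 0$ (consistently with the $2$--torsion in $I^\#(\Sigma(2,3,5);\mathbb F)$, \cite{bhat}*{Theorem~1.5}), Theorem~\ref{thm:q3-surgery} forces $0<1<M(T_{2,3})$, hence $M(T_{2,3})\ge 4$; and since $S^3_7(T_{2,3})$ is a lens space, hence $\SU(2)$--abelian with $q_3=0$, Theorem~\ref{thm:q3-surgery} forces $M(T_{2,3})\le 7$, hence $M(T_{2,3})=4$. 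Therefore $M(\#^n T_{2,3})=4n$ and $M(\#^n \overline{T_{2,3}})=-4n$ for all $n\ge 0$, so $M$ is onto $4\mathbb Z$.
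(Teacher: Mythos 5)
Your argument for concordance invariance is correct and essentially the same as the paper's (both run through the $q_3$--characterization in Theorem~\ref{thm:q3-surgery} and the fact that $q_3$ is an $\mathbb F$--homology cobordism invariant). The surjectivity argument via the trefoil is also the right idea, though it contains a small gap: you assert $q_3(\Sigma(2,3,5))\neq 0$ ``consistently with the $2$--torsion,'' but the presence of $2$--torsion alone does not formally imply $q_3\neq 0$. To close this one can combine $\dim I^\#(\Sigma(2,3,5);\mathbb F)=7$ from Theorem~\ref{thm:Brieskorn-calc} with the divisibility-by-$4$ statement for admissible pairs and Lemma~\ref{lemma:formula1}. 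The paper instead computes $M(3_1)=-4$ directly (Example~\ref{ex:trefoil}) from the dimension formula together with the explicit Brieskorn calculations; either route works once the details are supplied.

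The genuine gap is in additivity, which you yourself flag as ``the technical heart.'' Your proposed band-move cobordism approach requires controlling $q_3$ of connected sums $S^3_{s_1}(K_1)\# S^3_{s_2}(K_2)$ and arranging a cobordism with the right definiteness and $H_1$ properties, and neither of these is established. There is no a priori additivity for $q_3$ under connected sum, and the paper does not prove one. The paper's actual argument is entirely different and avoids $q_3$ here: it characterizes $M(K)$ by the non-vanishing locus of the $2$--handle cobordism maps $F_{K,n}:I^\#(S^3;\mathbb F)\to I^\#(S^3_n(K),w;\mathbb F)$ --- using the dimension formula in Proposition~\ref{prop:mainthm-Z} to see that $F_{K,n}\neq 0$ exactly when $n\leq M(K)-2$ or $n=M(K)-\zeta(K)$ --- and then invokes Baldwin--Sivek's \cite{bs-concordance}*{Lemma~5.1}, which says that $F_{K,a}\neq 0$ and $F_{L,b}\neq 0$ imply $F_{K\#L,a+b}\neq 0$. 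A short case analysis on $\zeta$ then gives $M(K\#L)\geq M(K)+M(L)$, and the mirror formula upgrades this to equality. This route is substantially easier because it pushes the connected-sum compatibility onto a known statement about cobordism maps, rather than trying to prove a new connected-sum formula for the Fr\o yshov-type invariant.
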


A pair $(Y,w)$ is said to be an \textbf{instanton $L$-space over $\mathbb F$} if $\dim I^\#(Y,w;\mathbb F) = |H_1(Y;\mathbb Z)|$. This is the minimal possible dimension, as the Euler characteristic of $I^\#(Y,w)$ is equal to $|H_1(Y;\mathbb Z)|$. A knot $K \subset S^3$ is said to be a (positive) \textbf{instanton $L$-space knot over $\mathbb F$} if some surgery $S^3_t(K)$ (with $t > 0$) is an instanton $L$-space over $\mathbb F$. By the universal coefficient theorem, these are also $L$-space knots over $\mathbb Q$. 

A typical example is a \textit{nondegenerate $\SU(2)$-abelian $3$-manifold}, for which every representation $\pi_1(Y) \to \SU(2)$ has abelian image and satisfies a certain nondegeneracy condition; see Section \ref{sec:TAK} for more details. These include, but are by no means limited to, lens spaces, and the nondegeneracy condition is automatic when $Y = S^3_{p/q}(K)$ for $p$ a prime power.

\begin{corollary}\label{cor:F2Lspace}
A knot $K \subset S^3$ is an instanton $L$-space knot over $\mathbb F$ if and only if $r_2(K) = |M(K)|$. If $K$ is a positive instanton $L$-space knot over $\mathbb F$, then $(S^3_r(K),w)$ is an instanton $L$-space over $\mathbb F$ if and only if $r > M(K)$ or $r = M(K)$ and $[w]_2 \ne 0$. 
\end{corollary}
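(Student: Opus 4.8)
The plan is to reduce the statement entirely to the arithmetic of the dimension formula \eqref{eqn:dim-formula}. The key observation is that $(S^3_{p/q}(K),w)$ is an instanton $L$-space over $\mathbb F$ exactly when $p\neq 0$ and $\dim I^\#(S^3_{p/q}(K),w;\mathbb F) = |p| = |H_1(S^3_{p/q}(K);\mathbb Z)|$ (when $p = 0$ the first homology is infinite and the surgery is never an $L$-space). By Theorem \ref{thm:surgerydim} this reduces to asking when $q\,r_2(K) + |p - qM(K)| = |p|$ in the generic case, or when $r_2(K) + 2 = |M(K)|$ at the exceptional slope $p/q = M(K)$ with $[w]_2 = 0$; the latter is impossible since $r_2(K)\ge |M(K)|$, so only the generic case matters except at $p/q = M(K)$ with $[w]_2\neq 0$. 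Everything then follows from the elementary fact that $|a| + |b| = |a + b|$ iff $ab \ge 0$, applied with $a = qM(K)$, $b = p - qM(K)$, and $q\ge 1$ (so $|qM(K)| = q|M(K)|$).

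For the first equivalence of the Corollary: if $K$ is an instanton $L$-space knot over $\mathbb F$, then some $(S^3_{p/q}(K),w)$ with $p\neq 0$ is an $L$-space; this cannot be the exceptional slope (that would force $r_2(K)+2 = |M(K)|$, contradicting $r_2(K)\ge |M(K)|$), so $q\,r_2(K) + |p-qM(K)| = |p|$, and the triangle inequality gives $q\,r_2(K) = |p| - |p - qM(K)|\le q|M(K)|$, forcing $r_2(K) = |M(K)|$ in view of $r_2(K)\ge |M(K)|$. Conversely, if $r_2(K) = |M(K)|$, then for any $r = p/q$ with $r > M(K)$ (when $M(K)\ge 0$) or $r < M(K)$ (when $M(K) < 0$), the numbers $qM(K)$ and $p - qM(K) = q(r - M(K))$ have the same sign, so the generic formula gives $\dim I^\#(S^3_r(K),w;\mathbb F) = q|M(K)| + |p - qM(K)| = |p|$ for every $w$; hence $S^3_r(K)$ is an instanton $L$-space, and $K$ is an instanton $L$-space knot over $\mathbb F$ (a positive one precisely when $M(K)\ge 0$, since then such $r$ may be taken positive).

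For the second statement, assume $K$ is a positive instanton $L$-space knot over $\mathbb F$, which we may take to be nontrivial. By the first equivalence $r_2(K) = |M(K)|$, and I claim $M(K) > 0$. If $M(K) = 0$ then $r_2(K) = 0$, so \eqref{eqn:dim-formula} gives $\dim I^\#(S^3_1(K);\mathbb F) = 1$; but by the theorem of Li--Ye \cite{LiYe1}, $I^\#(S^3_1(K);\mathbb Z)$ has $2$-torsion for nontrivial $K$, whence $\dim_{\mathbb F} I^\#(S^3_1(K);\mathbb F)\ge 2$ --- a contradiction. If $M(K) < 0$, the sign analysis above shows that every $L$-space surgery has slope $r\le M(K) < 0$, so no positive surgery is an $L$-space, again a contradiction. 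Thus $r_2(K) = M(K) > 0$. Now, for $r = p/q\neq M(K)$, \eqref{eqn:dim-formula} gives $\dim = qM(K) + |p - qM(K)|$, which equals $|p|$ iff $p\ge qM(K)$, i.e. (as $r\neq M(K)$) iff $r > M(K)$; and for $r = M(K)$ (so $q=1$, $p = M(K)$), $\dim = r_2(K) + 2 = M(K) + 2 > M(K) = |p|$ when $[w]_2 = 0$, while $\dim = q\,r_2(K) = M(K) = |p|$ when $[w]_2 \neq 0$. Combining, $(S^3_r(K),w)$ is an instanton $L$-space over $\mathbb F$ if and only if $r > M(K)$, or $r = M(K)$ and $[w]_2\neq 0$.

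The only step that is not purely formal is eliminating $M(K) = 0$ for a nontrivial positive $L$-space knot, which is precisely where the $2$-torsion computation of \cite{LiYe1} is used; beyond that, the proof is bookkeeping at the boundary slope $r = M(K)$ (where \eqref{eqn:dim-formula} jumps by $2$ exactly for $[w]_2 = 0$) together with the triangle inequality. I expect no genuine obstacle beyond organizing these cases carefully.
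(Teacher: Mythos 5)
Your proof is correct and, for the first equivalence, is essentially the paper's argument: rule out the exceptional slope with trivial bundle via $r_2(K)\ge|M(K)|$, then combine the generic dimension formula with the triangle inequality $|p|-|p-qM|\le q|M|$. The reverse direction is also the same observation.

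Where you go further is the second sentence, which the paper's proof simply omits (its given argument ends after establishing $r_2(K)=|M(K)|$). There you correctly identify the key nontrivial step: one must know $M(K)>0$ before the case analysis on slopes is clean. Your elimination of $M(K)<0$ by the sign analysis is right (all $L$-space slopes would then lie at or below $M(K)<0$, so none would be positive). Your elimination of $M(K)=0$ invokes Li--Ye's $2$-torsion theorem, which is exactly the ingredient that makes $r_2(K)>0$ for nontrivial $K$; the paper references the same fact (via $r_2(K)-r_0(K)>0$) elsewhere. You are also right to flag, implicitly, that the unknot must be excluded: for $U$ one has $M(U)=r_2(U)=0$ and every nonzero surgery is an $L$-space, so the ``iff $r>M(K)$'' characterization literally fails. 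The paper's statement silently assumes $K$ nontrivial in this half of the corollary, and your "which we may take to be nontrivial" is the correct reading. Overall this is a careful and complete version of what the paper treats as routine.

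One small quantitative note: from Li--Ye's $2$-torsion result together with the Euler characteristic constraint one actually gets $\dim_{\mathbb F}I^\#(S^3_1(K);\mathbb F)\ge 3$ rather than $\ge 2$, but your weaker bound already gives the contradiction, so nothing is affected.
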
 
\begin{proof}
If $r_2(K) = |M(K)|$ it follows from \eqref{eqn:dim-formula} that $K$ is an instanton $L$-space knot over $\mathbb F$. Conversely, suppose $S^3_{p/q}(K)$ is an instanton $L$-space; we cannot have $p/q = M$ because in this case \[\dim I^\#(S^3_M(K);\mathbb F) = \dim I^\#(S^3_M(K),w;\mathbb F) + 2 \ge |M| + 2 \ne |M|.\] Therefore, \eqref{eqn:dim-formula} gives $qr_2(K) + |p-qM(K)| = |p|.$ Thus \[qr_2(K) = |p| - |p-qM(K)| \le |qM(K)| = q|M(K)|,\] we have $r_2(K) \le |M(K)|$, giving equality.
\end{proof}

The main result of \cite{LiYe2} establishes that $I^\#(S^3_n(K);\mathbb F)$ has no $2$-torsion for any $|n| < 2g(K) - 1 + t_2(K)$, where $2t_2(K)$ is the quantity \[\dim I^\#(S^3_1(K);\mathbb F) - \dim I^\#(S^3_1(K);\mathbb Q).\]
In Section \ref{sec:TAK} we show their result is essentially sharp, by showing that if $S^3_n(K)$ ever admits $2$-torsion then $|M(K)| = 2g(K) - 1 + t_2(K)$. 

At this point, we compare our invariants to Baldwin and Sivek's more directly. As a consequence of the universal coefficient theorem, we have \[r_2(K) \ge r_0(K), \quad r_2(K) - r_0(K) \ge |M(K) - \nu^\#(K)|.\] 

A computation similar to Corollary \ref{cor:F2Lspace} shows that we have equality in this latter case if and only if $I^\#(S^3_r(K);\mathbb Z)$ is free of $2$-torsion for some rational number $r$, in which case we call $K$ `torsion-averse'. 

It is established in \cite{LiYe2}*{Theorem~1.4} that a torsion-averse knot $K$ is in fact an instanton $L$-space knot over $\mathbb F$. Thus $r_2(K) = |M(K)|$ and $r_0(K) = |\nu^\#(K)|$. The inequality \[|M(K)| - |\nu^\#(K)| \ge |M(K) - \nu^\#(K)|\] then implies $M(K), \nu^\#(K)$ have the same sign and $0 < |\nu^\#(K)| \le |M(K)|$. 

By \cite{bs-concordance}*{Theorem~1.18}, we have $|\nu^\#(K)| = 2g(K)-1$. Because $M$ is divisible by $4$, it is bounded below by \[|M(K)| \ge 4\bigg\lfloor{\frac{2g(K)-1}{4}}\bigg\rfloor = 4\lfloor g(K)/2 \rfloor = \begin{cases} 2g(K) + 2 & g(K) \text{ odd} \\ 2g(K) & g(K) \text{ even} \end{cases}\]

which gives us most of the following result.

\begin{corollary}\label{cor:SU2-ab}
The manifold $S^3_r(K)$ is never nondegenerate $\SU(2)$-abelian for any $|r| \le 4\lfloor g(K)/2 \rfloor$. If $S^3_t(K)$ is nondegenerate $SU(2)$-abelian for some $t > 0$, then $S^3_r(K)$ is not even $\mathbb F$-homology cobordant to a nondegenerate $SU(2)$-abelian $3$-manifold for any $0 < r < 4\lfloor g(K)/2 \rfloor$ with odd numerator.
\end{corollary}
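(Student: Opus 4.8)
The plan is to deduce both statements from results already in hand --- the classification of instanton $L$-space surgeries in Corollary~\ref{cor:F2Lspace}, the surgery formula for Fr\o yshov's invariant in Theorem~\ref{thm:q3-surgery}, and the bound $|M(K)| \ge 4\lceil g(K)/2\rceil$ obtained above for torsion-averse knots --- together with the structure of nondegenerate $\SU(2)$-abelian $3$-manifolds developed in Section~\ref{sec:TAK}. From that section I take two inputs: that a nondegenerate $\SU(2)$-abelian $Z$ has $I^\#(Z;\mathbb Z)$ free, so that $Z$ is an instanton $L$-space over $\mathbb F$ and (when $Z = S^3_r(K)$) the knot $K$ is torsion-averse; and that $q_3(Z) = 0$ whenever $Z$ is also an $\mathbb F$-homology sphere. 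The preliminary step is then the observation that if $S^3_t(K)$ is nondegenerate $\SU(2)$-abelian for some $t > 0$, then $K$ is a positive instanton $L$-space knot over $\mathbb F$ and is torsion-averse, so the discussion above gives $|M(K)| \ge 4\lceil g(K)/2\rceil$ with $M(K)$ and $\nu^\#(K)$ of the same sign and $|\nu^\#(K)| = 2g(K)-1$; since $K$ is a \emph{positive} instanton $L$-space knot we have $\nu^\#(K) = 2g(K)-1 > 0$, hence $M(K) \ge 4\lceil g(K)/2\rceil > 0$. We may assume throughout that $K$ is nontrivial, both claims being vacuous for the unknot.

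For the first claim I would argue by contradiction. Suppose $S^3_r(K)$ is nondegenerate $\SU(2)$-abelian with $|r| \le 4\lceil g(K)/2\rceil$. The slope $r = 0$ is excluded because $S^3_0(K)$ carries a positive-dimensional family of (abelian) $\SU(2)$-representations and hence fails the nondegeneracy condition. Replacing $K$ by its mirror and $r$ by $-r$ if necessary --- which reverses orientation, so preserves the hypothesis, and leaves $g(K)$ and $|r|$ unchanged --- reduces to the case $r > 0$. Applying the preliminary step with $t = r$, the knot $K$ is a positive instanton $L$-space knot over $\mathbb F$ and $r \le 4\lceil g(K)/2\rceil \le M(K)$. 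But $S^3_r(K)$ is itself an instanton $L$-space over $\mathbb F$, so Corollary~\ref{cor:F2Lspace}, applied with a nullhomologous $w$, would force $r > M(K)$ --- a contradiction.

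For the second claim, the hypothesis together with the preliminary step gives $M(K) \ge 4\lceil g(K)/2\rceil > 0$. Suppose $0 < r < 4\lceil g(K)/2\rceil$ has odd numerator and $S^3_r(K)$ is $\mathbb F$-homology cobordant to a nondegenerate $\SU(2)$-abelian $3$-manifold $Z$; then $S^3_r(K)$, and hence $Z$, is an $\mathbb F$-homology sphere, so by the $\mathbb F$-homology cobordism invariance of $q_3$ we get $q_3(S^3_r(K)) = q_3(Z) = 0$. On the other hand $0 < r < 4\lceil g(K)/2\rceil \le M(K)$, so Theorem~\ref{thm:q3-surgery} gives $q_3(S^3_r(K)) = -1$, a contradiction. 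I expect the genuine content of this corollary to lie not in this deduction but in its inputs: Section~\ref{sec:TAK} must establish that nondegenerate $\SU(2)$-abelian manifolds have torsion-free framed instanton homology and vanishing $q_3$, and the divisibility bound $|M(K)| \ge 4\lceil g(K)/2\rceil$ was the main point of the preceding discussion. Granting those, the only step inside the argument that needs care is identifying the sign of $M(K)$ --- upgrading $|M(K)| \ge 4\lceil g(K)/2\rceil$ to the signed inequality $M(K) \ge 4\lceil g(K)/2\rceil$ --- since without it Corollary~\ref{cor:F2Lspace} does not close the loop in the first claim.
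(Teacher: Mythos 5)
Your proof is correct and takes essentially the same approach as the paper's: use Lemma~\ref{lemma:ndga-q3} to see that a nondegenerate $\SU(2)$-abelian surgery makes $K$ an instanton $L$-space knot over $\mathbb F$, combine with the divisibility bound $|M(K)| \ge 4\lceil g(K)/2\rceil$ and Corollary~\ref{cor:F2Lspace} for the first claim, then use Theorem~\ref{thm:q3-surgery} together with $q_3 = 0$ for nondegenerate $\SU(2)$-abelian $\mathbb F$-homology spheres for the second. You are somewhat more careful than the paper in pinning down the sign of $M(K)$ (via $\nu^\#(K) = 2g(K)-1 > 0$ and the same-sign observation) and in noting that the slope $r = 0$ is excluded because $S^3_0(K)$ is not a rational homology sphere; the paper leaves these points implicit.
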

\begin{proof}
If $S^3_r(K)$ is nondegenerate $\SU(2)$-abelian, then $K$ is an $L$-space knot over every field and in particular over $\mathbb F$. It follows that $|r| > |M(K)| \ge 4\lfloor g(K)/2 \rfloor$. For $0 < r < M(K)$ with odd numerator, we have $q_3(S^3_r(K)) = -1 \ne 0$. We prove in Lemma \ref{lemma:ndga-q3} that for a nondegenerate $\SU(2)$-abelian $3$-manifold, we have $q_3(Y) = 0$. It follows that $S^3_r(K)$ is not $\mathbb F$-homology cobordant to any nondegenerate $\SU(2)$-abelian $3$-manifold.
\end{proof}

As a special case, we have the following strengthening of \cite{LiYe2}*{Corollary~1.18.}
\begin{corollary}
    If $K \subset S^3$ is a knot for which $S^3_7(K)$ is $\SU(2)$-abelian, then $K$ is the unknot or right-handed trefoil.
\end{corollary}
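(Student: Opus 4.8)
The plan is to combine the dimension formula~\eqref{eqn:dim-formula} and Corollary~\ref{cor:F2Lspace} with the classification of low-genus $L$-space knots, the key point being that for the prime slope $7$ the nondegeneracy hypothesis comes for free. So suppose $S^3_7(K)$ is $\SU(2)$-abelian. Since $7$ is prime it is in fact nondegenerate $\SU(2)$-abelian, hence an instanton $L$-space over $\mathbb F$ with $|H_1|=7$, so $K$ is a positive instanton $L$-space knot over $\mathbb F$. The universal coefficient theorem then forces $\dim I^\#(S^3_7(K);\mathbb Q)=7$ and, comparing with $\dim I^\#(S^3_7(K);\mathbb F)=7$, that $I^\#(S^3_7(K);\mathbb Z)$ is free of $2$-torsion; thus $K$ is torsion-averse. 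Now invoke the structural facts recorded in the introduction for torsion-averse knots: $r_2(K)=|M(K)|$ with $4\mid M(K)$, $M(K)$ and $\nu^\#(K)$ share a sign, $|\nu^\#(K)|\le|M(K)|$, and $|\nu^\#(K)|=2g(K)-1$; moreover $\nu^\#(K)>0$ since slope $7>0$ is an $L$-space surgery (Theorem~\ref{thm:rational-dimformula} rules out $\nu^\#(K)\le 0$), so $M(K)>0$. By Corollary~\ref{cor:F2Lspace}, $S^3_7(K)$ being an $\mathbb F$-instanton $L$-space gives $7>M(K)$. Combining, $0<2g(K)-1\le M(K)<7$ with $4\mid M(K)$, so $M(K)=4$ and $g(K)\in\{1,2\}$ --- unless $K$ is the unknot, in which case we are done.

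It remains to treat $g(K)\in\{1,2\}$. If $g(K)=1$, then $K$ is a genus-one $\mathbb Q$-$L$-space knot admitting a positive $L$-space surgery, so by the classification of low-genus $L$-space knots $K$ is the right-handed trefoil $T_{2,3}$, which is on the desired list (indeed $S^3_7(T_{2,3})$ is a lens space). If $g(K)=2$, then $K$ is a genus-two $\mathbb Q$-$L$-space knot; the Alexander polynomial of an $L$-space knot is severely constrained, and a brief numerical-semigroup argument shows $\Delta_{T_{2,5}}$ is the only genus-two possibility, whence $K=T_{2,5}$. It then suffices to show $S^3_7(T_{2,5})$ is not $\SU(2)$-abelian, contradicting the hypothesis. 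Since $|2\cdot 5-7|=3$, the manifold $S^3_7(T_{2,5})$ is Seifert fibered over the spherical orbifold $S^2(2,5,3)$, hence is a spherical space form $S^3/\Gamma$ with $\Gamma$ a central extension of the $(2,3,5)$-triangle group $A_5$ by a cyclic group and $\Gamma^{\mathrm{ab}}\cong\mathbb Z/7$. By the classification of spherical space forms $\Gamma\cong\mathbb Z/7\times 2I$, where $2I\subset\SU(2)$ is the binary icosahedral group (concretely: $\mathbb Z/7\times A_5$ contains $\mathbb Z/2\times\mathbb Z/2$ and so cannot act freely on $S^3$). The composite $\Gamma\twoheadrightarrow 2I\hookrightarrow\SU(2)$ is then an irreducible representation, so $S^3_7(T_{2,5})$ is not $\SU(2)$-abelian. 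Therefore $g(K)\ne 2$, and $K$ is the unknot or $T_{2,3}$.

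The main obstacle is the genus-two case: identifying $K=T_{2,5}$ --- which relies on the classification of genus-two $L$-space knots (fiberedness of instanton $L$-space knots together with the genus-two fibered knot classification) --- and then producing an irreducible $\SU(2)$-representation of the particular spherical space form $S^3_7(T_{2,5})$. Purely homological information does not settle this: Corollary~\ref{cor:F2Lspace} and Theorem~\ref{thm:q3-surgery} are entirely consistent with a hypothetical genus-two knot having $M(K)=4$ (in particular $q_3(S^3_7(K))=0$, as Lemma~\ref{lemma:ndga-q3} requires), so one genuinely needs the $\SU(2)$-representation theory of this space form; this is also precisely where the statement improves on the variant allowing $T_{2,5}$, since $S^3_7(T_{2,5})$ is a torsion-free instanton $L$-space but is not $\SU(2)$-abelian.
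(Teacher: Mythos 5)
Your route is genuinely different from the paper's. The paper simply cites \cite{LiYe2}*{Corollary~1.18} to reduce to ``$K$ is the unknot, the right-handed trefoil, or an $\mathbb F$-instanton $L$-space knot of genus~$3$,'' and then kills the genus-$3$ branch with the bound $|M(K)|\ge 4\lceil g(K)/2\rceil = 8 > 7$. You instead use $M(K)$ directly to force $M(K)=4$ and hence $g(K)\le 2$, and then try to classify the genus-$1$ and genus-$2$ cases by hand. Your reduction $M(K)=4$, $g(K)\le 2$ is sound, and the representation-variety calculation showing $S^3_7(T_{2,5})$ is not $\SU(2)$-abelian is correct and pleasant (and genuinely needed on your route, since $S^3_7(T_{2,5})$ is an $\mathbb F$-instanton $L$-space). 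Your route has the small advantage that it doesn't require trusting Li--Ye's reduction wholesale, only the structural facts about torsion-averse knots.

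The gap is in the genus-$2$ step. ``A brief numerical-semigroup argument shows $\Delta_{T_{2,5}}$ is the only genus-two possibility, whence $K=T_{2,5}$'' conflates a constraint on the Alexander polynomial with a classification of knots. The semigroup/staircase conditions do pin down $\Delta_K=\Delta_{T_{2,5}}$, but the Alexander polynomial does not determine a knot, even a fibered one. Passing from ``genus-$2$ $L$-space knot with $\Delta_K=\Delta_{T_{2,5}}$'' to ``$K=T_{2,5}$'' is precisely the content of Farber--Reinoso--Wang's theorem that $T_{2,5}$ is the unique genus-two $L$-space knot -- a substantial result about the monodromy of genus-two fibered knots, not a ``brief'' argument -- and you would additionally need to justify that it applies to \emph{instanton} $L$-space knots over $\mathbb F$ rather than only to Heegaard Floer $L$-space knots (this in turn needs fiberedness of instanton $L$-space knots, which you also use silently in the genus-$1$ case and should cite). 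The paper never hits this difficulty because Li--Ye's Corollary~1.18 already excludes genus~$2$ from the possible cases; by bypassing that citation you have taken on the burden of re-proving it, and that burden is not discharged.
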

\begin{proof}
    Li--Ye prove that $K$ is the unknot, right-handed trefoil, or an $L$-space knot over $\mathbb F$ with $g(K) = 3$. However, in this case we have $|M(K)| \ge 8$, so $I^\#(S^3_7(K);\mathbb Z)$ contains $2$-torsion, and is therefore not $\SU(2)$-abelian.
\end{proof}

To compute the invariants $r_2(K)$ and $M(K)$ for a given knot, one needs to know the dimension of $I^\#(S^3_r(K);\mathbb F)$ for $|r|$ sufficiently large, and to know the value of $M(K)$. This quantity seems to be difficult to compute, but the following is helpful.

\begin{theorem}\label{thm:crossing-change}
Suppose $K$ can be modified to $K'$ by changing $p$ positive and $n$ negative crossings of $K$. Then \[-4n \le M(K) - M(K') \le 4p.\]
\end{theorem}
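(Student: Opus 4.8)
\emph{The plan} is to reduce to a single crossing change and then feed the induced surgery cobordism into Fr\o yshov's invariant $q_3$, using Theorem~\ref{thm:q3-surgery} to translate statements about $q_3$ back into statements about $M$. For the reduction, perform the modification one crossing at a time, doing the $p$ positive-to-negative changes first and the $n$ negative-to-positive changes afterward, producing a chain $K=K_0,K_1,\dots,K_{p+n}=K'$. A negative-to-positive change is the inverse of a positive-to-negative one, so, telescoping and using that $M$ takes values in $4\mathbb Z$ (Theorem~\ref{thm:M-additive}), the two desired bounds follow at once from the single statement: \textbf{if $K'$ is obtained from $K$ by changing one positive crossing to a negative one, then $M(K)-4\le M(K')\le M(K)$.} (Mirroring negates $M$, via $S^3_r(\overline K)\cong -S^3_{-r}(K)$, and interchanges positive and negative crossings, so these two inequalities are ``the same'' for mirror knots.)

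\emph{The cobordism.} A positive-to-negative crossing change is realized by $\pm1$-framed surgery on an unknot $c$ in the complement of $K$ bounding a disc that meets $K$ in two points of opposite sign; thus $c$ is nullhomologous with $\operatorname{lk}(c,K)=0$, survives in $S^3_r(K)$ as a nullhomologous knot for every slope $r$, and surgering it there gives a single $2$-handle cobordism $Z_r\colon S^3_r(K)\to S^3_r(K')$ with $b_1(Z_r)=0$. A blow-up/blow-down (equivalently, Donaldson's diagonalization theorem applied to a standard cap-off of the trefoil model, where $S^3_{\pm1}(T_{2,3})$ bounds a definite $E_8$-manifold) pins down the sign: $Z_r$ is negative definite, of intersection form $\langle-1\rangle$, and the dual surgery gives a positive-definite $2$-handle cobordism $S^3_r(K')\to S^3_r(K)$. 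When $r$ has odd numerator both ends are $\mathbb F$-homology spheres, so $q_3$ is defined on them.

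\emph{Translating to $M$.} For every odd-numerator $r$, Fr\o yshov's cobordism inequality for $q_3$ applied to $Z_r$ gives monotonicity, $q_3(S^3_r(K'))\ge q_3(S^3_r(K))$, while a sharper form of the inequality for a single $(-1)$-framed handle bounds the jump from above. Theorem~\ref{thm:q3-surgery} says that on odd-numerator slopes $q_3(S^3_r(\,\cdot\,))$ equals $-\operatorname{sgn}M(\,\cdot\,)$ times the indicator of the open interval between $0$ and $M(\,\cdot\,)$; substituting this, the monotonicity statement forces the $K'$-interval to lie weakly to the left of the $K$-interval, which is exactly $M(K')\le M(K)$, and the jump bound (applied to $Z_r$, its dual, and, if needed, the analogous cobordisms for the mirror knots) prevents the left endpoint of the $K'$-interval from dropping more than one step in $4\mathbb Z$, giving $M(K')\ge M(K)-4$. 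The finitely many excluded slopes (even numerator, and the exceptional slope $M(K)$ in Theorem~\ref{thm:surgerydim}) play no role, since $M$ is already determined by the odd-numerator surgeries.

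\emph{The main obstacle} is precisely the second inequality $M(K')\ge M(K)-4$. Since $q_3$ takes only the values $0,\pm1$, comparing $q_3$ of $K$- and $K'$-surgeries slope by slope cannot by itself detect how far $M(K')$ has fallen below $M(K)$; one therefore needs either a genuinely sharp cobordism inequality for $q_3$ under a single $\pm1$-framed handle, or to supplement the $q_3$ analysis with the dimension formula of Theorem~\ref{thm:surgerydim} --- for instance through the instanton surgery exact triangle relating $I^\#(S^3_r(K))$, $I^\#(S^3_r(K'))$, and $I^\#$ of the intermediate band-surgered manifold $(S^1\times S^2)_r(K^{\flat})$, whose own dimension function is $V$- or $W$-shaped. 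Establishing this two-sided control is where the real work of the proof lies; the reduction and the cobordism construction are routine.
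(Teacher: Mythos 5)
Your reduction to a single crossing change via telescoping is sound, and the first inequality $M(K')\le M(K)$ for a positive-to-negative change does follow from the negative-definite same-slope cobordism $Z_r\colon S^3_r(K)\to S^3_r(K')$ together with $q_3$-monotonicity and Theorem~\ref{thm:q3-surgery}: if $M(K')>M(K)$, pick an odd-numerator slope $r$ strictly between them and read off a contradiction $q_3(S^3_r(K))\le q_3(S^3_r(K'))$ with the left side $0$ and the right side $-1$ (or $1$ and $0$ on the negative side). So that half is correct.

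But as you yourself flag, the second inequality $M(K')\ge M(K)-4$ does not follow from anything you have written, and the obstruction you identify is exactly right: since $q_3\in\{0,\pm1\}$ and $Z_r$ is a same-slope cobordism, comparing $q_3(S^3_r(K))$ and $q_3(S^3_r(K'))$ slope by slope only controls the \emph{direction} in which the $q_3$-interval moves, not its \emph{length}. Concretely, if $M(K)=8$ and $M(K')=0$, then at every odd-numerator positive $r$ one has $q_3(S^3_r(K'))-q_3(S^3_r(K))\in\{0,1\}$, so both the monotonicity bound and the one-step jump bound from \eqref{eqn:q3-main-ineq} are satisfied at every slope; nothing in the same-slope analysis rules this out. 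The ``sharper form of the inequality for a single $(-1)$-framed handle'' that you gesture at does not exist in this generality, and appealing to the dimension formula or an extra surgery triangle does not obviously close the gap either. So the proposal is genuinely incomplete.

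The idea you are missing, which is how the paper closes the loop, is to use a cobordism that \emph{shifts the slope} by an amount proportional to $p$. Rather than a nullhomologous crossing-change circle (linking $0$ with $K$, leaving the framing fixed), the paper places $(-1)$-framed unknots at the changed crossings with linking $\pm2$. Blowing these down changes the surgery coefficient by $4$ per crossing, producing a cobordism $W\colon S^3_d(K')\to S^3_{d+4p}(K)$ with $H_1(W;\mathbb F)=0$ for $d$ odd, which is negative-definite when $d$ and $d+4p$ have the same sign and has $b^+(W)=1$ otherwise. Taking $d=M(K')+1$ and applying \eqref{eqn:q3-main-ineq} then pins down $q_3(S^3_{M(K')+4p+1}(K))$, forcing $M(K)\le M(K')+4p$; the other inequality follows by mirroring. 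The quantitative bound is possible precisely because the number $p$ of crossing changes is encoded geometrically in the slope shift $4p$, rather than being invisible as it is in the same-slope comparison.
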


Because $M(K)$ is a concordance invariant, we have $-4c_+(K) \le M(K) \le 4c_-(K)$, where $c_\pm$ are the signed \textit{clasp numbers}, representing the minimal number of positive (resp. negative) double points in an immersed disc in $B^4$ bounding $K$ \cite{OwensStrle-immersed}. 

We can use these relationships to compute the invariants for the two infinite families below. This gives, in particular, an infinite family of knots for which $r_2(K) + |M(K)| = 16$, and an infinite family for which $r_2(K)$ is arbitrarily large. 

\begin{theorem}\label{thm:twist-comp}
Take $n \ge 1$. For the odd twist knots $K_{2n-1}$, we have $r_2(K_{2n-1}) = 8n-4$ and $M(K_{2n-1}) = -4$. For the even twist knots $K_{2n}$, we have $r_2(K_{2n}) = 8n$ and $M(K_{2n}) = 0$. For the pretzel knots $P_n = P(n, -3, 3)$, we have $r_2(P_n) = 16$ and $M(P_n) = 0$.
\end{theorem}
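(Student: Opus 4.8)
The plan is to handle the two invariants by genuinely different routes. The even integer $M(K)$ will be pinned down using only the ``qualitative'' inputs — the $q_3$-characterization of Theorem \ref{thm:q3-surgery}, the concordance invariance and additivity of Theorem \ref{thm:M-additive}, and the crossing-change inequality of Theorem \ref{thm:crossing-change} — whereas $r_2(K)$ will be extracted from the surgery formula \eqref{eqn:dim-formula}, which reduces its computation to knowing $\dim I^\#(S^3_N(K);\mathbb F)$ for a single large integer $N$; that one dimension will in turn be computed by induction on the number of twists, within each of the two infinite families at once.

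For $M$: all three families consist of genus-one knots, and within each family consecutive members differ by a single crossing change of a fixed sign — adding a full twist in the designated twist region turns $K_m$ into $K_{m+2}$, and likewise for $P(n,-3,3)$ in the $n$-band. Hence Theorem \ref{thm:crossing-change} confines $M$ over an entire family to an interval of length four about its value on a base member, and the induced bound $-4c_+(K)\le M(K)\le 4c_-(K)$ — evaluated from the small signed clasp (or unknotting) numbers of these knots, e.g. $c_+\le 1$ for the odd twist knots — cuts this down to at most the values $\{-4,0,4\}$. The surviving sign is fixed by a single $q_3$-computation at the base member: one computes $q_3$ of a small Dehn surgery, a Brieskorn sphere with known Fr\o yshov invariant, and applies Theorem \ref{thm:q3-surgery}. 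This gives $M=-4$ for the odd twist knots (anchored at the trefoil, which, being an instanton $L$-space knot over $\mathbb F$, has $r_2=|M|\ne 0$, hence $M=\pm 4$), $M=0$ for the even twist knots (anchored at an amphichiral or a slice member, using $q_3(-Y)=-q_3(Y)$ and Theorem \ref{thm:M-additive}), and $M=0$ for $P(n,-3,3)$, which specializes at $n=0$ to one of the even twist knots already treated.

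For $r_2$: by \eqref{eqn:dim-formula}, for $N$ a large positive integer one has $\dim I^\#(S^3_N(K);\mathbb F)=r_2(K)+N-M(K)$, so one surgery dimension determines $r_2(K)$. Adding a full twist in the twist region is $(\mp 1)$-surgery on an unknot $U$ encircling those strands, producing an instanton surgery exact triangle over $\mathbb F$ relating $I^\#(S^3_N(K_m))$, $I^\#(S^3_N(K_{m+2}))$, and the $I^\#$ of a third manifold obtained by doing $0$-surgery on $U$ in $S^3_N(K_m)$; identifying this third term and checking — via Euler characteristics and the already-known $V$- or $W$-shape of $N\mapsto\dim I^\#(S^3_N(K);\mathbb F)$ — that the triangle degenerates as expected yields the recursions $\dim I^\#(S^3_N(K_{m+2});\mathbb F)=\dim I^\#(S^3_N(K_m);\mathbb F)+8$ for the twist knots and $\dim I^\#(S^3_N(P_{n+2});\mathbb F)=\dim I^\#(S^3_N(P_n);\mathbb F)$ for the pretzels. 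Feeding in the base cases — $r_2=4$ for the trefoil (an instanton $L$-space knot over $\mathbb F$), a direct $\mathbb F$-coefficient computation of one surgery on the first even twist knot, and the reduction $P(0,-3,3)=$ a twist knot — then gives $r_2(K_{2n-1})=8n-4$, $r_2(K_{2n})=8n$, and $r_2(P_n)=16$. Since twist knots are alternating one can cross-check the rational shadows $r_0$ and $\nu^\#$ against Heegaard Floer, but $r_2$ genuinely records $2$-torsion and needs the $\mathbb F$-coefficient input.

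The main obstacle is this last point. A surgery exact triangle constrains the three dimensions only up to the rank of its connecting map, so the crux is to prove the relevant triangle splits over $\mathbb F$ (equivalently, that its maps have the expected rank). For the twist-knot family this can be controlled using the verified cases of Conjecture \ref{Conj: KM} for alternating two-bridge knots together with the structural results already in hand, but the pretzel knots $P(n,-3,3)$ are not alternating (the $-3$ and $3$ tangles have opposite sign), so no Heegaard-Floer scaffold is available; there the exact-triangle bookkeeping must be run self-containedly, including a hands-on computation of the base case $r_2(P(0,-3,3))$. A secondary but real nuisance is bookkeeping twist parameters and chiralities consistently across the families, since an off-by-one shift in either would change $M$ by $\pm 4$.
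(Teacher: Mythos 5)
Your computation of $M$ is broadly on the same track as the paper's: a base-case evaluation ($K_1 = 3_1$ from Example~\ref{ex:trefoil}, $K_2 = 4_1$ by amphichirality) combined with the crossing-change bound of Theorem~\ref{thm:crossing-change} to pin $M$ across the family. One slip: for the pretzels, $P(0,-3,3)$ is not a twist knot but the connected sum $T_{2,-3}\# T_{2,3}$; the paper gets $M(P_n)=0$ more directly from sliceness of $P_n$, which avoids the issue entirely.

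The genuine gap is in the computation of $r_2$, and you diagnose it yourself without closing it. You propose to compute $\dim I^\#(S^3_N(K_m);\mathbb F)$ by an induction on twists using a surgery exact triangle for the unknot encircling the twist region, but a triangle only bounds the middle term up to the rank of its connecting map, and you offer no mechanism for controlling that rank over $\mathbb F$: the Heegaard-Floer scaffold you gesture at only computes the characteristic-zero shadow (and indeed you concede the pretzels are not alternating, so even that is unavailable). Nor do you supply base cases: $r_2(K_2)$ is not computed, and $r_2(P(0,-3,3))$ would require controlling $r_2$ of a connected sum, which the paper never establishes. The paper sidesteps this entirely by a much shorter route: the identifications $S^3_1(K_{2m}) \cong \Sigma(2,3,6m+1)$ and $S^3_{-1}(K_{2m-1}) \cong \Sigma(2,3,6m-1)$ together with the computation $\dim I^\#(\Sigma(2,3,6m\pm 1);\mathbb F) = 8m\pm 1$ from Theorem~\ref{thm:Brieskorn-calc} pin a single surgery dimension in each family, and \eqref{eqn:dim-formula} then reads off $r_2$. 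For the pretzels the paper uses the surgery equivalences $S^3_{-2}(P(n,-3,3)) \cong S^3_{2}(P(n+3,-3,3))$ (periodicity of $r_2(P_n)$ in $n$ modulo $3$, not $2$) and $S^3_1(P(3,3,-3)) \cong -S^3_1(P(4,3,-3))$, anchored by $P(\pm 1,-3,3) = K_4$ or its mirror with $r_2 = 16$. Without either the Brieskorn identifications or these surgery equivalences, your exact-triangle induction has no base case and no degeneration argument, so the $r_2$ computation as proposed does not go through.
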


In these examples, we have $r_2(K) = 4r_0(K)$ and $M(K) = 4\nu^\#(K)$. This cannot hold in general. First, $M$ is additive while $\nu^\#$ is known not to be. Secondly, for any torus knot $T_{p,q}$ with $0 < p < q$ has $S^3_{-pq-1}(T_{p,q})$ a lens space. We have 
\[|\nu^\#(K)| = pq-p-q, \quad pq-p-q \le |M(K)| \le pq-2.\] 
No multiple of $M$ is a slice-torus invariant.

\subsection*{Questions} 
This limited data suggests further lines of inquiry.

\begin{question}
    Is it the case that $r_2(K) \equiv M(K) \mod 8$? 
\end{question}

The second condition would follow if it were known that $\dim I^\#(Y,w;\mathbb F)$ were divisible by $8$ for any admissible bundle. 

\begin{question}
Is it the case that $|M(K)| \le 4g_4(K)$ is bounded by a multiple of the slice genus?
\end{question}

This is true for twist knots and torus knots.  Even if this bound were proved, we have a wide range between the bounds $|M(K)| \in [2g_4(K), 4g_4(K)]$. 

\begin{question}
What pairs of values $(g_4(K), M(K))$ are realized for $K$ an instanton $L$-space knot over $\mathbb F$?
\end{question}

It is a consequence of \cite{LiYe1}*{Theorem~1.1} that if $K$ is a non-trivial knot, we have $r_2(K) - r_0(K) > 0$. The gap is larger in all known examples, and when $K$ is torsion-averse this quantity determines the surgeries for which $I^\#(S^3_r(K);\mathbb Z)$ has no $2$-torsion. It would be interesting to better determine it, especially in cases where it is small.

\begin{question}
Is it possible to enumerate knots with, say, $r_2(K) \le 12$? What about $r_2(K) - r_0(K) \le 9$?
\end{question}

\subsection*{Dependence on forthcoming work} The results of this paper depend on the forthcoming work \cite{DMES,DME2}. The second author wishes to apologize for the long delay since their announcement. Section 2 below gives all necessary background from those references, and it is intended that this paper is otherwise self-contained. Nevertheless, we briefly summarize the dependence on these works. 

From \cite[Chapter 8]{DMES} we use the definition of the equivariant instanton complexes, which are finite-dimensional dg-modules $\widetilde C(Y,w,\mathfrak a)$ associated to a rational homology sphere with some auxiliary data, and their functoriality properties. We also use a result relating $\widetilde C$ to $I^\#$, both at the level of $3$-manifolds and at the level of cobordism maps. Finally, we use explicit computations of $I^\#(Y;\mathbb Z)$ for $Y$ the manifolds $\Sigma(2,3,6 \pm 1)$ and for the surgeries $S^3_n(T_{2,3})$ for $-5 \le n \le -1$.

From \cite{DME2} we use two things. The main result gives an inequality on the difference $q_3(Y') - q_3(Y)$ where $\partial W = Y' - Y$, and we take this as a black box. That reference also constructs chain maps associated with cobordisms with $b^+(W) = 1$, and we need to know these may be compared to the induced map on $I^\#(Y)$.

\subsection*{Organization} 
In Section \ref{sec:bg} we summarize the material mentioned above which will be used in the proof, as well as the relevant exact triangles. In Section \ref{sec:integer-case}, which is the technical heart of this paper, we prove Theorem \ref{thm:surgerydim} and Theorem \ref{thm:q3-surgery} for integer surgeries, proving Theorem \ref{thm:M-additive} along the way. We extend these results to all rational surgeries in Section \ref{sec:rat}. In Section \ref{sec:TAK} we discuss the implications of our dimension formula for instanton $L$-spaces, completing the proof of Corollary \ref{cor:SU2-ab}. We conclude in Section \ref{sec:comp} by proving Theorem \ref{thm:crossing-change} and Theorem \ref{thm:twist-comp}.

\subsection*{Acknowledgements} 
The authors thank John Baldwin, Christopher Scaduto, Tye Lidman, and Steven Sivek for helpful remarks during the preparation of this work. The authors are especially grateful to Zhenkun Li and Fan Ye for pointing out an important subtlety in the proof of Theorem \ref{thm:surgerydim} in a previous draft of this article.

\section{Background}\label{sec:bg}
In Sections \ref{subsec:DMES1}-\ref{subsec:DMES3}, we state the results of \cite{DMES} that are used in this paper. In Section \ref{subsec:DME2} we do the same for \cite{DME2}. In Section \ref{subsec:SET} we recall the behavior of the surgery exact triangles that will be used in this paper.

\subsection{The complex $\widetilde C$}\label{subsec:DMES1}
We first review the construction of the tilde complex. Most material in this section can be found in \cite[Chapter 8.1-8.2]{DMES}; the explicit description of the differential and chain map is given in \cite[Chapter 7.2]{DMES}.

A pair $(Y,w)$ of a closed oriented $3$-manifold and oriented $1$-manifold $w \subset Y$ is called \emph{admissible} if there exists an oriented surface $\Sigma \subset Y$ with $\Sigma \cdot w \equiv 1 \mod 2$, and \emph{weakly admissible} if $(Y,w)$ is admissible or $b_1(Y) = 0$.

The oriented $1$-manifold $w$ gives rise to a $U(2)$-bundle $E_w \to Y$ with $c_1(E_w) = \text{PD}([w])$. We will study the space $\widetilde B_{(Y,w)}$ of $U(2)$-connections on $E_w$ with trace equal to a specified connection $A_0$, considered modulo determinant-$1$ gauge transformations which are equal to the identity at a chosen basepoint $y \in Y \setminus w$. This space comes equipped with an $SO(3)$-action, and a functional $\text{CS}: \widetilde B_{(Y,w)} \to \mathbb R/\mathbb Z$. The invariants discussed here are derived from the equivariant Morse theory of this Chern--Simons functional.

Suppose now that $(Y,w)$ is weakly admissible. If $(Y,w)$ is admissible, then the critical set of $\text{CS}$ consists of free $SO(3)$-orbits, called \textit{irreducible}. When $Y$ is a rational homology sphere, the critical set also contains components diffeomorphic to $S^2$ --- called \textit{abelian} --- and singleton components, called \textit{central}. The set of abelian and central connections can be determined cohomologically \cite{DME1}*{Lemma~2.1}:
\begin{align*}
\mathfrak A(Y, w) &= \big\{\{z_1, z_2\} \subset H^2(Y;\mathbb Z) \mid z_1 + z_2 = \textup{PD}(w), \;\; z_1 \ne z_2\} \\
\mathfrak Z(Y, w) &= \big\{z \in H^2(Y;\mathbb Z) \mid 2z = \text{PD}(w)\big\}.
\end{align*}

Notice in particular that $\mathfrak Z(Y,w)$ is empty unless $[w]_2 = 0 \in H_1(Y;\mathbb F)$, and when $[w] = 0 \in H_1(Y;\mathbb Z)$, there is a canonical element $\theta \in \mathfrak Z(Y,w)$ corresponding to the cohomology class $z = 0$. Geometrically, when $w = \varnothing$, this should be understood as the trivial connection on the trivial bundle $E_w$. 

For each $\alpha \in \mathfrak A(Y,w)$, we denote by $H^1(Y; \mathbb C_{\textup{ad } \alpha})$ the cohomology with coefficients in the adjoint representation.

\begin{defn}
If $(Y,w)$ is a rational homology sphere  \textbf{abelian data} $\mathfrak a = (\sigma, s)$ on $(Y,w)$ is the choice of two functions: 
\begin{itemize}
    \item $\sigma: \mathfrak A(Y,w) \to 2\mathbb Z$ is a function with $\sigma(\alpha) \equiv \dim_{\mathbb R} H^1(Y; \mathbb C_{\textup{ad } \rho}) \pmod{4}.$
    \item $s: \mathfrak A(Y,w) \to H^2(Y;\mathbb Z)$ is a function with $s\{z_1, z_2\} \in \{z_1, z_2\}$.
\end{itemize}
\end{defn}

When studying the Morse theory of the Chern--Simons functional, we must perturb it. The first piece of data corresponds to a choice of `chamber' in the space of perturbations of the Chern--Simons functional on $(Y,w)$. The second piece of data corresponds to choosing an $SO(2)$-fixed basepoint on each abelian orbit, which is used in the definition of the cellular Morse--Bott complex.

There is a dg-module over $C_*^{\text{cell}}(SO(3);\mathbb F) \cong \Lambda_{\mathbb F}(\chi_1, \chi_2)$, denoted $\widetilde C(Y, w, \mathfrak a; \mathbb F)$, which depends on a choice of abelian data $\mathfrak a$. The underlying vector space of $\widetilde C$ is equipped with a decomposition 
\begin{equation}\label{eqn:CAZ}\widetilde C = C \oplus \chi_1 C \oplus \chi_2 C \oplus \chi_3 C \oplus A \oplus \chi_2 A \oplus Z,\end{equation}
where each of $C(Y,w), A(Y,w), Z(Y,w)$ are $\mathbb F$-vector spaces, the latter two equipped with bases in bijection with $\mathfrak A(Y,w)$ and $\mathfrak Z(Y, w)$, respectively. In particular, $Z = 0$ if $[w]_2 \ne 0$, and when $w = \varnothing$ there is a canonical basis vector $\theta \in Z$. 

With respect to the direct sum decomposition, the matrix form of the differential is given by
\[\widetilde d = \left[\begin{array}{cccc|cc|c}
d_1 & 0 & 0 & 0 & 0 & 0 & 0 \\
d_2 & d_1 & 0 & 0 & e_2 & 0 & 0 \\
d_3 & 0 & d_1 & 0 & 0 & 0 & 0 \\
d_4 & d_3 & d_2 & d_1 & e_4 & e_2 & \delta_4 \\
\hline
e_1 & 0 & 0 & 0 & 0 & 0 & 0 \\
e_3 & 0 & e_1 & 0 & 0 & 0 & 0 \\
\hline
\delta_1 & 0 & 0 & 0 & 0 & 0 & 0 
\end{array}\right].\]

Suppose $(W,c): (Y,w, \mathfrak a) \to (Y', w', \mathfrak a')$ is a cobordism between weakly admissible pairs equipped with auxiliary data, where $c \subset W$ is an embedded surface with boundary $w' - w$. The notion of `nice cobordism' is introduced in \cite[Chapter 8.2]{DMES}. For a nice cobordism, there exists an induced $\Lambda_{\mathbb F}(\chi_1,\chi_2)$-equivariant chain map
\[(W,c)_*: \widetilde C(Y, w, \mathfrak a; \mathbb F) \to \widetilde C(Y', w', \mathfrak a'; \mathbb F),\] which is well-defined up to equivariant homotopy. Composites of nice cobordisms are nice, and the induced maps compose functorially up to equivariant homotopy.

The definition of `nice cobordism' is somewhat complicated and we will not reproduce it. It will suffice to know that the following cobordisms are nice.

\begin{itemize}
\item $I \times Y: (Y, w, \mathfrak a) \to (Y, w, \mathfrak a')$ is nice if and only if for all $\alpha \in \mathfrak A(Y,w)$ we have $\sigma(\alpha) \le \sigma'(\alpha)$ with strict inequality if $s(\alpha) \ne s'(\alpha)$. 
\item If $(W,c): (Y,w) \to (Y', w')$ is a cobordism between rational homology spheres and $b^+(W) = 0$ or $[c]_2 \ne 0 \in H^2(W;\mathbb F)$, there exists choices of abelian data $\mathfrak a, \mathfrak a'$ so that $(W,c): (Y, w, \mathfrak a) \to (Y', w', \mathfrak a')$ is nice.
\end{itemize}

Suppose $(W,c)$ is nice. With respect to the direct sum decomposition \eqref{eqn:CAZ}, the matrix form of this chain map is given by 
\[\widetilde I(W,c;\mathbb F) = \left[\begin{array}{cccc|cc|c} 
\lambda_0 & 0 & 0 & 0 & 0 & 0 & 0 \\
\lambda_1 & \lambda_0 & 0 & 0 & \mu_1 & 0 & 0 \\
\lambda_2 & 0 & \lambda_0 & 0 & 0 & 0 & 0 \\
\lambda_3 & \lambda_2 & \lambda_1 & \lambda_0 & \mu_3 & \mu_1 & \Delta_3 \\
\hline
\mu_0 & 0 & 0 & 0 & \nu_0 & 0 & 0 \\
\mu_2 & 0 & \mu_0 & 0 & \nu_2 & \nu_0 & \xi_2 \\
\hline
\Delta_0 & 0 & 0 & 0 & \xi_0 & 0 & \epsilon_0 
\end{array}\right].\]

The component $\epsilon_0: Z \to Z'$ will be of particular interest. We summarize the discussion concluding \cite[Chapter 8.2]{DMES}. Suppose $Y, Y'$ are rational homology spheres and $w = w' = \varnothing$. If $[c] \ne 0 \in H^2(W;\mathbb F)$, then $\epsilon_0$ is identically zero. \\

Suppose instead that $[c]_2 = 0$, and write \[T(W) = \# \{x \in \text{Tors } H^2(W;\mathbb Z) \mid i_Y^*(x) = 0 = i_{Y'}^*(x)\}\] for the number of torsion classes which restrict trivially to the boundary. Then 
\begin{equation}\label{eqn:eps0}
    \langle \epsilon_0 \alpha, \alpha'\rangle = \begin{cases} T(W) & \exists x \in \text{Tors } H^2(W;\mathbb Z) \text{ with } i_Y^*(x) = \alpha, \;\; i_Y'^*(x) = \alpha' \\ 
0 & \text{otherwise}
\end{cases}
\end{equation} where here we identify elements of $\mathfrak Z(Y,\varnothing)$ with $2$-torsion cohomology classes. 

There are two special cases worth emphasizing. First, suppose that $H_1(W;\mathbb F) = 0$. Then 
\[\langle \epsilon_0 \alpha, \alpha'\rangle = \begin{cases} 1 & \alpha = \theta, \alpha' = \theta' \\ 0 & \text{otherwise}.\end{cases}\]

This is because $\text{Tors } H^2(W;\mathbb Z)$ is a finite group of odd order, so zero is the only $2$-torsion class in the image of the restriction map to $H^2(\partial W;\mathbb Z)$ and its kernel is finite of odd order.

Second, suppose both inclusion maps induce isomorphisms \[H_1(Y;\mathbb F) \cong H_1(W;\mathbb F) \cong H_1(Y';\mathbb F).\] Then the map $\epsilon_0$ is an isomorphism. When $W = I \times Y$, the map $\epsilon_0$ is the identity.

\begin{remark}
Let $(W,c): (Y,w) \to (Y', w')$ be a cobordism, Define $\epsilon_0(W,c): Z(Y,w) \to Z(Y',w')$ to be zero when $[c]_2 \ne 0$, and to be defined by \eqref{eqn:eps0} otherwise. When $b^+(W) = 0$, we have proved this is one of the components of $\widetilde I(W,c;\mathbb F)$, but we will use the expression $\epsilon_0$ even when $b^+(W) > 0$. 
\end{remark}

\subsection{Equivariant homology and $q_3(Y)$}\label{subsec:DMES2}
We first review the definition of $q_3(Y)$ from \cite[Chapter 11.4]{DMES}. While that reference only presents $q_3(Y)$ in the case that $Y$ is an integer homology sphere, we explain below that the definition may be generalized. This more general definition will appear in the forthcoming work \cite{DME2}, as will some properties used in this paper. 

The general observation is that when $Y$ is an $\mathbb F$-homology sphere, $\widetilde C(Y)$ is a $\mathcal S$-complex in the sense of \cite{DS1}*{Section~4} with respect to the action of $\chi_2$, so we may mimic the definition of the $h$-invariant of a $\mathcal S$-complex. It will serve us to discuss the algebra somewhat generally.

Given a dg-module $\widetilde C = \widetilde C(Y, w, \mathfrak a; \mathbb F_2)$ as in the previous section, we will make use of two versions of equivariant homology. The defining chain complexes have underlying $\mathbb F_2[x]$-modules given by \[\widehat{C} = \widetilde C[x], \quad \overline{C} = \widetilde C\llbracket x^{-1}, x],\] with differential in both cases defined by the same formula \[d^\bullet(cx^n) = (\widetilde d c)x^n + (\chi_2 c) x^{n+1}.\] In particular, $\widehat{C}$ is a submodule of $\overline C$, and this construction is functorial with respect to $\Lambda_{\mathbb F}(\chi_2)$-module homomorphisms.

The corresponding equivariant homology groups are indicated $\widehat I_{\chi_2}(Y,w,\mathfrak a)$ and $\overline I_{\chi_2}(Y,w,\mathfrak a)$. The map induced by inclusion is an $\mathbb F[x]$-module homomorphism called the connecting homomorphism $j_{Y,w, \mathfrak a}: \widehat I \to \overline I$. These modules are functorial with respect to nice cobordisms, and the induced maps commute with $j$. Furthermore, there is an exact triangle 
\begin{equation}\label{eqn:hat-tilde}
\cdots \to \widehat I_{\chi_2}(Y,w,\mathfrak a) \xrightarrow{\cdot x} \widehat I_{\chi_2}(Y,w,\mathfrak a) \to \widetilde I_{\chi_2}(Y,w,\mathfrak a) \to \cdots 
\end{equation}

Here $\widetilde I(Y,w,\mathfrak a)$ is the homology of the complex $\widetilde C(Y,w,\mathfrak a;\mathbb F)$. 

Suppose $(Y,w)$ is equipped with abelian data $\mathfrak a = (\sigma, s)$. If $\mathfrak a' = (\sigma',s')$ has $\sigma' > \sigma$, then the cobordism $(I \times Y, I \times w): (Y,w,\mathfrak a) \to (Y, w, \mathfrak a')$ is a nice cobordism. It is proved in \cite{DMES}*{Section~8.4} that this cobordism induces an isomorphism on the equivariant homology groups, and these continuation maps compose functorially. For every two choices of abelian data on $(Y,w)$, there is another choice which is larger than both in the above sense. It follows that all three of $\widehat I_{\chi_2}(Y,w)$, $\overline I_{\chi_2}(Y,w)$, and $\widetilde I_{\chi_2}(Y,w)$ are independent of the choice of abelian data, as are the connecting homomorphism $j_{Y,w}$ and the maps in the exact triangle \eqref{eqn:hat-tilde}.

Suppose $(W,c): (Y,w) \to (Y', w')$ is nice for some choice of abelian data. This is true, for instance, if $b^+(W) = 0$ or if $[c]_2 \ne 0 \in H^2(W;\mathbb F)$. 
 
It follows that $(W,c)$ induces well-defined maps on $\widehat I_{\chi_2}$ and $\overline I_{\chi_2}$ for which the following diagram commutes: \[\begin{tikzcd}
	{\widehat I_{\chi_2}(Y, w, \mathfrak a)} & {\overline I_{\chi_2}(Y, w, \mathfrak a)} \\
	{\widehat I_{\chi_2}(Y', w', \mathfrak a')} & {\overline I_{\chi_2}(Y', w', \mathfrak a')}
	\arrow["{j_{Y,w,\mathfrak a}}", from=1-1, to=1-2]
	\arrow["{\widehat I(W,c)}"', from=1-1, to=2-1]
	\arrow["{\overline I(W,c)}", from=1-2, to=2-2]
	\arrow["{j_{Y,w,\mathfrak a}}",from=2-1, to=2-2]
\end{tikzcd}\]

\noindent Similarly, the induced maps commute with the maps in the exact triangle \eqref{eqn:hat-tilde}.

Explicit computations analogous to the work of \cite[Section 4.3]{DS1} imply first that $\widehat I_{\chi_2}(Y,w)$ is a finitely-generated $\mathbb F_2[x]$-module with free part of rank equal to $\dim Z(Y,w)$, while there is a localization isomorphism \[\Phi_{Y,w, \mathfrak a}: \overline I_{\chi_2}(Y,w,\mathfrak a) \to Z(Y,w)\llbracket x^{-1}, x].\] 

If $Z(Y,w)$ is nontrivial then $[w]_2 = 0 \in H_1(Y;\mathbb F)$, so we suppose $w = \varnothing$ for convenience and suppress it from notation. If $(W,c): (Y,\mathfrak a) \to (Y',\mathfrak a')$ is a nice cobordism, we define \[\Phi_{W,c} = \Phi_{Y',\mathfrak a'} \overline I(W,c) \Phi^{-1}_{Y, \mathfrak a}: Z(Y)\llbracket x^{-1}, x] \to Z(Y')\llbracket x^{-1},x].\] The argument of \cite[Corollary 4.12]{DS1} computes the leading term of this map.

\begin{lemma}\label{lemma:Phi-calculation}
The map $\Phi_{W,c}$ is given by multiplication by a power series \[p(x) = \epsilon_0(W,c) + \sum_{i=1}^\infty a_i x^{-i},\] where each $a_i$ is a linear map $Z(Y) \to Z(Y')$.  
\end{lemma}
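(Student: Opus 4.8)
We follow the strategy of \cite{DS1}*{Corollary~4.12}, using an explicit model for the localization isomorphism obtained by homological perturbation. Write $\overline C(Y) = \widetilde C(Y,\mathfrak a;\mathbb F)\llbracket x^{-1},x]$ with its differential $d^\bullet = \widetilde d + x\chi_2$; since $\widetilde C$ is finite-dimensional over $\mathbb F$, this is a finite free module over the field $\mathbb F\llbracket x^{-1},x]$, complete for the $x^{-1}$-adic valuation $v(cx^n)=-n$. From the naming of the summands in \eqref{eqn:CAZ}, the distinguished operator $\chi_2$ acts by the evident isomorphisms $C \xrightarrow{\ \sim\ } \chi_2 C$, $\chi_1 C \xrightarrow{\ \sim\ } \chi_3 C$, $A \xrightarrow{\ \sim\ } \chi_2 A$ and by zero on $\chi_2 C \oplus \chi_3 C \oplus \chi_2 A \oplus Z$; in particular $(\widetilde C,\chi_2)$ has homology the summand $Z$. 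Taking $x\chi_2$ as the unperturbed differential and $\widetilde d$ as the perturbation, the plan is to build a strong deformation retraction of $(\overline C, x\chi_2)$ onto $(Z\llbracket x^{-1},x],0)$ from the inclusion $\iota_Z$ and projection $\pi_Z$ of the $Z$-summand of $\widetilde C$ (extended $\mathbb F\llbracket x^{-1},x]$-linearly) together with the homotopy $h = x^{-1}\tilde h$, where $\tilde h$ inverts $\chi_2$ on $\chi_2 C \oplus \chi_3 C \oplus \chi_2 A$ and vanishes on $C\oplus\chi_1 C\oplus A\oplus Z$. One checks directly that $\tilde h\chi_2 + \chi_2\tilde h = \mathrm{id} - \iota_Z\pi_Z$ and the side conditions $\tilde h^2 = 0$, $\pi_Z\tilde h = 0$, $\tilde h\iota_Z = 0$, so that $(\iota_Z,\pi_Z,h)$ is a valid datum.

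Since $\widetilde d$ preserves the $x^{-1}$-adic valuation while $h$ lowers it by exactly one, the composite $h\widetilde d$ is topologically nilpotent, so the homological perturbation lemma applies and transfers $d^\bullet = x\chi_2 + \widetilde d$ to a differential $\sum_{n\ge 0}\pi_Z(\widetilde d h)^n\widetilde d\,\iota_Z$ on $Z\llbracket x^{-1},x]$, with transferred inclusion $i_Y = \sum_{n\ge 0}(h\widetilde d)^n\iota_Z$ and projection $p_Y = \sum_{n\ge 0}\pi_Z(\widetilde d h)^n$ satisfying $p_Y i_Y = \mathrm{id}$ on the nose. Two features are immediate from the construction: each factor of $h$ contributes exactly one negative power of $x$ and all the remaining data is of $x$-degree zero, so $i_Y$, $p_Y$ and the transferred differential involve only non-positive powers of $x$, with $x^0$-coefficients $\iota_Z$, $\pi_Z$, and $\pi_Z\widetilde d\,\iota_Z$ respectively. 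Inspecting the matrix of $\widetilde d$ in \eqref{eqn:CAZ} shows the transferred differential vanishes, since only the $C$-summand maps into $Z$ under $\widetilde d$ while neither $\mathrm{im}\,h$ nor $\widetilde d|_Z$ meets the $C$-summand; alternatively this is forced by the asserted fact that $\Phi_Y$ is an isomorphism onto $Z\llbracket x^{-1},x]$, as a valuation-lowering differential with full-rank homology over the field $\mathbb F\llbracket x^{-1},x]$ must be zero. Hence $\Phi_Y$ is the isomorphism induced by $p_Y$, with inverse induced by $i_Y$.

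Now $\widetilde I(W,c;\mathbb F)$ is $\chi_2$-equivariant, so it extends, acting on each $x^n$-slice, to an $\mathbb F\llbracket x^{-1},x]$-linear $d^\bullet$-chain map $\overline I(W,c)\colon \overline C(Y)\to\overline C(Y')$ of $x$-degree zero. Since the source and target of the transferred complexes have zero differential, we therefore have, literally as a module map, \[\Phi_{W,c} = p_{Y'}\circ\overline I(W,c)\circ i_Y \colon Z(Y)\llbracket x^{-1},x]\longrightarrow Z(Y')\llbracket x^{-1},x].\] This composite of $\mathbb F\llbracket x^{-1},x]$-linear maps is given by multiplication by a matrix $p(x)$ with entries in $\mathbb F\llbracket x^{-1},x]$, whose columns are the images of the basis $Z(Y)=Z(Y)x^0$; because $i_Y$, $p_{Y'}$ and $\overline I(W,c)$ each involve only non-positive powers of $x$, so does $p(x)$, which is thus of the form $\sum_{i\ge 0}a_i x^{-i}$ with each $a_i\colon Z(Y)\to Z(Y')$ linear. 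Its $x^0$-coefficient is the composite of $x^0$-coefficients $\pi_{Z'}\circ\widetilde I(W,c)\circ\iota_Z$, which is precisely the $Z\to Z'$ block of the matrix $\widetilde I(W,c;\mathbb F)$ displayed in Section~\ref{subsec:DMES1}, namely $\epsilon_0(W,c)$.

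The main obstacle is bookkeeping rather than conceptual: one must verify the side conditions for $h$ and its strict valuation-lowering property carefully enough to justify convergence of the perturbation series and to pin down the leading terms of $i_Y$ and $p_{Y}$, and one must identify the $(Z,Z')$-block of $\widetilde I(W,c;\mathbb F)$ in the chosen direct-sum decomposition with the map $\epsilon_0$ defined in \eqref{eqn:eps0}, which amounts to tracing conventions with \cite{DMES}. A secondary point is confirming that the transferred differential on $Z\llbracket x^{-1},x]$ vanishes, so that $\Phi_Y$ really is induced by $p_Y$; as noted, this follows either from the explicit shape of $\widetilde d$ or from the already-asserted isomorphism property of $\Phi_Y$.
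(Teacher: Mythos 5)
Your argument is correct and spells out the homological-perturbation computation that the paper delegates to \cite{DS1}*{Corollary~4.12}, adapted to the case where $Z(Y),Z(Y')$ may have dimension greater than one — exactly what the remark following the lemma indicates. One minor wording slip: with the stated convention $v(cx^n)=-n$, the homotopy $h=x^{-1}\tilde h$ \emph{raises} the $x^{-1}$-adic valuation by one rather than lowering it, which is precisely what makes the perturbation series converge; the logic is otherwise unaffected.
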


In that reference, the result is stated in the case that $Z(Y), Z(Y')$ are $1$-dimensional and $\epsilon_0 = 1$, but the argument applies without these assumptions.

For the cylinder $W = I \times Y$, the map $\epsilon_0$ is the identity for any choice of abelian data. Because the $a_i$ might be nonzero, we cannot conclude that the localization map is independent of the choice of abelian data. We do find that the localization map $\Phi_{Y,w,\mathfrak a}: \overline I_{\chi_2}(Y) \to Z(Y)\llbracket x^{-1},x]$ depends on $\mathfrak a$ only up to multiplication by power series of the form $1 + \sum_{i =1}^\infty b_i x^{-i}$, where again $b_i$ denotes a linear map $Z(Y) \to Z(Y)$. In particular, there is a filtration of $\overline I_{\chi_2}$ by \textbf{degree} which is independent of any choice of abelian data, where $c \in F_r \overline I_{\chi_2}(Y)$ if $\Phi_{Y,\mathfrak a}(c)$ can be written as $\sum_{i=0}^\infty c_i x^{r-i}$ for some $c_i \in Z(Y)$. Given $c \in \overline I_{\chi_2}(Y)$, we write $\deg(c) = \text{max} \{d \mid c \in F_d \overline I_{\chi_2}(Y).\}$ 

Now consider the connecting homomorphism $j_{Y}: \widehat I_{\chi_2}(Y) \to \overline I_{\chi_2}(Y)$. Daemi and Scaduto compute that $j_Y$ is an isomorphism after localization, so in particular the induced map on $\widehat I_{\chi_2}(Y)/\text{Tors}$ is injective. 

This fact will be used later more generally, but is most important in the case that $Y$ is an $\mathbb F$-homology sphere, in which case $Z(Y) = \mathbb F$. In this case, there exists an integer $q_3 = q_3(Y) \in \mathbb Z$ so that \[\Phi_{Y,\mathfrak a}j_Y: \mathbb F_2[x] \cong \widehat I_{\chi_2}(Y)/\text{Tors} \to \overline I_{\chi_2}(Y) \cong \mathbb F_2\llbracket x^{-1}, x]\] sends $1$ to $x^{-q_3} + O(x^{-1}).$ To rephrase, we may define $q_3(Y)$ as follows.

\begin{defn}
If $Y$ is an $\mathbb F$-homology sphere, define $q_3(Y) \in \mathbb Z$ by \[q_3(Y) = \max \{-\deg(x) \mid x \in \textup{im}(j_Y)\}.\]
\end{defn}

\begin{prop}
The quantity $q_3(Y)$ is an $\mathbb F$-homology cobordism invariant which satisfies $q_3(-Y) = -q_3(Y)$.
\end{prop}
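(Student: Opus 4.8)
The proof splits into two independent parts: $\mathbb F$-homology cobordism invariance, which I would deduce from the functoriality of the equivariant homology groups together with Lemma \ref{lemma:Phi-calculation}; and the identity $q_3(-Y) = -q_3(Y)$, which I would deduce from the duality of the instanton complexes under orientation reversal, exactly as the relation $h(-Y) = -h(Y)$ is proved for the $h$-invariant of an $\mathcal S$-complex in \cite{DS1}.

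For invariance, let $W$ be an $\mathbb F$-homology cobordism from $Y$ to $Y'$. Then $b_2(W) = 0$, hence $b^+(W) = 0$, so $(W, \varnothing)\colon (Y, \mathfrak a) \to (Y', \mathfrak a')$ is a nice cobordism for a suitable choice of abelian data. It therefore induces maps fitting in the commutative square of Section \ref{subsec:DMES2} relating $j_Y$ and $j_{Y'}$, together with the identity $\Phi_{Y', \mathfrak a'}\,\overline I(W) = \Phi_{W, \varnothing}\,\Phi_{Y, \mathfrak a}$. Since $H_1(Y;\mathbb F) \cong H_1(W; \mathbb F) \cong H_1(Y';\mathbb F) = 0$, the map $\epsilon_0(W, \varnothing)\colon Z(Y) \to Z(Y')$ is an isomorphism, hence equals $1$ under the identifications $Z(Y) = Z(Y') = \mathbb F$; Lemma \ref{lemma:Phi-calculation} then shows that $\Phi_{W, \varnothing}$ is multiplication by a power series $p(x) = 1 + \sum_{i \ge 1} a_i x^{-i}$, which is a unit in $\mathbb F\llbracket x^{-1}, x]$ whose leading term is $1$, so multiplication by $p(x)$ preserves the degree filtration. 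Chasing the square gives $\Phi_{Y',\mathfrak a'}\, j_{Y'}\,\widehat I(W) = p(x)\cdot \Phi_{Y,\mathfrak a}\, j_Y$, whence $p(x)\cdot \im(\Phi_{Y,\mathfrak a}\, j_Y) \subseteq \im(\Phi_{Y',\mathfrak a'}\, j_{Y'})$. Because multiplication by $p(x)$ preserves $\deg$, the left-hand side realizes exactly the degrees realized by $\im(j_Y)$; taking the maximum of $-\deg$ over the two sides gives $q_3(Y) \le q_3(Y')$. Running the identical argument for the turned-around cobordism $\overline W\colon Y' \to Y$, which is again an $\mathbb F$-homology cobordism with $b^+ = 0$, yields $q_3(Y') \le q_3(Y)$, hence equality.

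For the sign symmetry, the geometric input I would use is a chain homotopy equivalence $\widetilde C(-Y; \mathbb F) \simeq \widetilde C(Y;\mathbb F)^\vee$ of $\mathcal S$-complexes over $\Lambda_{\mathbb F}(\chi_2)$ — the Poincar\'e duality for the tilde complex within the framework of \cite{DMES} — which is available here because for $Y$ an $\mathbb F$-homology sphere the sets $\mathfrak A(\pm Y, \varnothing)$ are canonically identified and $\mathfrak Z(\pm Y, \varnothing) = \{\theta\}$, so the abelian data matches under orientation reversal. Dualization exchanges the roles of $\widehat C$ and $\overline C$ (up to $x$-adic completion) and negates the $x$-degree, identifying the connecting map $j_{-Y}$ with the transpose of $j_Y$; computing leading terms exactly as in \cite[Section~4.3]{DS1} then forces $q_3(-Y) = -q_3(Y)$. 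I expect this last step to be the main obstacle: one must set up the orientation-reversal duality at the level of the $x$-completed equivariant complexes and track the connecting homomorphism and the degree filtration through it correctly, after which the sign computation is purely mechanical. The cobordism-invariance half is comparatively routine, the only points requiring care being that $W$ and $\overline W$ are nice and that $\epsilon_0$ is genuinely invertible — both guaranteed by the criteria recalled in Section \ref{subsec:DMES1}.
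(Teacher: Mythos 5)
Your proposal is correct and matches the paper's proof in both halves: cobordism invariance is deduced from niceness of a negative-definite $\mathbb F$-homology cobordism, $\epsilon_0 = 1$ when $H_1(W;\mathbb F) = 0$, Lemma \ref{lemma:Phi-calculation}, and the reversed cobordism; and the sign symmetry is deduced from the duality $\widetilde C(Y;\mathbb F)^\vee \simeq \widetilde C(-Y;\mathbb F)$ together with the negation of the $h$-invariant of an $\mathcal S$-complex under duality. The paper simply cites \cite{DMES}*{Theorem~7.3.3} and \cite{DS1}*{Proposition~4.21} for the duality half, so the "main obstacle" you anticipate there is handled by those references rather than needing a fresh argument.
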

\begin{proof}
Suppose $W: Y \to Y'$ is a negative-definite cobordism between $\mathbb F$-homology spheres with $H_1(W;\mathbb F) = 0$. In this case, $\epsilon_0(W) = 1$. It follows from the above discussion that $q_3(Y) \le q_3(Y')$. Applying this to an $\mathbb F$-homology cobordism and its reverse, we deduce that $q_3(Y)$ is an $\mathbb F$-homology cobordism invariant.

The duality statement follows because $\widetilde C(Y;\mathbb F)^\vee \simeq \widetilde C(-Y;\mathbb F)$ by \cite{DMES}*{Theorem~7.3.3}. Because the $h$-invariant of a $\mathcal S$-complex negates under duality \cite{DS1}*{Proposition~4.21}, we obtain the stated formula for $q_3(-Y)$.
\end{proof}

\subsection{Framed instanton homology}\label{subsec:DMES3}
In \cite{KMframed}, Kronheimer and Mrowka introduced the homology group $I^\#(Y, w; R)$ for any pair of an oriented $3$-manifold $Y$, an oriented $1$-manifold $w \subset Y$, and commutative ring $R$. This $R$-module comes equipped with a relative $\mathbb Z/4$-grading and an absolute $\mathbb Z/2$-grading; in this article, we only discuss the absolute grading. 

To use results from the equivariant theory, we need to compare these functors. The result is simplest for rings in which $4 = 0$; we restrict attention further to $\mathbb F$. 

\begin{theorem}[{\cite[Theorem 1.3.1]{DMES}}]\label{thm:Isharp-is-Itilde}
There exists a natural isomorphism $\varphi_{Y,w}: \widetilde I(Y, w; \mathbb F) \to I^\#(Y, w; \mathbb F)$, natural with respect to the cobordism maps induced by nice cobordisms.
\end{theorem}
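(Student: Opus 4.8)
The plan is to realize $I^\#(Y,w;\mathbb F)$ and $\widetilde I(Y,w;\mathbb F)$ as the homologies of two chain complexes built from the \emph{same} Yang--Mills moduli data, to exhibit an explicit quasi-isomorphism between them, and then to verify this quasi-isomorphism is compatible with cobordism maps. Recall that, by construction, $I^\#(Y,w;\mathbb F)$ is computed by a Floer complex for $Y \# T^3$ equipped with a bundle restricting to $w$ on $Y$ and to a fixed admissible class on the $T^3$ summand, the role of the $T^3$ being to eliminate reducibles and thereby make the ordinary instanton complex well-defined. The first step is a connected-sum theorem: neck-stretching along the connect-sum sphere identifies this Floer complex, up to chain homotopy equivalence, with the tensor product of the instanton chain complex $C(Y,w)$ --- viewed as a dg-module over $H_*(SO(3);\mathbb F)\cong\Lambda_{\mathbb F}(\chi_1,\chi_2)$ through the $SO(3)$-action on $\widetilde B_{(Y,w)}$ --- and a small model for the Floer complex of $T^3$ with its fixed bundle, which one computes explicitly to be equivalent to $\Lambda_{\mathbb F}(\chi_1,\chi_2)$ supported at a single flat orbit. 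The off-diagonal gluing contributions of the connected-sum differential are then bookkept and shown to be exactly the entries $d_i,e_i,\delta_i$ of the displayed matrix $\widetilde d$.

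The second step is to handle the reducible locus. When $b_1(Y)=0$, the critical set of $\text{CS}$ on $\widetilde B_{(Y,w)}$ contains abelian $S^2$-orbits and central points alongside the free $SO(3)$-orbits; after the $T^3$ connect-sum (equivalently, after framing), the framed orbit of an abelian connection is $SO(3)/SO(2)\simeq S^2$ and that of a central connection is a point, which is why the underlying space of $\widetilde C$ decomposes as in \eqref{eqn:CAZ} with the extra summands $A\oplus\chi_2 A$ and $Z$. Here the abelian data $\mathfrak a=(\sigma,s)$ enters: $\sigma$ specifies a chamber for the holonomy perturbation used to obtain transversality near the reducibles, and $s$ a basepoint cellulating each $S^2$. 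As in the construction of the continuation maps in \cite{DMES}*{Section~8.4}, varying $\mathfrak a$ alters $\widetilde C$ only by a chain homotopy equivalence, so $\widetilde I(Y,w;\mathbb F)$ is well-defined; since $I^\#$ visibly carries no such auxiliary choices, the comparison makes sense.

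The third step is naturality. A nice cobordism $(W,c):(Y,w)\to(Y',w')$ induces its map on $I^\#$ via the cobordism $(W,c)\#(I\times T^3)$, and running the same neck-stretching argument over this connected cobordism identifies the induced map with the chain map $\widetilde I(W,c;\mathbb F)$ whose matrix is displayed in the excerpt; functoriality up to homotopy on both sides is then inherited from the functoriality of the gluing construction. The \emph{nice} hypothesis is precisely what supplies the compactness and gluing inputs needed here --- in particular sufficient control of reducible instantons over $W$ --- and is automatic when $b^+(W)=0$ or $[c]_2\neq 0$. I expect the main obstacle to be exactly this reducible gluing analysis, both on $Y$ and on the cobordisms: proving that the framed moduli spaces are transversally cut out after a chamber-generic perturbation, that their compactifications carry the expected codimension-one corner structure, and hence that the structure maps are \emph{literally} the claimed $e_i,\delta_i$ (respectively $\mu_i,\Delta_i,\epsilon_0$) rather than merely homotopic to them. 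This is the analytic core of \cite{DMES}, and the combinatorial shapes of $\widetilde d$ and of $\widetilde I(W,c;\mathbb F)$ recorded above are its output.
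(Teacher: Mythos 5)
This theorem is not proved in the paper: it is quoted verbatim from the forthcoming work \cite{DMES} (Theorem 1.3.1). What the paper does record, immediately after the statement, is the general fact from which the $\mathbb F$-case is deduced: for any coefficient ring $R$, setting $V = R^2$ and $\tau(x,y) = (y,x)$, one has
\[I^\#(Y,w;R) \;\cong\; H\bigl(\widetilde C(Y,w;\mathbb Z)\otimes V,\; \widetilde d + 4\chi_3\tau\bigr)^{\tau}.\]
Over $\mathbb F = \mathbb Z/2$ one has $4=0$, so the deformation term vanishes, the complex becomes $\widetilde C(Y,w;\mathbb F)^{\oplus 2}$ with differential $\widetilde d$, and the $\tau$-fixed points of $\widetilde I(Y,w;\mathbb F)^{\oplus 2}$ under the coordinate swap are the diagonal copy of $\widetilde I(Y,w;\mathbb F)$. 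That one-line reduction is the entire content of the $\mathbb F$-statement given the general formula.

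Your sketch describes a reasonable strategy for what the proof in \cite{DMES} presumably does (neck-stretching on $Y\#T^3$, abelian data to control the reducible locus, naturality via glued cobordisms), but it misses the one point that makes the theorem specific to $\mathbb F$: as written, your argument would appear to manufacture a natural isomorphism $\widetilde I(Y,w;R)\cong I^\#(Y,w;R)$ over an arbitrary ring $R$, and that is false. Over $\mathbb Z$ or $\mathbb Q$ there is only the $\tau$-fixed-point description with the $4\chi_3\tau$ correction; the $T^3$ summand does not simply strip off. Your neck-stretching bookkeeping must at some point produce this deformation term, and the proof must observe that it is a multiple of $4$. Separately, your description of ``a small model for the Floer complex of $T^3$\ldots equivalent to $\Lambda_{\mathbb F}(\chi_1,\chi_2)$ supported at a single flat orbit'' is off: the $T^3$ complex with its admissible bundle has two flat connections and $I(T^3)$ has rank $2$, not $4$. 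The exterior algebra $\Lambda_{\mathbb F}(\chi_1,\chi_2)$ appearing in the decomposition \eqref{eqn:CAZ} is $C_*^{\mathrm{cell}}(SO(3);\mathbb F)$, the coefficients of the $SO(3)$-equivariant Morse--Bott construction on $\widetilde B_{(Y,w)}$ (equivalently, the gluing parameter arising in the connect-sum), not a model for the $T^3$ factor.
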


More generally, for an arbitrary coefficient ring, set $V = R^2$ and $\tau: V \to V$ given by $\tau(x,y) = (y,x)$. Then $I^\#(Y;R)$ is isomorphic to the fixed points of $\tau$ on \[H(\widetilde C(Y,w;\mathbb Z) \otimes V, \widetilde d + 4\chi_3 \tau).\]

This explicit description is used to provide the following computations: 

\begin{theorem}\label{thm:Brieskorn-calc}
We have $\dim I^\#(\Sigma(2,3,6k-1);\mathbb F) = 8k \pm 1$. If $(Y,w)$ is an admissible pair, then $\dim I^\#(Y,w;\mathbb F)$ is divisible by $4$. If $Y$ is a rational homology sphere with $|H_1(Y;\mathbb Z)| = 2 \mod 4$, then $\dim I^\#(Y;\mathbb F) = 2 \mod 4$.
\end{theorem}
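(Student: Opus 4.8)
The plan is to read all three assertions off the $\mathbb F$-coefficient chain model for framed instanton homology: by Theorem~\ref{thm:Isharp-is-Itilde}, $I^\#(Y,w;\mathbb F)\cong \widetilde I(Y,w;\mathbb F)=H(\widetilde C(Y,w;\mathbb F),\widetilde d)$, so I argue from the direct-sum decomposition \eqref{eqn:CAZ}, the triangular form of $\widetilde d$, the equivariant completions $\widehat I_{\chi_2},\overline I_{\chi_2}$, and the exact triangle \eqref{eqn:hat-tilde}. The Brieskorn computation is of a separate, purely formal nature given the dimension formula, while the two divisibility statements are structural.

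\emph{The Brieskorn spheres.} Identify $\Sigma(2,3,6k\pm1)$ with $\mp1/k$-surgery on (a chirality of) the trefoil; since the trefoil is the twist knot $K_1$, Theorem~\ref{thm:twist-comp} gives $r_2(T_{2,3})=4$ and $|M(T_{2,3})|=4$. Feeding $p/q=\mp1/k$ into \eqref{eqn:dim-formula} then yields $\dim I^\#=4k+|4k\pm1|=8k\pm1$. (One can instead bootstrap from the $k=1$ computations of $I^\#(\Sigma(2,3,5);\mathbb Z)$ and $I^\#(\Sigma(2,3,7);\mathbb Z)$ of \cite{DMES}, inducting along the surgery exact triangle of Section~\ref{subsec:SET} relating $1/(k{+}1)$-, $1/k$- and $0$-surgery on the trefoil, once $\dim I^\#(S^3_0(T_{2,3}),w;\mathbb F)=8$ is recorded; but then one must verify the triangle saturates at each step, so the first route is cleaner.)

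\emph{Admissible pairs.} When $(Y,w)$ is admissible, $[w]_2\neq 0$ forces $Z(Y,w)=0$, and the absence of reducible flat connections forces $A(Y,w)=0$, so $\widetilde C(Y,w;\mathbb F)=\Lambda_{\mathbb F}(\chi_1,\chi_2)\otimes C$ is free over $\Lambda:=\Lambda_{\mathbb F}(\chi_1,\chi_2)$, with $\Lambda$-linear differential given by the upper-left block of $\widetilde d$. In particular $\overline I_{\chi_2}(Y,w)\cong Z(Y,w)\llbracket x^{-1},x]=0$, so $\widehat I_{\chi_2}(Y,w)$ is a finite-dimensional torsion $\mathbb F[x]$-module, and \eqref{eqn:hat-tilde} already gives $\dim\widetilde I(Y,w)=2\dim\bigl(\widehat I_{\chi_2}(Y,w)/x\bigr)$, hence even. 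Upgrading ``even'' to ``divisible by $4$'' is the crux, and I expect it to be the main obstacle: $\Lambda$-freeness of $\widetilde C$ alone does not suffice (a rank-one $\Lambda$-free dg-module with $\widetilde d=\chi_1\chi_2\cdot(-)$ has homology of dimension $2$), so one must exploit more of the structure. The approach I would try is to form the doubly-equivariant homology $\widehat{\widehat I}(Y,w)=H(\widetilde C[x_1,x_2],\widetilde d+\chi_1x_1+\chi_2x_2)$ --- a finite-dimensional module over $\mathbb F[x_1,x_2]/(x_1^N,x_2^N)$, since there are no reducibles --- express $\dim\widetilde I(Y,w)$ through the Koszul complex of $(x_1,x_2)$ on $\widehat{\widehat I}(Y,w)$ (whose Euler characteristic vanishes), and then use the chain-level duality $\widetilde C(Y,w;\mathbb F)^\vee\simeq\widetilde C(-Y,w;\mathbb F)$ of \cite{DMES}*{Theorem~7.3.3}, together with the orientation-independence of $\dim I^\#$, to show that $\widehat{\widehat I}(Y,w)$ is Matlis self-dual; the latter makes its top and socle dimensions equal, which forces divisibility by $4$. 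Controlling the relevant spectral sequence and carrying the duality through the double completion with its gradings is where the real work lies.

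\emph{Rational homology spheres with $|H_1(Y;\mathbb Z)|\equiv 2\pmod 4$.} Now $w=\varnothing$, and $H^2(Y;\mathbb Z)\cong H_1(Y;\mathbb Z)$ has a unique element of order $2$, so $\#\mathfrak Z(Y,\varnothing)=2$, i.e.\ $\dim Z(Y)=2$, and likewise $\dim A(Y)=(|H_1(Y;\mathbb Z)|-2)/2$ is even. Since $j_Y$ is an isomorphism after localization, $\widehat I_{\chi_2}(Y)$ has free $\mathbb F[x]$-part of rank $\dim Z(Y)=2$; writing $\widehat I_{\chi_2}(Y)\cong\mathbb F[x]^{2}\oplus T$ with $T$ a finite torsion module and feeding this into \eqref{eqn:hat-tilde} gives $\dim\widetilde I(Y)=2+2s$, where $s$ is the number of cyclic summands of $T$. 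Thus $\dim\widetilde I(Y)\equiv 2\pmod 4$ once $s$ is shown to be even, and I would extract this parity from the same duality $\widetilde C(Y;\mathbb F)^\vee\simeq\widetilde C(-Y;\mathbb F)$, which pairs the torsion of $\widehat I_{\chi_2}(Y)$ against the torsion of the ``minus'' completion for $-Y$; alternatively one tracks the $\mathbb Z/4$-grading and uses the evenness of $\dim A(Y)$ to deduce that $T$ has vanishing mod-$2$ Euler characteristic. As in the admissible case, it is this parity statement, rather than the set-up, that is the crux.
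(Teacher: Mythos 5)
The paper never proves Theorem~\ref{thm:Brieskorn-calc}. It is stated in Section~\ref{subsec:DMES3} immediately after ``This explicit description is used to provide the following computations,'' with no argument given; the introduction's paragraph on ``Dependence on forthcoming work'' lists exactly these computations among the inputs imported as a black box from the forthcoming reference \cite{DMES}. So any attempt to rederive it from the other results of the present paper is fighting the logical structure of the article, and that is where your argument breaks.

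Concretely, your ``cleaner'' first route is circular. You obtain $r_2(T_{2,3})=4$ and $|M(T_{2,3})|=4$ from Theorem~\ref{thm:twist-comp}, but the only way the paper establishes those values is Example~\ref{ex:trefoil}, which reads off $\dim I^\#(\Sigma(2,3,5);\mathbb F)=7$ and $\dim I^\#(S^3_{-5}(3_1);\mathbb F)=5$ from Theorem~\ref{thm:Brieskorn-calc} itself, and then the proof of Theorem~\ref{thm:twist-comp} in Section~\ref{sec:comp} invokes Theorem~\ref{thm:Brieskorn-calc} once more. You cannot feed $r_2,M$ of the trefoil back into \eqref{eqn:dim-formula} to deduce the Brieskorn dimensions: that is exactly the chain of implications the paper runs in the opposite direction. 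Your parenthetical alternative (inducting via the surgery triangle from the $k=1$ cases over $\mathbb Z$) is not circular, but you decline to carry it out, you would still need the two $k=1$ inputs from \cite{DMES}, and you yourself flag that saturation of the triangle at each step is unverified.

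For the two divisibility statements your write-up is frank that it is a plan rather than a proof. The setup is fine --- in the admissible case $A=Z=0$ so $\widetilde C$ is free over $\Lambda_{\mathbb F}(\chi_1,\chi_2)$, $\overline I_{\chi_2}=0$, and \eqref{eqn:hat-tilde} gives evenness; in the $|H_1|\equiv 2\pmod 4$ case $\dim Z(Y)=2$ and $\dim\widetilde I(Y)=2+2s$ --- but in both cases the actual content (divisibility by $4$, respectively evenness of $s$) is deferred to a conjectural ``double completion'' Koszul/Matlis-duality argument that you explicitly identify as ``where the real work lies'' and do not execute. Your observation that $\Lambda$-freeness alone is insufficient is correct and a useful sanity check, but it means the structural input you do control does not close the gap. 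None of this can be certified as a proof; the honest resolution in the context of this paper is that Theorem~\ref{thm:Brieskorn-calc} is taken on faith from \cite{DMES}, and the things you would want to cite from this paper to prove it are themselves downstream of it.
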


Computations are also provided for all quotients $\SU(2)/\Gamma$. These include the surgeries $S^3_n(K)$ on the left-handed trefoil for $-5 \le n \le -1$, and the behavior in these computations partly motivated this work. 

It seems plausible that the dimension of $I^\#(Y,w;\mathbb F)$ should be divisible by $8$, and this is known to be the case when there is a genus $1$ surface $\Sigma \subset Y$ for which $w \cap \Sigma$ is odd. This would have implications for the invariants $r_2(K), M(K)$ to be defined later in this article.

\subsection{Results from \cite{DME2}}\label{subsec:DME2}
We will make use of the following statements, whose proofs are out of the scope of this paper. The following is the main result of \cite{DME2}.

\begin{theorem}
If $W: Y \to Y'$ is a cobordism between $\mathbb F$-homology spheres with $H_1(W;\mathbb F) = 0$, then 
\begin{equation}\label{eqn:q3-main-ineq}
-b^+(W) \le q_3(Y') - q_3(Y) \le b^-(W)
\end{equation}
\end{theorem}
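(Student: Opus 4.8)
The plan is to deduce the inequality from the behavior of the localization map $\Phi_{W,c}$ under composition with negative- and positive-definite pieces, just as in the proof of the proposition computing $q_3(-Y) = -q_3(Y)$. First I would reduce to the two one-sided inequalities: the bound $q_3(Y') - q_3(Y) \le b^-(W)$ and the bound $q_3(Y) - q_3(Y') \le b^+(W)$, the latter being equivalent to the former applied to the reversed cobordism $\overline W: -Y' \to -Y$ (using $q_3(-Y) = -q_3(Y)$ and $b^\pm(\overline W) = b^\mp(W)$). So it suffices to prove $q_3(Y') - q_3(Y) \le b^-(W)$ for a cobordism with $H_1(W;\mathbb F) = 0$.

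For this, the key input is the localization computation of Lemma~\ref{lemma:Phi-calculation}: since $H_1(W;\mathbb F) = 0$ and $c = \varnothing$, we have $Z(Y) = Z(Y') = \mathbb F$ with canonical generators $\theta, \theta'$, and $\epsilon_0(W)$ sends $\theta \mapsto \theta'$ with coefficient $1$, so $\Phi_{W}$ is multiplication by a power series $p(x) = 1 + \sum_{i\ge 1} a_i x^{-i}$. Such a power series is invertible in $\mathbb F_2\llbracket x^{-1}, x]$ and preserves the degree filtration: $\deg(\Phi_W(c)) = \deg(c)$ for all $c \in \overline I_{\chi_2}$. The point, then, is to control how $\overline I(W)$ shifts the degree filtration; equivalently, to control the degree shift of $\widehat I(W)$ and combine with the commuting square relating $\widehat I(W)$, $\overline I(W)$, and the connecting maps $j_Y, j_{Y'}$. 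Concretely, if $1 \in \widehat I_{\chi_2}(Y)/\mathrm{Tors}$ maps under $\Phi_Y j_Y$ to something of degree $-q_3(Y)$ (by definition the maximal degree in $\mathrm{im}(j_Y)$), then $\Phi_{Y'} \overline I(W) j_Y(1) = \Phi_W(\Phi_Y j_Y (1))$ has the same degree $-q_3(Y)$; on the other hand this equals $\Phi_{Y'} j_{Y'} (\widehat I(W)(1))$, and one must show $\widehat I(W)$ can decrease degree by at most $b^-(W)$, so that $-q_3(Y) \le -q_3(Y') + b^-(W)$.

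The degree-shift estimate for $\widehat I(W)$ is where the real content lies, and it is the step I expect to be the main obstacle. The strategy is to factor $W$ (up to the relevant equivalence) through a standard model: since $b^+(W) = 0$, one can try to write $W$, after adding a collar or stabilizing, as a composite of a cobordism with $b^- = 0$ (hence $b_2 = 0$, which does not shift degree — the $\epsilon_0 = 1$, $H_1 = 0$ case handled in the excerpt) followed by $b^-(W)$ blow-ups, i.e. connected sums with $\overline{\mathbb{CP}}^2$. Each $\overline{\mathbb{CP}}^2$-summand contributes a single unit of negative-definiteness, and the induced map on $\widehat I_{\chi_2}$ for a blow-up should be computable — it should multiply the localized generator by something whose leading term is $x^{-1}$ (the $O(x^{-1})$-tail from Lemma~\ref{lemma:Phi-calculation} being where the shift hides), giving a degree drop of exactly $1$ per blow-up. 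Assembling $b^-(W)$ of these yields the bound. The technical care needed is: (i) checking that such a decomposition exists and that all pieces are nice for suitable abelian data (invoking the second bullet in Section~\ref{subsec:DMES1}, valid since these are cobordisms of rational homology spheres with $b^+ = 0$); (ii) verifying that the degree filtration behaves multiplicatively under composition, i.e. $\deg(\Phi_{W_2 W_1}(c)) \ge \deg(c) - (\text{shift of } W_1) - (\text{shift of } W_2)$, which follows from functoriality of $\overline I$ together with the fact that each $\Phi_{W_i}$ is multiplication by a power series with a well-defined leading degree; and (iii) pinning down the blow-up computation, which is the analogue of the corresponding computation in Daemi--Scaduto's $\mathcal S$-complex framework and should transfer. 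Since the main result of \cite{DME2} is explicitly taken as a black box in the body of the paper, I would expect the intended argument here is essentially this localization-plus-blow-up scheme carried out carefully in the equivariant instanton setting.
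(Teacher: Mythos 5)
This theorem is \emph{not} proved in the paper. It is explicitly taken as a black box: Section~\ref{subsec:DME2} states ``We will make use of the following statements, whose proofs are out of the scope of this paper,'' and the introduction says ``we take this as a black box.'' So there is no ``paper's own proof'' to compare against, a fact you do notice at the end. Since you nevertheless sketch an argument, here is where it runs into trouble.

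First, your duality reduction only eliminates one of the two inequalities; it does not let you assume $b^+(W) = 0$. After reducing to $q_3(Y') - q_3(Y) \le b^-(W)$, the cobordism $W$ is still arbitrary, so the phrase ``since $b^+(W)=0$'' in the next paragraph is a slip. Second, even in the genuinely negative-definite case, the proposed factorization of $W$ as a $b_2 = 0$ cobordism followed by blow-ups is not generally available: negative-definite cobordisms between rational homology spheres are not all stably split as connected sums of $\overline{\mathbb{CP}}^2$'s with a $b_2 = 0$ piece. Third, and most substantively, the degree-drop claim is confused. For a negative-definite cobordism with $H_1(W;\mathbb F) = 0$ --- in particular a blow-up --- Lemma~\ref{lemma:Phi-calculation} gives $\Phi_W = 1 + O(x^{-1})$ with leading term $1$, so $\Phi_W$ \emph{preserves} the degree filtration on $\overline I_{\chi_2}$; the leading term is not $x^{-1}$. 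The quantity you actually need to bound is the degree of the polynomial $p(x)$ by which $\widehat I(W)$ acts on the free part $F(Y) \cong \mathbb F[x] \to F(Y') \cong \mathbb F[x]$. Tracing through the commuting square, $\deg p = q_3(Y') - q_3(Y)$ exactly, so bounding $\deg p \le b^-(W)$ \emph{is} the statement to be proven --- it cannot be extracted from the leading term of $\Phi_W$ without circularity, and the lower-order $O(x^{-1})$ coefficients are not directly controlled by Lemma~\ref{lemma:Phi-calculation}.

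Finally, the paper signals that the actual proof in \cite{DME2} goes a different route: it ``introduces cobordism maps associated to cobordisms which are not negative-definite, yet have $[c]=0$.'' This suggests that rather than reducing to the negative-definite case by factorization, the intended argument directly constructs and estimates cobordism maps for $b^+(W) > 0$, extending Theorem~\ref{thm:suspension-maps} beyond $b^+ \le 2$ or combining such pieces with the localization computation $\Phi_{W,c} = \epsilon_0 x^{b^+} + O(x^{b^+-1})$. Your instinct to work with the localization filtration and $\epsilon_0$ is sound, but the blow-up decomposition and the claimed per-blow-up degree drop are not where the content lies.
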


This is proved by introducing cobordism maps associated to cobordisms which are not negative-definite, yet have $[c] = 0$. These cobordisms are not `nice' in the sense discussed in Section \ref{subsec:DMES2}. While the discussion of \cite{DME2} gives a stronger result on the chain level, we will only need the following statements in equivariant homology.

\begin{theorem}\label{thm:suspension-maps}
Suppose $(W,c): (Y, w) \to (Y', w')$ is a cobordism between rational homology spheres with $0 < b^+(W) \le 2$. Then there exist induced maps \[(W,c)_*: \widehat I_{\chi_2}(Y,w) \to \widehat I_{\chi_2}(Y',w'),\] and similarly for $\overline I_{\chi_2}$ and $\widetilde I$ with the following significance: 
\begin{enumerate}[label=(\alph*)]
    \item These induced maps are compatible with the connecting homomorphisms $j$ and the maps in \eqref{eqn:hat-tilde}.
    \item With respect to the isomorphism of Theorem \ref{thm:Isharp-is-Itilde}, the induced map $\widetilde I(W,c;\mathbb F)$ is taken to the induced map $I^\#(Y,w;\mathbb F)$.
    \item If $w, w'$ are empty, The induced map $\Phi_{W,c}: Z(Y)\llbracket x^{-1}, x] \to Z(Y')\llbracket x^{-1}, x]$ is \[\Phi_{W,c} = \epsilon(W,c) x^{b^+(W)} + O(x^{b^+(W)-1}).\]
\end{enumerate}
\end{theorem}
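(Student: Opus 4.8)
The plan is to construct the maps by a \emph{neck-stretching / compactification} argument modeled on the construction of cobordism maps in the negative-definite case, but allowing $b^+(W) \le 2$. First I would factor the cobordism. Any cobordism $(W,c)$ between rational homology spheres with $0 < b^+(W) \le 2$ can be decomposed, after possibly attaching $1$- and $3$-handles, into a composite of a cobordism with $b^+ = 0$, a sequence of at most two elementary cobordisms given by attaching a $2$-handle along a framed knot raising $b^+$ by one (equivalently, blowing up with a $+1$-framed unknot plus an isotopy), and another $b^+ = 0$ cobordism. The $b^+ = 0$ pieces already induce the desired chain maps by the "nice cobordism" machinery of \cite{DMES} recalled in Section \ref{subsec:DMES1}; so the real content is defining the map for a single $b^+(W) = 1$ elementary piece, since two of them compose to give the $b^+ = 2$ case (using that composites behave functorially up to homotopy, which we verify alongside the construction). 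For the $b^+ = 1$ piece I would appeal directly to \cite{DME2}: that reference constructs, for cobordisms with $b^+(W) = 1$ and $[c]_2 = 0$, a chain-level map on $\widetilde C$ by counting instantons over $W$ with a single reducible removed and a suitable perturbation, and more to the point it constructs the $\mathbb F_2[x]$-equivariant refinement on $\widehat C$ and $\overline C$. I would take this as the black box promised in the "Dependence on forthcoming work" subsection, and the theorem then becomes the statement that these maps satisfy properties (a)--(c).

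For part (a): compatibility with $j$ and with the exact triangle \eqref{eqn:hat-tilde} is a formal consequence of the construction. The maps on $\widehat C$ and $\overline C$ are built from the same chain-level data by the uniform formula $d^\bullet(c x^n) = (\widetilde d c)x^n + (\chi_2 c)x^{n+1}$, so a map commuting with $\chi_2$ up to the relevant homotopy automatically commutes with the inclusion $\widehat C \hookrightarrow \overline C$ and descends compatibly to $\widetilde C = \widehat C / x\widehat C$ (up to homotopy); this is exactly the functoriality of the $\mathcal S$-complex constructions of \cite{DS1}*{Section 4} applied to $\Lambda_{\mathbb F}(\chi_2)$-module maps. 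I would spell this out by noting the cone/mapping-cylinder description of \eqref{eqn:hat-tilde} and observing that the chain homotopies from \cite{DME2} are themselves $\chi_2$-equivariant up to the next order, which is all that is needed. For part (b): this is the assertion that on the $\widetilde I$ level the new map agrees with the map $I^\#(W,c;\mathbb F)$ induced on framed instanton homology via Theorem \ref{thm:Isharp-is-Itilde}. Here I would invoke the second statement in the "Dependence on forthcoming work" paragraph --- \cite{DME2} constructs these $b^+ = 1$ maps precisely so that they may be compared with the $I^\#$ cobordism map --- together with naturality of $\varphi_{Y,w}$ under nice cobordisms (Theorem \ref{thm:Isharp-is-Itilde}) to handle the $b^+ = 0$ outer pieces, and the fact that $I^\#$ itself is functorial so that the two-step composite $b^+ = 2$ case follows from the $b^+ = 1$ case.

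Part (c) is the one requiring genuine computation, and I expect it to be the main obstacle. The claim is that, when $w = w' = \varnothing$, the induced localized map $\Phi_{W,c}$ on $Z(Y)\llbracket x^{-1}, x] = \mathbb F_2\llbracket x^{-1},x]$ has leading term $\epsilon(W,c) x^{b^+(W)}$ --- note the \emph{positive} power $x^{b^+}$, in contrast with the negative-definite case of Lemma \ref{lemma:Phi-calculation} where the leading term sat in degree $0$. The idea is that each $+1$-framed handle addition shifts the degree filtration on $\overline I_{\chi_2}$ up by exactly one: one examines the reducible locus on the elementary cobordism $W$ with $b^+ = 1$, where the relevant moduli space of ASD connections picks up, relative to the negative-definite model, one extra unit of (formal) dimension coming from the self-dual cohomology $H^+(W)$, and this extra dimension is absorbed as one extra power of $x$ in the equivariant localization, exactly as in the computation of \cite{DS1}*{Corollary 4.12} but with the grading shift tracked. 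So the plan is: (i) run the \cite{DS1}*{Corollary 4.12}-style computation of the leading term of $\overline I(W,c)$ on the localized complex for a single $b^+ = 1$ elementary cobordism, identifying the leading coefficient with $\epsilon(W,c)$ as defined by \eqref{eqn:eps0} and the degree shift with $+1$; (ii) compose two such, using that the degree shifts add and the leading coefficients multiply (here one uses that $\epsilon_0$ is multiplicative under composition when the relevant $[c]$'s vanish, which follows from \eqref{eqn:eps0} applied to the glued cobordism together with the Mayer--Vietoris computation of $\mathrm{Tors}\, H^2$ already used in Section \ref{subsec:DMES1}); and (iii) conjugate back by the $b^+ = 0$ outer pieces, which by Lemma \ref{lemma:Phi-calculation} contribute only a degree-$0$ invertible power series and hence do not change the leading term. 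The subtlety to be careful about --- and the place where I would look hardest --- is that the leading term of $\Phi_{W,c}$ could \emph{a priori} depend on the choice of abelian data entering the localization isomorphism $\Phi_{Y,\mathfrak a}$; one needs the observation recorded just before the definition of $q_3(Y)$ that changing $\mathfrak a$ alters $\Phi$ only by a degree-$0$ unit $1 + \sum b_i x^{-i}$, so the degree-$b^+$ leading coefficient is well-defined, and that the cobordism must be chosen "nice enough" at each stage for the continuation maps to be isomorphisms.
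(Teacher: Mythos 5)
The paper does not prove Theorem \ref{thm:suspension-maps} at all. It is imported as a black-box result from the forthcoming reference \cite{DME2}; Section \ref{subsec:DME2} opens with ``We will make use of the following statements, whose proofs are out of the scope of this paper,'' and the ``Dependence on forthcoming work'' paragraph singles out exactly this theorem as the content borrowed from \cite{DME2}. There is therefore no proof in the paper against which to compare your sketch.

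On its own terms your sketch is a plausible reconstruction, but one step is in direct tension with what the paper does say. You propose to obtain the $b^+(W)=2$ case by composing two elementary $b^+ = 1$ pieces, and you also rely on composition in step (ii) of part (c), where you add degree shifts and multiply leading coefficients of $\Phi$ for the two factors. However, immediately after the theorem statement the paper writes, ``We do not claim, prove, or use that these maps compose functorially.'' The well-definedness of a composite-defined $b^+ = 2$ map (independence of the chosen factorization), and the manipulation of $\Phi_{W_1\cup W_2, c}$ as $\Phi_{W_2,c_2}\circ\Phi_{W_1,c_1}$, are precisely instances of the functoriality the paper disclaims; the $b^+ = 1$ pieces are not ``nice'' in the sense of \cite{DMES}, so the functoriality-up-to-homotopy theorem for nice cobordisms does not apply. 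If the map for $b^+ = 2$ is instead constructed directly over $W$ (removing a single reducible and keeping track of the two-dimensional $H^+(W)$, in the style of the $\mathcal S$-complex constructions of \cite{DS1}), as the paper's phrasing suggests, then no functoriality is needed and the degree shift $+b^+(W)$ appears in one stroke rather than additively. Absent the actual content of \cite{DME2}, this is the most likely point at which your plan diverges from the real argument, and it is the place you would need to be most careful.
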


We do not claim, prove, or use that these maps compose functorially. 

\subsection{Surgery exact triangles}\label{subsec:SET}

We will apply two exact triangles in instanton homology. The first is due to Floer, and the version stated here appears as \cite{Scaduto}*{Theorem~2.1}.

If $(Y,K)$ is a closed oriented $3$-manifold equipped with a framed knot $K$, the surgery $Y_{p/q}(K)$ is defined by removing the neighborhood of $K$ parameterized as $D^2 \times S^1$ and regluing along the boundary by a diffeomorphism which sends the meridian $\mu$ to $p\mu + q\lambda$, where $\lambda$ is the chosen longitude. When $Y = S^3$ we equip $K$ with its Seifert framing. Write $\tilde K \subset Y_{p/q}(K)$ for the dual knot, the image of $\{0\} \times S^1$.  

\begin{theorem}\label{thm:FloerTriangle}
For any oriented $1$-manifold $w \subset Y \setminus K$, there exists an exact triangle \[\cdots \to I^\#(Y, w) \to I^\#(Y_0(K), w \cup \tilde K) \to I^\#(Y_1(K), w) \to \cdots\] The maps in this exact triangle are induced by $2$-handle cobordisms.
\end{theorem}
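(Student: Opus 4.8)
The plan is to treat Theorem~\ref{thm:FloerTriangle} as Floer's surgery exact triangle for $I^\#$: build the three maps as $2$-handle cobordism maps, and then verify exactness by the usual two-step mechanism --- vanishing of consecutive composites, followed by a gluing argument identifying a mapping cone.

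First I would organize the \emph{surgery triad}. The slopes $\infty$, $0$, $1$ on $K$ are pairwise at Farey distance one, so the three manifolds $Y=Y_\infty(K)$, $Y_0(K)$, $Y_1(K)$ are cyclically related by surgery on dual knots: $Y_0(K)$ is $0$-surgery on $K\subset Y$, $Y_1(K)$ is surgery on $\tilde K\subset Y_0(K)$, and $Y$ is surgery on the dual knot in $Y_1(K)$. This produces $2$-handle cobordisms $W_1\colon Y\to Y_0(K)$, $W_2\colon Y_0(K)\to Y_1(K)$, $W_3\colon Y_1(K)\to Y$, each carrying an embedded cobordism surface interpolating between the $1$-manifolds $w$, $w\cup\tilde K$, $w$ on the three ends --- on $W_1$ this surface is $w\times I$ together with the cocore disk of the attached handle, whose boundary is exactly $\tilde K$, and dually for $W_2$. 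The extra component $\tilde K$ on the middle term is forced here: it is what lets the surface cobordisms close up around the triad, and it makes $(Y_0(K),w\cup\tilde K)$ admissible whenever $(Y,w)$ is. By Kronheimer--Mrowka's functoriality for $I^\#$ (equivalently, through the identification of Theorem~\ref{thm:Isharp-is-Itilde}), these decorated cobordisms induce maps $F_1,F_2,F_3$ among the groups in the stated triangle.

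Next I would show $F_{i+1}\circ F_i=0$. The composite cobordism $W_{i+1}\circ W_i$ contains a distinguished embedded $2$-sphere --- the union of a cocore disk from $W_i$ and a core disk from $W_{i+1}$ glued along a dual knot --- of self-intersection $-1$, reflecting the Farey-distance-one condition; blowing this sphere down, together with the associated vanishing in instanton theory, gives $F_{i+1}F_i=0$. Alternatively one stretches the neck along the separating hypersurface in the blow-up and runs an index/energy count showing that the moduli spaces over $W_{i+1}\circ W_i$ contributing in the relevant dimension are empty.

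The crux is exactness, and I expect this to be the main obstacle. Vanishing of consecutive composites does not suffice; one must show that a chain-level representative of $F_1$ has mapping cone quasi-isomorphic to a complex computing $I^\#(Y_1(K),w)$, compatibly with $F_2$ and $F_3$. This comes from a neck-stretching and gluing analysis on the composite cobordisms: stretching along the surgery tori, one matches broken ASD trajectories with the generators and differential of the cone and assembles the gluing maps into the desired quasi-isomorphism, the algebra being packaged by the standard exact-triangle detection lemma once one piece of data beyond $F_1,F_2,F_3$ (a fourth map, or a nullhomotopy) is produced. The hard part is precisely this gluing step --- controlling the moduli over the composite cobordisms, ruling out bubbling, and matching the mod-$2$ counts. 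Since all of this is already in place for $I^\#$, in practice I would simply cite \cite{Scaduto}*{Theorem~2.1}, which rests on Floer's original surgery exact triangle (in the form written up by Braam--Donaldson) together with the excision-based functoriality of framed instanton homology \cite{KMframed}, and merely record that the maps are the $2$-handle cobordism maps asserted. One could instead attempt to realize the triangle as an algebraic mapping-cone statement at the level of the complexes $\widetilde C$, but the jump in $b_1$ at the $0$-surgery makes the "niceness" of $W_1$ delicate, so the classical route is cleaner.
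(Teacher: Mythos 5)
Your proposal ends in the same place as the paper: the paper does not prove this theorem but simply cites \cite{Scaduto}*{Theorem~2.1} and then records the decorated $2$-handle cobordism descriptions (cocore for $Y\to Y_0$, core for $Y_0\to Y_1$, trivial for $Y_1\to Y$), which is exactly what you do after your sketch of the underlying mechanism. The extra discussion of vanishing composites, neck-stretching, and the delicacy of running the argument at the level of $\widetilde C$ because of the jump in $b_1$ at the $0$-surgery is all reasonable but goes beyond what the paper includes.
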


More precisely, there is a $2$-handle cobordism $W_0: Y \to Y_0(K)$, and this is equipped with the bundle represented by the union of $I \times w$ and the cocore of the $2$-handle, whose boundary is $\tilde K$. The cobordism $W_1: Y_0(K) \to Y_1(K)$ is again a $2$-handle cobordism, now with bundle represented by the union of $I \times w$ and the core of the $2$-handle, whose boundary is $-\tilde K$. Finally, the cobordism $W_\infty: Y_1(K) \to Y$ is again a $2$-handle cobordism, now equipped with the bundle represented by $I \times w$. \\

We will also use the distance-$2$ surgery exact triangle, first introduced in \cite{CDX}. That reference is focused on the generality of `surgery $(N+1)$-gons in $\SU(N)$-Floer homology', and is stated more generally than we will need. The following is the specialization of \cite{CDX}*{Theorem~1.6} to our situation.

\begin{theorem}\label{thm:distance2}
For any oriented $1$-cycle $w \subset Y \setminus K$, there exists an exact triangle \[\to \cdots I^\#(Y_{-1}(K),w) \to I^\#(Y,w) \oplus I^\#(Y,w \cup K) \to I^\#(Y_1(K),w \cup \tilde K) \to \cdots \]
The maps to and from the $I^\#(Y)$ factors are induced by the $2$-handle cobordisms.
\end{theorem}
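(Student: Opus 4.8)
The plan is to obtain the distance‑$2$ triangle as the $\SU(2)$ case of the surgery polygon of \cite{CDX}*{Theorem~1.6}, applied to the slopes $-1$, $\infty$, $1$ of $K$, which sit at pairwise Farey distances $1$, $1$, $2$ with $\infty$ --- that is, $Y$ itself --- the vertex opposite the distance‑$2$ edge. The first thing to pin down is this combinatorial picture: in the distance‑$2$ construction the opposite vertex appears ``doubled'', and the two copies carry $1$‑cycles differing by the class of $K$, which is why the middle term is $I^\#(Y,w)\oplus I^\#(Y, w\cup K)$; the vertex $Y_1(K)$ picks up the bundle $w\cup\tilde K$ carried by the cocore of the relevant $2$‑handle. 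Once these conventions are fixed, the task reduces to recalling why the construction is exact.

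The mechanism I would describe, and the order of steps, is the standard one for surgery triangles in instanton theory. First, apply Theorem~\ref{thm:FloerTriangle} to $K$ with two framings differing by a meridian, producing two overlapping distance‑$1$ triangles, one through $Y_{-1}(K)$ and one through $Y_1(K)$, both involving $Y$ and a surgery on $K$, with the core/cocore bundles prescribed by that theorem. Second, realize every $I^\#$ group by an honest chain complex and every $2$‑handle map by an honest chain map, and assemble them into a single twisted complex --- an iterated mapping cone --- built from these two triangles. Third, the instanton‑theoretic input: verify that the composites of surgery $2$‑handle maps that must cancel are null‑homotopic; this is the usual neck‑stretching/gluing argument showing that a composite of two such $2$‑handle cobordisms induces the zero map on $I^\#$, together with a coherent choice of the resulting null‑homotopies. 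Fourth, establish the quasi‑isomorphism identifying the total complex, after cancelling the acyclic intermediate‑surgery subquotients, with the mapping cone of a map $I^\#(Y_{-1}(K),w) \to I^\#(Y,w)\oplus I^\#(Y, w\cup K)$; in \cite{CDX} this comes from a direct count of instantons over the composite cobordisms with several cylindrical ends. Exactness of the triangle then follows from the standard exact‑triangle detection lemma (cf.\ \cite{Scaduto}).

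I expect the main obstacle to be this fourth step: producing the iterated cone together with coherent homotopies so that the cancellation is rigorous, and --- the genuinely delicate geometric point --- checking that the two surviving copies of $Y$ carry the bundles $w$ and $w\cup K$ rather than two copies of $w$, and that $Y_1(K)$ carries $w\cup\tilde K$. Getting any of these bundle assignments wrong would alter the Euler characteristics and so corrupt the dimension count behind Theorem~\ref{thm:surgerydim}. Since all of this is carried out in \cite{CDX}, in practice I would simply invoke their theorem and devote effort only to matching their slope and bundle conventions to ours.
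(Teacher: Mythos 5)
Your proposal is correct and takes essentially the same route as the paper: the paper likewise does not reprove the distance-$2$ triangle but invokes \cite{CDX}*{Theorem~1.6} as a black box, devoting attention only to matching conventions (taking $\mathcal Y$ to be the connected sum of the knot exterior with $T^3$ with bundle $w$ plus a fixed circle in $T^3$, and taking $\lambda = \mu_0 - \lambda_0$, $\mu = \mu_0$), and citing \cite{CDX}*{Section~6.2} for the identification of the connecting maps with $2$-handle cobordism maps. Your extra sketch of the iterated-cone mechanism inside \cite{CDX} is accurate but not needed for the paper's deduction; the substantive task, as you say in your final paragraph, is the slope and bundle bookkeeping, and your description of the ``doubled'' $Y$ vertex carrying $w$ and $w \cup K$ matches the paper's.
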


To obtain this from the statement of their theorem, take $\mathcal Y$ to be the connected sum of the complement of the chosen tubular neighborhood of $K$ with $T^3$, and use the bundle given by the disjoint union of $w$ and a fixed circle in $T^3$. Writing $\mu_0, \lambda_0$ the meridian and longitude with respect to our chosen framing, in their theorem take $\lambda = \mu_0 - \lambda_0$ and $\mu = \mu_0$. The statement about cobordism maps can be extracted from the more precise statement in \cite{CDX}*{Section~6.2}.

\section{Integer surgeries}\label{sec:integer-case}
In this section, we prove Theorem \ref{thm:surgerydim} and Theorem \ref{thm:q3-surgery} in the case that $r$ is an integer. The proof involves comparing the induced maps on $\widehat I_{\chi_2}(Y,w)$ to the induced maps on $I^\#(Y,w;\mathbb F) = \widetilde I(Y,w; \mathbb F)$, and this comparison is described in Section \ref{subsec:algebra}. In Section \ref{subsec:q3-Z}, we use this to prove the key technical lemmas, which determine the value $q_3(S^3_m(K))$ on odd integer surgeries, and relate this to the dimension of $I^\#$. In Section \ref{subsec:dim-Isharp-Z} we use this to define the integers $r_2(K), M(K)$ and prove Theorem \ref{thm:surgerydim} and Theorem \ref{thm:q3-surgery}. 

\subsection{Algebraic preliminaries}\label{subsec:algebra}
Suppose $(Y,w)$ is an admissible pair. We study the finitely generated $\mathbb F_2[x]$-module $\widehat I_{\chi_2}(Y,w)$. We define $\mathbb F_2[x]$-modules by the formulas 

\begin{align*}
    T(Y,w) &= \{a \in \widehat I_{\chi_2}(Y,w) \mid x^n a = 0 \text{ for some } n \ge 0\} \\
    T^\perp(Y,w) &= \{a \in \widehat I_{\chi_2}(Y,w) \mid p(x) a = 0 \text{ for some } p(x) \in \mathbb F_2[x] \text{ with } p(0) = 1\} \\
    F(Y,w) &= \widehat I_{\chi_2}(Y,w)/\text{Tors}.
\end{align*}

When the dependence on $(Y,w)$ is clear, it will be suppressed from notation. Notice that the torsion submodule of $\widehat I_{\chi_2}$ is the direct sum of $T$ and $T^\perp$. It follows that there is a noncanonical direct sum decomposition \[\widehat I_{\chi_2} \cong T \oplus T^\perp \oplus F;\] the $T$ and $T^\perp$ summands are naturally associated to $\widehat I_{\chi_2}$, but the free summand arises from a choice of splitting.

We will be interested in relating this to $\widetilde I(Y,w;\mathbb F) \cong I^\#(Y,w;\mathbb F)$. Recall the long exact sequence \eqref{eqn:hat-tilde}, which gives rise to a short exact sequence 
\begin{equation}\label{eqn:tilde-hat-SES}
0 \to \coker(x \mid \widehat I_{\chi_2}(Y,w)) \to \widetilde I(Y,w) \to \ker(x \mid \widehat I_{\chi_2}(Y,w)) \to 0
\end{equation}
where $\coker(x \mid M)$ means the cokernel of multiplication-by-$x$ on $M$, and $\ker(x \mid M)$ is similar. 

To understand the outer terms, we first define an additional vector space, \[V(Y,w) \cong T(Y,w)/x = \coker(x \mid T(Y,w)).\] We take a moment to investigate this group, noticing that \[\ker(x \mid \widehat I_{\chi_2}(Y,w)) = \ker(x \mid T(Y,w)).\] 

\begin{lemma}\label{lemma:gr-V}
There exists a natural filtration on $V(Y,w)$ and a natural isomorphism $\ker(x \mid \widehat I_{\chi_2}(Y,w)) \cong \gr V(Y,w)$.
\end{lemma}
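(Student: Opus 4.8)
The plan is to directly analyze the $\mathbb F_2[x]$-module structure of the finite-length torsion module $T = T(Y,w)$. Since $T$ is a finitely-generated torsion $\mathbb F_2[x]$-module all of whose elements are killed by a power of $x$, it is a finite-dimensional $\mathbb F_2$-vector space, and by the structure theorem it decomposes (non-canonically) as a direct sum of cyclic modules $\mathbb F_2[x]/(x^{n_i})$. The key point is that $\ker(x \mid \widehat I_{\chi_2}) = \ker(x \mid T)$ picks out, in each cyclic summand $\mathbb F_2[x]/(x^{n_i})$, the socle $x^{n_i - 1} \cdot \mathbb F_2[x]/(x^{n_i})$, whereas $V = T/xT = \coker(x \mid T)$ picks out the top $\mathbb F_2[x]/(x^{n_i})$-quotient, i.e. the ``bottom'' generator in each summand. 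These two spaces have the same dimension — namely the number of cyclic summands — but they are not naturally isomorphic; the natural map relating them is the composite ``multiply by $x^{n_i-1}$,'' which only makes sense after passing to an associated graded.

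Concretely, I would define the filtration on $V(Y,w) = T/xT$ as follows. Multiplication by $x$ on $T$ descends to a well-defined nilpotent operator on $V$ only after care, so instead I filter $T$ itself by the images $x^k T$, and then transport this to $V$ via the natural quotient map $T \twoheadrightarrow T/xT = V$: set $F_k V = \mathrm{image}\big(x^k T \cap \ker(x \mid T) \to V\big)$, or more cleanly, filter $V$ by $F_k V = \mathrm{image}\big((x^k T + xT)/xT\big)$ — the images of the submodules $x^kT$ in the quotient $V$. This is manifestly natural in $(Y,w)$ since it is built only from the module structure. The associated graded $\gr V = \bigoplus_k F_k V / F_{k+1} V$ then has a copy of $\mathbb F_2$ for each cyclic summand $\mathbb F_2[x]/(x^{n_i})$ sitting in degree exactly $n_i - 1$, which matches degree-by-degree the decomposition of $\ker(x \mid T)$, where the socle of $\mathbb F_2[x]/(x^{n_i})$ naturally records the integer $n_i - 1$ via the $x$-divisibility of its generator inside $T$. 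The natural isomorphism $\ker(x \mid \widehat I_{\chi_2}) \cong \gr V$ is then obtained by sending a class $a \in \ker(x \mid T)$ with $a \in x^k T \setminus x^{k+1}T$ to its image in $F_k V / F_{k+1} V$ under the quotient map, checking this is well-defined and an isomorphism summand-by-summand (the choice of decomposition drops out because the source, target, and map are all canonical).

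The main obstacle I expect is checking naturality carefully and confirming that the assignment $a \mapsto [a] \in \gr V$ is genuinely well-defined and bijective without reference to a splitting. One has to verify that if $a \in \ker(x\mid T)$ lies in $x^k T$, then its image in $V = T/xT$ lands in $F_k V$ and is nonzero in $F_k V/F_{k+1}V$ precisely when $a \notin x^{k+1}T$; this amounts to the elementary observation that in $\mathbb F_2[x]/(x^n)$ the socle element $x^{n-1}$ generates $x^{n-1}(\mathbb F_2[x]/(x^n))$, whose image in the top quotient $\mathbb F_2[x]/(x)$ is a basis vector of the degree-$(n-1)$ graded piece. Summing over the cyclic summands gives the isomorphism, and naturality follows because every object and every map in sight (the filtration $x^k T$, the quotient $T \to V$, the kernel $\ker(x\mid T)$) is functorial in the $\mathbb F_2[x]$-module $\widehat I_{\chi_2}(Y,w)$, which is itself functorial in $(Y,w)$ with respect to nice cobordisms.
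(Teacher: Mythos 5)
Your high-level picture is right: both $\ker(x\mid T)$ and $V = T/xT$ count cyclic summands, the isomorphism between them can only be expected after passing to an associated graded, and the grading ought to record the length of each cyclic summand (equivalently, the $x$-divisibility of the kernel generator). This is exactly the idea the paper uses. However, the explicit filtration and map you write down are both incorrect, and the error is substantial.

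The problem is a direction confusion between images and preimages. You filter $V = T/xT$ by the images of the submodules $x^k T$. But for $k \ge 1$ we have $x^k T \subseteq xT$, so these images are all zero; your filtration collapses to $F_0 V = V$, $F_k V = 0$ for $k \ge 1$, regardless of the module $T$. Consequently your proposed isomorphism is the zero map: if $a \in \ker(x\mid T)$ lies in $x^k T$ with $k \ge 1$, then $a \in xT$, so its image in $V$ under the quotient map is zero, not a basis vector. Concretely, for $T = \mathbb F_2[x]/(x^3)$ the kernel generator is $x^2$, which vanishes in $T/xT$; your rule sends it to $0 \in F_2 V/F_3 V = 0$. (Your parenthetical claim that ``the image of $x^{n-1}$ in the top quotient $\mathbb F_2[x]/(x)$ is a basis vector'' is false for $n \ge 2$.) The filtration that actually works is the one by \emph{lifts}: declare $v \in F_r V$ when some $\hat v \in T$ with $\pi(\hat v) = v$ satisfies $x^r \hat v = 0$. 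Then a cyclic summand of length $n$ contributes a graded piece in degree $n$, and the isomorphism $\gr V \to \ker(x\mid T)$ is $[v] \mapsto x^{d-1}\hat v$ for $\hat v$ a lift killed by $x^d$, not the quotient map $T \to T/xT$ applied to the kernel element. With your filtration, neither this nor the inverse map ``$a = x^k u \mapsto [u]$'' can even be stated, because your $F_k V$ is zero for $k \ge 1$.

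So: the strategy matches the paper's, but the proof as written has a gap that is not a matter of minor cleanup. You need to replace the filtration entirely, and re-derive the map, along the lines sketched above.
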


Here, if $M$ is a filtered module, the associated graded module $\gr M$ is defined by \[\gr M = \bigoplus_{r \in \mathbb Z} \frac{F_r M}{F_{r-1} M}.\]

\begin{proof}
This is a purely algebraic statement. Suppose $M$ is a finitely-generated $\mathbb F_2[x]$-module, and define 
\[T(M) = \bigcup_{n \ge 0} \ker(x^n \mid M), \quad V(M) = \coker(x \mid T(M)).\]
Write $\pi: T(M) \to V(M)$ for the projection map. Define a filtration on $V(M)$ as follows: $v \in F_r V(M)$ if there is some $\hat v \in T(M)$ with $\pi(\hat v) = v$ and $x^r \hat v = 0$. It is clear from the definition that this filtration is functorial in $M$. 

Define a map $\psi_M: \gr V(M) \to \ker(x \mid M)$ as follows: for $[v] \in \gr_d V(M)$, choose a representative with $x^d v = 0$, and send $\psi_M[v] = x^{d-1} v$, which is in the kernel of $x$. It is well-defined: if $[\bar v] = [v]$, then $\hat v = v + u$ where $x^{d-1} u = 0$. In particular, $x^{d-1} \hat v = x^{d-1} v$. It is straightforward to see that $\psi_M$ defines a natural transformation.

The map $\psi_M$ is injective: if $[v] \in \gr_d V(M)$ has $\pi(\hat v) = v$ and $x^{d-1} \hat v = 0$, then $v \in F_{r-1} V(M)$ and thus $[v] = 0$. The map $\psi_M$ is also surjective: if $w \in \ker(x \mid M)$, by definition $w \in T(M)$. Because $M$ is finitely-generated, there is a largest integer $d$ for which $w = x^d u$ for some $u \in T(M)$. Then $\psi_M[u] = w$. Thus, $\psi_M$ is a natural isomorphism.
\end{proof}

Next, we observe that $Z(Y,w)$ also comes equipped with a filtration, though this filtration does not come from elementary algebra. Recalling $F(Y,w) = \widehat I_{\chi_2}(Y,w)/\text{Tors}$, we have a homomorphism \[\mathfrak j_{Y,w,\mathfrak a} = \Phi_{Y,w,\mathfrak a} j_{Y,w}: F(Y,w) \to Z(Y,w)\llbracket x^{-1}, x]\] which is injective and an isomorphism after tensoring the domain with $\mathbb F\llbracket x^{-1}, x]$. While $\mathfrak j_{Y,w,\mathfrak a}$ depends on a choice of abelian data, the discussion of Section \ref{subsec:DMES2} shows that its \textit{leading order term} is well-defined. So we define $F_r Z(Y,w)$ to be the set of elements $z$ for which there exists some $a \in \widehat I_{\chi_2}(Y,w)$ with 
\[\mathfrak j_{Y,w}(a) = z x^r + O(x^{r-1}).\] 
This defines a filtration of vector spaces. Finally, every element of $Z(Y,w)$ lies in some level of this filtration: because $\mathfrak j_{Y,w}$ is an isomorphism after tensoring with $\mathbb F\llbracket x^{-1}, x]$, for any $z$ there exists some $a \in F(Y,w)$ and power series $p(x)$ so that $z = p(x) \mathfrak j_{Y,w,\mathfrak a}(a)$. Writing $p(x) = x^{-d} + O(x^{-d-1})$, we have $p(x)^{-1} z = \mathfrak j_{Y,w,\mathfrak a}(a)$, so \[\mathfrak j_{Y,w,\mathfrak a}(a) = z x^{d} + O(x^{d-1})\] and $z \in F_{d} Z(Y,w)$ as desired. 

\begin{lemma}
With respect to the filtration above, there exists an isomorphism \[\psi: \coker(x \mid F(Y,w)) \cong \gr Z(Y,w).\] The map $\epsilon_0(W,c)$ is filtered of level $-b^+(W)$, and this isomorphism takes the map induced by a cobordism $(W,c)$ to $\gr \epsilon_0(W,c).$
\end{lemma}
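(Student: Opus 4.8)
The plan is to treat this as a statement in commutative algebra about the finitely generated free $\mathbb F[x]$-module $F=F(Y,w)=\widehat I_{\chi_2}(Y,w)/\mathrm{Tors}$, which has rank $\dim_{\mathbb F}Z(Y,w)$, together with the injective $\mathbb F[x]$-linear map $\mathfrak j=\mathfrak j_{Y,w,\mathfrak a}\colon F\hookrightarrow Z(Y,w)\llbracket x^{-1},x]$ of the previous paragraph, which becomes an isomorphism after extending scalars to $\mathbb F\llbracket x^{-1},x]$. Since $Z(Y,w)=0$ unless $[w]_2=0$, in which case all maps in the statement are zero, I may assume $w=\varnothing$. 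Using $\mathfrak j$ to regard $F$ as an $\mathbb F[x]$-lattice inside $W:=Z\llbracket x^{-1},x]$, I filter $W$ by the $\mathbb F\llbracket x^{-1}\rrbracket$-submodules $L_r=\bigoplus_{i\le r}Z x^i$, so that $L_r/L_{r-1}\cong Z$, and I set $F^{\le r}=\mathfrak j^{-1}(L_r)$. Unwinding the definition of $F_rZ$, the map sending $a$ to the coefficient of $x^r$ in $\mathfrak j(a)$ induces a natural isomorphism $F^{\le r}/F^{\le r-1}\xrightarrow{\ \sim\ }F_rZ$. The subspaces $F^{\le r}$ are independent of the abelian data, since $\mathfrak j$ depends on $\mathfrak a$ only up to multiplication by a power series $1+O(x^{-1})$, which preserves each $L_r$; and $F^{\le r}=F$ for $r\gg 0$ while $F^{\le r}=0$ for $r\ll 0$, since $F$ is finitely generated.

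Next I would introduce the filtration $\overline L_r:=\mathrm{image}\big(F^{\le r}\to F/xF\big)$, an exhaustive, separated filtration of the finite-dimensional space $\coker(x\mid F)=F/xF$. The crux is the identity $F^{\le r}\cap xF=xF^{\le r-1}$, which is immediate from $xb\in L_r\iff b\in L_{r-1}$ together with injectivity of multiplication by $x$ on the free module $F$. It yields $\overline L_r/\overline L_{r-1}\cong F^{\le r}/\big(F^{\le r-1}+xF^{\le r-1}\big)$, and the coefficient-of-$x^r$ map carries $F^{\le r-1}$ to $0$ and carries $xF^{\le r-1}$ exactly onto $F_{r-1}Z\subseteq F_rZ$ (the coefficient of $x^r$ in $x\mathfrak j(b)$ being the coefficient of $x^{r-1}$ in $\mathfrak j(b)$), so this quotient is naturally $F_rZ/F_{r-1}Z=\gr_rZ$. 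Summing over $r$ gives a natural isomorphism $\gr^{\overline L}\!\big(\coker(x\mid F)\big)\cong\gr Z$; choosing a splitting of the finite filtration $\overline L_\bullet$ then produces the isomorphism $\psi$. This is in the spirit of Lemma~\ref{lemma:gr-V}, but I would stress that the canonical content — the only thing the sequel uses — is the identification of associated gradeds together with the compatibility below, rather than the splitting itself.

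For the cobordism compatibility, let $(W,c)\colon(Y,w)\to(Y',w')$ have $b^+(W)\le 2$, so $f=\widehat I(W,c)$ is defined on $\widehat I_{\chi_2}$ (by niceness when $b^+(W)=0$, by Theorem~\ref{thm:suspension-maps} otherwise) and, being $\mathbb F[x]$-linear, descends to a map $F(Y,w)\to F(Y',w')$. Because $f$ commutes with the connecting maps $j$ and with the localization isomorphisms $\Phi$, we have $\mathfrak j_{Y',w'}\circ f=\Phi_{W,c}\circ\mathfrak j_{Y,w}$, where by Lemma~\ref{lemma:Phi-calculation} (for $b^+(W)=0$) and Theorem~\ref{thm:suspension-maps}(c) (for $b^+(W)\in\{1,2\}$) the operator $\Phi_{W,c}$ is given by multiplication by $\epsilon_0(W,c)\,x^{b^+(W)}+O(x^{b^+(W)-1})$. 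Hence $\Phi_{W,c}$ sends an element of $W$ of top degree $\le r$ to one of top degree $\le r+b^+(W)$, carrying its coefficient $z$ of $x^r$ to $\epsilon_0(W,c)(z)$ as the coefficient of $x^{r+b^+(W)}$. Reading this through the constructions above yields all three assertions: $\epsilon_0(W,c)$ maps $F_rZ(Y)$ into $F_{r+b^+(W)}Z(Y')$, i.e.\ is filtered of level $-b^+(W)$; $f$ carries $\overline L_r(Y)$ into $\overline L_{r+b^+(W)}(Y')$ (using $f(xF)\subseteq xF'$ to descend to $\coker(x\mid F)$); and on associated gradeds $f$ agrees, via the isomorphisms $\gr^{\overline L}(\coker(x\mid F))\cong\gr Z$, with $\gr\epsilon_0(W,c)$.

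I expect the main obstacle to be reconciling this clean associated-graded picture with the literal assertion that a single isomorphism $\psi$ intertwines the cobordism maps with $\gr\epsilon_0$. The coefficient-of-$x^r$ construction is not linear on $F/xF$ — two representatives of a class of different $\mathfrak j$-degrees have unrelated leading coefficients — and there is no canonical splitting of $\overline L_\bullet$ in general (a genuinely canonical $\psi$ would require, for instance, the gradings of the elements of $\mathfrak Z(Y,\varnothing)$ to be pairwise distinct modulo $\deg x$). The resolution I would adopt is to phrase the naturality entirely on associated gradeds, where the identification with $\gr\epsilon_0(W,c)$ holds on the nose and is all that is needed for the dimension bookkeeping of Theorem~\ref{thm:surgerydim}. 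The only other points requiring care are the identity $F^{\le r}\cap xF=xF^{\le r-1}$ and the leading-coefficient bookkeeping under multiplication by $x$, on which the whole identification rests.
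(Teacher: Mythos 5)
Your proof is correct, and it is considerably more careful than the argument in the paper.  The paper defines $\psi(a)$ by taking the leading coefficient of $\mathfrak j_{Y,w,\mathfrak a}(a)$, placed in the graded piece $\gr_r Z$ determined by the leading exponent $r$, and then claims this ``descends'' to $\coker(x\mid F)$ because $\psi(ax)=0$.  As you observe, that formula does not define a linear map: if $\mathfrak j(a)$ and $\mathfrak j(xb)$ have different leading exponents, then $\psi(a+xb)$ picks out whichever has the higher leading exponent, so $\psi(a+xb)\ne\psi(a)$ in general, and the injective/surjective dimension count the paper runs does not directly apply to a nonlinear set map.  Your lattice-theoretic repair — setting $F^{\le r}=\mathfrak j^{-1}(L_r)$, establishing the identity $F^{\le r}\cap xF=xF^{\le r-1}$ (which uses only that $x$ acts injectively on the free module $F$), and deducing a natural isomorphism $\overline L_r/\overline L_{r-1}\cong\gr_r Z$ — is exactly the right way to make the isomorphism precise, and your verification that the independence of abelian data comes from $\mathfrak j$ being well-defined up to multiplication by $1+O(x^{-1})$ is correct.

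You are also right to flag that the lemma's phrasing ``this isomorphism takes the cobordism map to $\gr\epsilon_0(W,c)$'' should be read as an assertion about associated gradeds.  A single choice of splitting of $\overline L_\bullet$ for each $(Y,w)$ will not intertwine the cobordism maps on the nose (the off-diagonal entries that arise from any choice of splitting are exactly what the paper later calls $f_{12},f_{13},f_{23}$ in equation \eqref{eqn:triangular-matrix-form} and describes as ``difficult to control''), whereas the middle block of that upper-triangular matrix is intrinsically a map $\gr Z(Y,w)\to\gr Z(Y',w')$ on associated gradeds and is canonically $\gr\epsilon_0(W,c)$.  That associated-graded identification, together with the fact that the cobordism maps are filtered of level $-b^+(W)$ (which you derive correctly from Lemma~\ref{lemma:Phi-calculation} and Theorem~\ref{thm:suspension-maps}(c) via $\mathfrak j_{Y'}\circ f=\Phi_{W,c}\circ\mathfrak j_Y$), is precisely what the downstream dimension arguments use, so nothing is lost.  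In short: same underlying idea as the paper, but your version is the one that actually compiles.
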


To say that $\epsilon_0(W,c)$ is filtered of level $-b^+(W)$ means that $F_r Z(Y,w)$ maps into $F_{r+b^+(W)} Z(Y',w')$. This induces an associated graded map sending $\gr_r$ to $\gr_{r+b^+(W)}$.

Suppose $(W,c): (Y,w, \mathfrak a) \to (Y', w', \mathfrak a')$ is either nice or has $0 < b^+(W) \le 2$. In the first case, we obtain an induced map on $\widehat I_{\chi_2}$ by the work of Section \ref{subsec:DMES2}; in the second case we obtain an induced map from Theorem \ref{thm:suspension-maps}. In either case, we obtain an induced map on $F(Y,w) = \widehat I_{\chi_2}(Y,w)/\text{Tors}$, and therefore an induced map on $\coker(x \mid F(Y,w))$. The claim is that the given isomorphism takes this induced map to $\gr \epsilon_0(W,c)$.

\begin{proof}
The isomorphism is defined as follows. Given $a \in F(Y,w)$, if $\mathfrak j_{Y,w,\mathfrak a}(a) = z x^r + O(x^{r-1})$ for some choice of abelian data $\mathfrak a$, send $\psi(a) = [z] \in \gr_r Z(Y,w).$ Observe first that this map is well-defined because the leading order term of $\mathfrak j_{Y,w,\mathfrak a}$ is independent of $\mathfrak a$. It descends to $\coker(x \mid F)$ because \[\mathfrak j_{Y,w,\mathfrak a}(ax) = z x^{r+1} + O(x^r),\] but $z \in F_r Z(Y,w)$, so $[z] = 0 \in \gr_{r+1} Z(Y,w).$ 

The resulting map is surjective because the filtration is exhaustive: if $z \in F_r Z(Y,w)$, then there exists some $a \in F(Y,w)$ with $\mathfrak j_{Y,w,\mathfrak a}(a) = zx^r + O(x^{r-1})$, and thus $\psi[a] = [z]$. Injectivity follows because $\psi$ is a surjective map between vector spaces of the same dimension. 

The map $\epsilon_0$ is filtered, and its associated graded is equal to this induced map, as a consequence of Lemma \ref{lemma:Phi-calculation} when $b^+(W) = 0$ and of Theorem \ref{thm:suspension-maps}(c) when $0 < b^+(W) \le 2$. 
\end{proof}

The map $\gr \epsilon_0(W,c)$ is of great interest to us, for the following reason.

\begin{prop}\label{prop:q3-eps}
Suppose $(W,c): Y \to Y'$ is a cobordism between $\mathbb F$-homology spheres with $\epsilon_0(W,c) = 1$. Then we have \[\gr \epsilon_0(W,c) = \begin{cases} 1 & q_3(Y') = q_3(Y) - b^+(W) \\ 0 & q_3(Y') > q_3(Y) - b^+(W). \end{cases}\]
\end{prop}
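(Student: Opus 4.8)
The plan is to unravel the definition of $q_3$ through the filtration on $Z(Y) = \mathbb F$ and the connecting homomorphism $j$, and then to match the associated graded of $\epsilon_0$ against the relevant graded piece of $Z(Y')$. Since $Y, Y'$ are $\mathbb F$-homology spheres, both $Z(Y)$ and $Z(Y')$ are one-dimensional, generated by $\theta$ and $\theta'$, so each graded piece $\gr_r Z$ is at most one-dimensional, and the map $\gr \epsilon_0(W,c)$ is either $0$ or $1$. By definition, $q_3(Y) = \max\{-\deg(x) \mid x \in \operatorname{im}(j_Y)\}$; since $\mathfrak j_{Y,\mathfrak a} = \Phi_{Y,\mathfrak a} j_Y$ sends a generator of $F(Y) = \widehat I_{\chi_2}(Y)/\mathrm{Tors} \cong \mathbb F_2[x]$ to $x^{-q_3(Y)} + O(x^{-q_3(Y)-1})$, the filtration level of the generator $\theta \in Z(Y)$ is exactly $-q_3(Y)$: that is, $\theta \in F_{-q_3(Y)} Z(Y) \setminus F_{-q_3(Y)-1} Z(Y)$, so $\gr_{-q_3(Y)} Z(Y) = \mathbb F \cdot [\theta]$ and $\gr_r Z(Y) = 0$ for $r > -q_3(Y)$. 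The same holds for $Y'$ with $q_3(Y')$.

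Next I would invoke the previous lemma: the cobordism $(W,c)$ induces a map on $\coker(x \mid F(Y))$, and under the isomorphism $\psi: \coker(x \mid F(Y)) \cong \gr Z(Y)$ this induced map corresponds to $\gr \epsilon_0(W,c)$, which sends $\gr_r Z(Y)$ to $\gr_{r+b^+(W)} Z(Y')$. Applying this with $r = -q_3(Y)$, the map $\gr \epsilon_0(W,c)$ carries $\gr_{-q_3(Y)} Z(Y) = \mathbb F \cdot [\theta]$ into $\gr_{-q_3(Y)+b^+(W)} Z(Y')$. Now compare with the description of $\gr Z(Y')$ above: if $q_3(Y') = q_3(Y) - b^+(W)$, then $-q_3(Y) + b^+(W) = -q_3(Y')$ is exactly the top nonzero filtration level of $Z(Y')$, so the target $\gr_{-q_3(Y')} Z(Y')$ is one-dimensional; if $q_3(Y') > q_3(Y) - b^+(W)$, then $-q_3(Y) + b^+(W) > -q_3(Y')$, so $\gr_{-q_3(Y)+b^+(W)} Z(Y') = 0$ and necessarily $\gr \epsilon_0(W,c) = 0$. (By the inequality $q_3(Y') \ge q_3(Y) - b^+(W)$, implied by Lemma~\ref{lemma:Phi-calculation} and Theorem~\ref{thm:suspension-maps}(c) together with the hypothesis $\epsilon_0(W,c)=1$, these are the only two cases.)

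It remains to show that in the case $q_3(Y') = q_3(Y) - b^+(W)$ we actually get $\gr \epsilon_0(W,c) = 1$ rather than $0$. This is the step I expect to be the main obstacle, since a priori the graded map between two one-dimensional spaces could still vanish. The idea is to use the explicit leading-term formula for $\Phi_{W,c}$: by Lemma~\ref{lemma:Phi-calculation} (for $b^+ = 0$) and Theorem~\ref{thm:suspension-maps}(c) (for $0 < b^+(W) \le 2$), $\Phi_{W,c} = \epsilon_0(W,c) x^{b^+(W)} + O(x^{b^+(W)-1}) = x^{b^+(W)} + O(x^{b^+(W)-1})$ since $\epsilon_0(W,c) = 1$. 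Thus, chasing the commuting square relating $j_Y, j_{Y'}$ and the cobordism maps on $\widehat I_{\chi_2}$ and $\overline I_{\chi_2}$ (from Section~\ref{subsec:DMES2}, respectively Theorem~\ref{thm:suspension-maps}(a)): if $a \in F(Y)$ is a generator with $\mathfrak j_{Y,\mathfrak a}(a) = \theta\, x^{-q_3(Y)} + O(x^{-q_3(Y)-1})$, then $\mathfrak j_{Y',\mathfrak a'}\big((W,c)_* a\big) = \Phi_{W,c}\,\mathfrak j_{Y,\mathfrak a}(a) = \theta'\, x^{-q_3(Y)+b^+(W)} + O(\cdots) = \theta'\, x^{-q_3(Y')} + O(\cdots)$, which is nonzero of exactly the top filtration level. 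Hence $\psi\big((W,c)_*[a]\big) = [\theta'] \ne 0$ in $\gr_{-q_3(Y')} Z(Y')$, so $\gr \epsilon_0(W,c) = 1$, completing the proof. The one subtlety to check carefully is that $(W,c)_* a$ is a generator of (the relevant part of) $F(Y')$ modulo $x$ — equivalently, that the leading coefficient $\theta'$ genuinely survives to $\gr_{-q_3(Y')}$ — but this is immediate from the fact that $-q_3(Y')$ is the top nonzero level of the filtration on $Z(Y')$, so any element of that filtration level with nonzero leading coefficient represents the generator of $\gr_{-q_3(Y')} Z(Y')$.
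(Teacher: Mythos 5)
Your proof is correct and follows essentially the same approach as the paper: both arguments use the leading-term formula for $\Phi_{W,c}$ (via Lemma~\ref{lemma:Phi-calculation} or Theorem~\ref{thm:suspension-maps}(c)) together with the normalizations $\mathfrak j_Y(1) = \theta x^{-q_3(Y)} + O(x^{-q_3(Y)-1})$, $\mathfrak j_{Y'}(1) = \theta' x^{-q_3(Y')} + O(x^{-q_3(Y')-1})$, and then read off whether the image $(W,c)_*(1)$ has nonzero constant term in $F(Y') \cong \mathbb F[x]$. The paper phrases the conclusion directly as "the induced map on $F/x$'' while you route through the isomorphism $\psi: \coker(x \mid F) \cong \gr Z$ and the filtration on $Z$, but by the lemma preceding the proposition these are the same thing.
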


\begin{proof}
We have $F(Y) \cong \mathbb F[x]$ and $\mathfrak j_{Y}(1) = x^{-q_3(Y)} + O(x^{-q_3-1})$, and similarly that $\mathfrak j_{Y'}(1) = x^{-q_3(Y')} + O(x^{-q_3-1})$. The induced map applied to $\mathfrak j_Y(1)$ gives \[x^{-q_3(Y) + b^+(W)} + O(x^{-q_3-b^+-1}).\] As a result, we must have $-q_3(Y') \le -q_3(Y) + b^+(W)$. 

The induced map on $F/x$ is zero unless the leading order term $x^{-q_3(Y')}$ has exponent equal to $-q_3(Y) + b^+(W)$, in which case the induced map on $F/x$ is $1$. 
\end{proof}

On the other hand, we can assemble the preceding discussion into a fairly complete understanding of the induced map on $\widetilde I(W,c)$. We have a pair of natural injections \[V(Y,w) \hookrightarrow \coker(x \mid \widehat I_{\chi_2}(Y,w)) \hookrightarrow \widetilde I(Y,w;\mathbb F).\] Filter $\widetilde I(Y,w;\mathbb F)$ as \[F_r \widetilde I(Y,w;\mathbb F) = \begin{cases} V(Y,w) & r = 0 \\ \coker(x \mid \widehat I_{\chi_2}(Y,w)) & r = 1 \\ \widetilde I(Y,w;\mathbb F) & r \ge 2 \end{cases}\] Naturality of the above injections implies this filtration is natural under cobordism maps. The discussion above gave a natural isomorphism \[\gr_r \widetilde I(Y,w;\mathbb F) = \begin{cases} V(Y,w) & r = 0 \\ \gr Z(Y,w) & r = 1 \\ \gr V(Y,w) & r = 2. \end{cases}\] There exists a (noncanonical) isomorphism \[\widetilde I(Y,w) \cong V(Y,w) \oplus \gr Z(Y,w) \oplus \gr V(Y,w)\] so that, with respect to this isomorphism, \[F_r \widetilde I(Y,w) = \begin{cases} V(Y,w) \oplus 0 \oplus 0 & r = 0 \\ V(Y,w) \oplus \gr Z(Y,w) \oplus 0 & r = 1 \end{cases}\]

Because the cobordism maps respect the filtration, with respect to this direct sum decomposition the induced map $\widetilde I(W,c;\mathbb F)$ can be written as

\begin{equation}\label{eqn:triangular-matrix-form}
    \widetilde I(W,c;\mathbb F) = \begin{pmatrix} f_{11} & f_{12} & f_{13} \\ 0 & \gr \epsilon_0 & f_{23} \\ 0 & 0 & \gr f_{11}\end{pmatrix}
\end{equation}

Here, the map $f_{11} = f_{11}(W,c): V(Y,w) \to V(Y',w')$ is independent of the choice of splitting, and the bottom-right entry is its associated graded map by Lemma \ref{lemma:gr-V}. The entries $f_{12}, f_{13}, f_{23}$ depend on the choice of splitting and are difficult to control.

That the matrix is upper-triangular has some helpful consequences. If $\widetilde I(W,c)$ is injective, then so are both of the maps $f_{11}$ and $\left(\begin{smallmatrix} f_{11} & f_{12} \\ 0 & \gr \epsilon_0 \end{smallmatrix}\right)$. Similarly, if $\widetilde I(W,c)$ is surjective, then so are the maps $\gr f_{11}$ and $\left(\begin{smallmatrix} \gr \epsilon_0 & f_{23} \\ 0 & \gr f_{11} \end{smallmatrix}\right)$. This is especially useful, because if $\gr f_{11}$ is surjective then $f_{11}$ is surjective.

\subsection{The invariant $q_3$ for integer surgeries}\label{subsec:q3-Z}

We now combine the surgery sequence with the observations of the previous section to see that the behavior of $\dim I^\#(-;\mathbb F)$ and the behavior of $q_3$ determine each other. In the statements below, $w \subset S^3_m(K)$ refers to the embedded curve given by $\tilde K$. It is typically only written in the case that $m$ is even, in which case $[w]$ generates $H_1(S^3_m(K);\mathbb F)$.

\begin{lemma}\label{lemma:formula1}
    For $n \ge 0$, we have 
    \begin{align*}\dim I^\#(S^3_{-2n-1}(K);\mathbb F) &= \begin{cases} \dim I^\#(S^3_{-2n}(K),w;\mathbb F) + 1 & q_3(S^3_{-2n-1}(K)) = 0 \\ \dim I^\#(S^3_{-2n}(K),w;\mathbb F) - 1 & q_3(S^3_{-2n-1}(K)) = 1.\end{cases} \\
    \dim I^\#(S^3_{2n+1}(K);\mathbb F) &= \begin{cases} \dim I^\#(S^3_{2n}(K),w;\mathbb F) + 1 & q_3(S^3_{2n+1}(K)) = 0 \\ \dim I^\#(S^3_{2n}(K),w;\mathbb F) - 1 & q_3(S^3_{2n+1}(K)) = -1.
    \end{cases}\end{align*}
\end{lemma}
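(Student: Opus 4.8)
The plan is to feed the two $2$-handle cobordisms from the Floer exact triangle (Theorem~\ref{thm:FloerTriangle}) into the triangular matrix description of cobordism maps on $\widetilde I \cong I^\#$ developed in Section~\ref{subsec:algebra}. Focus on the positive case; the negative case is entirely analogous with signs reversed. Consider the exact triangle relating $I^\#(S^3_{2n}(K),w)$, $I^\#(S^3_{2n+1}(K))$, and $I^\#(S^3)$ (appropriately, the triangle built from the $2$-handle cobordisms $W_{2n}: S^3_{2n}(K) \to S^3_{2n+1}(K)$ and its companions, where the bundle on $S^3_{2n}(K)$ is dual to $\tilde K$, hence nontrivial since $2n$ is even, and the bundle on $S^3_{2n+1}(K)$ is trivial since $2n+1$ is odd). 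From the exact triangle, $\dim I^\#(S^3_{2n+1}(K);\mathbb F)$ equals $\dim I^\#(S^3_{2n}(K),w;\mathbb F)$ plus or minus $1$ according to whether the rank of the relevant connecting map drops or not; precisely, $\dim I^\#(S^3_{2n+1}) - \dim I^\#(S^3_{2n},w) = 2\rho - 1$ where $\rho \in \{0,1\}$ measures a rank defect, so it suffices to show that this defect is governed by $q_3(S^3_{2n+1}(K))$.

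First I would identify which map in the triangle carries the information. The cobordism $W: S^3_{2n}(K) \to S^3_{2n+1}(K)$ has trivial surface class (by the standard computation, the class $[\omega]$ of the core/cocore bundle is $0$ in $H^2(W;\mathbb F)$), so its $\epsilon_0$-component is defined via \eqref{eqn:eps0}; since $S^3_{2n+1}(K)$ is an $\mathbb F$-homology sphere but $S^3_{2n}(K)$ has $H_1(\cdot;\mathbb F) = \mathbb F$, the group $Z(S^3_{2n}(K),w) = 0$ (because $[w]_2 \neq 0$), so the relevant $\epsilon_0$ data lives on the other leg of the triangle: the cobordism $\bar X: S^3_{2n+1}(K) \to S^3$ or $X: S^3 \to S^3_{2n}(K)$, one of which connects the two $\mathbb F$-homology spheres $S^3_{2n+1}(K)$ and $S^3$ and is negative-definite with $H_1 = 0$, hence has $\epsilon_0 = 1$. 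Applying Proposition~\ref{prop:q3-eps} to that cobordism: since $q_3(S^3) = 0$ and the cobordism $S^3_{2n+1}(K) \to S^3$ is negative-definite (so $b^+ = 0$), $\gr\epsilon_0 = 1$ iff $q_3(S^3_{2n+1}(K)) = q_3(S^3) - b^+ = 0$, and $\gr\epsilon_0 = 0$ iff $q_3(S^3_{2n+1}(K)) > 0$, i.e.\ iff $q_3(S^3_{2n+1}(K)) = 1$ (using that $|q_3|$ of a surgery is $\le 1$ here, which follows from the $q_3$ inequality \eqref{eqn:q3-main-ineq} applied to the surgery cobordisms with $b^\pm \le 1$).

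Next I would translate the value of $\gr\epsilon_0$ on that leg into the rank defect in the triangle. Using the triangular matrix form \eqref{eqn:triangular-matrix-form} for each of the three cobordism maps, together with the exactness of the triangle, I would chase ranks: exactness forces the alternating sum of dimensions to vanish, and the upper-triangular structure lets one read off that the only ``free parameter'' controlling $\dim I^\#(S^3_{2n+1})$ versus $\dim I^\#(S^3_{2n},w)$ is precisely whether the $\gr\epsilon_0$ block on the $\mathbb F$-homology-sphere leg is zero or an isomorphism ($1\times 1$, since both $Z$'s are one-dimensional). When $\gr\epsilon_0 = 1$, the map on that leg hits the $\gr Z$ summand, the connecting map into $I^\#(S^3)$ loses a dimension of image there, and one gets the ``$-1$'' case; when $\gr\epsilon_0 = 0$, it does not, and one gets ``$+1$''. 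Combining with the previous paragraph gives exactly the stated dictionary.

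\textbf{Main obstacle.} The hard part will be the bookkeeping in the last step: correctly matching up which of the three legs of the Floer triangle is the negative-definite cobordism between $\mathbb F$-homology spheres, verifying the surface-class computations ($[\nu]$ nonzero iff $n$ odd, $[\omega]$ always zero) so that $\epsilon_0 = 1$ on the correct cobordism and the $Z$-groups vanish where claimed, and then running the rank-chase through three upper-triangular matrices while keeping track of which blocks are forced and which are the ``uncontrolled'' entries $f_{12}, f_{13}, f_{23}$. One must also justify $|q_3(S^3_m(K))| \le 1$ to know the two listed values of $q_3$ exhaust the cases; this comes from \eqref{eqn:q3-main-ineq} applied to the surgery $2$-handle cobordisms in both directions, which have $b^+ = 1, b^- = 0$ or vice versa depending on orientation. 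None of these steps is deep, but getting every sign and every homology computation consistent is where the real work lies.
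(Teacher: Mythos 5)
Your plan captures the same skeleton as the paper's proof: apply Floer's surgery triangle, use the filtered/triangular decomposition of cobordism maps from Section~\ref{subsec:algebra}, and feed $\gr\epsilon_0$ of the negative-definite cobordism between $\mathbb F$-homology spheres into Proposition~\ref{prop:q3-eps}. But as written the plan has both a sign error and a concrete gap, and it obscures an asymmetry the paper handles via duality.

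On the sign: with $W_\infty\colon S^3_{2n+1}(K)\to S^3$ playing the role of the negative-definite cobordism, Proposition~\ref{prop:q3-eps} with $b^+(W_\infty)=0$ gives $\gr\epsilon_0(W_\infty)=1 \Leftrightarrow q_3(S^3_{2n+1}(K))=0$ and $\gr\epsilon_0(W_\infty)=0 \Leftrightarrow q_3(S^3_{2n+1}(K))<0$, i.e.\ $q_3=-1$ (not $q_3=1$; recall $q_3\in\{-1,0\}$ here). And in the triangle $I^\#(S^3)\to I^\#(S^3_{2n},w)\to I^\#(S^3_{2n+1})\xrightarrow{W_\infty} I^\#(S^3)$, $\rank(W_\infty)=1$ gives the ``$+1$'' case, not ``$-1$'' as you wrote; so $\gr\epsilon_0=1$ should correspond to ``$+1$''. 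Your dictionary as stated contradicts the lemma.

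On the gap: the heart of the proof is not a rank chase but the implication ``$I^\#(W_\infty)\ne 0 \Rightarrow \gr\epsilon_0(W_\infty)\ne 0$''. In the coordinates of \eqref{eqn:triangular-matrix-form}, $I^\#(W_\infty)$ has only two potentially nonzero entries, $\gr\epsilon_0$ and $f_{23}$, and a priori $f_{23}$ alone could make $I^\#(W_\infty)\ne 0$ with $\gr\epsilon_0=0$; nothing in the upper-triangular shape rules this out by itself. The paper gets around this by instead analyzing the cobordism $W\colon S^3\to S^3_{-2n-1}(K)$ (mapping the $1$-dimensional $I^\#(S^3)$ \emph{into} the odd-surgery term), so that $\im I^\#(W)$ is a single vector, and then pinning that vector down via the structure of $\ker\varphi$: since $\varphi$ is surjective, $\gr\varphi_{11}$ is surjective, hence $\varphi_{11}$ is surjective and (by dimension count) an isomorphism, which forces the kernel of $\varphi$ to have vanishing third coordinate and nonzero middle coordinate. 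If you insist on arguing directly with $W_\infty$, the analogous step requires showing that injectivity of $g_{11}$ forces $\gr g_{11}$ to be an isomorphism --- this is true, by a Nakayama-type argument at the level of the underlying $\mathbb F[x]$-modules, but it is not the elementary fact ``surjective on $\gr$ implies surjective'' that the paper uses, and it is exactly the point your sketch glosses over as bookkeeping. Finally, the two halves of the lemma are not ``entirely analogous with signs reversed'': the paper proves only the negative case directly (where the negative-definite cobordism maps $S^3$ \emph{into} $S^3_{-2n-1}(K)$) and then obtains the positive case by applying $q_3(-Y)=-q_3(Y)$ and mirroring; your direct attack on the positive case puts the negative-definite cobordism on the other side of the triangle, which is dual to, not the same as, the paper's argument, and is why the extra Nakayama step appears.
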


Notice that it follows from \eqref{eqn:q3-main-ineq} that $q_3(S^3_{-2n-1}(K)) \in \{0,1\}$, so these are the only two possible cases; similarly $q_3(S^3_{2n+1}(K)) \in \{-1,0\}$. 

\begin{proof}
Because $-S^3_r(K^*) \cong S^3_{-r}(K)$, the dimension of $I^\#$ is unchanged under orientation-reversal, but $q_3(-Y) = -q_3(Y)$, it suffices to compute $I^\#(S^3_{-2n-1}(K);\mathbb F)$. Throughout this argument, we apply the natural isomorphism $\widetilde I(-;\mathbb F) \cong I^\#(-;\mathbb F)$.

Apply Theorem \ref{thm:FloerTriangle} to the pair $(S^3_{-2n-1}(K), \tilde K)$, framing $\tilde K$ with meridian $ -(2n+1)\mu + \lambda$ and longitude $(2n)\mu - \lambda$. Then Floer's exact triangle specializes to \[\cdots \to I^\#(S^3;\mathbb F) \xrightarrow{I^\#(W;\mathbb F)} I^\#(S^3_{-2n-1}(K);\mathbb F) \to I^\#(S^3_{-2n}(K),w;\mathbb F) \to \cdots\] Here the cobordism $W$ is the $2$-handle cobordism equipped with $c = \varnothing$, so in particular is negative-definite with $\epsilon_0(W) = 1$. 

We use now that $\dim I^\#(S^3;\mathbb F) = 1$. We have \[\dim I^\#(S^3_{-2n-1}(K);\mathbb F) = \dim I^\#(S^3_{-2n}(K),w;\mathbb F) \pm 1,\] with positive sign if $I^\#(W;\mathbb F)\ne 0$.

Suppose first that $I^\#(W;\mathbb F) = 0$. Then the entire matrix \eqref{eqn:triangular-matrix-form} is zero, so in particular $\gr \epsilon_0(W) = 0$ and by Proposition \ref{prop:q3-eps} we have \[q_3(S^3_{-2n-1}(K)) > q_3(S^3) = 0.\] Because this value is $0$ or $1$, we conclude it is equal to $1$.  
Suppose now that $I^\#(W;\mathbb F) \ne 0$, so that its $1$-dimensional image is the kernel of the surjective homomorphism $\varphi: I^\#(S^3_{-2n-1}(K);\mathbb F) \to I^\#(S^3_{-2n}(K),w;\mathbb F)$. Apply the vector space isomorphisms 
\begin{align*}
I^\#(S^3_{-2n-1}(K);\mathbb F) &\cong V(S^3_{-2n-1}) \oplus \mathbb F \oplus \gr V(S^3_{-2n-1}) \\
I^\#(S^3_{-2n}(K),w;\mathbb F) &\cong V(S^3_{-2n},w) \oplus \gr V(S^3_{-2n},w).
\end{align*}
Comparing dimensions, we see that $\dim V(S^3_{-2n-1}) = \dim V(S^3_{-2n},w)$.

With respect to this direct sum decomposition, we write \[\varphi = \begin{pmatrix} \varphi_{11} & \varphi_{12} & \varphi_{13} \\ 
0 & 0 & \gr \varphi_{11} \end{pmatrix}.\] Because $\varphi$ is surjective, in particular $\gr \varphi_{11}$ is surjective and therefore $\varphi_{11}$ is surjective; by comparing dimensions, these are isomorphisms. It follows that any element of the kernel of $\varphi$ must have third component zero, hence first component equal to $\varphi_{11}^{-1}(\varphi_{12}(y))$, and therefore 
\[\text{span}\begin{pmatrix} \varphi_{11}^{-1} \varphi_{12}(1) \\ 1 \\ 0 \end{pmatrix} = \ker \varphi = I^\#(W;\mathbb F)(1) = \text{span} \begin{pmatrix} f_{11}(1) \\ \gr \epsilon_0(W)\\ 0 \end{pmatrix},\] so in particular $\gr \epsilon_0(W) = 1$. It now follows from Proposition \ref{prop:q3-eps} that $q_3(S^3_{-2n-1}(K)) = q_3(S^3) = 0$.
\end{proof}

This computes half of what we need. The next argument requires the use of Theorem \ref{thm:suspension-maps}, as it will involve cobordisms with $b^+(W) = 1$.

\begin{lemma}\label{lemma:formula2}
    For $n \ge 1$, we have 
    \begin{align*}\dim I^\#(S^3_{-2n+1}(K);\mathbb F) &= \begin{cases} \dim I^\#(S^3_{-2n}(K),w;\mathbb F) - 1 & q_3(S^3_{-2n+1}(K)) = 0 \\ \dim I^\#(S^3_{-2n}(K),w;\mathbb F) + 1 & q_3(S^3_{-2n+1}(K)) = 1.\end{cases} \\
    \dim I^\#(S^3_{2n-1}(K);\mathbb F) &= \begin{cases} \dim I^\#(S^3_{2n}(K),w;\mathbb F) - 1 & q_3(S^3_{2n-1}(K)) = 0 \\ \dim I^\#(S^3_{2n}(K),w;\mathbb F) + 1 & q_3(S^3_{2n-1}(K)) = -1.
    \end{cases}\end{align*}
\end{lemma}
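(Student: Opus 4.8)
The plan is to mirror the proof of Lemma \ref{lemma:formula1}, but using the distance-$2$ surgery exact triangle (Theorem \ref{thm:distance2}) in place of Floer's triangle, so that the relevant cobordism has $b^+ = 1$ and we apply Theorem \ref{thm:suspension-maps} rather than the theory of nice cobordisms. As before, orientation-reversal via $-S^3_r(K^*)\cong S^3_{-r}(K)$ reduces us to computing $\dim I^\#(S^3_{-2n+1}(K);\mathbb F)$, and we work throughout under the natural isomorphism $\widetilde I(-;\mathbb F)\cong I^\#(-;\mathbb F)$ of Theorem \ref{thm:Isharp-is-Itilde} (extended to the relevant $b^+=1$ cobordism maps by Theorem \ref{thm:suspension-maps}(b)).

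First I would set up the triangle. Apply Theorem \ref{thm:distance2} to the knot $\tilde K \subset S^3_{-2n}(K)$ with an appropriate framing so that the three terms become $S^3_{-2n+1}(K)$, $S^3_{-2n}(K)$ equipped with $w = \tilde K$ (which carries the nontrivial bundle since $-2n$ is even), and $S^3$; the distance-$2$ nature of the triangle is what allows one to pass between $-2n$ and $-2n+1$ while picking up the $S^3$ term with trivial bundle. Concretely, the triangle reads
\[\cdots \to I^\#(S^3_{-2n}(K),w;\mathbb F)\oplus I^\#(S^3;\mathbb F) \xrightarrow{(P,Q)} I^\#(S^3_{-2n+1}(K);\mathbb F) \to \cdots,\]
where the map $Q: I^\#(S^3;\mathbb F)\to I^\#(S^3_{-2n+1}(K);\mathbb F)$ is induced by a $2$-handle cobordism $W$ with $[c]=\varnothing$, $b^+(W)=1$, and $\epsilon_0(W)=1$ (this cobordism is negative-definite-plus-a-positive-class; the key inputs are $[c]_2=0$, $H_1(W;\mathbb F)=0$ forcing $\epsilon_0=1$, and $b^+=1$). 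The one subtlety to pin down carefully is the exactness structure: since $\dim I^\#(S^3;\mathbb F)=1$, the long exact sequence gives $\dim I^\#(S^3_{-2n+1}(K);\mathbb F) = \dim I^\#(S^3_{-2n}(K),w;\mathbb F) \pm 1$, with the sign governed by whether the restriction of $Q$ to the $S^3$ summand survives, i.e.\ whether $Q \ne 0$.

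Next I would run the two cases exactly as in Lemma \ref{lemma:formula1}. If $Q = 0$, then in particular the $\gr\epsilon_0(W)$ entry of the upper-triangular matrix \eqref{eqn:triangular-matrix-form} for $\widetilde I(W;\mathbb F)$ vanishes, so Proposition \ref{prop:q3-eps} gives $q_3(S^3_{-2n+1}(K)) > q_3(S^3) - b^+(W) = -1$; combined with the a priori bound $q_3(S^3_{-2n+1}(K)) \in \{0,1\}$ from \eqref{eqn:q3-main-ineq} applied to $W$ (which gives $q_3 \le q_3(S^3)+b^-(W)$ in one direction and the reverse cobordism the other), and here $Q=0$ corresponds to the dimension dropping by $1$, matching the claimed "$q_3 = 0$" case. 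If $Q \ne 0$, then its $1$-dimensional image lies in the image of $(P,Q)$; using the surjectivity of the relevant corner $\gr\varphi_{11}$ coming from the next map in the triangle and comparing dimensions of the $V$, $\gr Z$, $\gr V$ pieces exactly as in the proof of Lemma \ref{lemma:formula1}, I extract that $\gr\epsilon_0(W) = 1$, whence Proposition \ref{prop:q3-eps} yields $q_3(S^3_{-2n+1}(K)) = q_3(S^3) - b^+(W)\cdot 0$... more precisely $q_3(S^3_{-2n+1}(K)) = q_3(S^3) - b^+(W) = -1$ is ruled out and instead the leading-order exponent matches, forcing $q_3 = 1$ in the surgery manifold (after the orientation bookkeeping, the "$q_3 = 1$" case where the dimension goes up by $1$).

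The main obstacle I anticipate is not the algebra — which is a verbatim rerun of Lemma \ref{lemma:formula1} with $b^+(W)=1$ shifting the degree by one in Proposition \ref{prop:q3-eps} — but rather the bookkeeping of the distance-$2$ triangle: identifying precisely which term is $S^3$ versus $S^3_{-2n}(K)$ versus $S^3_{-2n+1}(K)$, checking that the relevant $2$-handle cobordism $W$ from $S^3$ really does have $b^+(W)=1$ and carries the trivial surface class (so that $\epsilon_0(W)=1$ and Theorem \ref{thm:suspension-maps}(c) applies with exponent $x^{b^+(W)}=x$), and confirming that the bundle on the $S^3_{-2n}(K)$ term is the nontrivial one dual to $\tilde K$. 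Once the geometry of the triangle is correctly normalized, invoking Theorem \ref{thm:suspension-maps}(a)--(c) to transport \eqref{eqn:triangular-matrix-form} and Proposition \ref{prop:q3-eps} to this setting is routine, and the sign of the dimension change is then read off from whether $\gr\epsilon_0(W)$ is $0$ or $1$, as above.
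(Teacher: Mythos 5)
The proposal has the right high-level idea (a $b^+=1$ cobordism, Theorem~\ref{thm:suspension-maps}, Proposition~\ref{prop:q3-eps}) but the setup is wrong in two ways, and both are fatal.

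First, you invoke the distance-$2$ triangle (Theorem~\ref{thm:distance2}) and write its middle term as $I^\#(S^3_{-2n}(K),w)\oplus I^\#(S^3)$. This cannot happen: in the CDX triangle the middle term is always $I^\#(Y,w)\oplus I^\#(Y,w\cup K)$, a direct sum over a \emph{single} $3$-manifold $Y$ with two different bundle classes, never a direct sum of $I^\#$ of two distinct manifolds. There is no choice of $Y$ and framing that makes the summands become $S^3_{-2n}(K)$ and $S^3$. The paper instead uses Floer's triangle (Theorem~\ref{thm:FloerTriangle}) exactly as in Lemma~\ref{lemma:formula1}, just applied to the dual knot in $S^3_{2n-1}(K)$ with a different framing, which directly produces the triangle $I^\#(S^3)\to I^\#(S^3_{2n-1}(K))\to I^\#(S^3_{2n}(K),w)$.

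Second, you reduce by duality to the \emph{negative} surgery $S^3_{-2n+1}(K)$, copying the reduction from Lemma~\ref{lemma:formula1}, whereas the paper deliberately reduces to the \emph{positive} surgery $S^3_{2n-1}(K)$. This is not cosmetic: for the positive case, the Floer-triangle cobordism $S^3\to S^3_{2n-1}(K)$ is the $2$-handle cobordism with intersection form $(2n-1)$, hence genuinely has $b^+=1$, which is what feeds Proposition~\ref{prop:q3-eps} to produce the sign flip relative to Lemma~\ref{lemma:formula1}. If you try to stay in the negative case, the analogous $2$-handle cobordism $S^3\to S^3_{-2n+1}(K)$ has intersection form $(-2n+1)$, i.e.\ $b^+=0$; moreover a Floer triangle with $Y_\infty=S^3_{-2n+1}(K)$, $Y_0=S^3_{-2n}(K)$, $Y_1=S^3$ and the correct orientation convention does not exist (the compatible middle term would be $S^3_{-2n+2}(K)$, i.e.\ you would just be re-deriving Lemma~\ref{lemma:formula1}). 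Your assertion that the relevant $W$ has ``$b^+(W)=1$'' and is ``negative-definite-plus-a-positive-class'' is therefore unjustified, and running your own argument with the actual $b^+=0$ cobordism reproduces the sign pattern of Lemma~\ref{lemma:formula1}, not the desired one. To fix the proof: handle the positive case directly via Floer's triangle on $(S^3_{2n-1}(K),\tilde K)$ framed so that the three surgeries are $2n-1$, $2n$, $\infty$, observe $b^+(W_\infty)=1$, and only then appeal to Theorem~\ref{thm:suspension-maps} and Proposition~\ref{prop:q3-eps}.
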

\begin{proof}
It suffices to handle the case of positive surgeries. Apply Theorem \ref{thm:FloerTriangle} to the pair $(S^3_{2n-1}, \tilde K)$, framing $\tilde K$ with meridian $(2n-1)\mu + \lambda$ and longitude $-2n\mu - \lambda$. These give rise to the exact triangle
\[\cdots \to I^\#(S^3;\mathbb F) \xrightarrow{I^\#(W;\mathbb F)} I^\#(S^3_{2n-1}(K);\mathbb F) \to I^\#(S^3_{2n}(K),w;\mathbb F) \to \cdots \] 
Again the cobordism $W$ is equipped with surface $c = \varnothing$ and has $\epsilon_0(W) = 1$, but we now have instead that $b^+(W) = 1$. Again, we have \[\dim I^\#(S^3_{2n-1}(K);\mathbb F) = \dim I^\#(S^3_{2n}(K),w;\mathbb F) \pm 1\] with negative sign if $I^\#(W;\mathbb F) = 0$. We apply Theorem \ref{thm:suspension-maps} to study the induced map $I^\#(W;\mathbb F)$. When $I^\#(W;\mathbb F) = 0$, comparing to \eqref{eqn:triangular-matrix-form} we see that $\gr \epsilon_0 = 0$, so by Proposition \ref{prop:q3-eps} we see that \[q_3(S^3_{2n-1}(K)) > 0 - 1.\] We therefore must have $q_3(S^3_{2n-1}(K)) = 0$.

Suppose instead $I^\#(W;\mathbb F) \ne 0$, so the map \[\psi: I^\#(S^3_{2n-1}(K);\mathbb F) \to I^\#(S^3_{2n}(K),w;\mathbb F), \quad \psi = \begin{pmatrix} \psi_{11} & \psi_{12} & \psi_{13} \\ 0 & 0 & \gr \psi_{11}\end{pmatrix}\] is surjective with one-dimensional kernel. Arguing exactly as before, its kernel is spanned by an element of the form $\begin{pmatrix} x \\ 1 \\ 0 \end{pmatrix}$, so the induced map on $I^\#(W;\mathbb F)$ must have this element in its image. Applying Theorem \ref{thm:suspension-maps} to compare this to the induced map on $\widehat I_{\chi_2}$, we see $\gr \epsilon_0(W) \ne 0$ exactly as before. Applying Proposition \ref{prop:q3-eps}, we see that \[q_3(S^3_{2n-1}(K)) = q_3(S^3) - 1 = -1.\qedhere\]
\end{proof}

When $|\dim I^\#(S^3_{2n+1};\mathbb F) - \dim I^\#(S^3_{2n-1};\mathbb F)| = 2$, the dimension of $I^\#(S^3_{2n};\mathbb F)$ is the unique number lying between these, and the same is true for $I^\#(S^3_{2n},w;\mathbb F)$. We now determine the behavior in the remaining case, where $\dim I^\#(S^3_{2n \pm 1})$ are equal. 

\begin{prop}\label{prop:even-behavior-w}
    Suppose that $\dim I^\#(S^3_{2n-1}(K);\mathbb F) = \dim I^\#(S^3_{2n+1}(K);\mathbb F)$. Then we have \[\dim I^\#(S^3_{2n}(K),w;\mathbb F) = \dim I^\#(S^3_{2n \pm 1}(K);\mathbb F) - 1.\] 
\end{prop}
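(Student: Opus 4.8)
The plan is to combine Lemmas~\ref{lemma:formula1} and~\ref{lemma:formula2} with the monotonicity of $q_3$ under a negative-definite cobordism. Writing $a = \dim I^\#(S^3_{2n-1}(K);\mathbb F) = \dim I^\#(S^3_{2n+1}(K);\mathbb F)$ and $b = \dim I^\#(S^3_{2n}(K),w;\mathbb F)$, Lemmas~\ref{lemma:formula1} and~\ref{lemma:formula2} give $b = a \pm 1$, so it suffices to rule out the possibility $b = a+1$. Suppose $b = a+1$. Then, reading Lemma~\ref{lemma:formula1} off at the slope $2n+1$, the equality $a = \dim I^\#(S^3_{2n+1}(K);\mathbb F) = b-1$ forces $q_3(S^3_{2n+1}(K)) = -1$; reading Lemma~\ref{lemma:formula2} off at the slope $2n-1$, the equality $a = \dim I^\#(S^3_{2n-1}(K);\mathbb F) = b-1$ forces $q_3(S^3_{2n-1}(K)) = 0$. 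I will derive a contradiction from this pair of equalities. Throughout I assume $n \ge 1$; the cases $n \le 0$ follow from the identical argument applied to the negative-slope cases of the two lemmas, together with the symmetry $-S^3_r(K) \cong S^3_{-r}(\overline K)$ and $q_3(-Y) = -q_3(Y)$.

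The contradiction comes from a negative-definite cobordism $W\colon S^3_{2n-1}(K) \to S^3_{2n+1}(K)$ with $H_1(W;\mathbb F) = 0$. For each integer $m$, attaching a $2$-handle along the dual knot of $K$ in $S^3_m(K)$, with the framing for which the result of surgery is $S^3_{m+1}(K)$, gives a $2$-handle cobordism $W_m\colon S^3_m(K) \to S^3_{m+1}(K)$ --- the cobordism appearing in the integral surgery formula --- whose attaching circle generates $H_1(S^3_m(K);\mathbb Z)$, so that $H_1(W_m;\mathbb F) = 0$. I take $W = W_{2n}\circ W_{2n-1}$; since $2n\pm 1$ is odd, both ends of $W$ are $\mathbb F$-homology spheres, and $H_1(W;\mathbb F) = 0$ by a Mayer--Vietoris argument. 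Capping $W$ off with the trace of $(2n-1)$-surgery on $K$ produces the $4$-manifold built from $B^4$ by attaching $2$-handles along $K$ and along two disjoint parallel meridians of $K$, with framings $2n-1$, $-1$, $-1$ respectively; its intersection form is the symmetric matrix with rows $(2n-1,1,1)$, $(1,-1,0)$, $(1,0,-1)$, which has determinant $2n+1$ (matching $|H_1(S^3_{2n+1}(K))|$) and, from its leading principal minors $2n-1 > 0$, $-2n < 0$, $2n+1 > 0$, has $b^+ = 1$ and $b^- = 2$. Because this gluing is along the rational homology sphere $S^3_{2n-1}(K)$, the quantity $b^+$ is additive, so $b^+(W) = 1 - b^+(\text{trace}) = 1 - 1 = 0$; thus $W$ is negative-definite.

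Finally I apply the main inequality of~\cite{DME2} to $W$: since $b^+(W) = 0$ it reads $q_3(S^3_{2n+1}(K)) \ge q_3(S^3_{2n-1}(K))$, contradicting $q_3(S^3_{2n+1}(K)) = -1 < 0 = q_3(S^3_{2n-1}(K))$. Hence $b = a-1$, which is the claim.

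The step I expect to be the main obstacle is the Kirby-calculus verification that $W$ is negative-definite: pinning down the framing on the dual knot (its surgery longitude coincides with the Seifert longitude of the corresponding meridian of $K$ in $S^3$, which takes a small check), confirming that the two meridians arising from the two handle attachments are unlinked, and then computing the resulting intersection form. The constraint that the capped-off $4$-manifold has first homology of order $2n+1$ serves as a useful consistency check.
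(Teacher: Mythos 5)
Your argument for $n\ge 1$ is correct and is essentially the paper's: assume toward a contradiction that $\dim I^\#(S^3_{2n}(K),w;\mathbb F) = a+1$, read off $q_3(S^3_{2n+1}(K))=-1$ from Lemma~\ref{lemma:formula1}, read off $q_3(S^3_{2n-1}(K))=0$ from Lemma~\ref{lemma:formula2}, and contradict the monotonicity of $q_3$ under the negative-definite composite $2$-handle cobordism $S^3_{2n-1}(K)\to S^3_{2n+1}(K)$. Your Kirby-calculus verification of $b^+(W)=0$ is fine for $n\ge 1$, and by mirroring (or by directly running the negative-slope branches of the two lemmas) the cases $n\le -1$ go through the same way.

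However, your dispatch of $n=0$ does not work, and this is a genuine gap. Mirroring sends $n$ to $-n$, so it fixes $n=0$ and reduces nothing. More to the point, the composite cobordism $W\colon S^3_{-1}(K)\to S^3_{1}(K)$ is \emph{not} negative-definite: in your own Kirby computation, the trace of $(2n-1)$-surgery contributes $b^+=1$ only when $2n-1>0$, i.e.\ $n\ge 1$; at $n=0$ the trace of $(-1)$-surgery is negative-definite, so $b^+(W)=1-0=1$ rather than $0$. (Equivalently, the slopes $-1,1$ straddle $0$, so this cobordism necessarily picks up positive part.) Relatedly, the $q_3$ values at $n=0$ come from applying Lemma~\ref{lemma:formula1} at \emph{both} slopes $\pm 1$ (Lemma~\ref{lemma:formula2} compares $\pm 1$ to $\mp 2$, not to $0$), and the contradictory assumption gives $q_3(S^3_{-1}(K))=1$ and $q_3(S^3_{1}(K))=-1$, a jump of $-2$. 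Since $W$ has $b^+(W)=b^-(W)=1$, the inequality \eqref{eqn:q3-main-ineq} only forces $-1 \le q_3(S^3_1(K))-q_3(S^3_{-1}(K)) \le 1$, and \emph{this} bound is what is violated. The paper handles $n=0$ as a separate case for exactly this reason; you need to do the same, substituting the weaker-but-sufficient two-sided inequality for the monotonicity statement you used when $n\ne 0$.
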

\begin{proof}
Suppose not, so instead $\dim I^\#(S^3_{2n},w) = \dim I^\#(S^3_{2n \pm 1}) + 1$. We handle separately the cases $n = 0$ and $n > 0$; the case $n < 0$ follows from duality.

Suppose first that $n > 0$. By Lemma \ref{lemma:formula1} we have $q_3(S^3_{2n+1}(K)) = -1$, while by Lemma \ref{lemma:formula2} we have $q_3(S^3_{2n-1}(K)) = 0$. However, the composite of the $2$-handle cobordisms gives a simply-connected negative-definite cobordism $W: S^3_{2n-1}(K) \to S^3_{2n+1}(K)$, so $q_3(S^3_{2n-1}(K)) \le q_3(S^3_{2n+1}(K))$, giving a contradiction in this case.

If $n = 0$, the analogous argument gives $q_3(S^3_{-1}(K)) = 1$ and $q_3(S^3_1(K)) = -1$. However, the composite of $2$-handle cobordisms gives a simply-connected cobordism $W: S^3_{-1}(K) \to S^3_1(K)$ with $b^+(W) = b^-(W) = 1$, so \eqref{eqn:q3-main-ineq} gives \[-1 \le q_3(S^3_1(K)) - q_3(S^3_{-1}(K)) \le 1,\] a contradiction.
\end{proof}

The discussion is somewhat more complicated when $S^3_{2n}(K)$ is equipped with the trivial bundle. We handle here the case $n \ne 0$ and the case $n = 0$ in the next section.

\begin{prop}\label{prop:even-behavior}
    Suppose that $n \ne 0$ and $\dim I^\#(S^3_{2n-1}(K);\mathbb F) = \dim I^\#(S^3_{2n+1}(K);\mathbb F)$. Then we have \[\dim I^\#(S^3_{2n}(K);\mathbb F) = \dim I^\#(S^3_{2n \pm 1}(K);\mathbb F) + 1.\]
\end{prop}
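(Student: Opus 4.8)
The plan is to argue by contradiction, paralleling the proof of Proposition \ref{prop:even-behavior-w} but using the distance-$2$ surgery exact triangle (Theorem \ref{thm:distance2}) in place of Floer's; this is the natural substitute, as it is the triangle that brings both bundles on $S^3_{2n}(K)$ into play at once. By orientation reversal $-S^3_r(K^*)\cong S^3_{-r}(K)$, which preserves $\dim I^\#(-;\mathbb F)$ and sends $q_3\mapsto -q_3$, it suffices to treat $n>0$; write $D=\dim I^\#(S^3_{2n\pm1}(K);\mathbb F)$. Under the standing hypothesis, Proposition \ref{prop:even-behavior-w} gives $\dim I^\#(S^3_{2n}(K),w;\mathbb F)=D-1$, and substituting this into Lemmas \ref{lemma:formula1} and \ref{lemma:formula2} pins down $q_3(S^3_{2n+1}(K))=0$ and $q_3(S^3_{2n-1}(K))=-1$.

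Applying Theorem \ref{thm:distance2} to the dual knot $\tilde K\subset S^3_{2n}(K)$, framed so that the two distance-$1$ surgeries are $S^3_{2n\mp1}(K)$ (whose dual-knot bundles are trivial, these being $\mathbb F$-homology spheres), yields an exact triangle
\[\cdots\to I^\#(S^3_{2n+1}(K))\xrightarrow{f} I^\#(S^3_{2n}(K))\oplus I^\#(S^3_{2n}(K),w)\xrightarrow{g} I^\#(S^3_{2n-1}(K))\xrightarrow{h}\cdots\]
with $\mathbb F$ coefficients throughout, in which the maps in and out of the two $S^3_{2n}(K)$-factors are $2$-handle cobordism maps. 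Counting dimensions, using $\dim I^\#(S^3_{2n}(K),w;\mathbb F)=D-1$ and $\dim I^\#(S^3_{2n\pm1}(K);\mathbb F)=D$, gives $\dim I^\#(S^3_{2n}(K);\mathbb F)=D+1-2\dim\operatorname{im}(h)$; thus this dimension is at most $D+1$, with equality exactly when the connecting map $h$ vanishes. Together with the lower bound $\dim I^\#(S^3_{2n}(K);\mathbb F)\ge D-1$ from Floer's triangle for $\tilde K\subset S^3_{2n}(K)$ whose middle term is the one-dimensional $I^\#(S^3;\mathbb F)$, we are reduced to excluding $\dim I^\#(S^3_{2n}(K);\mathbb F)=D-1$, i.e.\ to proving $h=0$.

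Suppose instead $h\ne0$; then $\ker f$ is one-dimensional. Now I would run the linear-algebra bookkeeping of the proofs of Lemmas \ref{lemma:formula1}, \ref{lemma:formula2} and Proposition \ref{prop:even-behavior-w}: filter all four groups so that $\operatorname{gr}\cong V\oplus\operatorname{gr}Z\oplus\operatorname{gr}V$ (here $\dim_{\mathbb F}Z(S^3_{2n}(K),\varnothing)=2$, from the two solutions of $2z=0$ in $\mathbb Z/2n$, while $Z(S^3_{2n}(K),w)=0$ and $Z$ is one-dimensional on the odd surgeries), write the $2$-handle cobordism maps comprising $f$ and $g$ in the upper-triangular shape \eqref{eqn:triangular-matrix-form} with $\operatorname{gr}\epsilon_0$ as the $\operatorname{gr}Z$-block, and chase the generator of $\ker f$ through them. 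Because $g\circ f=0$ and the relevant composite of $2$-handle cobordisms is the simply-connected negative-definite cobordism relating $S^3_{2n-1}(K)$ and $S^3_{2n+1}(K)$ used in Proposition \ref{prop:even-behavior-w}, this chase would force $\operatorname{gr}\epsilon_0$ of that cobordism to be nonzero; Proposition \ref{prop:q3-eps} then gives $q_3(S^3_{2n-1}(K))=q_3(S^3_{2n+1}(K))=0$, contradicting $q_3(S^3_{2n-1}(K))=-1$. Hence $h=0$ and $\dim I^\#(S^3_{2n}(K);\mathbb F)=D+1$.

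The main obstacle is the step just described. Unlike in Proposition \ref{prop:even-behavior-w}, the cobordisms in the distance-$2$ triangle attach $2$-handles along $\tilde K$, which is torsion — rather than null-homologous — in $H_1(S^3_{2n}(K))$, so computing their $b^+$ and checking $\epsilon_0=1$, and above all controlling how the connecting map $h$ interacts with the filtration and the split bundle, require genuine care; this is the subtlety flagged in the introduction. (The hypothesis $n\ne0$ is used precisely to guarantee, as in the $n\ne0$ case of Proposition \ref{prop:even-behavior-w}, that the composite cobordism above is definite.)
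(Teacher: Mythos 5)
Your setup is right---you correctly derive $q_3(S^3_{2n-1}(K))=-1$ and $q_3(S^3_{2n+1}(K))=0$ from Lemmas \ref{lemma:formula1}--\ref{lemma:formula2} and Proposition \ref{prop:even-behavior-w}, and you correctly identify that the contradiction should come from forcing $\gr\epsilon_0$ of a definite cobordism between $S^3_{2n-1}(K)$ and $S^3_{2n+1}(K)$ to be nonzero, contradicting Proposition \ref{prop:q3-eps}. However, your choice of tool leaves a genuine gap, and you acknowledge as much by flagging the crucial step as the ``main obstacle'' without carrying it out.

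The specific problem with the distance-$2$ triangle is that the step you need to control is the connecting map $h$. Theorem \ref{thm:distance2} tells you only that the maps to and from the middle $I^\#(Y)$-factors are $2$-handle cobordism maps; it says nothing about the connecting homomorphism $h$, which is a Floer-theoretic boundary map, not (a priori) a cobordism map compatible with the equivariant theory. So you cannot write $h$ in the upper-triangular form \eqref{eqn:triangular-matrix-form}, and you cannot run ``$\gr\epsilon_0$'' bookkeeping on it. Your fallback---chasing through $g\circ f = 0$ together with functoriality of the composite of $2$-handle cobordisms---also needs work: the individual cobordism maps comprising $f$ and $g$ go in opposite surgery directions from what the composite in Proposition \ref{prop:even-behavior-w} uses, and Theorem \ref{thm:suspension-maps} explicitly disclaims functoriality for the positive-$b^+$ maps. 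In short, the hard part of the argument is exactly what you defer.

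The paper avoids all of this by not using the distance-$2$ triangle at all. It applies Floer's exact triangle \ref{thm:FloerTriangle} twice, producing the $2$-handle cobordism maps $I^\#(W)\colon I^\#(S^3_{2n-1}(K);\mathbb F)\to I^\#(S^3_{2n}(K);\mathbb F)$ and $I^\#(W')\colon I^\#(S^3_{2n}(K);\mathbb F)\to I^\#(S^3_{2n+1}(K);\mathbb F)$, with the one-dimensional $I^\#(S^3;\mathbb F)$ as the third vertex in each triangle. Since $n>0$, both $W$ and $W'$ are simply-connected and negative-definite, hence nice, so both induced maps have the form \eqref{eqn:triangular-matrix-form}, and crucially, they compose functorially. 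Assuming $\dim I^\#(S^3_{2n}(K);\mathbb F)=D-1$ forces $I^\#(W)$ to be surjective and $I^\#(W')$ to be injective (each by a $\pm1$ dimension count against $I^\#(S^3;\mathbb F)\cong\mathbb F$), and then surjectivity and injectivity of the lower-right and upper-left $2\times2$ blocks force $\gr\epsilon_0(W)\ne 0$ and $\gr\epsilon_0(W')\ne 0$. Composing gives $\gr\epsilon_0(W\cup W')=1$ for a simply-connected negative-definite cobordism, so Proposition \ref{prop:q3-eps} gives $q_3(S^3_{2n-1}(K))=q_3(S^3_{2n+1}(K))$---the desired contradiction. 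No distance-$2$ triangle and no analysis of any connecting map is needed; everything is phrased in terms of maps the equivariant theory already controls.
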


\begin{proof}
By duality we may suppose $n > 0$. Proposition \ref{prop:even-behavior-w}, combined with Lemma \ref{lemma:formula1} and Lemma \ref{lemma:formula2}, gives that $q_3(S^3_{2n-1}(K)) = -1$ and $q_3(S^3_{2n+1}(K)) = 0$.  

Towards a contradiction, suppose $\dim I^\#(S^3_{2n}(K);\mathbb F) = \dim I^\#(S^3_{2n \pm 1}(K);\mathbb F) - 1$. Decompose the vector spaces of interest as 
\begin{align*}
I^\#(S^3_{2n-1}(K);\mathbb F) &\cong V_{-1} \oplus \mathbb F \oplus \gr V_{-1} \\ 
I^\#(S^3_{2n}(K);\mathbb F) &\cong V_{0} \oplus \mathbb F^2 \oplus \gr V_{0} \\ 
I^\#(S^3_{2n+1}(K);\mathbb F) &\cong V_1 \oplus \mathbb F \oplus \gr V_1
\end{align*}
where $\dim V_{-1} = \dim V_1 = d$ and $\dim V_0 = d-1$. 

We study the two exact triangles obtained by applying Theorem \ref{thm:FloerTriangle} to the pairs $(S^3_{2n}(K), \tilde K)$ and $(S^3_{2n+1}(K), \tilde K)$ with standard framing. We obtain exact triangles 
\begin{align*}
    \cdots \to I^\#(S^3_{2n-1}(K);\mathbb F) \xrightarrow{I^\#(W)} &I^\#(S^3_{2n}(K);\mathbb F) \to I^\#(S^3;\mathbb F) \to \cdots \\
    \cdots \to I^\#(S^3_{2n}(K);\mathbb F) \xrightarrow{I^\#(W')} &I^\#(S^3_{2n+1}(K);\mathbb F) \to I^\#(S^3;\mathbb F) \to \cdots
\end{align*}
where both $W$ and $W'$ are simply-connected and negative-definite, with \[\epsilon_0 = \epsilon_0(W) = \begin{pmatrix} 1 \\ 0 \end{pmatrix} \quad \epsilon'_0 = \epsilon_0(W') = \begin{pmatrix} 1 & 0 \end{pmatrix}\]

We will show that the composite has $\gr \epsilon_0(W \cup W') = 1$ by showing that each of $\gr \epsilon_0$ and $\gr \epsilon'_0$ are nonzero.

By assumption, $I^\#(W)$ is surjective and $I^\#(W')$ is injective. Write these in matrix form as 
\[I^\#(W) = \begin{pmatrix} f_{11} & f_{12} & f_{13} \\ 0 & \gr \epsilon_0 & f_{23} \\ 0 & 0 & \gr f_{11} \end{pmatrix}, \quad I^\#(W') = \begin{pmatrix} g_{11} & g_{12} & g_{13} \\ 0 & \gr \epsilon'_0 & g_{23} \\ 0 & 0 & \gr g_{11} \end{pmatrix}.\] Because $I^\#(W)$ is surjective and $I^\#(W')$ is injective, the two submatrices \[\begin{pmatrix} \gr \epsilon_0 & f_{23} \\ 0 & \gr f_{11} \end{pmatrix}, \quad \begin{pmatrix} g_{11} & g_{12} \\ 0 & \gr \epsilon'_0 \end{pmatrix}\] are surjective and injective, respectively. By counting dimensions, they must in fact be isomorphisms, which implies $\gr \epsilon_0$ and $\gr \epsilon'_0$ are nonzero.

We arrive at our contradiction by applying Proposition \ref{prop:q3-eps} to the composite, giving that $q_3(S^3_{2n-1}(K)) = q_3(S^3_{2n+1}(K))$.
\end{proof}

\begin{remark}
The argument involves cobordism maps to and from $S^3_{2n}(K)$, which have only been established in the equivariant theory when $n \ne 0$. It seems likely that this argument would apply with minimal change if the equivariant theory had an extension to the case where $Y$ has $b_1(Y) > 0$ and is equipped with a trivial bundle.
\end{remark} 

We will handle the case $n = 0$ in the next section. 

\subsection{The dimension of $I^\#$ for integer surgeries}\label{subsec:dim-Isharp-Z}

We first nearly prove Theorem \ref{thm:surgerydim} and Theorem \ref{thm:q3-surgery} for integer surgeries, leaving some ambiguity in the case of $0$-surgery.

\begin{prop}\label{prop:mainthm-Z}
For every knot $K$ there are even integers $r_2(K), M(K)$ and $\zeta(K) \in \{0, 1\}$ so that for all integers $n$, we have \[\dim I^\#(S^3_n(K), w; \mathbb Z) = \begin{cases} r_2(K) + 2 -2\zeta(K) & n = M(K) \text{ and } [w] = 0 \\ r_2(K) + |n-M(K)| & \text{otherwise} \end{cases}\] If $M(K) \ne 0$ then $\zeta(K) = 0$, and in any case $M(K)$ is characterized by the formula, for odd integers $m$, \[q_3(S^3_m(K)) = \begin{cases} 1 & M(K) < m < 0 \\ -1 & 0 < m < M(K) \\ 0 & \text{otherwise.} \end{cases}\] 
\end{prop}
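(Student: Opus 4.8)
The plan is to first locate the even integer $M(K)$ from the behavior of Fr\o yshov's $q_3$ on odd surgeries, and then to extract the dimension formula from Lemmas \ref{lemma:formula1}--\ref{lemma:formula2} together with Propositions \ref{prop:even-behavior-w}--\ref{prop:even-behavior}; here and below $\dim$ is taken over $\mathbb F = \mathbb Z/2$. First I record the global shape of $m \mapsto q_3(S^3_m(K))$ on odd integers. Applying \eqref{eqn:q3-main-ineq} to the definite $2$-handle cobordism $S^3 \to S^3_m(K)$ (which has $H_1 = 0$ and is positive- or negative-definite according to the sign of $m$) shows $q_3(S^3_m(K)) \in \{-1,0\}$ for odd $m > 0$ and $q_3(S^3_m(K)) \in \{0,1\}$ for odd $m < 0$, as noted after Lemma \ref{lemma:formula1}. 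Moreover, for $n \ge 1$ there is a simply-connected negative-definite cobordism $S^3_{2n-1}(K) \to S^3_{2n+1}(K)$ (the composite of $2$-handle cobordisms used in the proof of Proposition \ref{prop:even-behavior-w}), so \eqref{eqn:q3-main-ineq} with $b^+ = 0$ shows $q_3$ is nondecreasing along the positive odd integers and, dually, nondecreasing toward $0$ along the negative odd integers.

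Next I claim $q_3(S^3_m(K)) = 0$ for $|m|$ large. Otherwise monotonicity would force $q_3(S^3_m(K)) = -1$ for \emph{every} positive odd $m$, or the dual statement on the negative side; but then Lemmas \ref{lemma:formula1} and \ref{lemma:formula2} give, for every $n \ge 1$, \[\dim I^\#(S^3_{2n+1}(K);\mathbb F) = \dim I^\#(S^3_{2n}(K),w;\mathbb F) - 1 = \dim I^\#(S^3_{2n-1}(K);\mathbb F) - 2,\] so these dimensions would tend to $-\infty$. With the previous paragraph this shows $q_3$ equals $-1$ on an initial (possibly empty) segment $\{m \text{ odd} : 0 < m < M^+\}$ and equals $1$ on a final (possibly empty) segment $\{m \text{ odd} : M^- < m < 0\}$, for even integers $M^+ \ge 0$ and $M^- \le 0$, and equals $0$ at all other odd $m$. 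These two segments cannot both be nonempty: that would give $q_3(S^3_1(K)) = -1$ and $q_3(S^3_{-1}(K)) = 1$, contradicting \eqref{eqn:q3-main-ineq} for the simply-connected cobordism $S^3_{-1}(K) \to S^3_1(K)$ with $b^+ = b^- = 1$. Taking $M(K)$ to be $M^+$, $M^-$, or $0$ accordingly produces the unique even integer satisfying the asserted $q_3$-formula.

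For the dimension formula I may, using $S^3_{-r}(K) = -S^3_r(K^*)$, $q_3(-Y) = -q_3(Y)$ and the invariance of $\dim I^\#$ under orientation reversal, assume $M(K) \ge 0$. Feeding the $q_3$-values just found into Lemmas \ref{lemma:formula1} and \ref{lemma:formula2} at each even node $2n$, one reads off that $\dim I^\#(S^3_{m+2}(K);\mathbb F) - \dim I^\#(S^3_m(K);\mathbb F)$ equals $-2$ for odd $m < M(K)-1$, equals $0$ for $m = M(K)-1$, and equals $+2$ for odd $m > M(K)-1$; hence $m \mapsto \dim I^\#(S^3_m(K);\mathbb F)$ on the odd integers is a $V$-shape whose minimum is attained exactly at $m = M(K)\pm 1$. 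Writing this minimum as $r_2(K)+1$ gives $\dim I^\#(S^3_m(K);\mathbb F) = r_2(K) + |m-M(K)|$ for all odd $m$. The same computation, together with Proposition \ref{prop:even-behavior-w} when $M(K) = 0$, identifies $r_2(K)$ with $\dim I^\#(S^3_{M(K)}(K),w;\mathbb F)$ for $[w] \ne 0$; since $(S^3_{M(K)}(K),\tilde K)$ is an admissible pair, Theorem \ref{thm:Brieskorn-calc} makes this divisible by $4$, so $r_2(K)$ is even.

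It remains to treat even $n$. For $w = \tilde K$, the same bookkeeping with Lemmas \ref{lemma:formula1}--\ref{lemma:formula2} writes $\dim I^\#(S^3_n(K),w;\mathbb F)$ in terms of $\dim I^\#(S^3_{n\pm 1}(K);\mathbb F)$ and yields $r_2(K) + |n-M(K)|$. For the trivial bundle with $n \ne M(K)$: the surgery triangles of Theorem \ref{thm:FloerTriangle} relating $S^3_n(K)$ with trivial bundle, $S^3_{n\pm 1}(K)$ and $S^3$ force $\dim I^\#(S^3_n(K);\mathbb F)$ to differ by exactly $1$ from each of $\dim I^\#(S^3_{n\pm 1}(K);\mathbb F)$ (as $\dim I^\#(S^3;\mathbb F) = 1$), and these two odd dimensions differ from one another by $2$, so the even-surgery dimension is forced to be the value between them, again $r_2(K) + |n-M(K)|$. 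Finally, at $n = M(K)$ with the trivial bundle: if $M(K) \ne 0$, then Proposition \ref{prop:even-behavior} applies — its hypothesis $\dim I^\#(S^3_{M(K)-1}(K);\mathbb F) = \dim I^\#(S^3_{M(K)+1}(K);\mathbb F)$ holds by the $V$-shape above — and gives $\dim I^\#(S^3_{M(K)}(K);\mathbb F) = r_2(K)+2$, so $\zeta(K) = 0$; if $M(K) = 0$, the surgery triangle relating $S^3_{-1}(K)$, $S^3_0(K)$ with trivial bundle, and $S^3$ only constrains $\dim I^\#(S^3_0(K);\mathbb F)$ to lie in $\{r_2(K), r_2(K)+2\}$, and $\zeta(K) \in \{0,1\}$ is defined to record which. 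The one step that is not pure bookkeeping is the eventual vanishing of $q_3$ in the second paragraph: it is the interplay of the monotonicity coming from the negative-definite cobordisms with the fact that dimensions cannot be negative that rigidifies the a priori weaker behavior into the precise formula, and I expect this — together with getting the $n = 0$, trivial-bundle case exactly right — to be the main point requiring care.
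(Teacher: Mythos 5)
Your argument is essentially the paper's own proof, organized in the opposite direction: you pin down the profile of $q_3$ on odd surgeries first (monotonicity from negative-definite $2$-handle cobordisms, the $b^+=b^-=1$ cobordism $S^3_{-1}(K)\to S^3_1(K)$ excluding $q_3(S^3_{\pm 1})=\mp 1$, and the positivity of dimensions forcing $q_3$ to vanish for large $|m|$), whereas the paper locates the unique turning point of the dimension sequence and reads off $q_3$ from it; Lemmas \ref{lemma:formula1}--\ref{lemma:formula2} make these two routes interchangeable, and your handling of the even slopes via Propositions \ref{prop:even-behavior-w}--\ref{prop:even-behavior} and the squeeze from the Floer triangle with $S^3$ matches the paper.

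One concrete slip: to get $r_2(K)$ even you assert that $(S^3_{M(K)}(K),\tilde K)$ is an admissible pair, but admissibility requires a closed surface meeting $w$ oddly, which forces $b_1>0$; this holds only when $M(K)=0$. For $M(K)\ne 0$ the manifold $S^3_{M(K)}(K)$ is a rational homology sphere and the pair is merely weakly admissible, so Theorem \ref{thm:Brieskorn-calc} does not apply at that slope. The fix is to apply the divisibility statement at slope $0$ instead: $(S^3_0(K),\tilde K)$ is admissible, so $4 \mid \dim I^\#(S^3_0(K),w;\mathbb F)=r_2(K)+|M(K)|$, and since $M(K)=2n$ is even this gives $r_2(K)$ even (this is exactly how the paper later derives the stronger divisibility-by-$4$ claims).
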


We will soon show that $\zeta(K) = 0$ for all knots $K$. 

\begin{proof}
The strategy of proof will be as follows. We will show that there is a unique integer $n$ for which $\dim I^\#(S^3_{2n+1}(K);\mathbb F) = \dim I^\#(S^3_{2n-1}(K);\mathbb F)$, and for other $n'$ we have \[\dim I^\#(S^3_{2n'+1}(K);\mathbb F) - \dim I^\#(S^3_{2n'-1}(K);\mathbb F) = \begin{cases} 2 & n' > n \\ 0 & n' = n \\ -2 & n' < n.\end{cases}\] Write $M(K) = 2n$. Because dimensions of $I^\#$ on adjacent integer slopes differ by $\pm 1$, the above implies \[\dim I^\#(S^3_m(K),w;\mathbb F) - \dim I^\#(S^3_{m-1}(K),w;\mathbb F) = \begin{cases} 1 & m > M(K) + 1 \\ -1 & m < M(K)\end{cases}\] for either choice of bundle.

The behavior of this difference for $m = M(K)$ and $m = M(K)-1$ is determined by Proposition \ref{prop:even-behavior-w} and Proposition \ref{prop:even-behavior}. In the case of possibly non-trivial bundles, we find \[\dim I^\#(S^3_m(K),w;\mathbb F) - \dim I^\#(S^3_{m-1}(K),w;\mathbb F) = \begin{cases} 1 & m > M(K) \\ -1 & m \le M(K)\end{cases}.\] 

If we set $r_2(K) = \dim I^\#(S^3_{M(K)},w;\mathbb F)$, this gives the stated dimension formula in the case of non-trivial bundle. Except when $M(K) = 0$, we showed in Proposition \ref{prop:even-behavior} that $\dim I^\#(S^3_{M(K)};\mathbb F) = r_2(K) + 2$ and thus $\zeta(K) = 0$. In the case $M(K) = 0$, the dimension is at this point undetermined between $r_2(K)$ and $r_2(K) + 2$.

It remains to show the claim before regarding the behavior on odd surgeries. Let $n$ be an integer for which $\dim I^\#(S^3_{2n\pm 1}(K);\mathbb F)$ are equal; the computations above imply $\dim I^\#(S^3_{2n}(K),w;\mathbb F)$ is one less.

If $n = 0$, then by Lemma \ref{lemma:formula1} and Lemma \ref{lemma:formula2} we have $q_3(S^3_{\pm 1}(K)) = 0$. Because there are simply connected negative-definite cobordisms \[S^3_1(K) \to S^3_m(K) \to S^3 \to S^3_{-m}(K) \to S^3_{-1}(K),\] for each $m > 0$, it follows from \eqref{eqn:q3-main-ineq} that $q_3(S^3_m(K)) = 0$ for all odd integers $m$. Conversely, if this is the case, then $q_3(S^3_{\pm 1}(K)) = 0$ and the dimensions are equal when $n = 0$. 

If $n > 0$, Lemma \ref{lemma:formula1} and Lemma \ref{lemma:formula2} give instead $q_3(S^3_{2n-1}(K)) = -1$ and $q_3(S^3_{2n+1}(K)) = 0$. Similar to the discussion above, this is equivalent to $q_3(S^3_m(K)) = -1$ for $0 < m < 2n$ odd and $q_3(S^3_m(K)) = 0$ for $2n < m$. 

If $n < 0$, Lemma \ref{lemma:formula1} and Lemma \ref{lemma:formula2} give $q_3(S^3_{2n-1}(K)) = 0$ and $q_3(S^3_{2n+1}(K)) = 1$, which is then equivalent to $q_3(S^3_m(K)) = 0$ for $m < 2n$ and is equal to $1$ for $2n < m < 0$. 

It is clear that the case $n = 0$ is mutually exclusive from the other two. The cases $n > 0$ and $n < 0$ are also mutually exclusive, because $q_3(S^3_{\pm 1}(K)) = \mp 1$ is impossible: there is a simply-connected cobordism $W: S^3_{-1}(K) \to S^3_1(K)$ with $b^+(W) = b^-(W) = 1$. 

So there is at most one such integer $n$. We must argue that one exists. If $q_3(S^3_1(K)) = -1$, we must have $q_3(S^3_m(K)) = 0$ for some $m > 0$. Otherwise, $\dim I^\#(S^3_m(K);\mathbb F)$ would be a decreasing sequence in odd integers $m$, but this quantity is no smaller than $m$. A similar argument applies in the case $q_3(S^3_{-1}(K)) = 1$. If neither of these are true, then $M(K) = 0$. 
\end{proof}

\begin{example}\label{ex:trefoil}
    For the left-handed trefoil knot $3_1$, the manifold $S^3_{-5}(K)$ is a lens space and $S^3_{-1}(K)$ is the Poincar\'e homology sphere. We have $\dim I^\#(S^3_{-5}(K);\mathbb F) = 5$ and $\dim I^\#(S^3_{-1}(K);\mathbb F) = 7$ by Theorem \ref{thm:Brieskorn-calc}. Given these computations, the formula from Proposition \ref{prop:mainthm-Z} implies that $M(3_1) = -4$, while $r_2(3_1) = 4$ and $\zeta(K) = 0$. 
\end{example}

The following is a consequence of the characterization in Proposition \ref{prop:mainthm-Z}.

\begin{prop}\label{prop:or-rev}
The quantity $M(K)$ is an invariant of the concordance class of $K$, and the mirror $K^*$ satisfies $M(K^*) = -M(K)$. 
\end{prop}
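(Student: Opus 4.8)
The plan is to derive both assertions of Proposition~\ref{prop:or-rev} directly from the characterization of $M(K)$ obtained in Proposition~\ref{prop:mainthm-Z}: the function sending an odd integer $m$ to $q_3(S^3_m(K))$ equals $1$ on $M(K) < m < 0$, equals $-1$ on $0 < m < M(K)$, and vanishes otherwise, and this function determines $M(K)$. Thus it is enough to understand how the assignment $m \mapsto q_3(S^3_m(K))$ (ranging over odd integers $m$) behaves under concordance and under mirroring. The two inputs I would use are that $q_3$ is an $\mathbb F$-homology cobordism invariant and that $q_3(-Y) = -q_3(Y)$, both recorded earlier in Section~\ref{subsec:DMES2}. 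Note that for odd $m$ we have $|H_1(S^3_m(K);\mathbb Z)| = |m|$ odd, so $S^3_m(K)$ is an $\mathbb F$-homology sphere and $q_3(S^3_m(K))$ is defined.

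For concordance invariance, suppose $K_0$ and $K_1$ are smoothly concordant. A concordance in $S^3 \times I$, after removing a tubular neighborhood of the concordance annulus and regluing $D^2 \times S^1 \times I$, produces a homology cobordism, hence in particular an $\mathbb F$-homology cobordism, from $S^3_m(K_0)$ to $S^3_m(K_1)$ for every $m$. Applying the invariance of $q_3$ under $\mathbb F$-homology cobordism gives $q_3(S^3_m(K_0)) = q_3(S^3_m(K_1))$ for all odd $m$, whence $M(K_0) = M(K_1)$ by the characterization above.

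For the behavior under mirroring, I would use the identity $-S^3_m(K^*) \cong S^3_{-m}(K)$ (as already invoked in the proof of Lemma~\ref{lemma:formula1}) together with $q_3(-Y) = -q_3(Y)$ to get $q_3(S^3_m(K^*)) = -q_3(S^3_{-m}(K))$ for all odd $m$. Substituting the characterization of $M(K)$ and re-indexing $m \leftrightarrow -m$ shows that $m \mapsto q_3(S^3_m(K^*))$ equals $1$ precisely on $-M(K) < m < 0$, equals $-1$ precisely on $0 < m < -M(K)$, and vanishes otherwise. Comparing with the characterization of $M(K^*)$ gives $M(K^*) = -M(K)$.

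The argument is essentially bookkeeping, and I do not anticipate a serious obstacle; the only point needing care is the first step — the standard fact that concordant knots have homology-cobordant surgeries — and checking that the surgery slopes in play give $\mathbb F$-homology spheres so that $q_3$, and therefore the characterization of $M$ from Proposition~\ref{prop:mainthm-Z}, is applicable.
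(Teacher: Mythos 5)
Your proposal is correct and follows essentially the same route as the paper: both deduce concordance invariance from the $\mathbb F$-homology cobordism invariance of $q_3$ applied to the homology cobordism between $S^3_m(K_0)$ and $S^3_m(K_1)$ induced by a concordance, and both derive $M(K^*) = -M(K)$ from the identity $-S^3_m(K^*) \cong S^3_{-m}(K)$ together with $q_3(-Y) = -q_3(Y)$, using the characterization of $M$ in terms of $q_3$ from Proposition~\ref{prop:mainthm-Z}. Your write-up is a bit more explicit about the construction of the homology cobordism and about checking that the relevant surgeries are $\mathbb F$-homology spheres, but the key ideas coincide.
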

\begin{proof}
If $K$ is concordance to $K'$, then for any odd integer $m$ the manifold $S^3_m(K)$ is $\mathbb F$-homology cobordant to $S^3_m(K')$, and therefore they have the same $q_3$. Thus $M(K) = M(K')$. The mirror has $S^3_{-m}(K^*) = -S^3_m(K)$, so applying $q_3(-Y) = -q_3(Y)$ we obtain $M(K^*) = - M(K)$.
\end{proof}

Write $F_{K,n}: I^\#(S^3;\mathbb F) \to I^\#(S^3_n(K), w; \mathbb F)$ for the map induced by the $2$-handle cobordism equipped with the cocore of the handle attachment. As a consequence of the surgery exact sequence and $\dim I^\#(S^3;\mathbb F) = 1$, we see that 
\begin{align*}F_{K,n} \text{ nonzero } &\iff I^\#(S^3_{n+1}(K);\mathbb F) = \dim I^\#(S^3_n(K);\mathbb F) - 1\\
&\iff n \le M(K) - 2 \text{ or } n = M(K) - \zeta(K).\end{align*}

\begin{lemma}\label{lemma:M-add}
We have $M(K \# L) = M(K) + M(L)$, and if we have $\zeta(K) = 0$ while $\zeta(L) = 1$ then $\zeta(K \# L) = 1$. 
\end{lemma}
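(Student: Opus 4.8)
The plan is to reduce everything to the characterization in Proposition \ref{prop:mainthm-Z}: the pair $(M(K),\zeta(K))$ is determined by the set $S(K)=\{n\in\mathbb Z:F_{K,n}\ne 0\}$, which equals $\{n\le M(K)-2\}\cup\{M(K)-\zeta(K)\}$, and also by the sign pattern of $q_3(S^3_m(K))$ for odd $m$ (nonzero exactly on the odd integers strictly between $0$ and $M(K)$). So for the additivity statement it suffices to show that $q_3(S^3_m(K\#L))$ is nonzero exactly on the odd integers strictly between $0$ and $M(K)+M(L)$. For the $\zeta$ statement — where $\zeta(L)=1$ already forces $M(L)=0$, hence $M(K\#L)=M(K)$ once additivity is known — it suffices to show $M(K)-1\in S(K\#L)$, equivalently that the bottom of the $W$-shaped graph of $\dim I^\#(S^3_n(K\#L);\mathbb F)$ takes the smaller value $r_2(K\#L)$.

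The geometric input I need is a \emph{connected-sum cobordism} linking the surgery maps of $K$, $L$, and $K\#L$. Using the decomposition of $S^3\setminus N(K\#L)$ along its swallow-follow torus — i.e.\ realizing $K\#L$ as the satellite of $K$ with the connect-sum-$L$ pattern — one produces, for each pair of integers $a,b$, a simply-connected cobordism $W_{a,b}$ built from a single $2$-handle attachment, with one end $S^3_a(K)\,\#\,S^3_b(L)$ and the other end a surgery on $K\#L$; it satisfies $H_1(W_{a,b};\mathbb F)=0$, has $\epsilon_0(W_{a,b})$ equal to the identity on $Z\cong\mathbb F$, and has exactly one of $b^+(W_{a,b}),b^-(W_{a,b})$ equal to $1$ (the other $0$), with the sign read off from the relevant surgery coefficient. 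Combined with a connected-sum formula $I^\#(Y_1\#Y_2;\mathbb F)\cong I^\#(Y_1;\mathbb F)\otimes_{\mathbb F}I^\#(Y_2;\mathbb F)$ compatible with $2$-handle cobordism maps, the composite $I^\#(S^3)\xrightarrow{F_{K,a}\otimes F_{L,b}}I^\#\big(S^3_a(K)\#S^3_b(L)\big)\xrightarrow{(W_{a,b})_*}I^\#(\text{surgery on }K\#L)$ recovers, up to the computed contribution of $W_{a,b}$, a surgery map for $K\#L$.

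With this in place the argument is the bookkeeping already developed in Section \ref{sec:integer-case}. I would write each cobordism map in the upper-triangular form \eqref{eqn:triangular-matrix-form}, use the surjectivity/injectivity coming from the surgery exact triangle together with a dimension count to detect when the middle entry $\gr\epsilon_0$ is nonzero — exactly as in the proofs of Lemma \ref{lemma:formula1}, Lemma \ref{lemma:formula2} and Proposition \ref{prop:even-behavior} — and then invoke Proposition \ref{prop:q3-eps} on the negative-definite pieces and Theorem \ref{thm:suspension-maps} together with \eqref{eqn:q3-main-ineq} on the $b^+=1$ pieces to constrain $q_3(S^3_m(K\#L))$. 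Running this for $W_{a,b}$ and its orientation reverse $\overline{W}_{a,b}$ yields both $M(K\#L)\le M(K)+M(L)$ and $M(K\#L)\ge M(K)+M(L)$. For the $\zeta$ statement, the same analysis applied at the transition slope shows that the nonvanishing of $F_{L,-1}$ guaranteed by $\zeta(L)=1$ (with $M(L)=0$) propagates, through $W_{a,b}$ and the hypothesis $\zeta(K)=0$, to nonvanishing of $F_{K\#L,M(K)-1}$, which is exactly $\zeta(K\#L)=1$. The main obstacle is the content of the second paragraph: constructing the splicing cobordism with the correct surgery coefficient, computing its intersection form, checking that it is simply connected with $\epsilon_0=1$, and arranging compatibility with the connected-sum isomorphism for $I^\#$; the rest is linear algebra of the kind already carried out in this section.
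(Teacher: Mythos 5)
Your plan has a genuine gap, and it sits precisely where you flag it yourself: the ``connected-sum cobordism'' $W_{a,b}$, its intersection form, its simple-connectedness, and its compatibility with a K\"unneth formula for $I^\#$ are all left unconstructed. That construction is not a routine detail --- it is the entire technical content of the step. The good news is that you do not need to build it, because its consequence is exactly the content of \cite{bs-concordance}*{Lemma~5.1}: if $F_{K,a}$ and $F_{L,b}$ are nonzero then so is $F_{K\#L,a+b}$. The paper's proof simply cites that lemma, writes $S=\{n : F_{K\#L,n}\ne 0\}=(-\infty,M(K\#L)-2]\cup\{M(K\#L)-\zeta(K\#L)\}$, runs a short case check on $(\zeta(K),\zeta(L))$ to see that $S$ always contains $(-\infty,M(K)+M(L)-2]$ (and more, in the asymmetric case), and concludes $M(K\#L)\ge M(K)+M(L)$. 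The reverse inequality then comes from $M(K^*)=-M(K)$ (Proposition~\ref{prop:or-rev}) applied to $K^*\#L^*$, not from an orientation-reversed splicing cobordism. And once additivity is in hand, the case $\zeta(K)=0,\ \zeta(L)=1$ gives $M(K)+M(L)-1\in S$, which is precisely $\zeta(K\#L)=1$.

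Beyond the missing construction, your route is also more elaborate than necessary: you reduce the question to the sign pattern of $q_3(S^3_m(K\#L))$ and propose to run the $\gr\epsilon_0$/triangular-matrix machinery of Section~\ref{sec:integer-case} on the splicing cobordisms, but the characterization of $M(K\#L)$ via the set $S$ of slopes where $F_{K\#L,n}\ne 0$ already packages the $q_3$ information, so the detour through $q_3$ and Proposition~\ref{prop:q3-eps} buys nothing here. If you supply \cite{bs-concordance}*{Lemma~5.1} as the input, the remainder collapses to the casework I described and your plan becomes correct; without it, the argument is incomplete.
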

\begin{proof}
It is established in \cite[Lemma 5.1]{bs-concordance} that if $F_{K, a}$ and $F_{L,b}$ are nonzero, the same is true for $F_{K \# L, a + b}$. Write $S = \{n \in \mathbb Z \mid F_{K \# L, n} \ne 0\}$. This set takes the form $(-\infty, M(K \# L) - 2] \cup \{M(K \# L) - \zeta(K \# L)\}$. We study this set in cases. 
\begin{itemize}
    \item If $\zeta(K) = \zeta(L) = 0$, then $S$ contains $(-\infty, M(K) + M(L) - 2]$, so $M(K \# L) \ge M(K) + M(L)$. 
    \item If $\zeta(K) = 0$ and $\zeta(L) = 1$ or vice versa, then $S$ contains $(-\infty, M(K) + M(L) - 1]$, so that $M(K \# L) \ge M(K) + M(L)$. 
    \item If $\zeta(K) = \zeta(L) = 1$, then $S$ contains $(-\infty, M(K) + M(L) -2] \cup \{M(K) + M(L)\}$, so that $M(K \# L) \ge M(K) + M(L)$. 
\end{itemize}
In all cases, we see that $M(K \# L) \ge M(K) + M(L)$. Proposition \ref{prop:or-rev} gives \[M(K \# L) = -M(K^* \# L^*) \le -M(K^*) - M(L^*) = M(K) + M(L),\] so that $M$ is additive under connected sum. Returning to the list above, we see that when $\zeta(K) = 0$ and $\zeta(L) = 1$, the set $S$ contains $(-\infty, M(K) + M(L) - 1]$. Because $M(K \# L) = M(K) + M(L)$, this implies $\zeta(K \# L) = 1$. 
\end{proof}

We now show that $\zeta(K) = 0$ holds even in the case $M(K) = 0$.

\begin{lemma}
    We have $\zeta(K) = 0$ for all $K$. 
\end{lemma}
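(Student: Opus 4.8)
The plan is to observe that Proposition \ref{prop:mainthm-Z} already gives $\zeta(K) = 0$ whenever $M(K) \ne 0$, so the entire content of the lemma is the case $M(K) = 0$. Rather than attempt a direct computation of $\dim I^\#(S^3_0(K);\mathbb F)$ with the trivial bundle (which is awkward, since the equivariant cobordism maps we rely on are not available for $0$-surgery with trivial bundle, as noted in the Remark after Proposition \ref{prop:even-behavior}), I would bootstrap from the connected-sum formula. The three ingredients are: additivity $M(K \# L) = M(K) + M(L)$; the rule from Lemma \ref{lemma:M-add} that if $\zeta(K) = 0$ and $\zeta(L) = 1$ then $\zeta(K \# L) = 1$; and the existence of at least one knot with nonzero $M$-invariant, supplied by Example \ref{ex:trefoil}.

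Concretely, I would argue by contradiction: suppose $\zeta(K) = 1$ for some knot $K$, so that necessarily $M(K) = 0$ by Proposition \ref{prop:mainthm-Z}. Let $T$ be the left-handed trefoil, for which Example \ref{ex:trefoil} records $M(T) = -4$; in particular $\zeta(T) = 0$ again by Proposition \ref{prop:mainthm-Z}. Applying Lemma \ref{lemma:M-add} to the pair $(T,K)$ — with $\zeta(T) = 0$ and $\zeta(K) = 1$ — yields $\zeta(T \# K) = 1$. On the other hand, $M(T \# K) = M(T) + M(K) = -4 \ne 0$, so $\zeta(T \# K) = 0$ by the already-established part of Proposition \ref{prop:mainthm-Z}. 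Since $T \# K$ and $K \# T$ denote the same knot, these two conclusions are incompatible, and hence no knot with $\zeta(K) = 1$ exists.

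I do not expect any genuine obstacle here: every step is a direct invocation of a result stated earlier in the excerpt, and the only point meriting a sentence of care is that $\zeta$ is a well-defined $\{0,1\}$-valued invariant of every knot (which it is, by Proposition \ref{prop:mainthm-Z}), so that Lemma \ref{lemma:M-add} applies verbatim to $T \# K$. Any knot with nonzero $M$-invariant — for instance any torus knot, by the discussion following Theorem \ref{thm:twist-comp} — would serve equally well in place of the trefoil; the trefoil is simply the most economical choice because its invariants are computed explicitly in Example \ref{ex:trefoil}.
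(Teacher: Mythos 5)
Your proposal is correct and takes essentially the same route as the paper: both arguments combine Example \ref{ex:trefoil} ($M(3_1) = -4$, $\zeta(3_1) = 0$), additivity of $M$ to force $M(K \# 3_1) \ne 0$ and hence $\zeta(K \# 3_1) = 0$, and then the $\zeta$-portion of Lemma \ref{lemma:M-add} to conclude $\zeta(K) = 0$. The only cosmetic difference is that you phrase it as an explicit contradiction while the paper states it directly via the contrapositive; the underlying logic is identical.
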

\begin{proof}
Recall from Example \ref{ex:trefoil} that $M(3_1) = -4$ and $\zeta(3_1) = 0$. Suppose $M(K) = 0$. Since $M(3_1) = M(K \# 3_1) = -4$, we have $\zeta(3_1) = \zeta(K \# 3_1) = 0$. It follows from the preceding lemma that $\zeta(K) = 0$.
\end{proof}

We have therefore computed $\dim I^\#(S^3_n(K),w;\mathbb F)$ for all integers and all choices of bundle. We conclude with some comments on parity.

\begin{lemma}
    We have $r_2(K) \ge |M(K)|$, while $r_2(K)$ and $M(K)$ are both divisible by $4$.
\end{lemma}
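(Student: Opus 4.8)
The plan is to treat the inequality by a direct Euler-characteristic estimate and to obtain both divisibility statements by confronting the dimension formula of Proposition~\ref{prop:mainthm-Z} with the congruence supplied by Theorem~\ref{thm:Brieskorn-calc}.

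First, the inequality $r_2(K) \ge |M(K)|$. When $M(K) \ne 0$, the surgery $S^3_{M(K)}(K)$ is a rational homology sphere with $|H_1(S^3_{M(K)}(K);\mathbb Z)| = |M(K)|$, so the Euler characteristic of $I^\#(S^3_{M(K)}(K), w)$ has absolute value $|M(K)|$; since $\dim V \ge |\chi(V)|$ for every $\mathbb Z/2$-graded vector space $V$, and $r_2(K) = \dim I^\#(S^3_{M(K)}(K), w; \mathbb F)$ when $[w]_2 \ne 0$, we conclude $r_2(K) \ge |M(K)|$. When $M(K) = 0$ the statement $r_2(K) \ge 0$ holds trivially.

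For divisibility by $4$, I would first observe that the strong clause of Theorem~\ref{thm:Brieskorn-calc} --- divisibility of $\dim I^\#(Y,w;\mathbb F)$ by $4$ for \emph{admissible} $(Y,w)$ --- is not available here: every nonzero surgery on a knot in $S^3$ is a rational homology sphere, in which an embedded surface is null-homologous and hence meets every $1$-cycle with intersection number $0$, so no admissible bundle exists. One is thus restricted to the weaker clause, that $\dim I^\#(Y;\mathbb F) \equiv 2 \pmod{4}$ whenever $|H_1(Y;\mathbb Z)| \equiv 2 \pmod{4}$. The argument then proceeds by contradiction. Suppose $M(K) \equiv 2 \pmod{4}$ (recall that $M(K)$ is even and that $\zeta(K) = 0$). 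Then $M(K) \notin \{0,-4\}$, so $S^3_{M(K)}(K)$ and $S^3_{M(K)+4}(K)$ are both rational homology spheres with first homology of order $\equiv 2 \pmod{4}$; by Theorem~\ref{thm:Brieskorn-calc}, both $\dim I^\#(S^3_{M(K)}(K);\mathbb F)$ and $\dim I^\#(S^3_{M(K)+4}(K);\mathbb F)$ are $\equiv 2 \pmod{4}$. But Proposition~\ref{prop:mainthm-Z}, applied with the trivial bundle, evaluates these dimensions as $r_2(K)+2$ and $r_2(K)+4$ respectively, which differ by $2$ --- a contradiction. Hence $M(K) \equiv 0 \pmod{4}$. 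With this settled, $M(K) \ne 2$, so Proposition~\ref{prop:mainthm-Z} gives $\dim I^\#(S^3_2(K);\mathbb F) = r_2(K) + |2 - M(K)|$ with $|2 - M(K)| \equiv 2 \pmod{4}$, while Theorem~\ref{thm:Brieskorn-calc} forces $\dim I^\#(S^3_2(K);\mathbb F) \equiv 2 \pmod{4}$; comparing the two yields $r_2(K) \equiv 0 \pmod{4}$.

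The inequality and the final congruence for $r_2(K)$ are routine; the one point that requires genuine care is the deduction that $4 \mid M(K)$. The main obstacle there is precisely that the cleanest divisibility input is inapplicable to surgeries on $S^3$, so the argument must instead exploit the local shape of the dimension function at the distinguished slope: it is the jump of size $2$ at $n = M(K)$, set against the mod-$4$ value forced on every slope $n \equiv 2 \pmod{4}$, that becomes contradictory exactly when $M(K) \equiv 2 \pmod{4}$.
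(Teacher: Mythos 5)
Your proof is correct, and the divisibility argument takes a genuinely different route from the paper's. The inequality $r_2(K) \ge |M(K)|$ is obtained in essentially the same way in both (an Euler-characteristic lower bound applied to the dimension formula; the paper evaluates at $n \to \pm\infty$, you evaluate at $n = M(K)$). For divisibility, however, the paper appeals to the admissibility clause of Theorem~\ref{thm:Brieskorn-calc} applied to the pair $(S^3_0(K), w)$, getting $4 \mid r_2(K) + |M(K)|$ in one step, and then rules out $M(K) \equiv 2 \pmod 4$ using the $\equiv 2 \pmod 4$ clause at $S^3_{M(K)}(K)$. You instead use only the $\equiv 2 \pmod 4$ clause, comparing it against the formula at the slopes $M(K)$, $M(K)+4$, and $2$, which is a perfectly valid alternative.

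One remark on your preliminary observation: your claim that the strong, admissibility-based clause of Theorem~\ref{thm:Brieskorn-calc} is unavailable for surgeries on knots in $S^3$ is not quite right. You correctly note that nonzero surgeries are rational homology spheres and so admit no admissible bundle, but the $0$-surgery $S^3_0(K)$ has $b_1 = 1$, and the pair $(S^3_0(K), \tilde K)$ \emph{is} admissible: the capped-off Seifert surface meets the dual knot with intersection number one. This is exactly what the paper exploits, applying the divisibility clause directly to $\dim I^\#(S^3_0(K),w;\mathbb F) = r_2(K) + |M(K)|$. Your argument sidesteps this and so reaches the conclusion from strictly weaker hypotheses, which is a small gain in self-containment, but it is worth knowing that the cleaner input is in fact available.
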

\begin{proof}
    Because \[r_2(K) + |n - M(K)| = \dim I^\#(S^3_n(K),w;\mathbb F) \ge |n|,\] taking $n$ large implies $r_2(K) \ge M(K)$ while taking $n$ small implies $r_2(K) \ge -M(K)$.

    Combining Theorem \ref{thm:Brieskorn-calc} and Theorem \ref{thm:surgerydim} we have \[4 \mid \dim I^\#(S^3_0(K),w;\mathbb F) = r_2(K) + |M(K)|.\] Thus, $r_2 \equiv -|M| \mod 4$. Because $M(K)$ is even, we find $r_2(K) \equiv M(K) \mod 4$. 

    Finally, suppose towards a contradiction that $M(K) = 4m+2$ for some integer $m$. Then $\dim I^\#(S^3_{M(K)};\mathbb F) = r_2(K) + 2 \equiv 0 \mod 4$. However, because $|H_1(S^3_{M(K)};\mathbb Z)| = 2 \mod 4$, this contradicts Theorem \ref{thm:Brieskorn-calc}.
\end{proof}

This proves Theorem \ref{thm:surgerydim} and Theorem \ref{thm:q3-surgery} for integer surgeries. Theorem \ref{thm:q3-surgery} implies that $M(K)$ is a concordance invariant. Combining the preceding lemma with Lemma \ref{lemma:M-add}, we see that $M$ defines a homomorphism $\mathcal C \to 4\mathbb Z$. It is surjective by Example \ref{ex:trefoil}, so this proves Theorem \ref{thm:M-additive}.

\section{Rational surgeries}\label{sec:rat}
In this section, we extend the previous computations to all rational surgeries. 

\subsection{Preliminaries}
We will prove Theorem \ref{thm:surgerydim} inductively by giving upper and lower bounds for $\dim I^\#(S^3_r(K);\mathbb F)$ and checking these bounds coincide. The upper bound comes from Theorem \ref{thm:FloerTriangle}, and the lower bound from Theorem \ref{thm:distance2}, and the following principle: given an exact triangle \[\cdots \to V_1 \to V_2 \to V_3 \xrightarrow{f} V_1 \to \cdots,\] we have \begin{align*}\dim V_2 &= \dim V_1 - \dim V_3 + 2\nll(f) \\ &= \dim V_3 - \dim V_1 + 2 \rank(f) \\ &= \dim V_1 + \dim V_3 - 2 \rank(f),\end{align*} so in particular \[|\dim V_1 - \dim V_3| \le \dim V_2 \le \dim V_1 + \dim V_3.\]

The next statement is the key to the inductive step in our argument. Most of this is described in the statement of \cite{bs-concordance}*{Proposition~4.23} and is derived there from the arithmetic of continued fractions; the remaining statements below are derived in the course of the proof given in that reference.

\begin{lemma}
Suppose $(p,q)$ is a coprime pair of integers with $q \ge 2$. Then there exists a triple $(a,b), (c, d), (e,f)$ of coprime pairs of integers with the following properties: 
\begin{itemize}
    \item $qc -pd = bc - ad = pb - qa = 1$. 
    \item $b, d > 0$ and $f \ge 0$ with equality if only if $e = 1$. 
    \item If $n$ is an integer with $n < p/q < n+1$, then $a/b, c/d \in [n, n+1]$ and $e/f$ lies in this interval unless $(e,f) = (1,0)$.
    \item $(p,q) = (a,b) + (c,d)$. If $b = d$, then both are $1$ and $(e,f) = (1,0)$. If $b < d$ then $(e,f) = (c-a, d-b)$, while if $b > d$ then $(e,f) = (a-c, b-d)$.
\end{itemize}
\end{lemma}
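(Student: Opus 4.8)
The plan is to recognise $(a,b)$ and $(c,d)$ as the \emph{Farey parents} of $p/q$, and then to obtain $(e,f)$ directly from the rule in the last bullet. The first point is purely formal: once $(p,q)=(a,b)+(c,d)$, expanding shows that $qc-pd=pb-qa=bc-ad$, so all three equalities in the first bullet collapse to the single condition $bc-ad=1$. The statement therefore reduces to three tasks: (i) produce a unique decomposition $(p,q)=(a,b)+(c,d)$ with $b,d>0$ and $bc-ad=1$; (ii) show that $a/b$ and $c/d$ lie in $[n,n+1]$; (iii) verify the claimed properties of $(e,f)$, which is \emph{defined} by the last bullet.

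For (i) I would use that $\gcd(p,q)=1$ and $q\ge2$, so $p$ is invertible modulo $q$: let $b\in\{1,\dots,q-1\}$ be the unique solution of $pb\equiv1\pmod q$, and set $a=(pb-1)/q\in\mathbb Z$, $d=q-b\in\{1,\dots,q-1\}$, $c=p-a$. Then $(p,q)=(a,b)+(c,d)$ by construction, the B\'ezout identities $pb-qa=1$ and $qc-pd=1$ are immediate, and they force $\gcd(a,b)=\gcd(c,d)=1$. Uniqueness is clear: any admissible decomposition has $b\in\{1,\dots,q-1\}$ solving $pb\equiv1\pmod q$, hence agrees with this one.

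For (ii) the B\'ezout relations give $p/q-a/b=\frac1{qb}>0$, $c/d-p/q=\frac1{qd}>0$ and $c/d-a/b=\frac1{bd}>0$, so $a/b<p/q<c/d$; and $n=\lfloor p/q\rfloor$ is the unique integer with $n<p/q<n+1$ since $q\ge2$. If $a/b<n$ then $a-nb\le-1$, and as $p-nq\ge1$ this forces $c-nd=(p-nq)-(a-nb)\ge2$, so that $b(c-nd)-d(a-nb)=bc-ad=1$ while the left-hand side is at least $2b+d\ge3$, a contradiction; the symmetric estimate rules out $c/d>n+1$. Hence $a/b\in[n,n+1)$ and $c/d\in(n,n+1]$.

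Task (iii), the fiddly part, I would do by cases on $b$ versus $d$. If $b=d$, then $b(c-a)=bc-ad=1$ forces $b=d=1$, $q=2$, and $(e,f)=(1,0)$, so $f=0$ and $e=1$. If $b<d$, then $(c,d)=(a,b)+(e,f)$ with $f=d-b>0$, so $c/d$ is the mediant of $a/b$ and $e/f$; hence $a/b<c/d<e/f$ and in particular $e/f>p/q>n$. For the upper bound I would put $u=(n+1)b-a\ge1$ and $v=(n+1)d-c\ge0$; the identity $du-bv=bc-ad=1$ then gives $f(n+1)-e=v-u=(uf-1)/b\ge0$, i.e.\ $e/f\le n+1$. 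The case $b>d$ is symmetric, with $(e,f)=(a-c,b-d)$; the new ingredient is that here $b\ge2$, since $a/b=n$ would force $b=\gcd(nb,b)=1$, and it is this that makes $a-nb\ge1$, so the analogous manipulation yields $n\le e/f<p/q$. Finally, coprimality of $(e,f)$ follows from $ed-cf=1$ (resp.\ $af-be=1$), and $f=0$ occurs only when $b=d$, i.e.\ only when $e=1$. The main obstacle is exactly this endpoint bookkeeping: tracking when $a/b$, $c/d$, or $e/f$ hits an endpoint of $[n,n+1]$, and observing that the regime $b>d$ forces $b\ge2$, without which the lower bound on $e/f$ fails. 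The uniform device making these estimates routine is to rewrite the unimodularity $bc-ad=1$ after subtracting an integer from $p/q$, as in $b(c-nd)-d(a-nb)=1$ and $du-bv=1$.
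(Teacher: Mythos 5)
Your proof is correct, but it takes a genuinely different route from the paper. The paper does not give an argument at all: it states the lemma and refers the reader to \cite{bs-concordance}*{Proposition~4.23}, where Baldwin and Sivek derive these facts from the arithmetic of continued fractions (and notes that a few of the clauses are extracted from the course of that proof rather than its statement). Your proposal is a self-contained elementary verification that avoids continued fractions entirely: you produce $(a,b)$ directly as the pair with $b \equiv p^{-1}\pmod q$ in $\{1,\dots,q-1\}$, define $(c,d) = (p,q) - (a,b)$, reduce all three B\'ezout relations to the single unimodularity $bc - ad = 1$, and then get the interval bounds by rewriting that unimodularity after translating by $n$ or $n+1$ (e.g.\ $b(c-nd) - d(a-nb) = 1$). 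This is a clean, checkable alternative that would make the lemma independent of the cited reference. One small stylistic remark: in the case $b > d$, the observation that $b \ge 2$ (hence $a/b \ne n$ and $a - nb \ge 1$) is true, but it is not actually needed if one mirrors the $b < d$ computation exactly. There you solved $du - bv = 1$ for $v$ and used $u = (n+1)b - a \ge 1$, which is free because $a/b < p/q < n+1$; the symmetric move in the $b > d$ case is to solve $b u'' - d v'' = 1$ for $v''$ with $u'' = c - nd \ge 1$, which is likewise free because $c/d > p/q > n$, and yields $v'' - u'' = (fu'' - 1)/d \ge 0$ with no extra input. Your route (solving for $u''$ and invoking $a - nb \ge 1$ via $b \ge 2$) is also valid, just asymmetric. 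Everything else — the coprimality of $(e,f)$ from $ed - cf = 1$ or $af - be = 1$, the endpoint analysis, and the degenerate case $b = d \Rightarrow b = d = 1$, $q = 2$, $(e,f) = (1,0)$ — is handled correctly.
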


Notice that if $(p,q)$ has triple $(a,b), (c, d), (e,f)$, then a triple for $(-p,q)$ is given by $(-p, q), (-c,d), (-a,b), (-e,f)$. We now describe the two relevant exact triangles. 

Apply Theorem \ref{thm:FloerTriangle} to the triple $(S^3_{c/d}(K), \tilde K, w_0)$ where $\tilde K$ is framed with meridian $c\mu + d\lambda$ and longitude $a \mu + b \lambda$, and $w_0 \subset S^3 \setminus K$ is a $1$-manifold to be specified later. The resulting exact triangle takes the form 
\begin{equation}\label{eqn:Q-Floer-triangle}
\cdots \to I^\#(S^3_{c/d}(K), w_0) \to I^\#(S^3_{a/b}(K), w_0 + \tilde K) \to I^\#(S^3_{p/q}(K), w_0) \to \cdots 
\end{equation} 
Also apply Theorem \ref{thm:distance2} to a triple $(S^3_{a/b}(K), \tilde K, w_0)$ with $\tilde K$ framed with meridian $a \lambda + b \mu$ and longitude $-(c\mu + d\lambda)$. The resulting exact triangle takes the form

\begin{equation}\label{eqn:Q-CDX-triangle}
\begin{tikzcd}
	I^\#(S^3_{e/f}(K), w_0) \arrow[r] & I^\#(S^3_{a/b}(K), w_0) \oplus I^\#(S^3_{a/b}(K), w_0 + \tilde K) \arrow[d] \\
	& I^\#(S^3_{p/q}(K), w_0 + \tilde K) \arrow[ul]
\end{tikzcd}
\end{equation}

\subsection{Proof of Theorem \ref{thm:surgerydim}}

To minimize case analysis in the proof, we first handle surgeries involving the slope $M(K)$ and half-integer surgeries.  

\begin{lemma}\label{lemma:M-plus-1n}
Theorem \ref{thm:surgerydim} holds for all knots $K$ and all slopes $M(K) \pm 1/n$ for integers $n \ge 1$.
\end{lemma}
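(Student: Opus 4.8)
The plan is to run the induction on $q$ using the two exact triangles \eqref{eqn:Q-Floer-triangle} and \eqref{eqn:Q-CDX-triangle}, but to organize things so that the base case already covers the slopes $M(K) \pm 1/n$. First I would note that by the orientation-reversal symmetry (Proposition \ref{prop:or-rev}, which gives $M(K^*) = -M(K)$ and leaves $\dim I^\#$ unchanged under reversing orientation of the surgered manifold), it suffices to prove the statement for slopes $M(K) + 1/n$ with $n \ge 1$ and for slopes $M(K) - 1/n$ with $n \ge 1$; handling one sign of the $1/n$ correction is enough since the other follows by mirroring. The integer case $n$ large aside, the genuinely new slopes here are the half-integers $M(K) \pm 1/2, \pm 1/3, \dots$ near $M(K)$, so writing $M = M(K)$ the targets of \eqref{eqn:dim-formula} are: $\dim I^\#(S^3_{M + 1/n}(K), w;\mathbb F) = n r_2(K) + 1$ for all $[w]_2$ (since the exceptional slope $p/q = M$ with $q = 1$ is the only place the bundle matters, and $p/q = M + 1/n \ne M$), and similarly $\dim I^\#(S^3_{M - 1/n}(K), w;\mathbb F) = n r_2(K) + 1$.

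The key step is to feed the continued-fraction lemma with $p/q = M \pm 1/n$. For $p/q = M + 1/n$ we have $p = Mn + 1$, $q = n$, and the unique integer with $n < p/q$... wait, rather $M < p/q < M+1$, so the relevant integer is $M$ itself; the triple produced by the lemma will consist of pairs $(a,b), (c,d), (e,f)$ with $a/b, c/d \in [M, M+1]$, and in fact for this particular slope one checks that the continued fraction expansion forces $\{a/b, c/d\}$ to be $\{M, (M(n-1)+1)/(n-1)\} = \{M, M + 1/(n-1)\}$ (with the appropriate one being an integer when $n=1$, i.e. $a/b$ or $c/d$ equals $M$), and $e/f = M$ or $(1,0)$. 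So the Floer triangle \eqref{eqn:Q-Floer-triangle} relates $S^3_{M+1/n}(K)$ to $S^3_{M+1/(n-1)}(K)$ and $S^3_{M}(K)$, and the distance-$2$ triangle \eqref{eqn:Q-CDX-triangle} relates it to $S^3_{M}(K)$ (with both bundles) and $S^3_{M+1/(n-1)}(K)$. Then I would induct on $n$: the base case $n = 1$ is the integer slope $M$, already handled by Proposition \ref{prop:mainthm-Z}, giving $\dim I^\#(S^3_M(K), w;\mathbb F) = r_2(K)$ for $[w]_2 \ne 0$ and $r_2(K) + 2$ for $[w]_2 = 0$. For the inductive step, choosing $w_0 = \varnothing$ so that on $S^3_M(K)$ one sees both $I^\#(S^3_M(K))$ (dimension $r_2 + 2$) and $I^\#(S^3_M(K), \tilde K)$ (dimension $r_2$, since $M$ is even so the dual knot gives a nontrivial bundle), the upper bound from \eqref{eqn:Q-Floer-triangle} reads
\[\dim I^\#(S^3_{M+1/n}(K), w_0) \le \dim I^\#(S^3_{M+1/(n-1)}(K), w_0 + \tilde K) + \dim I^\#(S^3_M(K), w_0),\]
which by induction is $\le ((n-1)r_2 + 1) + (r_2 + 2) = n r_2 + 3$ — too weak by $2$ — so I instead take the lower bound from the distance-$2$ triangle \eqref{eqn:Q-CDX-triangle},
\[\dim I^\#(S^3_{M+1/n}(K), w_0 + \tilde K) \ge \big|\dim I^\#(S^3_{M+1/(n-1)}(K), w_0) - \dim\big(I^\#(S^3_M(K),w_0) \oplus I^\#(S^3_M(K), w_0 + \tilde K)\big)\big|,\]
and combine with a parallel upper bound; the point is that the exceptional $+2$ on $S^3_M(K)$ appears once in each triangle and the two estimates pinch the answer to exactly $n r_2(K) + 1$. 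I would carry this out by tracking carefully, using Theorem \ref{thm:Isharp-is-Itilde}, whether the connecting maps in the triangles are induced by cobordisms with trivial or nontrivial mod-$2$ surface class, since this controls whether $I^\#(S^3_M(K))$ or $I^\#(S^3_M(K), \tilde K)$ is the relevant term — and, as in Section \ref{sec:integer-case}, using the triangular form \eqref{eqn:triangular-matrix-form} and Proposition \ref{prop:q3-eps} to pin down the rank of the relevant connecting map when the naive dimension count leaves an ambiguity.

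The main obstacle I anticipate is precisely this last point: the two bounds from \eqref{eqn:Q-Floer-triangle} and \eqref{eqn:Q-CDX-triangle} will generically differ by $2$ (the familiar $V$-shape-versus-$W$-shape ambiguity), and closing the gap requires showing that a particular cobordism map is nonzero. The clean way to do this is to observe that the relevant $2$-handle cobordism $W$ from $S^3_{M+1/(n-1)}(K)$ (or from $S^3_M(K)$) can be arranged to be negative-definite with $\epsilon_0(W) = 1$, lift the question to $\widehat I_{\chi_2}$ via Theorem \ref{thm:Isharp-is-Itilde} and Theorem \ref{thm:suspension-maps}, and then invoke the $q_3$ computation of Theorem \ref{thm:q3-surgery} (integer case) together with Proposition \ref{prop:q3-eps}: since $q_3(S^3_m(K))$ is already known for all odd $m$ between $0$ and $M$, the composite cobordism's $\gr \epsilon_0$ is forced to be nonzero, hence the cobordism map is surjective (using that $\gr f_{11}$ surjective implies $f_{11}$ surjective), which selects the correct one of the two candidate dimensions. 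A secondary nuisance is bookkeeping the two sub-cases $b < d$ versus $b > d$ in the continued-fraction lemma and the degenerate case $(e,f) = (1,0)$ (where the $I^\#(S^3_{e/f}(K))$ term is just $I^\#(S^3)$, one-dimensional), but these are routine once the main argument is in place; I would dispatch the $n = 1$ and $n = 2$ cases by hand to seed the induction cleanly.
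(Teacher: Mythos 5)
Your plan correctly identifies the two exact triangles and the need for an induction on $n$, and it even flags the right technical worry (closing a $\pm 2$ gap). But there is a genuine gap created by the choice of which side of $M(K)$ to induct on, and the fallback you propose cannot save it.

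The asymmetry is built into the continued-fraction lemma and the CDX triangle: the convention $bc-ad = 1$ forces $a/b < p/q < c/d$, and the middle term of the distance-$2$ triangle \eqref{eqn:Q-CDX-triangle} is always at slope $a/b$ (appearing with both bundles). For your choice $p/q = M + 1/n$, the triple is forced to be $a/b = M$, $c/d = M + 1/(n-1)$, $e/f = M + 1/(n-2)$ (not $M + 1/(n-1)$ as you write). Thus the distance-$2$ triangle has its middle at the exceptional slope $M$, contributing $\dim I^\#(S^3_M) + \dim I^\#(S^3_M,w) = 2r_2+2$. The resulting lower bound is \[\bigl|\,(n-2)r_2 + 1 - (2r_2+2)\,\bigr| = (n-4)r_2 - 1,\] which falls short of the target $n r_2 + 1$ by roughly $4r_2$, not by $2$. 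No amount of rank-bookkeeping via $\gr\epsilon_0$ and Proposition \ref{prop:q3-eps} can close a gap of size $4r_2$; that machinery identifies a single $\pm 1$ ambiguity. (You also have the bundles on the Floer triangle swapped: in \eqref{eqn:Q-Floer-triangle} the $c/d$-slope carries $w_0$ and the $a/b$-slope carries $w_0 + \tilde K$, so with $w_0 = \varnothing$ the Floer upper bound is actually $nr_2 + 1$ already, not $nr_2 + 3$ --- but this only fixes the upper bound, not the lower.)

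The paper sidesteps all of this by running the induction on the other side, $p/q = M - 1/(n+1)$. There the triple comes out $a/b = M - 1/n$, $c/d = M$, $e/f = M - 1/(n-1)$, so the distance-$2$ middle is at $M - 1/n \neq M$ with $|H_1|$ odd, both bundles contribute $n r_2 + 1$ each, and the lower bound is $2(nr_2+1)-((n-1)r_2+1) = (n+1)r_2 + 1$, matching the Floer upper bound (taken with $w_0 = \mu$ so that $S^3_M$ appears with the nontrivial bundle, dimension $r_2$). The slopes $M + 1/n$ are then recovered by mirroring. You already note that one sign suffices by duality; you simply chose the sign on which the argument does not close.
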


\begin{proof}
By a duality argument it suffices to handle the case $M(K) - 1/n$. We aim to show that the dimension of $I^\#(S^3_{M-1/n};\mathbb F)$ is $nr_2 + 1$. We prove the claim by induction on $n$, where the case $n = 1$ is known. Take $(p,q) = ((n+1)M-1,n+1)$ with triple $(a,b) = (nM-1,n), (c,d) = (M, 1)$, and $(e,f) = ((n-1)M-1,n-1)$. 

Applying \eqref{eqn:Q-Floer-triangle} with $w_0 = \mu$ gives the inequality 
\begin{align*}\dim I^\#(S^3_{M-1/(n+1)}(K);\mathbb F) &\le (nr_2 + 1) + r_2 = (n+1)r_2 + 1.
\end{align*}  Applying \eqref{eqn:Q-CDX-triangle} with $w_0 = \varnothing$ gives 
\begin{align*}
    \dim I^\#(S^3_{M-1/(n+1)}(K); \mathbb F) &\ge 2(nr_2 + 1) - ((n-1)r_2 + 1) = (n+1)r_2 + 1.
\end{align*} 
Thus the lower and upper bounds agree, and give the desired result.
\end{proof}

In handling the general case, we will want some more control over certain cobordism maps involving $S^3_M(K)$. Write $W_n: I^\#(S^3_M(K);\mathbb F) \to I^\#(S^3_{M+1/n}(K);\mathbb F)$ for the cobordism map induced by the $2$-handle cobordism equipped with trivial bundle, and $W'_n$ for the cobordism map equipped with nontrivial bundle dual to the cocore.

Similarly, write $X: I^\#(S^3;\mathbb F) \to I^\#(S^3_M(K);\mathbb F)$ for the cobordism map induced by the $2$-handle cobordism with trivial bundle and $X'$ for the cobordism map equipped with bundle dual to the core. 

\begin{lemma}\label{lemma:nplus1-cob}
We have $\ker(W_n) = \im(X')$ and $\ker(W'_n) = \im(X)$ for all $n \ge 1$. 
\end{lemma}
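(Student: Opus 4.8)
The plan is to realize the four named maps as consecutive arrows in surgery exact triangles and to read off the kernels and images from exactness together with the dimension count. First I would record the dimensions already at hand: by Proposition~\ref{prop:mainthm-Z} with $\zeta\equiv 0$, $\dim I^\#(S^3_M(K);\mathbb F)=r_2(K)+2$ and $\dim I^\#(S^3_M(K),w;\mathbb F)=r_2(K)$ for $[w]_2\neq 0$; by Lemma~\ref{lemma:M-plus-1n}, $\dim I^\#(S^3_{M+1/n}(K);\mathbb F)=nr_2(K)+1$; and $\dim I^\#(S^3;\mathbb F)=1$. Since the numerator $Mn+1$ of $M+1/n$ is odd, $S^3_{M+1/n}(K)$ carries only one admissible bundle up to isomorphism, so the targets are unambiguous.

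For $n=1$ the slopes $\infty,M,M+1$ form a Floer triple; applying Theorem~\ref{thm:FloerTriangle} to $(S^3_M(K),\tilde K)$ with the two auxiliary curves $w_0=\varnothing$ and $w_0=\mu$ used in Section~\ref{sec:integer-case} yields two exact triangles of the shape $I^\#(S^3)\to I^\#(S^3_M(K))\to I^\#(S^3_{M+1}(K))\to\cdots$. In each, the dimensions $1,\,r_2(K)+2,\,r_2(K)+1$ force the connecting map $I^\#(S^3_{M+1}(K))\to I^\#(S^3)$ to vanish, hence the first map is injective and exactness at the middle term identifies its (one-dimensional) image with the kernel of the second map; identifying the first and second maps with the correct members of $\{X,X'\}$ and $\{W_1,W'_1\}$ then gives $\ker W_1=\im X'$ and $\ker W'_1=\im X$. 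For $n\ge 2$ I would use the continued-fraction bookkeeping of Lemma~\ref{lemma:M-plus-1n} with $(a,b)=(M,1)$, $(c,d)=(M(n-1)+1,n-1)$, $(p,q)=(Mn+1,n)$: triangle \eqref{eqn:Q-Floer-triangle} (with $w_0=\mu$, so that its middle term becomes $I^\#(S^3_M(K))$ with the trivial bundle) relates $S^3_{M+1/(n-1)}(K)$, $S^3_M(K)$, $S^3_{M+1/n}(K)$, and triangle \eqref{eqn:Q-CDX-triangle} relates $S^3_{M+1/(n-2)}(K)$ --- equal to $S^3$ exactly when $n=2$ --- to $S^3_M(K)$ and $S^3_{M+1/n}(K)$, with middle term $I^\#(S^3_M(K))\oplus I^\#(S^3_M(K),\bullet)$. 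In each triangle the dimensions above determine all ranks: in \eqref{eqn:Q-Floer-triangle}, for instance, the connecting map has rank exactly $(n-1)r_2(K)$, so the next map has rank $1$, so the $2$-handle map out of $I^\#(S^3_M(K))$ has one-dimensional kernel equal to the image of the connecting map. To see that this one-dimensional kernel is $\im X'$ (and, from \eqref{eqn:Q-CDX-triangle}, that the analogous kernel is $\im X$) I would induct on $n$, comparing the connecting map at stage $n$ with the maps treated at stage $n-1$ (or $n-2$) via commutativity of cobordism maps with the triangle arrows together with the compatibility of the composite cobordism $S^3\to S^3_{M+1/(n-1)}(K)\to S^3_M(K)$ with $S^3\to S^3_M(K)$; equivalently, one checks $X'\neq 0$ and $W_n\circ X'=0$ (and the mirror statements), so that the two one-dimensional subspaces $\im X'$ and $\ker W_n$ of $I^\#(S^3_M(K);\mathbb F)$ coincide.

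The step I expect to be the main obstacle is matching the surface-in-cobordism classes, so that the abstract maps produced by the triangles are literally $X,X',W_n,W'_n$: this is the $\mathbb Z/2$-cohomology bookkeeping for the $2$-handle cobordisms between Dehn surgeries on $K$ (which curve generates $H_1$, when a surface class restricts trivially to a boundary component, when a core or cocore disk caps off to a nullhomologous surface), the same kind of computation appearing in the work of Baldwin--Sivek. Once that is settled, the remainder is exactness and dimension counting.
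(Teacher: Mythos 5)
Your base case ($n=1$) is fine, though slightly roundabout: exactness of the Floer triangle at $I^\#(S^3_M(K);\mathbb F)$ already gives $\ker W_1 = \im X'$ without needing to first show the connecting map to $I^\#(S^3;\mathbb F)$ vanishes.

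The inductive step is where the gap lies. You correctly reduce, via dimension counting in the rational-surgery triangles, to the fact that both $\ker W_n$ and $\im X'$ are one-dimensional subspaces of $I^\#(S^3_M(K);\mathbb F)$, so a single containment suffices. But neither of the two mechanisms you propose for producing that containment actually does the job. The statement $W_n \circ X' = 0$ is, for $n\ge 2$, equivalent to the lemma itself; the composite $X' \cup W_n$ is not a pair of consecutive maps in any single exact triangle, so exactness gives you nothing, and there is no obvious sphere in that composite. The ``compatibility'' you invoke between the composite cobordism $S^3 \to S^3_{M+1/(n-1)}(K) \to S^3_M(K)$ and the direct cobordism $S^3 \to S^3_M(K)$ also does not hold in any useful form: the composite has $b_2 \ge 2$ (and for $n\ge 3$ the first leg is already not a single $2$-handle), so it is not the cobordism $X'$, and the induced maps on $I^\#$ are not a priori comparable.

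The input that the paper actually uses, and that is absent from your sketch, is the vanishing of a \emph{different} composite. Let $\bar W'_n : S^3_{M+1/n}(K) \to S^3_M(K)$ be the $2$-handle map (with nontrivial bundle) appearing as the arrow just before $W_{n+1}$ in the triangle for slopes $M+1/n,\ M,\ M+1/(n+1)$, so that exactness gives $\ker W_{n+1} = \im \bar W'_n$. The cobordism underlying $W_n \circ \bar W'_n$ contains an embedded sphere of self-intersection zero meeting the bundle surface in an odd number of points (the count is $nM+1$), and by the vanishing criterion of Li--Ye (Lemma~3.10 of that paper, following the discussion on p.~23 of Baldwin--Sivek) this forces $W_n \circ \bar W'_n = 0$. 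Hence $\ker W_{n+1} = \im \bar W'_n \subseteq \ker W_n$, and the dimension count you already have makes every containment an equality, so $\ker W_n = \ker W_1 = \im X'$ by induction. Without this sphere-vanishing step (or some genuine replacement for it) your argument does not close, and the obstacle you flagged --- matching surface classes --- is real but is not the main one.
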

\begin{proof}
The argument for $W'_n$ is similar, so we focus on the case of $W_n$. 

For $n=1$, this follows from Floer's exact triangle, which gives exactness of \[\cdots \to I^\#(S^3;\mathbb F) \xrightarrow{X'} I^\#(S^3_M(K);\mathbb F) \xrightarrow{W_1} I^\#(S^3_{M+1}(K);\mathbb F) \to \cdots \]

We now prove the claim in general by induction. Suppose $n \ge 1$. Consider the (non-commuting) diagram 

\[\begin{tikzcd}
	& {I^\#(S^3_{M+1/n}(K);\mathbb F)} & {I^\#(S^3_{M+1/(n+1)}(K);\mathbb F)} \\
	{I^\#(S^3_{M+1/(n-1)}(K);\mathbb F)} & {I^\#(S^3_{M}(K);\mathbb F)}
	\arrow[from=1-2, to=2-1]
	\arrow["{\bar W'_n}", shift left=3, from=1-2, to=2-2]
	\arrow[from=1-3, to=1-2]
	\arrow["{\bar W'_{n-1}}"', from=2-1, to=2-2]
	\arrow["{W_n}", from=2-2, to=1-2]
	\arrow["{W_{n+1}}"', from=2-2, to=1-3]
\end{tikzcd}\]

We have $\ker(W_{n+1}) = \im(\bar W_n')$. As discussed in \cite[Page 23]{bs-concordance}, the composite $W_n \circ \bar W'_n$ contains a sphere of self-intersection number zero which intersects whose intersection with the surface defining the bundle data is $a = nM+1 \equiv 1 \mod 2$, and as discussed in \cite[Lemma 3.10]{LiYe1}, this implies the composite map is zero.

Therefore, \[\ker(W_{n+1}) = \im(\bar W_n') \subset \ker(W_n)\] for all $n \ge 1$. Finally, by comparing dimensions in the exact triangle using Lemma \ref{lemma:M-plus-1n}, we find \[(n-1)r_2 + 1 = nr_2 + 1 - (r_2 + 2) + 2 \nll(W_n)\] so that $\nll(W_n) = 1$ for all $n$, so all containments are equalities.
\end{proof}

\begin{lemma}\label{lemma:half-surgery}
Theorem \ref{thm:surgerydim} holds for all slopes $n + 1/2$ with $n \in \mathbb Z$.
\end{lemma}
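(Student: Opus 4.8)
The plan is to sandwich $\dim I^\#(S^3_{n+1/2}(K);\mathbb F)$ between matching upper and lower bounds coming from the two exact triangles \eqref{eqn:Q-Floer-triangle} and \eqref{eqn:Q-CDX-triangle}, fed by the integer surgery formula of Proposition~\ref{prop:mainthm-Z}. Write $r_2 = r_2(K)$ and $M = M(K)$. Since the numerator $2n+1$ is odd we have $H_1(S^3_{n+1/2}(K);\mathbb F)=0$, so there is a unique bundle class and the target value is always the ``otherwise'' quantity $2r_2 + |2n+1-2M|$, which is odd.

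I would first dispose of the slopes with $n\in\{M-1,M\}$: these are exactly $M(K)\pm 1/2$, already covered by Lemma~\ref{lemma:M-plus-1n}. Next, the identification $\dim I^\#(S^3_r(K);\mathbb F)=\dim I^\#(S^3_{-r}(K^*);\mathbb F)$, together with $M(K^*)=-M$ (Proposition~\ref{prop:or-rev}) and $r_2(K^*)=r_2$ (immediate from the definition of $r_2$ and orientation-invariance of $\dim I^\#$), carries the slope $n+1/2$ for $K$ with $n\ge M+1$ to the slope $(-n-1)+1/2$ for $K^*$, where $-n-1\le M(K^*)-2$. Hence it suffices to prove the formula for all $K$ and all $n\le M-2$; note that then $n\ne M$ and $n+1\ne M$.

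For $n\le M-2$, the triple associated to $(p,q)=(2n+1,2)$ is $(a,b)=(n,1)$, $(c,d)=(n+1,1)$, $(e,f)=(1,0)$, so $S^3_{e/f}(K)=S^3$. Because $n,n+1\ne M$, Proposition~\ref{prop:mainthm-Z} gives $\dim I^\#(S^3_n(K),w';\mathbb F)=r_2+(M-n)$ for every bundle $w'$ and $\dim I^\#(S^3_{n+1}(K);\mathbb F)=r_2+(M-n-1)$. Taking $w_0=\varnothing$ in \eqref{eqn:Q-Floer-triangle} and applying the dimension-count principle for exact triangles gives the upper bound
\[\dim I^\#(S^3_{n+1/2}(K);\mathbb F)\le \big(r_2+(M-n-1)\big)+\big(r_2+(M-n)\big)=2r_2+2(M-n)-1.\]
Taking $w_0=\varnothing$ in \eqref{eqn:Q-CDX-triangle}, and using $\dim I^\#(S^3;\mathbb F)=1$, gives the lower bound
\[\dim I^\#(S^3_{n+1/2}(K);\mathbb F)\ge 2\big(r_2+(M-n)\big)-1=2r_2+2(M-n)-1.\]
Since $2(M-n)-1=|2n+1-2M|$ for $n\le M-2$, the two bounds coincide and equal $2r_2+|2n+1-2M|$, as required.

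The main obstacle I anticipate is the built-in asymmetry of \eqref{eqn:Q-CDX-triangle}: its middle term carries two copies of $I^\#(S^3_n(K))$, so the lower bound it produces is sharp only when the slope $n+1/2$ lies below $M$; for $n\ge M+1$ it falls short by $2$, which is why the mirror-knot reduction is necessary. A secondary point requiring care is that none of the integer surgeries appearing in the triangles should land on the slope $M$ with trivial bundle, where $\dim I^\#$ jumps by $2$; restricting to $n\le M-2$ (so $n,n+1\ne M$) and offloading $n\in\{M-1,M\}$ to Lemma~\ref{lemma:M-plus-1n} achieves this.
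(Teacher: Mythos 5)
Your proof is correct and uses the same two ingredients as the paper: the Floer triangle \eqref{eqn:Q-Floer-triangle} for the upper bound and the distance-$2$ triangle \eqref{eqn:Q-CDX-triangle} for the lower bound, with the boundary cases $n\in\{M-1,M\}$ already handled by Lemma~\ref{lemma:M-plus-1n}. The only genuine difference is which side you reduce to by mirroring: you work directly on $n\le M-2$ and mirror the case $n\ge M+1$, whereas the paper works directly on $n>M$ and mirrors $n<M-1$. Your choice has a small but real advantage that you articulate clearly: with $(a,b)=(n,1)$ the middle term of \eqref{eqn:Q-CDX-triangle} is two copies of $I^\#(S^3_n)$, and the resulting lower bound $2\bigl(r_2+|n-M|\bigr)-1$ matches the target only when $n<M$; for $n>M$ it falls short by $2$. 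The paper's displayed lower bound $2\bigl(r_2+|n+1-M|\bigr)-1$ amounts to putting $S^3_{n+1}$ in the middle of the distance-$2$ triangle, which is a legitimate variant (the slope $(n+1)/1$ is also at distance $1$ from both $1/0$ and $(2n+1)/2$) but does not literally match the $(a,b)$-convention of \eqref{eqn:Q-CDX-triangle}. Your version sidesteps this by working on the side where the stated formula is already sharp. Everything else — the choice $w_0=\varnothing$, the observation that $S^3_{(2n+1)/2}(K)$ is a $\mathbb Z/2$-homology sphere so the bundle class is forced, and the check that neither $n$ nor $n+1$ hits $M$ in the range you consider — is handled correctly.
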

\begin{proof}
We handled the case of $M \pm 1/2$ in the previous statement. It suffices to assume $n > M(K)$, as the other case is handled by duality: if $n < M(K)-1$ we have $n + 1/2 = -[-(1+n) + 1/2]$ where $-(1+n) > M(K^*)$.

Now take $(p,q) = (2n+1, 2)$ with $(a,b) = (n,1)$ and $(c,d) = (n+1,1)$ and $(e,f) = (1,0)$. Applying \eqref{eqn:Q-Floer-triangle} with $w_0 = \varnothing$ gives the inequality 
\[\dim I^\#(S^3_{(2n+1)/2}(K);\mathbb F) \le (r_2 + |n+1-M|) + (r_2+|n-M|) = 2r_2 + |2n+1-2M|,\] here using that $n > M$ so these absolute values may be combined. Applying \eqref{eqn:Q-CDX-triangle} with $w_0 = \varnothing$ gives 
\begin{align*}
    \dim I^\#(S^3_{(2n+1)/2}(K); \mathbb F) &\ge 2(r_2+|n+1-M|) - 1 = 2r_2 + |2n+1-2M|.
\end{align*} 

Thus the lower and upper bounds agree and give the desired result. 
\end{proof}

We finally handle the remaining cases. 

\begin{proof}[Proof of Theorem \ref{thm:surgerydim}]
We have established \eqref{eqn:dim-formula} for all knots, all choices of bundle, and all slopes which either have denominator $q \le 2$ or take the form $M \pm 1/n$. 

Suppose now that \eqref{eqn:dim-formula} holds for all rational numbers whose (positive) denominator is less than $q$. We aim to prove it for slope $p/q$ and for all knots $K$. Without loss of generality, we may suppose $(p,q)$ has a triple $(a,b), (c, d), (e,f)$ so that $b > d$: the case $b = d$ was handled by Lemma \ref{lemma:half-surgery}, and if $b < d$, replace this with a triple corresponding to $(-p,q)$. The result for $p/q$ then follows by a duality argument.\\

We first prove the dimension formula in the case that $S^3_{p/q}(K)$ is equipped with the trivial bundle. In \eqref{eqn:Q-Floer-triangle}, take $w_0 = \varnothing$ if $a$ or $p$ is even, while $w_0 = \mu$ if $c$ is even. This ensures that $S^3_{p/q}$ is equipped with a trivializable bundle, while the other two are equipped with the non-trivial bundle if it is available. Because $b+d=q$ and $b,d$ are positive, we see that $b,d < q$, so we may apply the inductive hypothesis to obtain an upper bound \[\dim I^\#(S^3_{p/q}(K);\mathbb F) \le br_2 + |a-bM| + dr_2 + |c-dM|.\] Because $a/b$ and $c/d$ lie in the same integer interval, $a-bM$ and $c-dM$ have the same sign, so these expressions may be summed to give \[\dim I^\#(S^3_{p/q}(K);\mathbb F) \le (b+d)r_2 + |(a+c) - (b+d)M| = qr_2 + |p-qM|.\]
In \eqref{eqn:Q-CDX-triangle}, take $w_0 = \varnothing$ for $p$ odd and $w_0 = -\mu$ for $p$ even, so as to ensure that the $S^3_{p/q}$ term is equipped with a trivializable bundle. If $p$ is even, then $S^3_{e/f}$ is equipped with a non-trivial bundle. By hypothesis we have $b > d > 0$, so $a/b \ne M$. Our dimension formula gives the inequality \[\dim I^\#(S^3_{p/q}(K);\mathbb F) \ge 2(br_2 + |a-bM|) - (fr_2 + |e-fM|).\] Because $b > d$ we have $(e,f) \ne (1,0)$, so $a-bM$ and $e-fM$ have the same sign and these may be combined to $(2b-f) r_2 + |2a-e - (2b-f)M| = qr_2 + |p-qM|$.\\

We now handle the case where $p$ is even and $S^3_{p/q}$ is equipped with a nontrivial bundle. The same application of Floer's triangle, now with $w_0 = -\mu$, gives the desired upper bound \[\dim I^\#(S^3_{p/q}(K),w;\mathbb F) \le qr_2 + |p-qM|.\] 
Apply also \eqref{eqn:Q-CDX-triangle} with $w_0 = \varnothing$. As before, we have $a/b \ne M$. If $e/f \ne M$, then the lower bound goes through exactly as above. However, in the case $e/f = M$ additional care is required. In this case, because $(a,b) = (c,d) + (e,f)$ and $ad-bc = 1$, we find 
\[(c,d) = ((b-1)M+1,b-1), \quad (a,b) = (bM+1,b), \quad (p,q) = ((2b-1)M+2, 2b-1).\] 

The exact triangle takes the shape
\[\cdots \to I^\#(S^3_{M+2/(2b-1)}(K),w;\mathbb F) \to I^\#(S^3_M(K);\mathbb F) \xrightarrow{W_b \oplus W'_b} I^\#(S^3_{M+1/b}(K);\mathbb F)^2 \to \cdots\]
Here $W_b, W'_b$ are induced by the same $2$-handle cobordism, but where the former is equipped with trivial bundle and the latter with bundle dual to the cocore. We have at this point that \[2(br_2+1) - (r_2 + 2) + 2 \nll(W_b \oplus W'_b) = \dim I^\#(S^3_{M+2/(2b-1)}(K),w;\mathbb F) \le (2b-1) r_2 + 2\] and we aim to show $\nll(W_b \oplus W'_b) = 0$. By Lemma \ref{lemma:nplus1-cob}, we see that $\ker(W_b) = \im(X')$ and $\ker(W'_b) = \im(X)$ for $X: S^3 \to S^3_M(K)$ the $2$-handle cobordism equipped with either the trivial bundle or the bundle dual to the core. To conclude, it suffices to show $\im(X) = \im(X')$. To see this, consider the distance-$2$ exact triangle 
\[\cdots \to I^\#(S^3_{M-2}(K);\mathbb F) \to I^\#(S^3;\mathbb F)^2 \xrightarrow{X \oplus X'} I^\#(S^3_M(K);\mathbb F) \to \cdots\] We have \[2\rank(X \oplus X') = 2 + (r_2 + 2) - (r_2 + 2) = 2.\] Because both maps have image of dimension $1$, it follows that $\im(X) = \im(X')$ as desired.\qedhere
\end{proof}

\subsection{Proof of Theorem \ref{thm:q3-surgery}}

It suffices to prove this claim for positive slopes. We use two general facts: 
\begin{itemize} 
\item if $0 < r < r'$ or $r < r' < 0$ are rational, there exists a cobordism $W: S^3_r(K) \to S^3_{r'}(K)$ which is negative-definite and has $H_1(W;\mathbb F) = 0$ \cite{OwensStrle-negdef}*{Lemma 2.6}.
\item if $r, r'$ are any nonzero rationals, there exists a cobordism $W: S^3_r(K) \to S^3_{r'}(K)$ with $H_1(W;\mathbb F) = 0$ and $b^+(W) = b^-(W) = 1$.
\end{itemize}

The first bullet point implies that $q_3(S^3_r(K))$ is increasing as a function of $r$ on the negative rationals and increasing as a function of $r$ on the postive rationals. The second bullet point implies that the values on the positive rationals are in $\{-1, 0\}$ and on the negative rationals are in $\{0, 1\}$ and at most one of $\pm 1$ can be realized. In particular, if $q_3(S^3_r(K)) = 1$ for some $r < 0$, then $q_3(S^3_{r'}(K)) = 0$ for all $r' > 0$.

If $M(K) < 0$, then because $q_3(S^3_{-1}(K)) = 1$, we see that $q_3(S^3_r(K)) = 0$ for all $r > 0$. 

If $M(K) \ge 0$, then we know by Proposition \ref{prop:mainthm-Z} that $q_3(S^3_{M(K)+1}(K)) = 0$ and if $M(K) > 0$ we know that $q_3(S^3_{M(K)-1}(K)) = -1$. It follows from monotonicity that $q_3(S^3_r(K)) = -1$ for $r \in (0, M(K)-1)$ and $q_3(S^3_r(K)) = 0$ for $r \in (M(K)+1,\infty)$. To understand the value of $q_3$ for $r \in (M-1,M+1)$ and thereby complete the proof, we must extend this discussion to  compute this invariant for the values of $r$ closest to $M(K)$.

\begin{lemma}
    For any integer $n > 0$ and any knot $K$, we have \[q_3(S^3_{M(K) + 1/n}(K)) = \begin{cases} 1 & M(K) < 0 \\ 0 & M(K) \ge 0 \end{cases}\]
\end{lemma}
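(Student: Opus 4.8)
The plan is to realize $S^3_{M+1/n}(K)$ by a cobordism from $S^3$ whose positivity detects exactly the dichotomy in the statement, and then to show the associated instanton cobordism map vanishes. Let $X\colon S^3\to S^3_M(K)$ be the two-handle cobordism attaching a handle along $K$ with framing $M$ (trivial bundle), let $W_n\colon S^3_M(K)\to S^3_{M+1/n}(K)$ be the two-handle cobordism of Lemma~\ref{lemma:nplus1-cob} (trivial bundle, attaching along the dual knot $\tilde K$), and set $Z_n$ to be the composite $S^3\to S^3_M(K)\to S^3_{M+1/n}(K)$. Viewing $Z_n$ back in $S^3$, it is the trace of surgery on the two-component link consisting of $K$ with framing $M$ together with a meridian of $K$ with framing $-n$; a slam-dunk on the meridian recovers $S^3_{M+1/n}(K)$, so the intersection form of $Z_n$ is the linking matrix $\left(\begin{smallmatrix}M&1\\1&-n\end{smallmatrix}\right)$, with determinant $-(Mn+1)$ and trace $M-n$. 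Because $M$ is divisible by $4$: if $M<0$ then $-(Mn+1)=|M|n-1>0$ while $M-n<0$, so the form is negative definite and $b^+(Z_n)=0$; if $M\ge 0$ then $-(Mn+1)<0$, so the form is indefinite and $b^+(Z_n)=1$. In both cases $H_1(Z_n;\mathbb F)=0$, $Z_n$ carries the trivial bundle so that $\epsilon_0(Z_n)=1$, and $Z_n$ is a cobordism between $\mathbb F$-homology spheres since $|H_1(S^3_{M+1/n}(K))|=|Mn+1|$ is odd.

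Next I would show $I^\#(Z_n;\mathbb F)=0$. By functoriality of the framed instanton cobordism maps this equals $I^\#(W_n)\circ I^\#(X)$, so it suffices to see $\im\big(I^\#(X)\big)\subseteq\ker\big(I^\#(W_n)\big)$. Lemma~\ref{lemma:nplus1-cob} gives $\ker(W_n)=\im(X')$, where $X'$ is the cobordism map with the nontrivial bundle, and the argument at the end of the proof of Theorem~\ref{thm:surgerydim} (via the distance-$2$ triangle $\cdots\to I^\#(S^3_{M-2}(K))\to I^\#(S^3)^{\oplus 2}\xrightarrow{X\oplus X'} I^\#(S^3_M(K))\to\cdots$, together with the $n=1$ case of Lemma~\ref{lemma:nplus1-cob}, which forces $X$ and $X'$ to be nonzero and hence of rank $1$) shows $\im(X)=\im(X')$. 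Therefore $I^\#(W_n)\circ I^\#(X)=0$.

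Finally, $Z_n$ is nice when $b^+(Z_n)=0$ and has $0<b^+(Z_n)\le 2$ when $M\ge 0$, so by Theorem~\ref{thm:Isharp-is-Itilde}, respectively Theorem~\ref{thm:suspension-maps}(b), the induced map $\widetilde I(Z_n;\mathbb F)$ is identified with $I^\#(Z_n;\mathbb F)=0$. Writing this map in the upper-triangular form \eqref{eqn:triangular-matrix-form}, the vanishing of $\widetilde I(Z_n;\mathbb F)$ forces the middle diagonal entry $\gr\epsilon_0(Z_n)$ to be zero, and since $\epsilon_0(Z_n)=1$ Proposition~\ref{prop:q3-eps} yields $q_3\big(S^3_{M+1/n}(K)\big)>q_3(S^3)-b^+(Z_n)=-b^+(Z_n)$. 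When $M<0$ this reads $q_3\big(S^3_{M+1/n}(K)\big)>0$; since $M+1/n<0$ this invariant lies in $\{0,1\}$, so it equals $1$. When $M\ge 0$ it reads $q_3\big(S^3_{M+1/n}(K)\big)>-1$; since $M+1/n>0$ it lies in $\{-1,0\}$, so it equals $0$. The argument is uniform in $n$, so no induction is needed.

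The main obstacle I anticipate is the geometric bookkeeping in the first two paragraphs: pinning down the cobordism $Z_n$ and the framing $-n$ precisely (this is exactly where divisibility of $M$ by $4$ is used, since it guarantees a positive determinant, hence negative definiteness, when $M<0$), and assembling from Lemma~\ref{lemma:nplus1-cob} and the image equality $\im(X)=\im(X')$ the vanishing $I^\#(Z_n;\mathbb F)=0$, all while keeping the bundle data trivial so that $\epsilon_0(Z_n)=1$ and Proposition~\ref{prop:q3-eps} applies.
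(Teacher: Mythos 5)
Your proof is correct, and it takes a genuinely different route from the paper's. The paper proceeds by induction on $n$, constructing filtration-preserving isomorphisms $\psi_n \colon I^\#(S^3_{M+1/n}(K);\mathbb F) \cong V_0^n \oplus \mathbb F \oplus (\gr V_0)^n$ from the distance-$2$ triangle, and shows that the two-handle cobordism $S^3_{M+1/(n+1)}(K) \to S^3_{M+1/n}(K)$ has $\gr\epsilon_0 = 1$; Proposition~\ref{prop:q3-eps} then gives $q_3\big(S^3_{M+1/(n+1)}(K)\big) = q_3\big(S^3_{M+1/n}(K)\big)$, and the common value is read off from the integer case $q_3(S^3_{M+1}(K))$ in Proposition~\ref{prop:mainthm-Z}. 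You instead compose $X$ with $W_n$ to land directly on $S^3_{M+1/n}(K)$, encode the sign dichotomy on $M(K)$ in $b^+(Z_n) \in \{0,1\}$ via the linking matrix $\left(\begin{smallmatrix} M & 1 \\ 1 & -n \end{smallmatrix}\right)$, and get $\gr\epsilon_0(Z_n) = 0$ from the vanishing $I^\#(W_n) \circ I^\#(X) = 0$, which follows from Lemma~\ref{lemma:nplus1-cob} together with $\im(X) = \im(X')$ as established at the end of the proof of Theorem~\ref{thm:surgerydim}. Your approach is shorter, avoids the inductive bookkeeping of the $\psi_n$, and extracts $q_3$ from a strict inequality in Proposition~\ref{prop:q3-eps} rather than from equality along a chain of cobordisms; the trade-off is that it relies on facts already extracted inside the proof of Theorem~\ref{thm:surgerydim}, whereas the paper's proof also yields explicit structural information about $I^\#(S^3_{M+1/n}(K);\mathbb F)$ and the maps between adjacent slopes. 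One small expository note: when $M < 0$ you should remark, as you essentially do, that $M \le -4$ makes the determinant $|M|n - 1 \ge 3 > 0$ so that the form is actually definite (it is here, not just in the $b^+$ computation, that divisibility of $M(K)$ enters).
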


A similar description holds in the case of slope $M(K) - 1/n$. By the discussion above, establishing this Lemma completes the proof of Theorem \ref{thm:q3-surgery}.

\begin{proof}
We prove a more technical claim. Decompose
\begin{align*}
I^\#(S^3_M(K),w;\mathbb F) &= V_0 \oplus \gr V_0  \\
I^\#(S^3_{M+1/n}(K);\mathbb F) &= V_n \oplus \mathbb F \oplus \gr V_n.
\end{align*}
We claim that for all $n \ge 1$, there is an isomorphism \[\psi_n: I^\#(S^3_{M+1/n}(K);\mathbb F) \cong V_0^n \oplus \mathbb F \oplus (\gr V_0)^n\] with the following properties:
\begin{itemize} 
\item This isomorphism sends $V_n$ to $V_0^n$, and sends $V_n \oplus \mathbb F$ to $V_0^n \oplus \mathbb F$. That is, the isomorphism preserves the natural $3$-step filtrations.
\item With respect to this isomorphism, the map induced by the $2$-handle cobordism $Z_n: S^3_{M+1/(n+1)}(K) \to S^3_{M+1/n}(K)$ has \[\psi_{n} I^\#(Z_n;\mathbb F) \psi_{n+1}^{-1} = \pi_{n+1},\] where $\pi_{n+1}$ is projection onto the last $n$ coordinates on $V_0^{n+1}$ and $\gr V_0^{n+1}$, and is the identity on the second summand.
\end{itemize}
This will give the value of $q_3$, as in particular $\gr \epsilon_0 = 1$ for each of these cobordisms; by Proposition \ref{prop:q3-eps} this implies $q_3(S^3_{M+1/(n+1)}(K)) = q_3(S^3_{M+1/n}(K))$. Thus, the value for each of these agrees with $q_3(S^3_{M+1}(K))$, which was computed in Proposition \ref{prop:mainthm-Z}. 

The proof of the technical claim is nearly identical to \cite{DLME}*{Lemma 4.2}, which proves a similar statement for the irreducible instanton homology; we provide a proof for completeness. The result follows by induction on $n$, using the distance-$2$ exact triangle \[\cdots \to I^\#(S^3_{M+1/(n+1)}(K);\mathbb F) \xrightarrow{Z_n \oplus Z'_n} I^\#(S^3_{M+1/n}(K);\mathbb F)^2 \xrightarrow{Z'_{n-1} \oplus Z_{n-1}} I^\#(S^3_{M+1/(n-1)};\mathbb F) \to \cdots\] By the dimension formula, \[\dim I^\#(S^3_{M+1/n}(K);\mathbb F)^2 = \dim I^\#(S^3_{M+1/(n+1)}(K);\mathbb F) + \dim I^\#(S^3_{M+1/(n-1)}(K);\mathbb F),\] so the exact triangle reduces to a short exact sequence, which is then isomorphic to \[0 \to I^\#(S^3_{M+1/(n+1)}(K);\mathbb F) \to (V_0^n \oplus \mathbb F \oplus \gr V_0^n)^2 \to V_0^{n-1} \oplus \mathbb F \oplus \gr V_0^{n-1} \to 0\]
where the first map is $\psi_n Z'_n \oplus \psi_n Z_n$ and the second map is $\pi_n \oplus \psi_{n-1} Z'_{n-1} \psi_n^{-1}$. Write \begin{align*}
    \sigma_n&: V_0^{n-1} \oplus \mathbb F \oplus \gr V_0^{n-1} \to V_0^n \oplus \mathbb F \oplus \gr V_0^n \\ 
    \iota_n&: V_0 \oplus \gr V_0 \to V_0^n \oplus \mathbb F \oplus \gr V_0^n
\end{align*}
for inclusion of the last $(n-1)$ and first coordinate, respectively. There is now a filtration-preserving identification \[\varphi_n = \begin{pmatrix} \iota_n & \sigma_n \psi_{n-1} Z'_{n-1} \psi_n^{-1} \\ 0 & 1 \end{pmatrix}: (V_0 \oplus \gr V_0) \oplus (V^n_0 \oplus \mathbb F \oplus \gr V^n_0) \to \ker(\pi_n \oplus \psi_{n-1} Z'_{n-1} \psi_n^{-1}).\]
The map $\psi_{n+1}$ can be defined as the composite $\varphi^{-1}_n \circ (\psi_n Z'_n \oplus \psi_n Z_n)$. Now $\pi_{n+1} \varphi_n^{-1}$ is projection to the second coordinate, so $\pi_{n+1} \psi_{n+1} = \psi_n Z_n$, as desired.
\end{proof}

\section{Torsion-averse knots}\label{sec:TAK}
In this section, we compare the invariants $r_2(K)$ and $M(K)$ to the invariants $r_0(K)$ and $\nu^\#(K)$ studied in \cite{bs-concordance}. Further leveraging the main results of \cite{LiYe1,LiYe2} against our dimension formula, we are able to sharpen and improve their results.

\begin{lemma}\label{lemma:r2r0diff}
We have $r_2(K) \ge r_0(K)$, and the difference satisfies $r_2(K) - r_0(K) \ge |M(K) - \nu^\#(K)|$. 
\end{lemma}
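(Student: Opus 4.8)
The plan is to play the mod-$2$ dimension formula of Theorem \ref{thm:surgerydim} against the rational dimension formula of Theorem \ref{thm:rational-dimformula} on integer surgeries $S^3_n(K)$ with $|n|$ large, and to pass between $\mathbb F$- and $\mathbb C$-coefficients via the universal coefficient theorem. Since $I^\#$ admits an integral model (as in the discussion following Theorem \ref{thm:Isharp-is-Itilde}), the standard universal coefficient theorem gives, for every closed oriented $3$-manifold $Y$,
\[
\dim_{\mathbb F} I^\#(Y;\mathbb F) \;=\; \dim_{\mathbb Q} I^\#(Y;\mathbb Q) + 2t(Y) \;\ge\; \dim_{\mathbb Q} I^\#(Y;\mathbb Q) \;=\; \dim_{\mathbb C} I^\#(Y;\mathbb C),
\]
where $t(Y)\ge 0$ counts the cyclic $2$-power summands of the integral group.

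Next I would fix an integer $n$ with $|n|$ large --- in particular $n\neq M(K)$ and $n\neq 0$ --- and take $w=\varnothing$. Theorem \ref{thm:surgerydim} with $q=1$ gives $\dim I^\#(S^3_n(K);\mathbb F)=r_2(K)+|n-M(K)|$, while Theorem \ref{thm:rational-dimformula} with $p/q=n\neq 0$ gives $\dim I^\#(S^3_n(K);\mathbb C)=r_0(K)+|n-\nu^\#(K)|$. Feeding these into the displayed inequality yields
\[
r_2(K) + |n - M(K)| \;\ge\; r_0(K) + |n - \nu^\#(K)|
\]
for all integers $n$ with $|n|$ large. Letting $n\to+\infty$, both absolute values open with a plus sign and the $n$'s cancel, giving $r_2(K)-M(K)\ge r_0(K)-\nu^\#(K)$; letting $n\to-\infty$ gives $r_2(K)+M(K)\ge r_0(K)+\nu^\#(K)$. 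Equivalently $r_2(K)-r_0(K)\ge \pm\big(M(K)-\nu^\#(K)\big)$, hence $r_2(K)-r_0(K)\ge |M(K)-\nu^\#(K)|\ge 0$, which in particular forces $r_2(K)\ge r_0(K)$.

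There is no genuine obstacle here; the only points needing (routine) care are that the hypotheses of both dimension formulas really do apply for $|n|$ large --- the exceptional slope $p/q=M(K)$ in Theorem \ref{thm:surgerydim} and the exceptional case $p/q=\nu^\#(K)=0$ in Theorem \ref{thm:rational-dimformula} are automatically avoided --- and that framed instanton homology satisfies a universal coefficient theorem over $\mathbb Z$, so that the $\mathbb F$-dimension dominates the $\mathbb C$-dimension. All of the mathematical content sits in the two dimension formulas, which are available to us.
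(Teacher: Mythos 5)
Your proof is correct and follows essentially the same route as the paper: compare the two dimension formulas at integer slopes, use universal coefficients to get $\dim I^\#(\cdot;\mathbb F) \ge \dim I^\#(\cdot;\mathbb Q)$, and let $n \to \pm\infty$ to extract the two one-sided inequalities. The only cosmetic difference is that you invoke $\mathbb C$-coefficients and spell out the universal-coefficient step, whereas the paper works with $\mathbb Q$ and states the inequality directly; these are interchangeable.
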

\begin{proof}
Except for possibly two exceptions, we have \[r_2(K) + |n-M(K)| = \dim I^\#(S^3_n(K);\mathbb F) \ge \dim I^\#(S^3_n(K);\mathbb Q) = r_0(K) + |n-\nu^\#(K)|.\] Taking $n$ very large, this simplifies to $r_2(K) - r_0(K) \ge M(K) - \nu^\#(K)$. Taking $n$ very small, this instead simplifies to $r_2(K) - r_0(K) \ge \nu^\#(K) - M(K)$. Combining these gives the stated absolute value inequality.
\end{proof}

We now study the case in which equality is achieved.

\begin{defn}
    A knot $K$ is said to be (positive) \textbf{torsion-averse} if there exists some slope $p/q \in \mathbb Q$ (with $q \ne 0$) and some choice of bundle data $[w]_2 \in H_1(S^3_{p/q}(K);\mathbb F)$ so that $I^\#(S^3_{p/q}(K),w;\mathbb Z)$ has no $2$-torsion. We say $K$ is an \textbf{instanton $L$-space knot over $R$} if $I^\#(S^3_{p/q}(K),w;R) \cong R^{|p|}$ for some slope $p/q$ and some choice of bundle data.
\end{defn}

Equivalently, we ask that $\dim I^\#(S^3_{p/q}(K),w;\mathbb F) = \dim I^\#(S^3_{p/q}(K),w;\mathbb Q)$ for some $p/q$ and $w$. 

If $K$ is an instanton $L$-space knot over $\mathbb F$, then the universal coefficient theorem shows $K$ is torsion-averse and is also an instanton $L$-space knot over $\mathbb Q$. A torsion-averse knot is an instanton $L$-space knot over $\mathbb F$ if and only if it is an $L$-space knot over $\mathbb Q$. 

In fact, the main result of \cite{LiYe2} includes that two of these notions coincide.

\begin{lemma}
$K$ is torsion-averse if and only if it is an instanton $L$-space knot over $\mathbb F$.
\end{lemma}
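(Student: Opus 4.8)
The plan is to treat the two implications separately, since only one of them carries real content. The forward implication --- an instanton $L$-space knot over $\mathbb F$ is torsion-averse --- is a formal consequence of the universal coefficient theorem, so I would dispose of it first. The reverse implication --- a torsion-averse knot is an instanton $L$-space knot over $\mathbb F$ --- is the substantive one, and I would deduce it directly from \cite{LiYe2}*{Theorem~1.4}, which asserts exactly this.

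For the forward direction: suppose $(S^3_{p/q}(K),w)$ is an instanton $L$-space over $\mathbb F$, so $\dim I^\#(S^3_{p/q}(K),w;\mathbb F) = |p| = |H_1(S^3_{p/q}(K);\mathbb Z)|$. The Euler characteristic of $I^\#(S^3_{p/q}(K),w)$ equals $|H_1|$, so $\dim I^\#(S^3_{p/q}(K),w;\mathbb Q) \ge |p|$, while the universal coefficient theorem gives $\dim I^\#(S^3_{p/q}(K),w;\mathbb F) \ge \dim I^\#(S^3_{p/q}(K),w;\mathbb Q)$. Both dimensions therefore equal $|p|$; since $I^\#(S^3_{p/q}(K),w;\mathbb Z)$ is free of $2$-torsion exactly when its $\mathbb F$- and $\mathbb Q$-dimensions agree, the group is torsion-free and $K$ is torsion-averse.

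For the reverse direction: if $K$ is torsion-averse, then by the reformulation recorded just before the lemma there is a slope $p/q$ and bundle $w$ with $\dim I^\#(S^3_{p/q}(K),w;\mathbb F) = \dim I^\#(S^3_{p/q}(K),w;\mathbb Q)$, which is precisely the hypothesis of \cite{LiYe2}*{Theorem~1.4}; that theorem yields that $K$ is an instanton $L$-space knot over $\mathbb F$. As a consistency check and for later use I would observe that combining this with Corollary~\ref{cor:F2Lspace} sharpens the conclusion to $r_2(K) = |M(K)|$, and that Lemma~\ref{lemma:r2r0diff} together with \cite{bs-concordance}*{Theorem~1.1} then forces $r_0(K) = |\nu^\#(K)|$ as well.

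The main obstacle lies entirely in the reverse direction, which cannot be extracted from the formal machinery used elsewhere in this paper: it rests on Li--Ye's surgery formula in sutured instanton homology, and we invoke it as a black box. One might hope to avoid this by feeding a single torsion-free surgery into Theorem~\ref{thm:surgerydim} and \cite{bs-concordance}*{Theorem~1.1}, but that only produces the equality $r_2(K) - r_0(K) = |M(K) - \nu^\#(K)|$, which does not by itself pin down $r_2(K) = |M(K)|$, so the Li--Ye input is indispensable.
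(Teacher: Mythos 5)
The easy direction of your argument (an instanton $L$-space knot over $\mathbb F$ is torsion-averse) is correct and matches the paper's brief appeal to the universal coefficient theorem. The substantive direction, however, has a real gap.

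You cite \cite{LiYe2}*{Theorem~1.4} as yielding directly that a torsion-averse knot is an instanton $L$-space knot over $\mathbb F$. That is not what Li--Ye's theorem provides. As the paper's own proof makes explicit, Theorem~1.4 of \cite{LiYe2} concludes only that $K$ is an instanton $L$-space knot over $\mathbb Q$, together with a constraint $t > 2g(K)-1$ on the torsion-free slope; the conclusion over $\mathbb F$ does not come for free. The paper therefore takes two further steps that you omit. First, before applying Li--Ye it shows that from a single torsion-free pair $(r,w)$ one obtains torsion-freeness at all larger slopes $t > r$ (with trivializable bundle) by comparing the dimension formulas of Theorem~\ref{thm:rational-dimformula} and Theorem~\ref{thm:surgerydim}; this normalization is needed to put oneself in the range where Li--Ye's hypotheses apply and where the rank result below is available. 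Second, having obtained $L$-space over $\mathbb Q$ with $t > 2g(K)-1$, the paper invokes \cite{bs-concordance}*{Theorem~1.18} to get $\operatorname{rank}I^\#(S^3_t(K);\mathbb Z) = p$ for $t = p/q$; combined with torsion-freeness, the universal coefficient theorem then gives $I^\#(S^3_t(K);\mathbb F)\cong\mathbb F^p$, i.e., $L$-space over $\mathbb F$. Your final paragraph already observes that the Li--Ye input is indispensable, but the chain you propose stops one step short: it lands at the $\mathbb Q$-statement and needs the Baldwin--Sivek rank computation plus a second application of the universal coefficient theorem to reach the $\mathbb F$-statement the lemma asserts.
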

\begin{proof}
The reverse direction was discussed above. Now suppose $I^\#(S^3_r(K),w;\mathbb Z)$ is torsion-free; without loss of generality, suppose $r \ge 0$. By combining Theorem \ref{thm:rational-dimformula} and Theorem \ref{thm:surgerydim}, we see that if $[w]_2 \ne 0$ we have $r \ne M(K)$, in which the dimension formulas imply that $I^\#(S^3_t(K);\mathbb Z)$ also has no $2$-torsion for any $t > r$.

By \cite{LiYe2}*{Theorem~1.4}, $K$ is an instanton $L$-space knot over $\mathbb Q$ and $t > 2g(K) - 1$. By \cite{bs-concordance}*{Theorem~1.18}, if $t = p/q$, we have $\text{rank } I^\#(S^3_t(K);\mathbb Z) = p$. Because this group has no $2$-torsion, it follows from the universal coefficient theorem that $I^\#(S^3_t(K);\mathbb F) = \mathbb F^p$, so $K$ is also an instanton $L$-space knot over $\mathbb F$.
\end{proof}

Thus if any $I^\#(S^3_r(K),w;\mathbb Z)$ is torsion-free, we have $r_2(K) = |M(K)|$. Applying Theorem \ref{thm:surgerydim} immediately gives the following result.

\begin{corollary}
Suppose $K \subset S^3$ is a knot. Then $I^\#(S^3_r(K),w;\mathbb Z)$ is torsion-free if and only if the following conditions hold: 
\begin{itemize}
    \item $K$ is torsion-averse.
    \item $r$ and $M(K)$ have the same sign.
    \item $|r| \ge |M(K)|$, and if these are equal $[w]_2 \ne 0$. 
\end{itemize}
\end{corollary}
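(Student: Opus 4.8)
The plan is to deduce the corollary by directly comparing the two dimension formulas available to us: Theorem \ref{thm:surgerydim} over $\mathbb F$ and Theorem \ref{thm:rational-dimformula} over $\mathbb Q$ (whose statement is given over $\mathbb C$, but $\dim I^\#(Y;\mathbb Q) = \dim I^\#(Y;\mathbb C)$ by flat base change). The universal coefficient theorem detects torsion through the difference of these dimensions, so the problem becomes a finite case check. The forward implication is essentially free: a torsion-free group has no $2$-torsion, so $K$ is torsion-averse by definition, and (as observed in the discussion preceding Corollary \ref{cor:SU2-ab}) this forces $r_2(K) = |M(K)|$, $r_0(K) = |\nu^\#(K)|$, and --- since $|\nu^\#(K)| = 2g(K)-1$ is odd while $4 \mid M(K)$ --- that $M(K)$ and $\nu^\#(K)$ have the same sign with $0 < |\nu^\#(K)| < |M(K)|$ for $K$ nontrivial. (The unknot is a degenerate case with $M = \nu^\# = 0$ whose surgeries are lens spaces and $S^1 \times S^2$ and are understood directly; we assume $K$ nontrivial below.)

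For the converse, first reduce to $M(K) > 0$ using orientation-reversal: $S^3_{-p/q}(K^*) \cong -S^3_{p/q}(K)$, both $\dim I^\#(-;\mathbb F)$ and $\dim I^\#(-;\mathbb Q)$ are orientation-invariant, $M(K^*) = -M(K)$ by Proposition \ref{prop:or-rev}, and $\nu^\#(K^*) = -\nu^\#(K)$ follows from Theorem \ref{thm:rational-dimformula} together with orientation-invariance. So assume $0 < \nu^\#(K) < M(K)$ and write $r = p/q$ in lowest terms, $q \ge 1$. The key device is the piecewise-linear function $f(t) = qt + |p-qt| = \max(p,\, 2qt-p)$, which is constant equal to $p$ for $t \le p/q$ and strictly increasing for $t \ge p/q$. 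By Theorem \ref{thm:surgerydim} with $r_2(K) = M(K)$, we have $\dim I^\#(S^3_{p/q}(K),w;\mathbb F) = f(M(K))$ except at the single slope $p/q = M(K)$ with $[w]_2 = 0$, where it equals $M(K)+2$; by Theorem \ref{thm:rational-dimformula} with $r_0(K) = \nu^\#(K) \ne 0$ (so no exceptional slope and no $w$-dependence), $\dim I^\#(S^3_{p/q}(K),w;\mathbb Q) = f(\nu^\#(K))$. Since $\nu^\#(K) < M(K)$ and $f$ is nondecreasing, $f(\nu^\#(K)) \le f(M(K))$ with equality if and only if $f$ is constant on $[\nu^\#(K), M(K)]$, i.e. if and only if $p/q \ge M(K)$; and the surplus value $M(K)+2$ never coincides with $f(\nu^\#(K))$, since at $p/q = M(K)$ one has $q = 1$ and $f(\nu^\#(K)) = p = M(K)$. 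By the universal coefficient theorem the $\mathbb F$- and $\mathbb Q$-dimensions agree precisely when $I^\#(S^3_{p/q}(K),w;\mathbb Z)$ has no $2$-torsion, and when they agree the common value is $|p| = |H_1(S^3_{p/q}(K);\mathbb Z)|$, so $S^3_{p/q}(K)$ is a rational homology $L$-space and the group is torsion-free. Reading off the slopes and bundles for which agreement holds --- namely $p/q \ge M(K)$, with $[w]_2 \ne 0$ when $p/q = M(K)$ --- recovers exactly the second and third conditions.

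The only step requiring genuine care is the exceptional slope $p/q = M(K)$, where the $\mathbb F$-dimension overshoots the generic value by $2$; one must check that this overshoot is never accidentally cancelled by a coincidence with the $\mathbb Q$-dimension, which it is not because $q = 1$ there, and this is precisely the source of the dependence on $[w]_2$ in the third bullet. The other input one must not omit is the strict inequality $|\nu^\#(K)| < |M(K)|$, coming from the parity of $2g(K)-1$ against divisibility of $M(K)$ by $4$; without it the two V-shaped graphs could coincide away from the exceptional slope and the characterization would read differently. Beyond Theorem \ref{thm:surgerydim} and the preceding torsion-averse lemma, no new idea is needed --- this is, as the text above anticipates, an immediate consequence.
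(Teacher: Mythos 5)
Your proof is correct and follows the same route the paper (implicitly) takes: combine the lemma that torsion-averse implies instanton $L$-space over $\mathbb F$ (so $r_2 = |M|$, $r_0 = |\nu^\#|$, same sign), then compare the two dimension formulas via the universal coefficient theorem. The paper simply asserts the corollary is ``immediate'' from Theorem~\ref{thm:surgerydim}; you have spelled out the piecewise-linear comparison $f(M)$ versus $f(\nu^\#)$, correctly isolated the exceptional slope $r = M$ with $[w]_2 = 0$ as the one place the $\mathbb F$-dimension overshoots by $2$, and, usefully, flagged the strict inequality $|\nu^\#| < |M|$ (forced by the parity of $2g-1$ against $4 \mid M$) which the introduction only records as $\le$ but which is genuinely needed for the stated characterization; you also correctly noted that the unknot must be set aside as a degenerate case.
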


By \cite{bs-concordance}*{Theorem~1.18}, in this case we have $|\nu^\#(K)| = 2g(K) - 1$. Because $|M(K)| \ge |\nu^\#(K)|$ and $M(K)$ is divisible by $4$, we obtain the following result.

\begin{corollary}
If $K \subset S^3$ is any non-trivial knot, the group $I^\#(S^3_r(K);\mathbb Z)$ has $2$-torsion for any $|r| \le 4\lceil g(K)/2\rceil$.
\end{corollary}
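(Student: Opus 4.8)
The plan is to argue by contradiction, assuming that $I^\#(S^3_r(K);\mathbb Z)$ has no $2$-torsion for some rational slope $r$ with $|r|\le 4\lceil g(K)/2\rceil$, and running through the chain of consequences assembled earlier in this section. Taking the bundle $w=\varnothing$ at the slope $r$, the assumption says precisely that $K$ is torsion-averse, so by the equivalence proved above $K$ is an instanton $L$-space knot over $\mathbb F$ (hence also over $\mathbb Q$). As recorded in the discussion preceding Corollary \ref{cor:SU2-ab}, this gives $r_2(K)=|M(K)|$ (Corollary \ref{cor:F2Lspace}), $r_0(K)=|\nu^\#(K)|$ (comparing Theorem \ref{thm:rational-dimformula} with $\dim I^\#(S^3_n(K);\mathbb Q)=|n|$ for $n\gg 0$), and $|\nu^\#(K)|=2g(K)-1$ by \cite{bs-concordance}*{Theorem~1.18}.

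The next step is to extract the lower bound on $|M(K)|$. Lemma \ref{lemma:r2r0diff} gives $|M(K)|-|\nu^\#(K)|=r_2(K)-r_0(K)\ge |M(K)-\nu^\#(K)|$; since $|a|-|b|\ge|a-b|$ forces $a,b$ to be nonzero of the same sign with $|a|\ge|b|$, we conclude that $M(K)$ and $\nu^\#(K)$ are nonzero with the same sign and $|M(K)|\ge 2g(K)-1$. As $4\mid M(K)$ while $2g(K)-1$ is odd, $|M(K)|$ is at least the smallest multiple of $4$ not less than $2g(K)-1$, which equals $4\lceil g(K)/2\rceil$. In particular $|r|\le 4\lceil g(K)/2\rceil\le|M(K)|$.

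The contradiction then comes from a dimension count. Writing $r=p/q$ in lowest terms, note that $r=M(K)$ can only occur with $q=1$ (as $M(K)\in 4\mathbb Z$) and then forces $|r|=|M(K)|=4\lceil g(K)/2\rceil$. By Theorem \ref{thm:surgerydim}, $\dim I^\#(S^3_r(K);\mathbb F)$ is $q\,r_2(K)+|p-qM(K)|$, or $r_2(K)+2$ in that exceptional case, while Theorem \ref{thm:rational-dimformula} together with $\nu^\#(K)\ne 0$ gives $\dim I^\#(S^3_r(K);\mathbb Q)=q\,r_0(K)+|p-q\nu^\#(K)|$. Using the reflection $K\mapsto$ mirror, $r\mapsto -r$ — under which $M(K)$ and $\nu^\#(K)$ negate, $g(K)$ is unchanged, and the $\mathbb F$- and $\mathbb Q$-dimensions of $I^\#$ are unchanged (orientation reversal) — we may assume $M(K)>0$. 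A one-line check in each of the positions $r=M(K)$, $\ \nu^\#(K)\le r<M(K)$, $\ r<\nu^\#(K)$, using only $|r|\le|M(K)|$ and $0<\nu^\#(K)\le M(K)$, shows the $\mathbb F$-dimension exceeds the $\mathbb Q$-dimension by $2$, by $2(|M(K)|-r)$, and by $2(|M(K)|-|\nu^\#(K)|)$ respectively, all strictly positive. By the universal coefficient theorem this is incompatible with $I^\#(S^3_r(K);\mathbb Z)$ being $2$-torsion free.

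The only real obstacle is organizational: one must carry out the dimension comparison for every position of $r$ relative to the two distinguished slopes $M(K)$ and $\nu^\#(K)$, and for rational as well as integer $r$. The reflection trick collapses the sign cases and the inequalities above make each remaining sub-case immediate. Alternatively, one can bypass the explicit count by quoting the preceding characterization of when $I^\#(S^3_r(K),w;\mathbb Z)$ is torsion free, provided one first verifies — via the same inequalities — that for a torsion-averse knot the conditions ``torsion free'' and ``$2$-torsion free'' coincide.
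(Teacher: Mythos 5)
Your proposal is correct and follows essentially the same chain of reasoning the paper assembles in Section~\ref{sec:TAK}: reduce to torsion-averse, deduce $r_2(K)=|M(K)|$ and $r_0(K)=|\nu^\#(K)|=2g(K)-1$, extract $|M(K)|\ge 4\lceil g(K)/2\rceil$ from Lemma~\ref{lemma:r2r0diff} and divisibility by $4$, and then compare the $\mathbb F$- and $\mathbb Q$-dimensions of $I^\#(S^3_r(K))$ via the two surgery formulas. The only difference is presentational: the paper funnels the dimension comparison through the preceding corollary characterizing torsion-freeness, whereas you carry it out directly; your explicit case analysis has the mild advantage of making transparent the point you flag at the end, that for torsion-averse knots the conditions ``torsion-free'' and ``$2$-torsion-free'' coincide (a step the paper leaves implicit). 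The only slips are cosmetic — in your case-by-case differences $2(|M(K)|-r)$ and $2(|M(K)|-|\nu^\#(K)|)$ should each carry a factor of $q$ for $r=p/q$ with $q>1$ — but this does not affect strict positivity, which is all that is used.
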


The main result of \cite{LiYe2} says more than was discussed above. They consider the quantity \[t_2(K) = \frac 12 \left(\dim I^\#(S^3_1(K);\mathbb F) - \dim I^\#(S^3_1(K);\mathbb Q)\right),\] equal to the number of $2$-torsion summands in $I^\#(S^3_1(K);\mathbb Z)$. They prove that $I^\#(S^3_n(K);\mathbb Z)$ is torsion-free for integers $n$ with $|n| < 2g(K) - 1 + t_2(K)$. We show that their result is nearly sharp.

\begin{prop}
    Suppose $K$ is a torsion-averse knot. Then $|M(K)| = 2g(K) - 1 + t_2(K)$.
\end{prop}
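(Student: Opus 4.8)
The plan is to evaluate the $\mathbb F$- and $\mathbb Q$-coefficient dimension formulas at the single slope $r = 1$ and to read off $t_2(K)$ from their difference. First I would collect the inputs that follow from $K$ being torsion-averse: by the characterization just proved, $K$ is an instanton $L$-space knot over $\mathbb F$, hence over $\mathbb Q$ by the universal coefficient theorem, so $r_2(K) = |M(K)|$ by Corollary \ref{cor:F2Lspace} and $r_0(K) = |\nu^\#(K)|$ by the analogous computation over $\mathbb Q$; by \cite{bs-concordance}*{Theorem~1.18} the latter equals $2g(K) - 1$; and, as noted in the discussion preceding Corollary \ref{cor:SU2-ab}, the equality case of Lemma \ref{lemma:r2r0diff} forces $M(K)$ and $\nu^\#(K)$ to be nonzero of the same sign with $|\nu^\#(K)| \le |M(K)|$, so that in fact $r_2(K) - r_0(K) = |M(K)| - |\nu^\#(K)|$. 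Finally, $M(K)$ is a nonzero multiple of $4$, so $|M(K)| \ge 4$ and the slope $r = 1$ is not the exceptional slope appearing in \eqref{eqn:dim-formula}.

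Next I would apply Theorem \ref{thm:surgerydim} to get $\dim I^\#(S^3_1(K);\mathbb F) = r_2(K) + |1 - M(K)|$, and Theorem \ref{thm:rational-dimformula} — together with $\dim I^\#(Y;\mathbb Q) = \dim I^\#(Y;\mathbb C)$, and noting that the exceptional case there does not occur since the slope is nonzero — to get $\dim I^\#(S^3_1(K);\mathbb Q) = r_0(K) + |1 - \nu^\#(K)|$. Subtracting and using the definition of $t_2(K)$, $2t_2(K) = (r_2(K) - r_0(K)) + (|1 - M(K)| - |1 - \nu^\#(K)|)$. Because $|M(K)| \ge 4$, $|\nu^\#(K)| \ge 1$, and the two invariants share a sign, the absolute values unfold uniformly: in the positive case $|1 - M(K)| = |M(K)| - 1$ and $|1 - \nu^\#(K)| = |\nu^\#(K)| - 1$, while in the negative case $|1 - M(K)| = |M(K)| + 1$ and $|1 - \nu^\#(K)| = |\nu^\#(K)| + 1$; in either case the difference of these two quantities is $|M(K)| - |\nu^\#(K)|$. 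Combined with $r_2(K) - r_0(K) = |M(K)| - |\nu^\#(K)|$, this gives $2t_2(K) = 2(|M(K)| - |\nu^\#(K)|)$, hence $t_2(K) = |M(K)| - |\nu^\#(K)| = |M(K)| - (2g(K) - 1)$, which rearranges to the claim.

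I expect the argument to be short, with no essential difficulty beyond bookkeeping. The two points that require a little care are checking that the exceptional clauses of the two dimension formulas play no role at $r = 1$ (guaranteed by $4 \mid M(K)$, $M(K) \ne 0$, and the slope being nonzero), and tracking signs when unfolding $|1 - M(K)|$ and $|1 - \nu^\#(K)|$ — both of which rest on the already-established sign and size relations between $M(K)$ and $\nu^\#(K)$ for torsion-averse $K$. There is no new geometric input: everything is extracted from the closed-form dimension formulas of Theorem \ref{thm:surgerydim} and Theorem \ref{thm:rational-dimformula} and from the identification of torsion-averse knots with instanton $L$-space knots over $\mathbb F$.
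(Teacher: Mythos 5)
Your proof is correct and follows essentially the same line as the paper's: evaluate both dimension formulas at slope $r=1$, read off $t_2$ from the difference, use $r_2=|M|$, $r_0=|\nu^\#|$ for an instanton $L$-space knot together with $|\nu^\#| = 2g-1$, and unfold the absolute values via the shared-sign relation between $M$ and $\nu^\#$. The only (inessential) divergence is how you justify that $M$ and $\nu^\#$ share a sign with $|\nu^\#|\le |M|$: you invoke the equality case of Lemma~\ref{lemma:r2r0diff} (as the paper does in the discussion before Corollary~\ref{cor:SU2-ab}), whereas the paper's proof of the proposition re-derives it directly from the observation that $S^3_{M+1/2}(K)$ is an instanton $L$-space over $\mathbb Q$ and then applies Theorem~\ref{thm:rational-dimformula}; both routes are already available in the text and yield the same conclusion.
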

\begin{proof}
Because $K$ is an instanton $L$-space knot over $\mathbb F$ and $\mathbb Q$, we have $r_2(K) = |M(K)|$ and $r_0(K) = |\nu^\#(K)|$. Applying Theorem \ref{thm:rational-dimformula} and Theorem \ref{thm:surgerydim}, we may express \begin{align*}
    2t_2 &= r_2 + |1-M| - r_0 - |1-\nu^\#| \\
    &= |M| - |\nu^\#| + |1-M| - |1-\nu^\#|.
\end{align*}
Notice that $M$ and $\nu^\#$ share the same sign. For instance, if $M$ is positive, then $S^3_{M+1/2}(K)$ is an instanton $L$-space over $\mathbb F$ and hence over $\mathbb Q$. It follows from Theorem \ref{thm:rational-dimformula} that $0 < \nu^\#(K) \le M(K) + 1/2$. Similarly, it follows that $1-M$ and $1-\nu^\#$ share a sign. We may thus combine the above absolute values to give \[2t_2 = |M - \nu^\#| + |M-\nu^\#| = 2|M| - 2|\nu^\#|.\] By \cite{bs-concordance}*{Theorem~1.18}, in this case we have $|\nu^\#(K)| = 2g(K) - 1$. Dividing and rearranging gives the desired expression.
\end{proof}

Many examples of instanton $L$-spaces over $\mathbb F$ arise as follows.

\begin{defn}
A rational homology $3$-sphere $Y$ is called \textbf{nondegenerate $\SU(2)$-abelian} if every homomorphism $\alpha: \pi_1(Y) \to \SU(2)$ has abelian image and $\alpha$ is a nondegenerate critical orbit of the Chern--Simons functional.
\end{defn}

As discussed by Boyer and Nicas \cite{boyer-nicas}, this is equivalent to the following condition: every regular cyclic cover $\tilde Y \to Y$ other than those of maximal even degree has $\tilde Y$ a rational homology sphere. 

More generally, if $w \subset Y$ is a $1$-cycle, the critical points of the Chern--Simons functional correspond to representations $\alpha: \pi_1(Y \setminus w) \to \SU(2)$ up to conjugacy satisfying the following condition: for each component of $w$, a meridian is sent to $-1$. To say that $(Y,w)$ is nondegenerate $\SU(2)$-abelian means that these all have abelian image, in which case $\ad \alpha$ extends to a representation of $\pi_1(Y)$ to $SO(2)$, and nondegeneracy of $\alpha$ is equivalent to asking that the cover corresponding to each $\ker \ad \alpha$ is a rational homology sphere.

\begin{lemma}\label{lemma:ndga-q3}
If $(Y,w)$ is a nondegenerate $\SU(2)$-abelian pair, then $I^\#(Y,w;\mathbb Z) \cong \mathbb Z^{|H_1(Y;\mathbb Z)|}$. If $w = \varnothing$ and $Y$ is a $\mathbb F$-homology sphere, then $q_3(Y) = 0$. 
\end{lemma}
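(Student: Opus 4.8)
The plan is to compute $\widetilde C(Y,w,\mathfrak a)$, and its integral avatar, essentially by hand, exploiting that the defining property of a nondegenerate $\SU(2)$-abelian pair is exactly that the Chern--Simons functional on $\widetilde B_{(Y,w)}$ has \emph{no} irreducible critical orbits and is already nondegenerate along its reducible locus. The first step is therefore to argue that one may choose abelian data $\mathfrak a$ so that the irreducible summand vanishes, $C(Y,w) = 0$: the unperturbed functional is Morse--Bott with critical set the disjoint union of the abelian $2$-spheres indexed by $\mathfrak A(Y,w)$ and the central points indexed by $\mathfrak Z(Y,w)$, so a sufficiently small holonomy perturbation in the appropriate chamber creates no new critical orbits, in particular no irreducible ones. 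I expect this reduction --- locating or extracting this statement from \cite{DMES} --- to be the main obstacle; the rest is bookkeeping.

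Granting $C(Y,w) = 0$, the decomposition \eqref{eqn:CAZ} collapses to $\widetilde C = A \oplus \chi_2 A \oplus Z$, and inspecting the displayed matrix for $\widetilde d$ shows every nonzero entry lies in a row or column built from $C$; hence $\widetilde d = 0$ on $\widetilde C$. The same is true of $\chi_3$, which carries $C$ into $\chi_3 C$ and annihilates the remaining summands, so over $\mathbb Z$ the complex $(\widetilde C(Y,w;\mathbb Z) \otimes V,\ \widetilde d + 4\chi_3\tau)$ of Section \ref{subsec:DMES3} has vanishing differential. Thus $I^\#(Y,w;\mathbb Z)$ is the $\tau$-fixed subspace of the free $\mathbb Z$-module $\widetilde C(Y,w;\mathbb Z)\otimes V$, and since $\tau$ acts only on $V$ by the transposition this fixed subspace is $\widetilde C(Y,w;\mathbb Z)\otimes V^\tau \cong \widetilde C(Y,w;\mathbb Z)$, free of rank $\dim_{\mathbb F}\widetilde C = 2\lvert\mathfrak A(Y,w)\rvert + \lvert\mathfrak Z(Y,w)\rvert$. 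A short count finishes this part: writing $c = \mathrm{PD}(w) \in H^2(Y;\mathbb Z)$, the $\lvert H_1(Y;\mathbb Z)\rvert$ ordered solutions of $z_1 + z_2 = c$ split into the $\lvert\mathfrak Z(Y,w)\rvert$ with $z_1 = z_2$ and the $2\lvert\mathfrak A(Y,w)\rvert$ with $z_1 \ne z_2$, so $2\lvert\mathfrak A\rvert + \lvert\mathfrak Z\rvert = \lvert H_1(Y;\mathbb Z)\rvert$ and $I^\#(Y,w;\mathbb Z) \cong \mathbb Z^{\lvert H_1(Y;\mathbb Z)\rvert}$.

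For the statement about $q_3$, take $w = \varnothing$ with $Y$ an $\mathbb F$-homology sphere, so $\mathfrak Z(Y,\varnothing) = \{\theta\}$ and $Z(Y) = \mathbb F\theta$. On the reducible complex $\widetilde C = A \oplus \chi_2 A \oplus \mathbb F\theta$ the $\mathcal S$-complex structure map $\chi_2$ carries $A$ isomorphically onto $\chi_2 A$ and kills $\chi_2 A$ and $\theta$; feeding this together with $\widetilde d = 0$ into the formula $d^\bullet(cx^n) = \widetilde d c\,x^n + \chi_2 c\, x^{n+1}$, one computes $\widehat I_{\chi_2}(Y) \cong (\chi_2 A) \oplus \mathbb F[x]\theta$ --- the first summand being the torsion $T(Y)$, the second the free part $F(Y)$ --- and $\overline I_{\chi_2}(Y) \cong \mathbb F\llbracket x^{-1},x]\theta$, with the connecting homomorphism $j_Y$ annihilating $\chi_2 A$ (the class $\chi_2 a\,x^0 = d^\bullet(a x^{-1})$ is already a boundary in $\overline C$) and sending $\mathbb F[x]\theta$ into $\mathbb F\llbracket x^{-1},x]\theta$ by the evident inclusion. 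Hence $\mathfrak j_Y(1)$ has leading term $x^{0}\theta$, and by the definition of $q_3$ we get $q_3(Y) = 0$; equivalently, every element of $\im(j_Y)$ has nonnegative degree while $\theta$ itself has degree $0$.

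Beyond the first step, the points that will need a little care are the action of $\chi_3$ (resp. $\chi_2$) on the $Z$-summand --- which should be immediate from the description in \cite{DMES} of the central contribution to $\widetilde C$ --- and the identification of $I^\#(Y;\mathbb Z)$ with $\tau$-fixed points as $\mathbb Z$-modules, so that freeness genuinely transports. The last worry can be sidestepped: the $\mathbb F$-dimension count already gives $\dim_{\mathbb F} I^\#(Y,w;\mathbb F) = \lvert H_1(Y;\mathbb Z)\rvert$, which together with the universal coefficient theorem and the Euler-characteristic lower bound on the rank forces $I^\#(Y,w;\mathbb Z)$ to have rank $\lvert H_1(Y;\mathbb Z)\rvert$ and no $2$-torsion, and re-running the identical computation over $\mathbb F_p$ for odd primes $p$ rules out the rest.
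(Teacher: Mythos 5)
Your argument is correct and follows essentially the same route as the paper: vanishing of the irreducible summand $C$ forces $\widetilde d = 0$ (the paper instead deduces this from the even-degree parity of the reducibles, but your matrix inspection gives the same conclusion), the triviality of $\chi_3$ on $A \oplus \chi_2 A \oplus Z$ kills the differential in the $\tau$-twisted complex over $\mathbb Z$, and the count $2|\mathfrak A| + |\mathfrak Z| = |H_1(Y;\mathbb Z)|$ finishes the first half. The step you flag as "the main obstacle" is actually immediate: a nondegenerate $\SU(2)$-abelian pair needs no perturbation at all (the unperturbed Chern--Simons functional is already Morse--Bott with only reducible critical orbits), so one may take the zero perturbation directly. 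For $q_3$, your more careful accounting $\widehat I_{\chi_2}(Y) \cong (\chi_2 A) \oplus \mathbb F[x]\theta$ with torsion summand $\chi_2 A$ is in fact more precise than the paper's terse statement, which ignores the torsion; since $q_3$ depends only on the free part and $\mathfrak j_Y(\theta) = \theta x^0 + O(x^{-1})$, both arguments give $q_3(Y) = 0$.
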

\begin{proof}
    Under these assumptions, we may take the zero perturbation on $(Y,w)$, for which the complex \[\widetilde C(Y,w;\mathbb Z) = A(Y,w) \oplus \chi_2 A(Y,w) \oplus Z(Y,w)\] is generated by reducible flat connections. Each reducible connection lies in even degree, so it follows that $\widetilde C(Y,w;\mathbb Z)$ is concentrated in even degrees, and thus that $\widetilde d = 0$. Using the enumeration of $\mathfrak A(Y,w)$ and $\mathfrak Z(Y,w)$ discussed in Section \ref{subsec:DMES1}, one sees that $\widetilde C(Y,w;\mathbb Z) \cong \mathbb Z^n$ for $n = |H_1(Y;\mathbb Z)|$, concentrated in degree zero.
    The homology $I^\#(Y,w;\mathbb Z)$ is computed as the $\tau$-fixed points in the homology of $(\widetilde C(Y, w; \mathbb Z) \otimes V, \widetilde d + 4\chi_3 \tau)$ where $V = \mathbb Z^2$ and $\tau(x,y) = (y,x)$. However, in this case, $\chi_3$ acts trivially on $\widetilde C(Y,w)$, so the differential is again just zero. Thus, $I^\#(Y,w;\mathbb Z) = \mathbb Z^n$ for $n = |H_1(Y;\mathbb Z)|$, and is concentrated in degree zero.

    Suppose also now that $w = \varnothing$ and $Y$ is a $\mathbb F$-homology sphere. Because the complex $\widetilde C$ has zero differential, in this case, we have $\widehat I_{\chi_2}(Y) \cong \mathbb F_2[x]$ and $\overline I_{\chi_2}(Y) \cong \mathbb F_2\llbracket x^{-1}, x]$. It follows from the definition that $q_3(Y) = 0$.
\end{proof}

\section{Computations}\label{sec:comp}
The purpose of this section is to present a few computations.

\begin{lemma}
Suppose $K'$ is the resulting of modifying the knot $K$ by changing $p$ positive crossings and $n$ negative crossings. Then we have an inequality \[-4n \le M(K) - M(K') \le 4p.\]
\end{lemma}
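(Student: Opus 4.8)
The plan is to reduce to a single crossing change and then exploit the characterization of $M(K)$ via $q_3$ of surgeries (Theorem~\ref{thm:q3-surgery}) together with the cobordism inequality \eqref{eqn:q3-main-ineq}. Since $M$ is additive under connected sum (Theorem~\ref{thm:M-additive}), it suffices to understand how a single crossing change affects $M$ and then telescope: concretely, I would show that if $K'$ is obtained from $K$ by changing one negative crossing to a positive one, then $0 \le M(K') - M(K) \le 4$ (and symmetrically for a positive-to-negative change, $M$ drops by at most $4$ and does not increase). Composing the $p+n$ individual changes and adding the resulting inequalities gives $-4n \le M(K) - M(K') \le 4p$.

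Next I would realize a single negative-to-positive crossing change geometrically. There is an unknot $c \subset S^3 \setminus K$ disjoint from $K$ — which may be taken with $\operatorname{lk}(c,K) = 0$ (the Montesinos trick) or, when the two strands at the crossing are parallel, with $\operatorname{lk}(c,K) = \pm 2$ — such that blowing down $c$ with framing $\mp 1$ transforms $(S^3,K)$ into $(S^3,K')$ while shifting the Seifert framing of the knot by $\operatorname{lk}(c,K)^2$. Performing surgery on $K$ first and then attaching the corresponding $2$-handle along $c$ produces a cobordism $W \colon S^3_{p/q}(K) \to S^3_{p'/q}(K')$ with $p' = p + \operatorname{lk}(c,K)^2\,q$. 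Restricting to $p$ odd (so both ends are $\mathbb F$-homology spheres), the class $[c]$ has order $p$ in $H_1(S^3_{p/q}(K);\mathbb Z) \cong \mathbb Z/p$, so the long exact sequence of the pair $(W, S^3_{p/q}(K))$ gives $H_1(W;\mathbb F) = 0$ and $b_2(W) = 1$; since the boundary components are rational homology spheres, the intersection form of $W$ is definite of rank one. Thus $W$ is exactly the kind of cobordism to which \eqref{eqn:q3-main-ineq} applies, and the argument hinges on determining which of $b^\pm(W)$ equals $1$.

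To pin down the sign I would calibrate against a known instance, e.g.\ the change from the left-handed trefoil to the unknot: using $M(3_1) = -4$ one has $q_3(S^3_{-3}(3_1)) = 1$ while $q_3(S^3_1(\text{unknot})) = q_3(S^3) = 0$, and the associated $W$ with $b_2(W)=1$ forces $b^+(W) = 1$, $b^-(W) = 0$. So a negative-to-positive change yields positive-definite cobordisms $W\colon S^3_{s}(K) \to S^3_{s+\operatorname{lk}^2}(K')$, and \eqref{eqn:q3-main-ineq} gives, for every $s$ with odd numerator, $q_3(S^3_{s}(K)) - 1 \le q_3\big(S^3_{s+\operatorname{lk}^2}(K')\big) \le q_3(S^3_{s}(K))$. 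Running this for the $\operatorname{lk} = 0$ and $\operatorname{lk} = \pm 2$ realizations, together with the negative-definite cobordisms in the other direction coming from the reverse (positive-to-negative) change, one obtains two-sided control: $q_3(S^3_s(K'))$ is squeezed between suitable minima and maxima of $q_3(S^3_{s}(K))$ and $q_3(S^3_{s\mp4}(K))$. Feeding these constraints into the explicit description of $q_3$ from Theorem~\ref{thm:q3-surgery} — namely that $q_3(S^3_r(K))$ is $1$ precisely on $(M(K),0)$, $-1$ precisely on $(0,M(K))$, and $0$ otherwise — localizes the transition point of $q_3$ for $K'$ and yields $0 \le M(K') - M(K) \le 4$.

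The step I expect to be the main obstacle is this last bookkeeping: checking that the available shifted $q_3$-inequalities are collectively strong enough to confine $M(K')$ to $[M(K), M(K)+4]$ rather than to some wider interval. This is where one must be careful about exactly which cobordisms can be produced (which linking numbers, the reverse change, and possibly compositions of the above) and apply \eqref{eqn:q3-main-ineq} to each; by contrast, verifying $b_2 = 1$, the vanishing of $H_1(W;\mathbb F)$ over odd numerators, and the definiteness of the intersection form is routine once the geometric picture is fixed. A secondary point requiring care is the sign calibration of the intersection forms, since an error there would reverse the direction of the bound.
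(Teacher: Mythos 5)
Your proposal is in the same spirit as the paper's proof: both use Theorem~\ref{thm:q3-surgery} to characterize $M$ via the behavior of $q_3$ on odd surgeries, realize crossing changes by blow-downs of $(\pm1)$-framed unknots, and feed the resulting cobordisms into \eqref{eqn:q3-main-ineq}. The main structural difference is organizational: the paper first reduces to a single inequality by the mirroring symmetry $M(K^*)=-M(K)$ and then handles all $p$ positive crossing changes at once via a single cobordism $W\colon S^3_d(K')\to S^3_{d+4p}(K)$ (citing Owens--Strle and Gompf--Stipsicz for the construction), whereas you propose to change one crossing at a time and telescope. Telescoping is viable in principle, but there are gaps in your argument.

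The most serious gap is the ``calibration'' step. You propose to determine the sign of the intersection form of $W$ by computing it in one example (trefoil to unknot). This is not a proof: the sign is a topological fact about the handle attachment and must be derived directly, not inferred from a special case. Worse, the sign is not uniform. As the paper observes, the same construction produces a negative-definite $W$ when $d$ and $d+4p$ have the same sign, but $b^+(W)=1$ when they straddle zero. Your calibration example ($d=-3$, $d+4p=1$) happens to lie in the straddling-zero regime, so what you extract ($b^+=1$) is true there, but if you then apply the same inequality for slopes on the same side of zero you would use the wrong half of \eqref{eqn:q3-main-ineq}. The paper's proof requires both signs and handles them in three explicit cases depending on the sign of $M(K')$ and of $M(K')+4p+1$; you explicitly flag ``the last bookkeeping'' as an expected obstacle and leave it unresolved, but it is exactly this case analysis (in particular the straddling-zero case, which forces the use of the full two-sided inequality \eqref{eqn:q3-main-ineq} with $b^+=1$) that closes the argument. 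Finally, your treatment of the $\operatorname{lk}(c,K)=0$ versus $\operatorname{lk}(c,K)=\pm2$ dichotomy does not actually resolve anything: for a given oriented crossing the linking number of the local unknot is determined, and the $\operatorname{lk}=0$ construction gives only one side of the single-crossing inequality (a monotonicity statement $M(K')\le M(K)$) with no framing shift to produce the bound of $4$. The paper sidesteps all of this by invoking the specific cobordism of \cite{OwensStrle-negdef}*{Theorem~1.1(a)} and \cite{GompfStipsicz}*{Figure~5.19}, for which both the framing shift $4p$ and the sign of the intersection form are known.
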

\begin{proof}
By a mirroring argument it suffices to prove one inequality.

This is a very slight generalization of the argument in \cite{OwensStrle-negdef}*{Theorem 1.1(a)}. If $K'$ is framed with integer $d$, one obtains a diagram for $K$ framed with integer $d+4p$ by placing $(-1)$-framed unknots around each of the changed crossings; compare \cite{GompfStipsicz}*{Figure~5.19}. 

This diagram defines a cobordism $W: S^3_d(K') \to S^3_{d+4p}(K)$ which, if $d$ is odd, has $H_1(W;\mathbb F) = 0$. If $d, d+4p$ have the same sign, then $W$ is negative-definite; otherwise $W$ has $b^+(W) = 1$. 

Suppose first that $M(K') \ge 0$, so $q_3(S^3_{M(K')+1}(K')) = 0$. By monotonicity under negative-definite cobordisms we have $q_3(S^3_{M(K')+4p+1}(K)) \ge 0$, and because this is a positive surgery we must have equality here. Thus $M(K) \le M(K') + 4p$. 

If instead $M(K') < 0$, then $q_3(S^3_{M(K')+1}(K')) = 1$. If $M(K')+4p+1 > 0$, then applying \eqref{eqn:q3-main-ineq} we see that $q_3(S^3_{M(K')+4p+1}(K)) \ge 0$, and because this is a positive surgery we must have equality. Therefore $M(K) \le M(K') + 4p$ in this case as well. 

Finally, if $M(K') + 4p + 1 < 0$, then a similar monotonicity argument gives $q_3(S^3_{M(K')+4p+1}(K)) = 1$, so that $M(K) \le M(K')+4p \le 0$.
\end{proof}

Consider now the family of twist knots, denoted $K_n$ in \cite{bs-concordance}. We have $K_1 = 3_1$ equal to the left-handed trefoil, while $K_2 = 4_1$ is the figure-eight knot. 

\begin{theorem}
For the twist knots $K_n$, the invariants $r_2$ and $M$ are \begin{align*}
    M(K_{2m-1}) = -4, \quad & \quad r_2(K_{2m-1}) = 8m-4 \\
    M(K_{2m}) = 0 \quad & \quad r_2(K_{2m}) = 8m.
\end{align*}
\end{theorem}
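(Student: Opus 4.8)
The plan is to pin down $M(K_n)$ first, by crossing changes, and then to compute $r_2(K_n)$ by surgery exact triangles.

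\emph{The values of $M$.} The anchors are $M(K_1) = M(3_1) = -4$, which is Example \ref{ex:trefoil}, and $M(K_2) = M(4_1) = 0$, which follows from Proposition \ref{prop:or-rev} since the figure-eight knot is amphichiral, so $M(K_2) = M(K_2^*) = -M(K_2)$. For $n \ge 3$, the standard alternating diagram presents $K_n$ as obtained from $K_{n-2}$ by a single crossing change inside the twist region and from the unknot by a single crossing change in the clasp. For the odd family $K_{2m-1}$, modeled on the all-negative diagram of the left-handed trefoil, both of these change a negative crossing, so Theorem \ref{thm:crossing-change} gives $M(K_{2m-1}) \le M(K_{2m-3}) \le \dots \le M(K_1) = -4$ together with $M(K_{2m-1}) \ge M(\text{unknot}) - 4 = -4$, forcing $M(K_{2m-1}) = -4$. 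For the even family $K_{2m}$, the twist-region crossing is positive while the clasp crossing is negative, so Theorem \ref{thm:crossing-change} gives $M(K_{2m}) \ge M(K_{2m-2}) \ge \dots \ge M(K_2) = 0$ together with $M(K_{2m}) \le M(\text{unknot}) = 0$, forcing $M(K_{2m}) = 0$. The delicate point here is only the bookkeeping of crossing signs, i.e. reading off from the diagrams that the twist-region and clasp crossings carry the asserted signs in each family.

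\emph{The values of $r_2$.} With $M(K_n)$ in hand, Theorem \ref{thm:surgerydim} reduces the problem to evaluating $\dim I^\#(S^3_{-1}(K_n);\mathbb F)$: since $-1 \ne M(K_n)$ for every twist knot, $r_2(K_n) = \dim I^\#(S^3_{-1}(K_n);\mathbb F) - |-1-M(K_n)|$. The base cases come from $S^3_{-1}(K_1) = \pm\Sigma(2,3,5)$ and $S^3_{-1}(K_2) = \pm\Sigma(2,3,7)$, which have $\dim I^\#(-;\mathbb F) = 7$ and $9$ by Theorem \ref{thm:Brieskorn-calc} and the $\Sigma(2,3,6\pm 1)$ computations recorded in \cite{DMES}; these yield $r_2(K_1) = 4$ and $r_2(K_2) = 8$. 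For the inductive step I would realize the twist knots of a fixed parity as the images of the base knot $K_{n_0}$ ($n_0 \in \{1,2\}$) under $1/k$-surgeries ($k \ge 0$, for a suitable sign convention) on an unknot $C$ that bounds a disk meeting the twist region in two points; the manifolds $S^3_{-1}(K_{n_0+2k})$ are then surgeries on the fixed link $K_{n_0} \cup C$, and the Farey triple $\{0, 1/k, 1/(k+1)\}$ of slopes on $C$ produces, for every $k$, a surgery exact triangle relating $I^\#(S^3_{-1}(K_{n_0+2k}))$, $I^\#(S^3_{-1}(K_{n_0+2k+2}))$, and $I^\#$ of the $(-1)$-surgery on the image $\kappa \subset S^1 \times S^2$ of $K_{n_0}$ after $0$-surgery on $C$ --- a term independent of $k$. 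Computing $\dim I^\#$ of this single auxiliary manifold with both bundle data, and running the upper bound from Floer's triangle (Theorem \ref{thm:FloerTriangle}) against the lower bound from the distance-$2$ triangle (Theorem \ref{thm:distance2}) as in Section \ref{sec:rat}, one checks the triangles collapse to short exact sequences and propagates $r_2(K_{2m-1}) = 8m-4$, $r_2(K_{2m}) = 8m$ by induction; these are consistent with Theorem \ref{thm:Brieskorn-calc}, which forces $\dim I^\#(S^3_0(K_n),\tilde K;\mathbb F) = r_2(K_n) + |M(K_n)|$ to be divisible by $4$.

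\emph{The main obstacle} is the $r_2$ computation. One must identify the auxiliary knot $\kappa \subset S^1 \times S^2$ concretely and compute $\dim I^\#$ of its $(-1)$-surgery for both bundles, and --- more delicately --- control the ranks of the connecting maps in the inductive exact triangles well enough that these degenerate to short exact sequences, so the dimensions are determined exactly rather than merely bracketed. As with the rational surgeries in Section \ref{sec:rat}, the way to force this is to carry the upper and lower bounds in parallel and observe that they agree, invoking Theorem \ref{thm:Brieskorn-calc} to eliminate any residual slack of $\pm 2$.
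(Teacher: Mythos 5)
Your computation of $M(K_n)$ follows essentially the same line the paper takes: fix the anchors $M(K_1)=-4$ and $M(K_2)=0$ (the latter by amphichirality), bound $M(K_n)$ from one side by a single unknotting crossing change, and from the other by monotonicity under the twist-region crossing change using Theorem \ref{thm:crossing-change}. Your sign bookkeeping on the even family (twist crossing positive, clasp negative) is the mirror of what the paper records (twist crossing negative, clasp positive), which is probably a convention mismatch on the diagram; both give $M(K_{2m})$ monotone between $M(K_2)=0$ and the unknotting bound $0$, so the conclusion is unaffected.

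The $r_2$ computation is where your route diverges, and the divergence introduces a genuine gap. You propose to induct on the twist parameter by realizing $S^3_{-1}(K_{n_0+2k})$ as surgeries on the link $K_{n_0}\cup C$ with $C$ an auxiliary unknot, running Floer's triangle and the distance-$2$ triangle over the Farey triple $\{0,1/k,1/(k+1)\}$ on $C$. This requires (i) computing $\dim I^\#$, with both bundle data, of a $(-1)$-surgery on a knot $\kappa \subset S^1\times S^2$, which is not a manifold covered by any computation established in the paper or in \cite{DMES}, and (ii) showing the resulting triangles collapse to short exact sequences. Neither step is supplied, and the divisibility-by-$4$ constraint from Theorem \ref{thm:Brieskorn-calc} is not strong enough by itself to close the $\pm 2$ ambiguity, since the upper and lower bounds you would obtain differ by more than $2$ in general before the auxiliary term is pinned down. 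The paper avoids this entirely: the identifications $S^3_1(K_{2m})\cong\Sigma(2,3,6m+1)$ and $S^3_{-1}(K_{2m-1})\cong\Sigma(2,3,6m-1)$ hold for \emph{all} $m\ge 1$ (cited from \cite{bs-concordance}*{Proposition~7.4--7.5}), so Theorem \ref{thm:Brieskorn-calc} gives $\dim I^\#(\Sigma(2,3,6m\pm1);\mathbb F)=8m\pm1$ directly, and Theorem \ref{thm:surgerydim} then yields $r_2(K_{2m})=8m$ and $r_2(K_{2m-1})=8m-4$ with no induction and no auxiliary manifold. You noticed this identification for $m=1$ (your base cases) but did not observe it persists for all $m$; that observation is the whole content of the $r_2$ step, and recognizing it would let you delete the entire inductive machinery.
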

\begin{proof}
For $K_1$ these were computed in Example \ref{ex:trefoil}. The knot $K_2 = 4_1$ is amphicheiral, so $M(K_2) = 0$. 

Observe that each $K_{2m-1}$ can be unknotted by changing one negative crossing, so $-4 \le M(K_{2m-1})$. In addition, for $m \ge 1$ each $K_{2m+1}$ can be modified to $K_{2m-1}$ by changing one negative crossing, so $-4 \le M(K_{2m+1}) - M(K_{2m-1}) \le 0$, and in particular $-4 \le M(K_{2m-1}) \le M(K_1) = -4$ for all $m \ge 1$. 

Each $K_{2m}$ can be unknotted by changing one positive crossing, so $0 \le K_{2m} \le 4$. On the other hand, it is still the case that $-4 \le M(K_{2m+2}) - M(K_{2m}) \le 0$ for all $m \ge 1$, so that $0 \le M(K_{2m}) \le M(K_2) = 0$ for all $m \ge 1$. 

Now as discussed in \cite{bs-concordance}*{Proposition~7.4-7.5}, there exist identifications \[S^3_1(K_{2m}) \cong S^3_{-1/m}(3^*_1) \cong \Sigma(2,3,6m+1), \quad S^3_{-1}(K_{2m-1}) \cong S^3_{-1/m}(3_1) \cong \Sigma(2,3,6m-1).\]
Theorem \ref{thm:Brieskorn-calc} asserts in particular that
\[\dim I^\#(\Sigma(2,3,6m+1);\mathbb F) = 8m+1, \quad \dim I^\#(\Sigma(2,3,6m-1);\mathbb F) = 8m-1.\] Then Theorem \ref{thm:surgerydim} gives $r_2(K_{2m}) = 8m$ and $r_2(K_{2m-1}) = 8m-4$. 
\end{proof}

We may also argue that there exists an infinite family of knots with $r_2(K) + |M(K)|$ bounded. 

\begin{theorem}
Consider the family of pretzel knots $P_n = P(n,-3,3)$. For all $n \in \mathbb Z$, we have $r_2(P_n) = 16$ and $M(P_n) = 0$.
\end{theorem}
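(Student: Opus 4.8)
The statement splits into computing $M(P_n)$ and then $r_2(P_n)$, the former feeding into the latter through Theorem \ref{thm:surgerydim}.

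For $M(P_n)$: note first that the mirror of $P_n = P(n,-3,3)$ is $P(-n,-3,3) = P_{-n}$ (permute the $3$ and $-3$ tangles), so $M(P_n) = -M(P_{-n})$ by Proposition \ref{prop:or-rev}. A single crossing change in the $+3$ twist region converts $P_n$ into $P(n,-3,1)$, which is a $2$-bridge knot --- in fact a twist knot, as its double branched cover is a lens space with $|H_1| = |2n+3|$ --- whose $M$-invariant is $0$ or $\pm 4$ by the twist-knot computation established above. Applying Theorem \ref{thm:crossing-change} to this crossing change confines $M(P_n)$ to an interval of length $4$ about that value; since $M(P_n) \in 4\mathbb Z$ this leaves at most two possibilities, and imposing the mirror relation on both $P_n$ and $P_{-n}$ --- whose candidate sets are negatives of each other --- forces $M(P_n) = 0$. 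The base case $P_0 = 3_1 \# \overline{3_1}$ has $M(P_0) = M(3_1) + M(\overline{3_1}) = -4+4 = 0$ by Theorem \ref{thm:M-additive} and Example \ref{ex:trefoil}; this, together with the chiralities of the auxiliary twist knots, is what makes the two windows intersect only at $0$.

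For $r_2(P_n)$: with $M(P_n) = 0$, Theorem \ref{thm:surgerydim} gives $\dim I^\#(S^3_N(P_n);\mathbb F) = r_2(P_n) + N$ for all integers $N > 0$, so it is enough to evaluate this dimension for one large $N$, and to show it does not depend on $n$. The crossing change $P_n \rightsquigarrow P_{n-2}$ is realized by a cobordism $S^3_N(P_n) \to S^3_N(P_{n-2})$ with $H_1(\cdot;\mathbb F) = 0$ and $b^+ = b^- = 1$; running the two surgery exact triangles of Theorems \ref{thm:FloerTriangle} and \ref{thm:distance2} in the manner of the inductive step of Theorem \ref{thm:surgerydim}, and using $M(P_n) = M(P_{n-2})$, forces $\dim I^\#(S^3_N(P_n);\mathbb F)$ to be constant in $n$ within each parity class. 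It then remains to pin the constant for one member of each parity, say $P_0$ and $P_1$. Since $r_2(P_n) \neq |M(P_n)|$, no $P_n$ is an instanton $L$-space knot over $\mathbb F$, so no surgery on them is a lens space; instead I would match a surgery on the base member with a spherical space form $\SU(2)/\Gamma$ (necessarily carrying $2$-torsion in $I^\#$) or with a Brieskorn sphere $\Sigma(2,3,6k\pm1)$ appearing in the explicit computations of \cite{DMES} (cf.\ Theorem \ref{thm:Brieskorn-calc}), and read off the value $16$.

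The hard part is this base-case computation: one must produce a concrete surgery on some $P_{n_0}$ identified with a $3$-manifold whose $I^\#(-;\mathbb F)$ is already known, and verify it has dimension exactly $16$; the remaining ingredients are routine crossing-change bookkeeping together with the surgery triangles. A secondary difficulty is confirming the chiralities of the twist knots $P(\pm n,-3,1)$ used in Step 1 so that the two ``window'' constraints genuinely meet only at $0$, and handling the small-$|n|$ exceptions where the relevant twist region has too few crossings to support the crossing change employed above.
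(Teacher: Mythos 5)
Your proof takes a genuinely different route from the paper, and both halves contain gaps.

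For $M(P_n)$: the window argument via crossing changes and mirroring is far more elaborate than needed and, as you yourself note, requires verifying chiralities and handling small-$n$ exceptions. The paper simply observes that $P(n,-3,3)$ bounds a ribbon disc (it is of the form $P(n,q,-q)$), hence is slice, and $M$ is a concordance invariant by Theorem \ref{thm:M-additive}; so $M(P_n) = 0$ immediately and uniformly in $n$.

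For $r_2(P_n)$ there are two concrete problems. First, the claim that a crossing-change cobordism $S^3_N(P_n) \to S^3_N(P_{n-2})$ with $b^+ = b^- = 1$ ``forces'' $\dim I^\#(S^3_N(P_n);\mathbb F)$ to be constant within a parity class does not follow from the machinery available. The surgery triangles compare surgeries on a single framed knot; applying them to the unknot $c$ linking the crossing introduces a third term given by $0$-surgery on $c$ inside $S^3_N(P_n)$, a manifold with $b_1 > 0$ over which you have no dimensional control, and the inequality \eqref{eqn:q3-main-ineq} only gives $|q_3(S^3_N(P_n)) - q_3(S^3_N(P_{n-2}))| \le 1$, which is vacuous for pinning dimensions. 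Second, the step ``Since $r_2(P_n) \neq |M(P_n)|$, no $P_n$ is an instanton $L$-space knot over $\mathbb F$'' is circular: $r_2(P_n) = 16 \ne 0 = |M(P_n)|$ is exactly what you are trying to prove. You are also correct that you have left the base case entirely open, and no identification of a $P_{n_0}$-surgery with a known $\SU(2)/\Gamma$ or $\Sigma(2,3,6k\pm1)$ is exhibited. The paper instead imports two surgery identities used in the proof of \cite{bs-concordance}*{Theorem~1.15}, namely $S^3_{-2}(P(n,-3,3)) \cong S^3_2(P(n+3,-3,3))$ and $S^3_1(P(3,3,-3)) \cong -S^3_1(P(4,3,-3))$; with $M(P_n)=0$ and \eqref{eqn:dim-formula}, the first shows $r_2(n)$ depends only on $n \bmod 3$ (note: mod $3$, not mod $2$), the second shows $r_2(3) = r_2(4)$, and then $P(\pm 1,-3,3)$ is recognized as the twist knot $K_4$ or its mirror, whose $r_2$ value of $16$ was already computed in the preceding theorem. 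This closes the argument with a single base case and no new spherical-space-form computation.
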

\begin{proof}
The argument is akin to the proof of \cite{bs-concordance}*{Theorem~1.15}, and we use facts stated in that proof without further justification. Because $P_n$ is slice, we have $M(P_n) = 0$. Writing $r_2(n) = r_2(P(n,-3,3))$, because \[S^3_{-2}(P(n,-3,3)) \cong S^3_2(P(n+3,-3,3)), \quad S^3_1(P(3,3,-3)) \cong -S^3_1(P(4,3,-3)).\] The first gives that $r_2(n)$ depends only on $n$ mod $3$, while the second gives $r_2(3) = r_2(4)$. The knots $P(\pm 1, -3, 3)$ are $K_4$ and its mirror, which has $r_2(K_4) = 16$, so $r_2(1) = r_2(-1) = 16$. It follows that $r_2(n) = 16$ for all $n$.
\end{proof}

\bibliography{biblio.bib}

@misc{LiYe3,
	author = {{Li}, Zhenkun and {Ye}, Fan},
	title = {Instanton dimensions of knot surgeries over arbitrary fields}
}

@misc{DMES,
	author = {{Daemi}, Aliakbar and {M}iller {E}ismeier, Mike and {Scaduto}, Christopher},
	note = {In preparation},
	title = {Equivariant instanton {F}loer homology: small dg-modules and connected sums}
}

@misc{DME2,
	author = {{Daemi}, Aliakbar and {M}iller {E}ismeier, Mike},
	note = {In preparation.},
	title = {{Instantons, indefinite manifolds, and Dehn surgery}}}

@misc{DME1,
      title={Instantons and rational homology spheres}, 
      author={Aliakbar Daemi and Mike {M}iller {E}ismeier},
      year={2022},
      eprint={2210.14071},
      archivePrefix={arXiv},
      primaryClass={math.GT},
      url={https://arxiv.org/abs/2210.14071}, 
}

@article {km-excision,
    AUTHOR = {Kronheimer, Peter and Mrowka, Tomasz},
     TITLE = {Knots, sutures, and excision},
   JOURNAL = {J. Differential Geom.},
  FJOURNAL = {Journal of Differential Geometry},
    VOLUME = {84},
      YEAR = {2010},
    NUMBER = {2},
     PAGES = {301--364},
      ISSN = {0022-040X,1945-743X},
   MRCLASS = {57R58 (57M25 57M27)},
  MRNUMBER = {2652464},
MRREVIEWER = {Nikolai\ N.\ Saveliev},
       URL = {http://projecteuclid.org/euclid.jdg/1274707316},
}

@article {CDX,
    AUTHOR = {Culler, Lucas and Daemi, Aliakbar and Xie, Yi},
     TITLE = {Surgery, polygons and {${\rm SU}(N)$}-{F}loer homology},
   JOURNAL = {J. Topol.},
  FJOURNAL = {Journal of Topology},
    VOLUME = {13},
      YEAR = {2020},
    NUMBER = {2},
     PAGES = {576--668},
      ISSN = {1753-8416,1753-8424},
   MRCLASS = {57R58 (57K16 57K18)},
  MRNUMBER = {4092776},
MRREVIEWER = {Jianfeng\ Lin},
       DOI = {10.1112/topo.12137},
       URL = {https://doi.org/10.1112/topo.12137},
}

@article {bs-concordance,
    AUTHOR = {Baldwin, John A. and Sivek, Steven},
     TITLE = {Framed instanton homology and concordance},
   JOURNAL = {J. Topol.},
  FJOURNAL = {Journal of Topology},
    VOLUME = {14},
      YEAR = {2021},
    NUMBER = {4},
     PAGES = {1113--1175},
      ISSN = {1753-8416,1753-8424},
   MRCLASS = {57K10 (57K18 57K31 57R58)},
  MRNUMBER = {4332488},
MRREVIEWER = {Nikolai\ N.\ Saveliev},
       DOI = {10.1112/topo.12207},
       URL = {https://doi.org/10.1112/topo.12207},
}

@article {bs-lspace,
    AUTHOR = {Baldwin, John A. and Sivek, Steven},
     TITLE = {Instantons and {L}-space surgeries},
   JOURNAL = {J. Eur. Math. Soc. (JEMS)},
  FJOURNAL = {Journal of the European Mathematical Society (JEMS)},
    VOLUME = {25},
      YEAR = {2023},
    NUMBER = {10},
     PAGES = {4033--4122},
      ISSN = {1435-9855,1435-9863},
   MRCLASS = {57R58 (57K10 57K31)},
  MRNUMBER = {4634689},
MRREVIEWER = {Nikolai\ N.\ Saveliev},
       DOI = {10.4171/jems/1280},
       URL = {https://doi.org/10.4171/jems/1280},
}

@misc{LiYe2,
      title={Instanton 2-torsion and Dehn surgeries}, 
      author={Zhenkun Li and Fan Ye},
      year={2025},
      eprint={2508.03394},
      archivePrefix={arXiv},
      primaryClass={math.GT},
      url={https://arxiv.org/abs/2508.03394}, 
}

@misc{froy-mod2,
      title={Mod 2 instanton homology and 4-manifolds with boundary}, 
      author={Kim A. Fr{\o}yshov},
      year={2024},
      eprint={2307.03950},
      archivePrefix={arXiv},
      primaryClass={math.GT},
      url={https://arxiv.org/abs/2307.03950}, 
}

@article {OwensStrle-negdef,
    AUTHOR = {Owens, Brendan and Strle, Sa{\v s}o},
     TITLE = {Dehn surgeries and negative-definite four-manifolds},
   JOURNAL = {Selecta Math. (N.S.)},
  FJOURNAL = {Selecta Mathematica. New Series},
    VOLUME = {18},
      YEAR = {2012},
    NUMBER = {4},
     PAGES = {839--854},
      ISSN = {1022-1824,1420-9020},
   MRCLASS = {57Q60},
  MRNUMBER = {3000471},
MRREVIEWER = {Hee\ Jung\ Kim},
       DOI = {10.1007/s00029-012-0086-2},
       URL = {https://doi.org/10.1007/s00029-012-0086-2},
}

@article {LiYe1,
    AUTHOR = {Li, Zhenkun and Ye, Fan},
     TITLE = {2-torsion in instanton {F}loer homology},
   JOURNAL = {Adv. Math.},
  FJOURNAL = {Advances in Mathematics},
    VOLUME = {472},
      YEAR = {2025},
     PAGES = {Paper No. 110289, 55},
      ISSN = {0001-8708,1090-2082},
   MRCLASS = {57K18 (57K31 57K41)},
  MRNUMBER = {4897443},
       DOI = {10.1016/j.aim.2025.110289},
       URL = {https://doi.org/10.1016/j.aim.2025.110289},
}

@misc{bhat,
      title={Surgery Exact Triangles in Instanton Theory}, 
      author={Deeparaj Bhat},
      year={2024},
      eprint={2311.04242},
      archivePrefix={arXiv},
      primaryClass={math.GT},
      url={https://arxiv.org/abs/2311.04242}, 
}

@unpublished{bs-concordance2,
author = {John A. Baldwin and Steven Sivek},
title = {Framed instanton homology and concordance, {II}},
year = {2022},
note = {arXiv:2206.11531},
}

@article {BSLY-plumbing,
    AUTHOR = {Alfieri, Antonio and Baldwin, John A. and Dai, Irving and
              Sivek, Steven},
     TITLE = {Instanton {F}loer homology of almost-rational plumbings},
   JOURNAL = {Geom. Topol.},
  FJOURNAL = {Geometry \& Topology},
    VOLUME = {26},
      YEAR = {2022},
    NUMBER = {5},
     PAGES = {2237--2294},
      ISSN = {1465-3060,1364-0380},
   MRCLASS = {57R58},
  MRNUMBER = {4520305},
MRREVIEWER = {Francesco\ Lin},
       DOI = {10.2140/gt.2022.26.2237},
       URL = {https://doi.org/10.2140/gt.2022.26.2237},
}

@article {GL-alternating,
    AUTHOR = {Ghosh, Sudipta and Li, Zhenkun},
     TITLE = {Knot cobordism, torsion order and framed instanton homology},
   JOURNAL = {Trans. Amer. Math. Soc. Ser. B},
  FJOURNAL = {Transactions of the American Mathematical Society. Series B},
    VOLUME = {12},
      YEAR = {2025},
     PAGES = {1020--1042},
      ISSN = {2330-0000},
   MRCLASS = {57K18},
  MRNUMBER = {4938792},
       DOI = {10.1090/btran/211},
       URL = {https://doi.org/10.1090/btran/211},
}

@article {LCS-lspace,
    AUTHOR = {Lidman, Tye and Pinz\'on-Caicedo, Juanita and Scaduto,
              Christopher},
     TITLE = {Framed instanton homology of surgeries on {L}-space knots},
   JOURNAL = {Indiana Univ. Math. J.},
  FJOURNAL = {Indiana University Mathematics Journal},
    VOLUME = {71},
      YEAR = {2022},
    NUMBER = {3},
     PAGES = {1317--1347},
      ISSN = {0022-2518,1943-5258},
   MRCLASS = {57K18},
  MRNUMBER = {4448586},
MRREVIEWER = {Yi\ Ni},
}

@misc{DLME,
      title={Filtered instanton homology and cosmetic surgery}, 
      author={Aliakbar Daemi and Tye Lidman and Mike {M}iller {E}ismeier},
      year={2025},
      eprint={2410.21248},
      archivePrefix={arXiv},
      primaryClass={math.GT},
      url={https://arxiv.org/abs/2410.21248}, 
}

@article {KMframed,
    AUTHOR = {Kronheimer, P. B. and Mrowka, T. S.},
     TITLE = {Knot homology groups from instantons},
   JOURNAL = {J. Topol.},
  FJOURNAL = {Journal of Topology},
    VOLUME = {4},
      YEAR = {2011},
    NUMBER = {4},
     PAGES = {835--918},
      ISSN = {1753-8416,1753-8424},
   MRCLASS = {57R58 (57M27)},
  MRNUMBER = {2860345},
MRREVIEWER = {Nikolai\ N.\ Saveliev},
       DOI = {10.1112/jtopol/jtr024},
       URL = {https://doi.org/10.1112/jtopol/jtr024},
}

@article {LFapplications,
    AUTHOR = {Li, Zhenkun and Ye, Fan},
     TITLE = {Knot surgery formulae for instanton {F}loer homology {II}:
              applications},
   JOURNAL = {Math. Ann.},
  FJOURNAL = {Mathematische Annalen},
    VOLUME = {391},
      YEAR = {2025},
    NUMBER = {4},
     PAGES = {6291--6371},
      ISSN = {0025-5831,1432-1807},
   MRCLASS = {57R58 (57K18)},
  MRNUMBER = {4884573},
       DOI = {10.1007/s00208-024-03074-6},
       URL = {https://doi.org/10.1007/s00208-024-03074-6},
}

@book {KM-book,
    AUTHOR = {Kronheimer, Peter and Mrowka, Tomasz},
     TITLE = {Monopoles and three-manifolds},
    SERIES = {New Mathematical Monographs},
    VOLUME = {10},
 PUBLISHER = {Cambridge University Press, Cambridge},
      YEAR = {2007},
     PAGES = {xii+796},
      ISBN = {978-0-521-88022-0},
   MRCLASS = {57R57 (53C27 57N10 57R58)},
  MRNUMBER = {2388043},
MRREVIEWER = {Vicente Mu\~noz},
       DOI = {10.1017/CBO9780511543111},
       URL = {https://doi.org/10.1017/CBO9780511543111},
}

@article {OSdisks,
    AUTHOR = {Ozsv\'{a}th, Peter and Szab\'{o}, Zolt\'{a}n},
     TITLE = {Holomorphic disks and three-manifold invariants: properties
              and applications},
   JOURNAL = {Ann. of Math. (2)},
  FJOURNAL = {Annals of Mathematics. Second Series},
    VOLUME = {159},
      YEAR = {2004},
    NUMBER = {3},
     PAGES = {1159--1245},
      ISSN = {0003-486X},
   MRCLASS = {57M27 (32Q65 57R58)},
  MRNUMBER = {2113020},
MRREVIEWER = {Thomas E. Mark},
       DOI = {10.4007/annals.2004.159.1159},
       URL = {https://doi.org/10.4007/annals.2004.159.1159},
}

@article {hutchings-index,
    AUTHOR = {Hutchings, Michael},
     TITLE = {An index inequality for embedded pseudoholomorphic curves in
              symplectizations},
   JOURNAL = {J. Eur. Math. Soc. (JEMS)},
  FJOURNAL = {Journal of the European Mathematical Society (JEMS)},
    VOLUME = {4},
      YEAR = {2002},
    NUMBER = {4},
     PAGES = {313--361},
      ISSN = {1435-9855},
   MRCLASS = {53D40 (32Q65 53D45 57R58)},
  MRNUMBER = {1941088},
MRREVIEWER = {Yi-Jen Lee},
       DOI = {10.1007/s100970100041},
       URL = {https://doi.org/10.1007/s100970100041},
}

@article {klt1,
    AUTHOR = {Kutluhan, \c{C}a\u{g}atay and Lee, Yi-Jen and Taubes, Clifford Henry},
     TITLE = {{$\rm HF{=}HM$}, {I}: {H}eegaard {F}loer homology and
              {S}eiberg-{W}itten {F}loer homology},
   JOURNAL = {Geom. Topol.},
  FJOURNAL = {Geometry \& Topology},
    VOLUME = {24},
      YEAR = {2020},
    NUMBER = {6},
     PAGES = {2829--2854},
      ISSN = {1465-3060},
   MRCLASS = {53C07 (57R57 57R58)},
  MRNUMBER = {4194305},
       DOI = {10.2140/gt.2020.24.2829},
       URL = {https://doi.org/10.2140/gt.2020.24.2829},
}

@article {CGHhf=ech1,
    AUTHOR = {Colin, Vincent and Ghiggini, Paolo and Honda, Ko},
     TITLE = {The equivalence of {H}eegaard {F}loer homology and embedded
              contact homology via open book decompositions {I}},
   JOURNAL = {Publ. Math. Inst. Hautes \'Etudes Sci.},
  FJOURNAL = {Publications Math\'ematiques. Institut de Hautes \'Etudes
              Scientifiques},
    VOLUME = {139},
      YEAR = {2024},
     PAGES = {13--187},
      ISSN = {0073-8301,1618-1913},
   MRCLASS = {53D40 (53D42 57K33)},
  MRNUMBER = {4750569},
MRREVIEWER = {Alexander\ Fel\cprime shtyn},
       DOI = {10.1007/s10240-024-00145-x},
       URL = {https://doi.org/10.1007/s10240-024-00145-x},
}

@article {OwensStrle-immersed,
    AUTHOR = {Owens, Brendan and Strle, Sa{\v s}o},
     TITLE = {Immersed disks, slicing numbers and concordance unknotting
              numbers},
   JOURNAL = {Comm. Anal. Geom.},
  FJOURNAL = {Communications in Analysis and Geometry},
    VOLUME = {24},
      YEAR = {2016},
    NUMBER = {5},
     PAGES = {1107--1138},
      ISSN = {1019-8385,1944-9992},
   MRCLASS = {57M27 (57M25 57N70)},
  MRNUMBER = {3622316},
MRREVIEWER = {Arunima\ Ray},
       DOI = {10.4310/CAG.2016.v24.n5.a8},
       URL = {https://doi.org/10.4310/CAG.2016.v24.n5.a8},
}

@article {boyer-nicas,
    AUTHOR = {Boyer, S. and Nicas, A.},
     TITLE = {Varieties of group representations and {C}asson's invariant
              for rational homology {$3$}-spheres},
   JOURNAL = {Trans. Amer. Math. Soc.},
  FJOURNAL = {Transactions of the American Mathematical Society},
    VOLUME = {322},
      YEAR = {1990},
    NUMBER = {2},
     PAGES = {507--522},
      ISSN = {0002-9947},
   MRCLASS = {57N10},
  MRNUMBER = {972701},
MRREVIEWER = {Alexander I. Suciu},
       DOI = {10.2307/2001712},
       URL = {https://doi.org/10.2307/2001712},
}

@book {GompfStipsicz,
    AUTHOR = {Gompf, Robert E. and Stipsicz, Andr\'as I.},
     TITLE = {{$4$}-manifolds and {K}irby calculus},
    SERIES = {Graduate Studies in Mathematics},
    VOLUME = {20},
 PUBLISHER = {American Mathematical Society, Providence, RI},
      YEAR = {1999},
     PAGES = {xvi+558},
      ISBN = {0-8218-0994-6},
   MRCLASS = {57N13 (14J80 32Q55 57-02 57R17 57R57 57R65)},
  MRNUMBER = {1707327},
MRREVIEWER = {Nikolai\ N.\ Saveliev},
       DOI = {10.1090/gsm/020},
       URL = {https://doi.org/10.1090/gsm/020},
}

@article {scaduto,
    AUTHOR = {Scaduto, Christopher W.},
     TITLE = {Instantons and odd {K}hovanov homology},
   JOURNAL = {J. Topol.},
  FJOURNAL = {Journal of Topology},
    VOLUME = {8},
      YEAR = {2015},
    NUMBER = {3},
     PAGES = {744--810},
      ISSN = {1753-8416},
   MRCLASS = {57R58 (57R57)},
  MRNUMBER = {3394316},
MRREVIEWER = {Laura P. Starkston},
       DOI = {10.1112/jtopol/jtv012},
       URL = {https://doi.org/10.1112/jtopol/jtv012},
}

@article {DS1,
    AUTHOR = {Daemi, Aliakbar and Scaduto, Christopher},
     TITLE = {Equivariant aspects of singular instanton {F}loer homology},
   JOURNAL = {Geom. Topol.},
  FJOURNAL = {Geometry \& Topology},
    VOLUME = {28},
      YEAR = {2024},
    NUMBER = {9},
     PAGES = {4057--4190},
      ISSN = {1465-3060,1364-0380},
   MRCLASS = {57R58 (57K18)},
  MRNUMBER = {4845451},
       DOI = {10.2140/gt.2024.28.4057},
       URL = {https://doi.org/10.2140/gt.2024.28.4057},
}
\bibliographystyle{alpha}
\end{document}